  \font\tencyr=wncyr10
  \font\sevencyr=wncyr7
  \font\fivecyr=wncyr5
  \font\tencyi=wncyi10
  \font\sevencyi=wncyi7
  \font\fivecyi=wncyi5
\def\id{{\mbox{1 \hskip -7pt 1}}}
 \newcommand{\lon}{\longrightarrow}
 \newcommand{\bu}{\bullet}
 \newcommand{\rar}{\rightarrow}
 \newcommand{\hook}{\hookrightarrow}
\newcommand{\p}{{\partial}}
\newcommand{\Id}{{\mathrm I\mathrm d}}
\newcommand{\oC}{{\overline{C}}}
 \newcommand{\Z}{{\mathbb Z}}
 \newcommand{\bS}{{\mathbb S}}
 \renewcommand{\P}{{\mathbb P}}
 \newcommand{\C}{{\mathbb C}}
 \newcommand{\R}{{\mathbb R}}
 \newcommand{\N}{{\mathbb N}}
 \newcommand{\K}{{\mathbb K}}
\newcommand{\V}{{\mathbb V}}
 \newcommand{\bbH}{{\mathbb H}}
\newcommand{\Conf}{{\mathit C\mathit o \mathit n\mathit f}}
 \newcommand{\ot}{\otimes}
\newcommand{\sG}{{\mathsf G}}
 \newcommand{\Beq}{\begin{equation}}
 \newcommand{\Eeq}{\end{equation}}
 \newcommand{\Beqr}{\begin{eqnarray}}
 \newcommand{\Eeqr}{\end{eqnarray}}
 \newcommand{\Beqrn}{\begin{eqnarray*}}
 \newcommand{\Eeqrn}{\end{eqnarray*}}
 \newcommand{\Ba}{\begin{array}}
 \newcommand{\Ea}{\end{array}}
 \newcommand{\Bi}{\begin{itemize}}
 \newcommand{\Ei}{\end{itemize}}
 \newcommand{\Bc}{\begin{center}}
 \newcommand{\Ec}{\end{center}}
 \newcommand{\fg}{{\mathfrak g}}
\newcommand{\fp}{{\mathfrak p}}
\newcommand{\ii}{{\mathfrak i}}
\newcommand{\fC}{{\mathfrak C}}
 \newcommand{\fG}{{\mathfrak G}}
 \newcommand{\f}{{\mathcal O}}
 \newcommand{\cA}{{\mathcal A}}
 \newcommand{\cC}{{\mathcal C}}
 \newcommand{\cE}{{\mathcal E}}
 \newcommand{\cF}{{\mathcal F}}
 \newcommand{\cG}{{\mathcal G}}
 \newcommand{\cJ}{{\mathcal J}}
 \newcommand{\caL}{{\mathcal L}}
 \newcommand{\cM}{{\mathcal M}}
 \newcommand{\cP}{{\mathcal P}}
 \newcommand{\cS}{{\mathcal S}}
 \newcommand{\cT}{{\mathcal T}}
 \newcommand{\cU}{{\mathcal U}}
 \newcommand{\ccc}{{\mathit c}}
\newcommand{\cce}{{\mathit e}}
\newcommand{\cci}{{\mathit i}}
\newcommand{\ccm}{{\mathit m}}
\newcommand{\ccn}{{\mathit n}}
\newcommand{\cco}{{\mathit o}}
\newcommand{\ccr}{{\mathit r}}
\newcommand{\cct}{{\mathit t}}
\newcommand{\ccu}{{\mathit u}}
 \newcommand{\al}{\alpha}
 \newcommand{\be}{\beta}
 \newcommand{\ga}{\gamma}
 \newcommand{\Ga}{\Gamma}
 \newcommand{\var}{\varepsilon}
 \newcommand{\la}{\lambda}
 \newcommand{\om}{\omega}
 \newcommand{\Img}{{\mathsf I\mathsf m}\, }
 \newcommand{\Hom}{{\mathrm H\mathrm o\mathrm m}}
 \newcommand{\sip}{\smallskip}
 \newcommand{\bip}{\bigskip}
 \newcommand{\mip}{\vspace{2.5mm}}
\theoremstyle{plain}
\newtheorem{theorem}{Theorem}[subsection]
\newtheorem{corollary}[theorem]{Corollary}
\newtheorem{prop-def}[theorem]{Proposition-definition}
\newtheorem{f-theorem}{Formality Theorem}[section]
\newtheorem{main-theorem}{Main~Theorem}[section]
\newtheorem{section-theorem}{Theorem}[section]
\newtheorem{subtheorem}{Theorem}[subsubsection]
\newtheorem{subcorollary}{Corollary}[subsubsection]
\theoremstyle{definition}
\renewcommand{\thesubsection}{\bf\arabic{section}.\arabic{subsection}}
\renewcommand{\thesubsubsection}{\bf\arabic{section}.\arabic{subsection}.\arabic{subsubsection}}
\begin{document}

 \sloppy

 \newenvironment{proo}{\begin{trivlist} \item{\sc {Proof.}}}
  {\hfill $\square$ \end{trivlist}}

\long\def\symbolfootnote[#1]#2{\begingroup%
\def\thefootnote{\fnsymbol{footnote}}\footnote[#1]{#2}\endgroup}

 \title{Operads, configuration spaces and quantization}
 \author{ S.A.\ Merkulov}
\address{Sergei~A.~Merkulov: Department of Mathematics, Stockholm University, 10691 Stockholm, Sweden}
\email{sm@math.su.se}

 \begin{abstract}
 We review several well-known operads of compactified configuration spaces and construct several new such operads, $\overline{C}$,
 in the category of smooth manifolds with corners
 whose  complexes of fundamental chains give us
(i) the 2-coloured operad of $A_\infty$-algebras and their homotopy morphisms, (ii)
 the 2-coloured operad of $L_\infty$-algebras and their homotopy morphisms, and (iii)
 the 4-coloured operad of open-closed homotopy algebras and their homotopy morphisms.

\sip

Two gadgets --- a (coloured) operad of Feynman graphs and a de Rham field theory
on $\overline{C}$ --- are introduced and used to construct  quantized representations
of the (fundamental)
 chain
 operad of
 $\overline{C}$ which are given by Feynman type sums over graphs and depend on choices of
propagators.


\sip
\noindent {\sc Mathematics Subject Classifications} (2000). 53D55, 16E40, 18G55, 58A50.

\noindent {\sc Key words}. Poisson geometry, homotopy Lie algebras, configuration spaces.
\end{abstract}
 \maketitle
\markboth{S.A.\ Merkulov}{Exotic automorphisms of open-closed homotopy algebras}

{\small
{\small
\tableofcontents
}
}
{\large
\section{\bf Introduction}
}

{\bf 1.1. Configuration spaces}. This paper is inspired by Kontsevich's proof \cite{Ko} of his celebrated
formality theorem.
A central role in that proof
is played by a 2-coloured operad of compactified configuration spaces, $\overline{C}(\bbH)=\overline{C}_\bu(\C)\bigsqcup
\overline{C}_{\bu,\bu}(\bbH)$, whose
associated operad of fundamental chains, $\cF \cC hains(\overline{C}(\bbH))$,
 was termed in \cite{KS}  an operad, $\f\cC_\infty$,  of {\em open-closed homotopy algebras}.

\sip

We review in this paper the operad $\overline{C}(\bbH)$, its lower and higher dimensional versions, and also construct
several new operads, $\overline{C}$, of compactified configuration spaces
 in the category of smooth manifolds with corners (or in the category of semialgebraic manifolds)
 whose  complexes of fundamental chains,  $\cF \cC hains(\overline{C})$, give us
\Bi
\item[(i)] the 2-coloured operad of $\cA_\infty$-algebras and their homotopy morphisms, $\cM or(\cA_\infty)$,
\item[(ii)] the 2-coloured operad of $\caL_\infty$-algebras and their homotopy morphisms, $\cM or(\caL_\infty)$, and
\item[(iii)]
 the 4-coloured operad of open-closed homotopy algebras and their homotopy morphisms, $\cM or(\f\cC_\infty)$.
\Ei
An upper-half space model for $\cM or(\caL_\infty)$ was studied earlier in \cite{Me-Auto}; in this paper we
introduce several other
configuration space models for this important 2-coloured operad including the ones which use configurations of points
in the complex plane $\C$.

\sip

{\bf 1.2. Operads of Feynman graphs}. Kontsevich formality map $F$ is given by a sum \cite{Ko},
$$
F=\sum_{\Ga\in \fG} c_\Ga \Phi_\Ga
$$
where the summation runs over a family of graphs $\fG$ and, for each graph $\Ga\in \fG$ $c_\Ga$ is
 a complex number
 given by an integral over a  fundamental chain in $\overline{C}_{\bu,\bu}(\bbH)$ of a differential
 form $\Omega_\Ga$,
 and $\Phi_\Ga$ is a certain polydifferential operator.  We show that the family $\fG$ can be
 equipped with a natural
 structure of a 2-coloured {\em operad of  Feynman graphs}\, which admits a canonical representation
$$
\Ba{rccc}
\rho: & \fG &\lon & \cE nd_{\{\cT_{poly}(V), \f_V\}}\\
& \Ga &\lon & \Phi_\Ga
\Ea
$$
into the two-coloured endomorphism operad generated by the vector space of smooth (formal) polyvector
 fields $\cT_{poly}(V)$ and the vector space, $\f_V$, of smooth (formal) functions on an affine space $V$. This representation
 is given precisely by the aforementioned polydifferential operators $\Phi_\Ga$.
 \sip

One can construct natural analogues of $\fG$ for any (coloured) operad of compactified configuration spaces, $\overline{C}$, studied in this
paper. To distinguish these (coloured) operads of Feynman graphs from  each other we use an appropriate subscript,
$\fG_{\overline{C}}$, to indicate which  geometric operad $\overline{C}$ an operad of Feynman diagrams $\fG$
is associated to (or,  speaking plainly,  which space the graphs from $\fG$ are drawn on).

\mip

{\bf 1.3. De Rham field theories on $\overline{C}$}. The numbers $c_\Ga=\int_{\overline{C}_{\bu,\bu}(\bbH)} \Omega_\Ga$
in the Kontsevich formula also have a clear operadic
meaning. To explain it we have to articulate a new concept (cf.\ \cite{Ko2}).

\sip

For any operad, $\overline{C}=\{\overline{C}(n)\}$, in the category of smooth manifolds with corners,
the associated $\bS$-module of de Rham algebras,
$\Omega_{\overline{C}}=\{\Omega_{\overline{C}(n)},d_{DR}\}$,
is a dg cooperad (if equipped with a completed tensor product, see \S {\ref{8 Section: De Rham}} for details).
Let ${\fG}^*_{\overline{C}}$
be the dual cooperad of Feynman graphs, and let
$\check{\fG}_{\overline{C}}\subset {\fG}^*_{\overline{C}}$ be its sub-cooperad spanned by {\em finite}\, linear combinations
of graphs. A {\em de Rham field theory}\, on $\overline{C}$ is, by definition, a morphism of dg cooperads,
$$
\Ba{rccc}
\Omega: & (\check{\fG}_{\overline{C}}, 0) & \lon & (\Omega_{\overline{C}},d_{DR})\\
        &  \Ga                            & \lon & \Omega_\Ga
\Ea
$$
where $\check{\fG}_{\overline{C}}$ is equipped with the trivial differential (there exist variants of this definition
in which $\check{\fG}_{\overline{C}}$ has a non-trivial differential but we do not need such variants in this paper). Any such a theory
defines an associated morphism of dg operads,
$$
\Ba{rccc}
\Omega^*: & \cC hains(\overline{C}) & \lon & \fG_{\overline{C}}\vspace{3mm}\\
          &    X                    & \lon &  \sum_{\Ga\in \fG_{\overline{C}}} (\int_X \Omega_\Ga) \Ga.
\Ea
$$
Therefore, any representation,
$$
\rho: \fG_{\overline{C}} \lon \cE nd_{W}
$$
of the (coloured) operad of Feynman graphs in a (collection of) vector space(s) $W$ can be
{\em quantized}\, as follows
$$
\rho^{quant}: \cF \cC hains(\overline{C}) \hookrightarrow \cC hains(\overline{C}) \stackrel{\Omega^*}{\lon}
 \fG_{\overline{C}} \stackrel{\rho}{\lon}  \cE nd_{W}.
$$
When one applies this general construction to Kontsevich's configuration spaces, $\overline{C}=\overline{C}(\bbH)$,
and uses his formulae for $\Omega_\Ga$ in terms of a propagator, then one obtains precisely his formality map
as the quantization of the aforementioned standard representation
$\fG \rar \cE nd_{\{\cT_{poly}(V), \f_V\}}$.
Note that Kontsevich formulae admit a natural extension from the suboperad of {\em fundamental chains},
  $\cF \cC hains(\overline{C})$ to the {\em full}\, operad
of chains in $\overline{C}(\bbH)$; this extension plays no role in our paper but we refer to a
 beautiful work of
 Johan Alm
\cite{Alm} who employed this  observation to construct another less obvious sub-operad
of  $\cC hains(\overline{C}(\bbH))$ and then used this new suboperad to  extend explicitly Duflo-Kontsevich algebra
isomorphism \cite{CaRo, Du,Ko, MT, PT}
$$
H^\bu(\fg, \odot^\bu \fg) \lon H^\bu(\fg, U(\fg)),
$$
at the level of cohomologies to an $\cA_\infty$ quasi-isomorphism between the associated Chevalley-Eilenberg complexes equipped with certain $A_\infty$-structures. Here $\fg$ stands for an
arbitrary finite-dimensional graded Lie algebra, and $U(\fg)$ for its universal enveloping algebra.

\sip

Another useful output of this interpretation of Kontsevich's deformation quantization is that one can apply this
 technique to any operad of compactified configuration spaces and
to any representation of the operad of Feynman graphs, not necessarily to the standard representation in
 $\cE nd_{\{\cT_{poly}(V), \f_V\}}$. We show several new explicit examples below.

\mip

{\bf 1.4. Content of the paper}. Section~{\ref{2: section}} reminds a well-known interpretation of Stasheff's
 associahedra (or, in essence, of the operad of $\cA_\infty$-algebras) as compactified configuration spaces of points
 on the real line $\R$.
In \S {\ref{2'': section}} we give a similar description of  Stasheff's multiplihedra
(or the 2-coloured operad, $\cM or(\cA_\infty)$, of $\cA_\infty$-morphisms of
$\cA_\infty$-algebras). The main novelty here is a new compactification of configuration spaces
of points on $\R$ whose boundary strata involve not only collapsing points but also points going far away from each other
in the standard Euclidean metric on $\R$; this construction is a 1-dimensional version of the 2-dimensional
geometric model  \cite{Me-Auto} for the 2-coloured operad  $\cM or(\caL_\infty)$.
In fact we give in \S {\ref{2'': section}} two inequivalent configuration space models for $\cM or(\cA_\infty)$
and discuss at length their similarities and differences as the same idea will be repeated several times later
in higher dimensions.

\mip

In \S {\ref{3: Section}} we remind Kontsevich's compactification  \cite{Ko} of configuration spaces  of points
on the closed
upper half-plane \cite{Ko} and the associated notion of {\em open-closed homotopy algebra}\, \cite{KS}.
In \S {\ref{4': Section on Mor(L_infty)}} we discuss several configuration space models for the 2-coloured
operad, $\cM or(\caL_\infty)$, of $\caL_\infty$ morphisms; one of them was studied earlier in \cite{Me-Auto}.
In \S {\ref{6: Section}} we construct two configuration space models, $\widehat{\fC}_{\bu,\bu}(\bbH)$,
 for the operad, $\cM or(\f\cC_\infty)$,
of morphisms of open-closed homotopy algebras.

\sip

Operads of Feynman graphs and their representations are studied  in \S {\ref{7: section}}.
De Rham field theories on operads of configuration spaces are introduced in \S {\ref{8 Section: De Rham}}; a de Rham field theory on the Fulton-MacPherson
compactification, $\overline{C}(\R^d)$, of points in $\R^d$ --- one of the simplest in the class ---
is studied there in full details.

\sip

In \S {\ref{9: section}} we consider several concrete quantized  representations of operads of Feynman diagrams
including the one which gives a strange non-flat $\cA_\infty$-algebra structure on $\cT_{poly}(V)$
induced from the standard homogeneous volume form on the circle $S^1$. We also consider a version of the Kontsevich
construction in the 3-dimensional hyperbolic space and use it to give explicit formulae for a 1-parameter
(homotopy trivial) deformation of the standard Gerstenhaber algebra structure in $\cT_{poly}(V)$
which involves an infinite sequence of Bernoulli numbers.

\sip

In Sect.\ {\ref{10: section}} we discuss de Rham field theories on configuration space models
for the 2-coloured operad $\cM or(\caL_\infty)$ and on
 the 4-coloured operad, $\widehat{\fC}_{\bu,\bu}(\bbH)$. This machinery  is expected to produce morphisms of
 open-closed homotopy algebras
out of a propagator, $\om$, on the following 3-dimensional version of the Kontsevich eye,
$$
\widehat{\fC}_{2,0}(\bbH)=\Ba{c}
\xy
 <5mm,15mm>*{};<5mm,0mm>*{}**@{.},
<-5mm,15mm>*{};<-5mm,0mm>*{}**@{.},
<5mm,30mm>*{};<5mm,15mm>*{}**@{--},
<-5mm,30mm>*{};<-5mm,15mm>*{}**@{--},
%
<-17mm,0mm>*{};<-12mm,18mm>*{}**@{--},
<-17mm,0mm>*{};<-22mm,12mm>*{}**@{-},
<-17mm,30mm>*{};<-22mm,12mm>*{}**@{-},
<-17mm,30mm>*{};<-12mm,18mm>*{}**@{--},
<17mm,0mm>*{};<12mm,12mm>*{}**@{-},
<17mm,0mm>*{};<22mm,18mm>*{}**@{-},
<17mm,30mm>*{};<22mm,18mm>*{}**@{-},
<17mm,30mm>*{};<12mm,12mm>*{}**@{-},
(0,15)*\ellipse(5,1){-};
(0,7.5)*\ellipse(5,1){.};
(0,0)*\ellipse(5,1){.};
(-17,0)*+{};(17.0,-0)*-{}
**\crv{~*=<4pt>{.}(0,6)}
**\crv{(0,-6)};
(-17,30)*+{};(17.0,30)*-{}
**\crv{(0,36)}
**\crv{(0,24)};
(-22,12)*+{};(12.0,12)*-{}
**\crv{(0,6)};
(-12,18)*-{};(22.0,18)*+{}
**\crv{~*=<4pt>{.}(0,22)}
\endxy
\Ea
$$
and give us {\em explicit}\, formulae for the homotopy action of the
Grothendieck-Teichmueller group on deformation quantizations,
an open problem which we hope to address elsewhere and which was the main motivation for
writing this paper.

\sip

We assume that the reader knows the language of (coloured) operads. However in the Appendix we collected all the
information about this concept which is necessary to read our text.
We paid special attention to the presence and absence of units in operads as several
operads of configuration spaces have no units so that some classical definitions of operads become
inequivalent to each other.

\sip

We tried not to be sketchy and attempted to show  every important detail of all the constructions and
illustrate with examples every non-evident definition. Hence the size of this text.

\mip

{\bf 1.5. Some notation}.
 The set $\{1,2, \ldots, n\}$ is abbreviated to $[n]$;  its group of automorphisms is
denoted by $\bS_n$. The
cardinality of a finite set
$A$ is denoted by $\# A$. If $V=\oplus_{i\in \Z} V^i$ is a graded vector space, then
$V[k]$ stands for the graded vector space with $V[k]^i:=V^{i+k}$ and
and $s^k$ for the associated isomorphism $V\rar V[k]$; for $v\in V^i$ we set $|v|:=i$.
For a pair of graded vector spaces $V_1$ and $V_2$, the symbol $\Hom_i(V_1,V_2)$ stands for the space
of homogeneous linear maps of degree $i$, and $\Hom(V_1,V_2):=\bigoplus_{i\in \Z}\Hom_i(V_1,V_2)$; for example,
$s^k\in \Hom_{-k}(V,V[k])$.

\sip
If $\om_1$ and $\om_2$
are differential forms on manifolds $M_1$ and, respectively, $M_2$, then the form $p_1^*(\om_1)\wedge p_2^*(\om_2)$
on $M_1\times M_2$,
where $p_1: M_1\times M_2\rar M_1$ and  $p_2: M_1\times M_2\rar M_2$ are natural projections,
is often abbreviated to $\om_1\wedge \om_2$.
\sip

\sip

We work throughout in the category of smooth manifolds with corners.
 However, all the main theorems of this paper hold true in the category of semialgebraic
 manifolds introduced in \cite{KoSo} and further developed in \cite{HLTV} so that in applications one can employ not only ordinary smooth differential forms but also  $PA$-forms, where $PA$ stands
for ``piecewise semi-algebraic" as defined in the above mentioned papers. We use this freedom to change the category
of geometric species we work in throughout the text.


{\large
\section{\bf Associahedra as compactified configuration spaces of points on the real line}
\label{2: section}
}

\subsection{Stasheff's associahedra and configuration spaces}\label{2.1: associahedra}
Here we remind a well-known construction \cite{St,Ko} identifying
 the operad of $\cA_\infty$-algebras with the fundamental chain  complex\footnote{All operads $\cC=\{\cC_n\}_{n\geq 1}$
 of compactified
configuration spaces considered in this paper are {\em free}\, as operads in the category of sets; the topological closures of its generators
are called  {\em faces}\, or {\em fundamental chains}\, of $\cC$; moreover,  the subspace of the chain operad of the topological operad $\cC$ generated
 by the fundamental chains is always a dg {\em suboperad}\,  called  the {\em fundamental chain operad}\,
   of $\cC$, or its {\em face complex}.}
of the topological operad, $\overline{C}(\R)=\{\overline{C}_n(\R)\}_{n\geq 2}$,
of compactified configuration spaces
of (equivalence classes of) points on the real line $\R$. Let
$$
\Conf_n(\R):=\{[n]\hook \R\},
$$
be the space of all possible injections of the set $[n]$ into the real line $\R$.
This space is a disjoint union of $n!$ connected components each of which is isomorphic
 to the space $$
 \Conf_n^{o}(\R)=\{x_{1}< x_{2} <\ldots < x_{n}\}.
 $$
  The set $\Conf_n(\R)$ has a natural structure of an oriented $n$-dimensional manifold
 with orientation on $\Conf_n^{0}(\R)$ given by the volume form $dx_1\wedge dx_2\wedge\ldots\wedge dx_n$;
  orientations of all other connected components are then fixed once we assume that the natural smooth action of $\bS_n$  on $\Conf_n(\R)$ is orientation preserving.
 In fact, we can (and often do) label points by an arbitrary finite set $I$, that is, consider the space
 of injections of sets,
$$
\Conf_I(\R):=\{I\hook \R\}.
$$
 A $2$-dimensional Lie group $G_{(2)}=\R^+ \ltimes \R$ acts freely on $\Conf_n(\R)$ by the law,
 $$
\Ba{ccccc}
\Conf_n(\R) & \times & \R^+ \ltimes \R & \lon & \Conf_n(\R)\\
p=\{x_1,\ldots,x_n\}&& (\la,\nu) &\lon & \la p+\nu:= \{\la x_1+\nu, \ldots, \la x_n+\nu\}.
\Ea
$$
The action is free
so that the quotient space,
$$
C_n(\R):= \Conf_n(\R)/G_{(2)},\ \ \  n\geq 2,
$$
is naturally an $(n-2)$-dimensional real oriented manifold equipped with a smooth orientation preserving
action of the group $\bS_n$. In fact,
$$
C_n(\R)=C_n^o(\R)\times \bS_n
$$ with orientation, $\Omega_n$, defined
on $C_n^o(\R):=\Conf^o_n(\R)/G_{(2)}$ as follows: identify $C_n^o(\R)$ with the subspace of
$\Conf^o_n(\R)$ consisting of points $\{0=x_{1}< x_{2} <\ldots < x_{n}=1\}$ and then set
$\Omega_n:= dx_2\wedge\ldots\wedge dx_{n-1}$.

The space $C_2(\R)$ is closed as it is the disjoint union, $C_2(\R)\simeq \bS_2$,  of two points.
The topological compactification,  $\overline{C}_n(\R)$, of  $C_n(\R)$ for higher $n$
 can be defined as $\overline{C}_n^o(\R)\times \bS_n$ where $\overline{C}_n^o(\R)$ is, by definition, the closure of an embedding,
$$
\Ba{ccc}
C_n^o(\R) & \lon & (\R\P^2)^{n(n-1)(n-2)}\\
(x_{i_1}, \ldots, x_{i_n}) & \lon &  \prod_{i_p\neq i_q\neq i_r\neq i_p}\left[|x_{i_p}-x_{i_q}| :
|x_{i_q}-x_{i_r}|: |x_{i_p}-x_{i_r}|\right].
\Ea
$$
Its codimension one strata are given by
$$
\p \overline{C}_n^o(\R) = \bigsqcup_{A} \overline{C}^o_{n - \# A + 1}(\R)\times
 \overline{C}_{\# A}^o(\R),
$$
where the union runs over {\em connected}\, proper  subsets, $A$, of the set $[1,2,\ldots,n]$
 with $\# A\geq 2$. The fundamental chain operad of $\overline{C}(\R)$ is a
dg free operad (in the category of linear spaces)
generated by the $\bS$-module,
$$
\K[\bS_n] =\left\langle \xy
(1,-5)*{\ldots},
(-13,-7)*{_{\sigma(1)}},
(-7,-7)*{_{\sigma(2)}},
(14,-7)*{_{\sigma(n)}},
 (0,0)*{\circ}="a",
(0,5)*{}="0",
(-12,-5)*{}="b_1",
(-8,-5)*{}="b_2",
(-3,-5)*{}="b_3",
(8,-5)*{}="b_4",
(12,-5)*{}="b_5",
\ar @{-} "a";"0" <0pt>
\ar @{-} "a";"b_2" <0pt>
\ar @{-} "a";"b_3" <0pt>
\ar @{-} "a";"b_1" <0pt>
\ar @{-} "a";"b_4" <0pt>
\ar @{-} "a";"b_5" <0pt>
\endxy   \right\rangle_{\sigma\in \bS_n}, \ \ \ n\geq 2,
$$
with the differential given by\footnote{This formula follows immediately from the above formula for
the above formula for $\p \overline{C}_n(\R)$ except for
the sign factor
which compares the  induced orientation on the boundary with the  product
orientation on the right hand side.
We shall prove this sign factor in \S {\ref{2: subsection on induced orientation}}
 below.}
\begin{equation}\label{2: delta_ass(n)}
\p
\xy
(1,-5)*{\ldots},
(-13,-7)*{_{i_1}},
(-8,-7)*{_{i_2}},
(-3,-7)*{_{i_3}},
(8,-7)*{_{i_{n-1}}},
(14,-7)*{_{i_n}},
 (0,0)*{\circ}="a",
(0,5)*{}="0",
(-12,-5)*{}="b_1",
(-8,-5)*{}="b_2",
(-3,-5)*{}="b_3",
(8,-5)*{}="b_4",
(12,-5)*{}="b_5",
\ar @{-} "a";"0" <0pt>
\ar @{-} "a";"b_2" <0pt>
\ar @{-} "a";"b_3" <0pt>
\ar @{-} "a";"b_1" <0pt>
\ar @{-} "a";"b_4" <0pt>
\ar @{-} "a";"b_5" <0pt>
\endxy
=\sum_{k=0}^{n-2}\sum_{l=2}^{n-k}
(-1)^{k+l(n-k-l)+1}
\begin{xy}
<0mm,0mm>*{\circ},
<0mm,0.8mm>*{};<0mm,5mm>*{}**@{-},
<-9mm,-5mm>*{\ldots},
<14mm,-5mm>*{\ldots},
<-0.7mm,-0.3mm>*{};<-13mm,-5mm>*{}**@{-},
<-0.6mm,-0.5mm>*{};<-6mm,-5mm>*{}**@{-},
<0.6mm,-0.3mm>*{};<20mm,-5mm>*{}**@{-},
<0.3mm,-0.5mm>*{};<8mm,-5mm>*{}**@{-},
<0mm,-0.5mm>*{};<0mm,-4.3mm>*{}**@{-},
<0mm,-5mm>*{\circ};
<-5mm,-10mm>*{}**@{-},
<-2.7mm,-10mm>*{}**@{-},
<2.7mm,-10mm>*{}**@{-},
<5mm,-10mm>*{}**@{-},
<4mm,-7mm>*{^{i_1\ \  \dots\ \   i_k\ \ \qquad \ \   i_{k+l+1}\dots  \ \ i_n}},
<2mm,-12mm>*{_{i_{k+1} \ \dots\ \  i_{k+l}}},
\end{xy}
\end{equation}
Therefore, the operad of fundamental chains of $\overline{C}(\R)$
is nothing but the minimal resolution, $\cA ss_\infty$, of the operad of associative algebras.

\subsubsection{\bf Example} $C_3^0(\R)$ is an open interval,
$$
{C}_3^0(\R)=(0,1)\simeq \xy
<-3mm,0mm>*{};<12mm,0mm>*{}**@{-},
<0mm,0mm>*{\bu};
<4mm,0mm>*{\bu};
<10mm,0mm>*{\bu};
<0mm,2mm>*{^{x_1=0}};
<10mm,2mm>*{^{x_3=1}};
<4mm,-3mm>*{^{x_2}};
\endxy
$$
Its compactification $\overline{C}_3^0(\R)$ is, by definition,  the closure of the following embedding,
$$
\Ba{rccc}
i: & C_3^0(\R) & \lon & \R\P^2\\
& (x_1<x_2<x_3) & \lon & \left[|x_1-x_2|: |x_2-x_3|: |x_1-x_3] \right]
\Ea
$$
so that
$$
\overline{C}_3^0(\R)=\overline{i(C_3^0(\R))}=i(C_3^0(\R))\sqcup [0:1:1]\sqcup [1:0:1]=(0,1)\sqcup(0)\sqcup (1)=[0,1]
$$
Therefore,
$$
\p
\begin{xy}
 <0mm,-4mm>*{};<0mm,4mm>*{}**@{-},
 <0.39mm,-0.39mm>*{};<3.2mm,-4mm>*{}**@{-},
 <-0.35mm,-0.35mm>*{};<-3.2mm,-4mm>*{}**@{-},
<0mm,0mm>*{\circ};
   <0.39mm,-0.39mm>*{};<4mm,-6.9mm>*{^3}**@{},
   <-0.35mm,-0.35mm>*{};<-4mm,-6.9mm>*{^1}**@{},
    <-0.35mm,-0.35mm>*{};<0mm,-6.9mm>*{^2}**@{},
\end{xy}=-
\begin{xy}
 <0mm,0mm>*{\circ};<0mm,0mm>*{}**@{},
 <0mm,0.69mm>*{};<0mm,3.0mm>*{}**@{-},
 <0.39mm,-0.39mm>*{};<2.4mm,-2.4mm>*{}**@{-},
 <-0.35mm,-0.35mm>*{};<-1.9mm,-1.9mm>*{}**@{-},
 <-2.4mm,-2.4mm>*{\circ};<-2.4mm,-2.4mm>*{}**@{},
 <-2.0mm,-2.8mm>*{};<0mm,-4.9mm>*{}**@{-},
 <-2.8mm,-2.9mm>*{};<-4.7mm,-4.9mm>*{}**@{-},
    <0.39mm,-0.39mm>*{};<3.3mm,-4.0mm>*{^3}**@{},
    <-2.0mm,-2.8mm>*{};<0.5mm,-6.7mm>*{^2}**@{},
    <-2.8mm,-2.9mm>*{};<-5.2mm,-6.7mm>*{^1}**@{},
 \end{xy}
\ + \
 \begin{xy}
 <0mm,0mm>*{\circ};<0mm,0mm>*{}**@{},
 <0mm,0.69mm>*{};<0mm,3.0mm>*{}**@{-},
 <0.39mm,-0.39mm>*{};<2.4mm,-2.4mm>*{}**@{-},
 <-0.35mm,-0.35mm>*{};<-1.9mm,-1.9mm>*{}**@{-},
 <2.4mm,-2.4mm>*{\circ};<-2.4mm,-2.4mm>*{}**@{},
 <2.0mm,-2.8mm>*{};<0mm,-4.9mm>*{}**@{-},
 <2.8mm,-2.9mm>*{};<4.7mm,-4.9mm>*{}**@{-},
    <0.39mm,-0.39mm>*{};<-3mm,-4.0mm>*{^1}**@{},
    <-2.0mm,-2.8mm>*{};<0mm,-6.7mm>*{^2}**@{},
    <-2.8mm,-2.9mm>*{};<5.2mm,-6.7mm>*{^3}**@{},
 \end{xy}.
$$
where
$$
\begin{xy}
 <0mm,0mm>*{\circ};<0mm,0mm>*{}**@{},
 <0mm,0.69mm>*{};<0mm,3.0mm>*{}**@{-},
 <0.39mm,-0.39mm>*{};<2.4mm,-2.4mm>*{}**@{-},
 <-0.35mm,-0.35mm>*{};<-1.9mm,-1.9mm>*{}**@{-},
 <2.4mm,-2.4mm>*{\circ};<-2.4mm,-2.4mm>*{}**@{},
 <2.0mm,-2.8mm>*{};<0mm,-4.9mm>*{}**@{-},
 <2.8mm,-2.9mm>*{};<4.7mm,-4.9mm>*{}**@{-},
    <0.39mm,-0.39mm>*{};<-3mm,-4.0mm>*{^1}**@{},
    <-2.0mm,-2.8mm>*{};<0mm,-6.7mm>*{^2}**@{},
    <-2.8mm,-2.9mm>*{};<5.2mm,-6.7mm>*{^3}**@{},
 \end{xy}
 \ \simeq \overline{C}_2^0\times \overline{C}_2^0 \ \ \simeq \ \
 \xy
<-3mm,0mm>*{};<12mm,0mm>*{}**@{-},
<0mm,0mm>*{\bu};
<10mm,0mm>*{\bu};
<0mm,2mm>*{^{x_1=0}};
<10mm,2mm>*{^1};
\endxy \
\times \
 \xy
<-3mm,0mm>*{};<12mm,0mm>*{}**@{-},
<0mm,0mm>*{\bu};
<10mm,0mm>*{\bu};
<0mm,2mm>*{^{x_2=0}};
<10mm,2mm>*{^{x_3=1}};
\endxy
$$
$$
\begin{xy}
 <0mm,0mm>*{\circ};<0mm,0mm>*{}**@{},
 <0mm,0.69mm>*{};<0mm,3.0mm>*{}**@{-},
 <0.39mm,-0.39mm>*{};<2.4mm,-2.4mm>*{}**@{-},
 <-0.35mm,-0.35mm>*{};<-1.9mm,-1.9mm>*{}**@{-},
 <-2.4mm,-2.4mm>*{\circ};<-2.4mm,-2.4mm>*{}**@{},
 <-2.0mm,-2.8mm>*{};<0mm,-4.9mm>*{}**@{-},
 <-2.8mm,-2.9mm>*{};<-4.7mm,-4.9mm>*{}**@{-},
    <0.39mm,-0.39mm>*{};<3.3mm,-4.0mm>*{^3}**@{},
    <-2.0mm,-2.8mm>*{};<0.5mm,-6.7mm>*{^2}**@{},
    <-2.8mm,-2.9mm>*{};<-5.2mm,-6.7mm>*{^1}**@{},
 \end{xy}
 \ \simeq \overline{C}_2^0\times \overline{C}_2^0 \ \ \simeq \ \
 \xy
<-3mm,0mm>*{};<12mm,0mm>*{}**@{-},
<0mm,0mm>*{\bu};
<10mm,0mm>*{\bu};
<0mm,2mm>*{^{x_1=0}};
<10mm,2mm>*{^{x_2=1}};
\endxy \
\times \
 \xy
<-3mm,0mm>*{};<12mm,0mm>*{}**@{-},
<0mm,0mm>*{\bu};
<10mm,0mm>*{\bu};
<0mm,2mm>*{^{0}};
<10mm,2mm>*{^{x_3=1}};
\endxy
$$

\subsection{Smooth structure on $\overline{C}_n(\R)$}\label{2: subsubs smooth atlas on associh}
The codimension $l$  boundary strata of $\overline{C}_n(\R)$
is a disjoint union,
$$
\coprod_{T\in \cT_{n,l}}\  \underbrace{\prod_{v\in V(T)} {C}_{\# In(v)}(\R)}_{C_T(\R)}
$$
running over the set, $\cT_{n,l}$, of all possible trees (built from the above corollas) with $l+1$ vertices and $n$
input legs which are labeled  by elements of $[n]$. Here $V(T)$ stands for the set of vertices of a tree $T$ and
  $In(v)$  for the set of input legs of a vertex $v$ (we also use below the symbol $E(T)$ to denote the set of internal edges of $T$). The resulting stratification,
$$
 \overline{C}_n(\R)=\coprod_{l\geq 0}\coprod_{T\in \cT_{n,l}}\ C_T(\R),
$$
can be used to make the compactified configuration space $\overline{C}_n(\R)$ into a smooth manifold with corners.
 For that purpose we need to construct a coordinate chart $\cU_T$ near the boundary stratum
$C_T(\R)\subset \overline{C}_n(\R)$ corresponding to an arbitrary  tree $T$, say to this one
\Beq\label{2: tree T}
T=
\Ba{c}
\xy
(-10.5,-2)*{_1},
(-11,-17)*{_3},
(-2,-17)*{_5},
(3,-10)*{_6},
(8,-10)*{_2},
(14,-10)*{_4},
(21,-10)*{_7},
(0,14)*{}="0",
 (0,8)*{\circ}="a",
(-10,0)*{}="b_1",
(-2,0)*{\circ}="b_2",
(12,0)*{\circ}="b_3",
(2,-8)*{}="c_1",
(-7,-8)*{\circ}="c_2",
(8,-8)*{}="c_3",
(14,-8)*{}="c_4",
(20,-8)*{}="c_5",
(-11,-15)*{}="d_1",
(-3,-15)*{}="d_2",
\ar @{-} "a";"0" <0pt>
\ar @{-} "a";"b_1" <0pt>
\ar @{-} "a";"b_2" <0pt>
\ar @{-} "a";"b_3" <0pt>
\ar @{-} "b_2";"c_1" <0pt>
\ar @{-} "b_2";"c_2" <0pt>
\ar @{-} "b_3";"c_3" <0pt>
\ar @{-} "b_3";"c_4" <0pt>
\ar @{-} "b_3";"c_5" <0pt>
\ar @{-} "c_2";"d_1" <0pt>
\ar @{-} "c_2";"d_2" <0pt>
\endxy
\Ea
\Eeq
and then check that the gluing mappings at the intersections, $\cU_T\cap \cU_{T'}$, of such charts are smooth.
The construction of $\cU_T$ goes in four steps
 (cf.\ Sect.\ 5.2 in \cite{Ko}) which we discuss in some detail as it applies to all configuration spaces we
 study in this paper:
\Bi
\item[(i)] Associate to $T$ a metric graph, $T_{\ccm \cce \cct\ccr \cci \ccc}$, by assigning a small
non-negative parameter $\var$ to each internal edge of $T$, e.g.
\Beq\label{2: metric tree T}
T_{\ccm \cce \cct\ccr \cci \ccc}=
\xy
(2.0,3.0)*{_{\var_{1}}},
(11.2,3.7)*{_{\var_{2}}},
(-6.8,-4)*{_{\var_{3}}},
(-10.5,-2)*{_1},
(-11,-17)*{_3},
(-2,-17)*{_5},
(3,-10)*{_6},
(8,-10)*{_2},
(14,-10)*{_4},
(21,-10)*{_7},
(0,14)*{}="0",
 (0,8)*{\circ}="a",
(-10,0)*{}="b_1",
(-2,0)*{\circ}="b_2",
(12,0)*{\circ}="b_3",
(2,-8)*{}="c_1",
(-7,-8)*{\circ}="c_2",
(8,-8)*{}="c_3",
(14,-8)*{}="c_4",
(20,-8)*{}="c_5",
(-11,-15)*{}="d_1",
(-3,-15)*{}="d_2",
\ar @{-} "a";"0" <0pt>
\ar @{-} "a";"b_1" <0pt>
\ar @{-} "a";"b_2" <0pt>
\ar @{-} "a";"b_3" <0pt>
\ar @{-} "b_2";"c_1" <0pt>
\ar @{-} "b_2";"c_2" <0pt>
\ar @{-} "b_3";"c_3" <0pt>
\ar @{-} "b_3";"c_4" <0pt>
\ar @{-} "b_3";"c_5" <0pt>
\ar @{-} "c_2";"d_1" <0pt>
\ar @{-} "c_2";"d_2" <0pt>
\endxy \ \ \ \ \ \ \var_{1}, \var_{2}, \var_{3}\in [0,\var) \ \mbox{for some}\ 0\leq \var \ll +\infty;
\Eeq

\item[(ii)] choose an $\bS_n$-equivariant section, $\tau: C_n(\R)\rar \Conf_n(\R)$, of the natural projection
$\Conf_n(\R)\rar  C_n(\R)$ as well as an arbitrary smooth structure on its image,
$C_n^{st}(\R):=\tau(C_n(\R))$,
which is often called the space of configurations in the {\em standard position}; for example,
$C_n^{st}(\R)$ can be chosen to be a subspace of $\Conf_n(\R)$ satisfying  either the
conditions
$\sum_{i=1}^n x_i=0$ and $\sum_{i}|x_i|^2=1$, or the conditions that the leftmost point in
the configuration is at $0$ and the rightmost point is at $1$;
\item[(iii)] the required coordinate chart $\cU_T\subset \overline{C}_n(\R)$ is, by definition,
isomorphic to the manifold with corners
    $[0,\var)^{\# E(T)}\times \prod_{v\in V(T)} {C}_{\# In(v)}(\R)$ and the isomorphism is given
    by a map,
    $$
    \al_T: [0,\var)^{\# E(T)}\times \prod_{v\in V(T)} {C}^{st}_{\# In(v)}(\R)\lon \cU_T
    $$
which one reads from the graph $T_{\ccm \cce \cct\ccr \cci \ccc}$ by interpreting it
as a substitution scheme of $\var$-magnified standard configurations; for example, the map $\al_T$ corresponding
to the tree
 (\ref{2: metric tree T})  is given by  a continuous map,
$$
\Ba{ccccccccccc}
 (0,\var)^3 & \hspace{-2mm} \times \hspace{-2mm}& C^{st}_3(\R) & \hspace{-2mm} \times\hspace{-2mm} & C^{st}_2(\R) & \hspace{-2mm} \times \hspace{-2mm} &  C^{st}_3(\R)  &\hspace{-2mm} \times\hspace{-2mm} & C^{st}_2(\R)
& {\lon} & C_7(\R) \\
 (\var_1,\var_2,\var_3) & \hspace{-2mm} \times \hspace{-2mm} & (x_1, x',x'') & \hspace{-2mm}\times
 \hspace{-2mm}& (x''', x_6) &\hspace{-2mm} \times \hspace{-2mm}&
(x_2,x_4,x_7) &\hspace{-2mm} \times\hspace{-2mm} & (x_3,x_7) &\lon& (y_1, y_3, y_5,y_6, y_2,y_4, y_7)
\Ea
$$
$$
\Ba{lllrrrr}
y_1 &=& x_1 & &   y_2 &=&   \var_2 x_2 + x''\\
y_3 &=&   \var_1(\var_3 x_3 + x''')+ x'  &&  y_4 &=&   \var_2 x_4 + x''\\
y_5 &=&   \var_1(\var_3 x_5 + x''')+ x' &&  y_7 &=&   \var_2 x_7 + x''\\
y_6 &=&   \var_1 x_6 + x', &&   &&
\Ea
$$
whose domain is formally extended
to $[0,\var)^3\times C_3^{st}(\R) \times C_2^{st}(\R) \times C_3^{st}(\R) \times C_2^{st}(\R)$; the boundary stratum
$C_T(\R)$ is given in $\cU_T$ by the equations
$\var_1=\var_2=\var_3=0$.
\Ei
It is easy to check that the gluing mappings at every non-empty intersection $\cU_T\cap \cU_{T'}$
are smooth (cf.\ \cite{Ga}).

\sip

We have $\overline{C}_n(\R)= \cA_n\times \bS_n$, where $\cA_n:=\overline{C}_n^o(\R)$ is the $n$-th Stasheff's associahedron \cite{St},
for example
$$
\cA_2=\bu, \ \ \ \ \ \ \cA_3=
\xy
(0,0)*{\bu}="a",
(6,0)*{\bu}="b",
\ar @{-} "a";"b" <0pt>
\endxy,
\ \ \ \ \ \
\cA_4=\Ba{c}
\xy
(-3,0)*{\bu}="a",
(3,0)*{\bu}="b",
(6,6)*{\bu}="c",
(0,12)*{\bu}="d",
(-6,6)*{\bu}="e"
\ar @{-} "a";"b" <0pt>
\ar @{-} "b";"c" <0pt>
\ar @{-} "c";"d" <0pt>
\ar @{-} "d";"e" <0pt>
\ar @{-} "e";"a" <0pt>
\endxy
\Ea
,
\ \ \ \ \ \ \mbox{etc}.
$$

\subsection{Induced orientation on the boundary strata}\label{2: subsection on induced orientation}
Let us prove  the formula for signs in  (\ref{2: delta_ass(n)}), that is, let us compare the
orientation, $\Omega_T$, induced on a generic codimension 1 boundary stratum $C_T(\R) \simeq
\overline{C}_{n-l+1}(\R)\times \overline{C}_l(\R)\subset \overline{C}_n(\R)$
corresponding to a tree,
$$
T=\Ba{c}\begin{xy}
<0mm,0mm>*{\circ},
<0mm,0.8mm>*{};<0mm,5mm>*{}**@{-},
<-9mm,-5mm>*{\ldots},
<14mm,-5mm>*{\ldots},
<-0.7mm,-0.3mm>*{};<-13mm,-5mm>*{}**@{-},
<-0.6mm,-0.5mm>*{};<-6mm,-5mm>*{}**@{-},
<0.6mm,-0.3mm>*{};<20mm,-5mm>*{}**@{-},
<0.3mm,-0.5mm>*{};<8mm,-5mm>*{}**@{-},
<0mm,-0.5mm>*{};<0mm,-4.3mm>*{}**@{-},
<0mm,-5mm>*{\circ};
<-5mm,-10mm>*{}**@{-},
<-2.7mm,-10mm>*{}**@{-},
<2.7mm,-10mm>*{}**@{-},
<5mm,-10mm>*{}**@{-},
<4mm,-7mm>*{^{1\ \  \dots\ \   k\ \ \qquad \ \   {k+l+1}\dots  \ \ n}},
<2mm,-12mm>*{_{k+1} \ \dots\ \  _{k+l}},
\end{xy}
\Ea ,
$$
from the standard
orientation, $\Omega$, on $\overline{C}_n(\R)$ with the product, $\Omega_1\times \Omega_2$, of the standard
orientations on
$\overline{C}_{n-l+1}(\R)$ and $\overline{C}_{l}(\R)$.
The upper corolla in $T$ corresponds to the configuration space $C_{n-l+1}(\R)$ with the volume form
in the standard coordinates (in which the left most point is at 0 at the right most point at $1$)
given by
$$
\Omega_1=dx_2\wedge \ldots \wedge dx_k \wedge dx_\bu\wedge dx_{k+l+1}\wedge\ldots \wedge dx_{n-1}.
$$
The lower corolla corresponds to  $C_{l}(\R)$ with the volume form given
in the standard coordinates  by
$$
\Omega_2=d\bar{x}_{k+2}\wedge\ldots \wedge d\bar{x}_{k+l-1}.
$$
The inclusion $C_T(\R)\hook \overline{C}_n(\R)$ is best described in the coordinate chart $\cU_T$
corresponding to the metric tree,
$$
\Ba{c}
\begin{xy}
<1.5mm,-3.5mm>*{^\var},
<0mm,0mm>*{\circ},
<0mm,0.8mm>*{};<0mm,5mm>*{}**@{-},
<-9mm,-5mm>*{\ldots},
<14mm,-5mm>*{\ldots},
<-0.7mm,-0.3mm>*{};<-13mm,-5mm>*{}**@{-},
<-0.6mm,-0.5mm>*{};<-6mm,-5mm>*{}**@{-},
<0.6mm,-0.3mm>*{};<20mm,-5mm>*{}**@{-},
<0.3mm,-0.5mm>*{};<8mm,-5mm>*{}**@{-},
<0mm,-0.5mm>*{};<0mm,-4.3mm>*{}**@{-},
<0mm,-5mm>*{\circ};
<-5mm,-10mm>*{}**@{-},
<-2.7mm,-10mm>*{}**@{-},
<2.7mm,-10mm>*{}**@{-},
<5mm,-10mm>*{}**@{-},
<4mm,-7mm>*{^{1\ \  \dots\ \   k\ \ \qquad \ \   {k+l+1}\dots  \ \ n}},
<2mm,-12mm>*{_{k+1} \ \dots\ \  _{k+l}},
\end{xy}
\Ea
$$
i.e.\ in the coordinates
$$
\left(\var, x_2, \ldots, x_k, x_\bu, x_{k+l+1},\ldots, x_{n-1}, \bar{x}_2,\bar{x}_3 \dots,
\bar{x}_{k+l-1}\right).
$$
These coordinates are related to the standard coordinates $0<y_2<\ldots< y_{n-1}<1$ on
${C}_n(\R)$ as follows,
$$
y_2=x_2,\ \ \ y_3=x_3, \ \ \ldots, \ \ \ y_k= x_k,
$$
$$
y_{k+1}= x_\bu, \ \ y_{k+2}= \var \bar{x}_{k+2} + x_\bu, \ \ \ldots, \ \
y_{k+l-1}= \var \bar{x}_{k+l-1} + x_\bu, \ \  y_{k+l}= \var + x_\bu,
$$
$$
y_{k+l+1}=x_{k+l+1}, \ \ \ldots, \ \ \ y_{n-1}= x_{n-1},
$$
so that the orientation form on $C_n(\R)$ is given in the ``metric tree" coordinates as follows,
\Beqrn
\Omega&=& dy_2\wedge dy_3\wedge\ldots\wedge dy_{n-1}\\
        &=& \var{^{l-2}}dx_1\wedge\ldots\wedge dx_{k}\wedge dx_\bu \wedge d\bar{x}_{k+2}
        \wedge\ldots \wedge d\bar{x}_{k+l-1} \wedge d\var\wedge  dx_{k+l+1}\wedge\ldots \wedge dx_{n-1}\\
    &=&  (-1)^{k+l+1+ (l-2)(n-k-l-1)}  \var^{l-2}d\var \Omega_1\wedge\Omega_2\\
    &=&  (-1)^{k+1+ l(n-k-l)}  \var^{l-2}d\var \Omega_1\wedge\Omega_2.
\Eeqrn
As the boundary $C_T(\R)\hook \overline{C}_n$ is given by the equation $\var=0$ and $\var\geq 0$,
the induced orientation on $C_T(\R)$ is given by the form
$$
 \Omega_T= -(-1)^{k+1+ l(n-k-l)}  \Omega_1\wedge\Omega_2=  (-1)^{k+ l(n-k-l)} \Omega_1\wedge\Omega_2
$$
proving thereby the sign formula in (\ref{2: delta_ass(n)}).

\subsection{An equivalent definition of $\overline{C}_\bu(\R)$}
 Let
$$
\widetilde{\Conf_n}(\R):=\{[n]\rar \R\}
$$
be the space of all possible (not necessarily injective) maps   of the set $[n]$ into the real line $\R$.
For a configuration $p=(x_{i_1},\ldots x_{i_n})$ in $\Conf_n(\R)$ or in $\widetilde{\Conf}_n(\R)$  we set
$$
  x_c(p):=\frac{1}{n}\sum_{k=1}^n x_{i_k}, \ \ \ \  ||p||:=\sqrt{\sum_{k=1}^n |x_{i_k}-x_c(p)|^2}.
$$

Recall that each space $C_n(\R)$ can be  identified with a subspace,
$$
C_n^{st}(\R):= \{ p\in \Conf_n(\R)\, |\   x_c(p)=0, \ \ ||p||=1\}.
$$
Define next a {\em compact}\, space,
$$
\widetilde{C}^{st}_n(\R):= \{p\in \widetilde{\Conf}_n(\R)\ |\ x_c(p)=0,\ ||p||=1\}
$$
which contains $C_n^{st}(\R)$ as a subspace. For any subset $A\subseteq [n]$
there is a natural map
$$
\Ba{rccc}
\pi_A : & C_n(\R) & \lon & C_A(\R)\\
        & p=\{x_i\}_{i\in [n]} & \lon & p_A:=\{x_i\}_{i\in A}
\Ea
$$
which forgets all the points labeled by elements of the complement $[n]\setminus A$.

\sip

The compactification, $\overline{C}_n(\R)$, can be defined  as the closure  of the embedding \cite{AT},
$$
C_n(\R) \stackrel{\prod \pi_A}{\lon} \prod_{A\subseteq [n]\atop \# A\geq 2} C_A(\R)
 \stackrel{\simeq }{\lon} \prod_{A\subseteq [n]\atop \# A\geq 2} C_A^{st}(\R)
\hook  \prod_{A\subseteq [n]\atop \# A\geq 2} \widetilde{C}^{st}_A(\R).
$$
For example, consider the case $n=3$,
$$
\Ba{rccccccc}
i: & C_3(\R) & {\hook} & \widetilde{C}^{st}_{123}(\R) &\times & \widetilde{C}^{st}_{12}(\R) &\times &
\widetilde{C}^{st}_{23}(\R)\\
& p=(x_1,x_2,x_3) &\lon& \frac{p- x_{c}(p)}{||p||} && \left(\frac{x_1-x_2}{2|x_2-x_1|},
\frac{x_2-x_1}{2|x_2-x_1|}\right)=\left(-\frac{1}{2},\frac{1}{2}\right)  &&
\left(\frac{x_2-x_3}{2|x_2-x_1|},
\frac{x_3-x_2}{2|x_2-x_3|}\right)=\left(-\frac{1}{2},\frac{1}{2}\right)
\Ea
$$
It is clear that $\overline{i(C_3(\R))}$ is the union of $C_3(\R)=(0,1)$ with two limiting points,
$0$ corresponding to $x_1-x_2\rar 0$ and $1$ corresponding to  $x_2-x_3\rar 0$, i.e.\ again
$\overline{C}_{3}(\R)=[0,1]$.

\bip

{\large
\section{\bf Multiplihedra as compactified configuration spaces of points on the real line}
\label{2'': section}
}
\mip

 Here we construct two different compactifications
of configuration spaces of distinct points on the real line both of which come equipped with the structure
of a two coloured operad
in the category of smooth manifolds with corners and share one and the same property: the associated face complex
is precisely the 2-coloured operad, $\cM or(\cA_\infty)$,
whose representations are the same as a pair of $\cA_\infty$-algebras together with
an $\cA_\infty$ morphism between them.
 These compactifications involve not only strata of collapsing points but also strata of points going far away from
 each other with respect to
the standard Euclidean metric on $\R$.

\mip

\subsection{The first configuration space model}\label{2'': the first model for multiplihedra}
Let a $1$-dimensional Lie group $G_{(1)}= \R$ act on the configuration space  $\Conf_n(\R)$ by translations,
$$
\Ba{ccc}
\Conf_n(\R) \times \R& \lon & \Conf_n(\R)\\
(p=\{x_1,\ldots,x_n\},\nu) &\lon & p+\nu:= \{x_1+\nu, \ldots, x_n+\nu\}
\Ea
$$

This action is free so that the quotient space,
\Beq\label{2': def of fC(R)}
{\mathfrak C}_n(\R):= \Conf_n(\R)/G_{(1)},
\Eeq
is a naturally oriented $(n-1)$-dimensional real manifold equipped with a smooth action of the group $\bS_n$
(one defines analogously  ${\mathfrak C}_A(\R):= \Conf_A(\R)/G_{(1)}$ for any non-empty set $A$).
The space $\fC_1(\R)$ is a point and hence closed. For $n\geq 2$ we have an $\bS_n$-equivariant isomorphism,
$$
\Ba{rccl}
\Psi_n: &\fC_{n}(\R) & \lon & C^{st}_{n}(\R) \times \R^+\simeq  C_{n}(\R) \times (0,+\infty)   \\
& p=(x_{i_1},\ldots, x_{i_n}) & \lon & (\frac{p-x_c(p)}{||p||}, ||p||)
\Ea
$$
Note that the configuration $\frac{p-x_c(p)}{||p||}$ is $\R^+ \ltimes \R$-invariant and hence represents a uniquely
defined point in $C_n(\R)$.

\sip

For any  subset $A\subseteq [n]$ there is a natural map
$$
\Ba{rccc}
\pi_A : & \fC_n(\R) & \lon & \fC_A(\R)\\
        & p=\{x_i\}_{i\in [n]} & \lon & p_A:=\{x_i\}_{i\in A}
\Ea
$$
which forgets all points labeled by elements of the complement $[n]\setminus A$ (which can be empty). We have  $\fC_n(\R)=  \fC_n^0(\R)\times \bS_n$ where  $\fC_n^0(\R):= \Conf_n^o(\R)/G_{(1)}$.

\sip

A {\em topological compactification}, $\widehat{\fC}_n(\R)$, of $\fC_{n}(\R)$
can be defined as $\widehat{\fC}_n^o(\R)\times \bS_n$ where $\widehat{\fC}_n^o(\R)$ is the closure of a composition (cf.\ \cite{Me-Auto}),
\Beq\label{2: first compactifn of fC(R)}
\fC_{n}^0(\R)\stackrel{\prod \pi_A}{\lon} \prod_{A\subseteq [n]} \fC_{A}(\R)
\stackrel{\prod \Psi_A}{\lon} \prod_{A\subseteq [n]}
 C_{A}^{st}(\R)\times (0, +\infty) \hook \prod_{A\subseteq [n]}
 \widetilde{C}^{st}_{A}(\R)\times [0, +\infty].
\Eeq
here the product runs over {\em connected}\, nonempty subsets, $A$, of the set $[1,2,\ldots,n]$.
Thus all the limiting points in this compactification
come  from configurations when a group or groups of points move too {\em close}\,  to each other
 within each group (as in the case of $\overline{C}_n(\R)$) and/or a group or
groups of points are moving too {\em far}\, (with respect to the relative Euclidean distances inside each group) away
from each other.

\mip

It is not hard to see from the above definition (we refer to
\S {\ref{2: Examples of fC(R) compactif}} below for a detailed discussion
of several explicit examples)
 that
the codimension 1 boundary strata in $\widehat{\fC}_{n}(\R)$ are given by
\Beq\label{Ch3:codimension 1 boundary}
\displaystyle
\p \widehat{\fC}_{n}(\R) = \bigcup_{n=p+q+r\atop p,r\geq 0, q\geq 2} \widehat{\fC}_{n - r + 1}(\R)\times
 \overline{C}_{r}(\R)\
 \bigcup_{n=n_1 +\ldots+ n_k\atop{
 2\leq k\leq n \atop n_1,\ldots, n_k\geq 1}}\overline{ C}_{k}(\R)\times \widehat{\fC}_{n_1}(\R)\times \ldots\times
 \widehat{\fC}_{ n_k}(\R)
\Eeq
where
\Bi
\item the first summation runs over all possible partitions of the form
$$
\{x_{i_1},\ldots, x_{i_n}\}= \{x_{i_1},\ldots, x_{i_p}\}\sqcup\{x_{i_{p+1}},\ldots, x_{i_{p+q}}\}
\sqcup \{x_{i_{p+q+1}},\ldots, x_{i_n}\}
$$
with the corresponding stratum $\fC_{n - r + 1}(\R)\times C_{r}(\R)$ describing
limit configurations in which the points $\{x_{i_{p+1}},\ldots, x_{i_{p+q}}\}$ collapse into a
single point in the real line $\R$, and
\item the second summation runs over all possible partitions of the form
$$
\{x_{i_1},\ldots, x_{i_n}\}= \underbrace{\{x_{i_1},\ldots, x_{i_{n_1}}\}}_{I_1}\sqcup
\underbrace{\{x_{i_{n_1+1}},\ldots, x_{i_{n_1+n_2}}\}}_{I_2}
\sqcup\ldots \sqcup \underbrace{\{x_{i_{n_1+\ldots+n_{k-1}+1}},\ldots, x_{i_n}\}}_{I_k}
$$
with the corresponding stratum  $C_{k}(\R)\times \fC_{n_1}(\R)\times \ldots\times
 \fC_{ n_k}(\R)$ describing
limit configurations in which the distances  between different configurations $I_i$ and $I_j$
tend to $+\infty$  while
the diameter of each such a configuration stands finite.

\Ei

By analogy to \S {\ref{2.1: associahedra}} the collection of spaces
$\{\overline{C}_\bu(\R)\sqcup \widehat{\fC}_n(\R)\sqcup \overline{C}_\bu(\R)\}$ is
 naturally  a dg topological operad, but this time
a {\em two}\, coloured dg operad. Note that the faces of the type $C_k(\R)$ appear in the natural stratification
of $\widehat{\fC}_n(\R)$ in two ways --- as the strata of collapsing points and as the strata controlling groups of points at ``infinity" --- and they never intersect in $\widehat{\fC}_n(\R)$. For that reason we have to assign to these two groups
of faces  different colours and represent collapsing $\overline{C}_\bu(\R)$-strata  by, say, solid white corollas
$$
\overline{C}_q(\R) \simeq
\xy
(1,-5)*{\ldots},
(-13,-7)*{_{i_1}},
(-8,-7)*{_{i_2}},
(-3,-7)*{_{i_3}},
(7,-7)*{_{i_{q-1}}},
(13,-7)*{_{i_q}},
 (0,0)*{\circ}="a",
(0,5)*{}="0",
(-12,-5)*{}="b_1",
(-8,-5)*{}="b_2",
(-3,-5)*{}="b_3",
(8,-5)*{}="b_4",
(12,-5)*{}="b_5",
\ar @{-} "a";"0" <0pt>
\ar @{-} "a";"b_2" <0pt>
\ar @{-} "a";"b_3" <0pt>
\ar @{-} "a";"b_1" <0pt>
\ar @{-} "a";"b_4" <0pt>
\ar @{-} "a";"b_5" <0pt>
\endxy.
$$
and $\overline{C}_\bu(\R)$-stratum at ``infinity" by, say, dashed white corollas
$$
\overline{C}_p(\R) \simeq
\xy
(1,-5)*{\ldots},
(-13,-7)*{_{i_1}},
(-8,-7)*{_{i_2}},
(-3,-7)*{_{i_3}},
(7,-7)*{_{i_{p-1}}},
(13,-7)*{_{i_p}},
 (0,0)*{\circ}="a",
(0,5)*{}="0",
(-12,-5)*{}="b_1",
(-8,-5)*{}="b_2",
(-3,-5)*{}="b_3",
(8,-5)*{}="b_4",
(12,-5)*{}="b_5",
\ar @{.} "a";"0" <0pt>
\ar @{.} "a";"b_2" <0pt>
\ar @{.} "a";"b_3" <0pt>
\ar @{.} "a";"b_1" <0pt>
\ar @{.} "a";"b_4" <0pt>
\ar @{.} "a";"b_5" <0pt>
\endxy,
$$

Finally, representing the faces $\widehat{\fC}_\bu(\R)$ by the black corollas
$$
\widehat{\fC}_{n}(\R)\simeq
\xy
(1,-5)*{\ldots},
(-13,-7)*{_{i_1}},
(-8,-7)*{_{i_2}},
(-3,-7)*{_{i_3}},
(7,-7)*{_{i_{n-1}}},
(13,-7)*{_{i_n}},
 (0,0)*{\bullet}="a",
(0,5)*{}="0",
(-12,-5)*{}="b_1",
(-8,-5)*{}="b_2",
(-3,-5)*{}="b_3",
(8,-5)*{}="b_4",
(12,-5)*{}="b_5",
\ar @{.} "a";"0" <0pt>
\ar @{-} "a";"b_2" <0pt>
\ar @{-} "a";"b_3" <0pt>
\ar @{-} "a";"b_1" <0pt>
\ar @{-} "a";"b_4" <0pt>
\ar @{-} "a";"b_5" <0pt>
\endxy
$$
we can rewrite the boundary differential (\ref{2: delta_ass(n)}) in the associated face complex
in a more informative way
\Beqr\label{2: differential A_infty morphism}
\p
\xy
(1,-5)*{\ldots},
(-13,-7)*{_{i_1}},
(-8,-7)*{_{i_2}},
(-3,-7)*{_{i_3}},
(8,-7)*{_{i_{n-1}}},
(14,-7)*{_{i_n}},
 (0,0)*{\bullet}="a",
(0,5)*{}="0",
(-12,-5)*{}="b_1",
(-8,-5)*{}="b_2",
(-3,-5)*{}="b_3",
(8,-5)*{}="b_4",
(12,-5)*{}="b_5",
\ar @{.} "a";"0" <0pt>
\ar @{-} "a";"b_2" <0pt>
\ar @{-} "a";"b_3" <0pt>
\ar @{-} "a";"b_1" <0pt>
\ar @{-} "a";"b_4" <0pt>
\ar @{-} "a";"b_5" <0pt>
\endxy
&=& -\sum_{l=2}^{n-1}\sum_{k=1}^{n-l} (-1)^{k+l+l(n-k)}
\Ba{c}
\begin{xy}
<0mm,0mm>*{\bullet},
<0mm,0.8mm>*{};<0mm,5mm>*{}**@{.},
<-9mm,-5mm>*{\ldots},
<-9mm,-7mm>*{_{i_1\ \,  \ldots\  \, i_k}},
<13mm,-7mm>*{_{i_{k+l+1}\ \,  \ldots\  \, i_n}},
<0mm,-10mm>*{...},
<14mm,-5mm>*{\ldots},
<-0.7mm,-0.3mm>*{};<-13mm,-5mm>*{}**@{-},
<-0.6mm,-0.5mm>*{};<-6mm,-5mm>*{}**@{-},
<0.6mm,-0.3mm>*{};<20mm,-5mm>*{}**@{-},
<0.3mm,-0.5mm>*{};<8mm,-5mm>*{}**@{-},
<0mm,-0.5mm>*{};<0mm,-4.3mm>*{}**@{-},
<0mm,-5mm>*{\circ};
<-5mm,-10mm>*{}**@{-},
<-2.7mm,-10mm>*{}**@{-},
<2.7mm,-10mm>*{}**@{-},
<5mm,-10mm>*{}**@{-},
<2mm,-12mm>*{_{i_{k+1}}\ \ldots \ _{i_{k+l}}},
\end{xy}
\Ea
\nonumber\\
& +& \sum_{k=2}^n \sum_{[n]=n_1+\ldots+ n_k
\atop n_1\geq 1, \ldots, n_k\geq 1}
(-1)^{\sum_{i=1}^k(k-i)(n_i-1)}\hspace{-6mm}
\Ba{c}
\xy
(-15.5,-7)*{...},
(19,-7)*{...},
(7.5,0)*{\ldots},
(-16,-9)*{_{i_1\ \ \ldots\ \ i_{n_1}}},
(1,-9)*{_{i_{n_1+1}\ldots \ i_{n_1+n_2}}},
(20,-9)*{\ldots\ \ _{i_{n}}},
(-1.8,-7)*{...},
%
%
 (0,7)*{\circ}="a",
(-14,0)*{\bullet}="b_0",
(-4.5,0)*{\bullet}="b_2",
(15,0)*{\bullet}="b_3",
(0,13)*{}="0",
(1,-7)*{}="c_1",
(-8,-7)*{}="c_2",
(-5,-7)*{}="c_3",
(-22,-7)*{}="d_1",
(-19,-7)*{}="d_2",
(-13,-7)*{}="d_3",
(12,-7)*{}="e_1",
(15,-7)*{}="e_2",
(22,-7)*{}="e_3",
\ar @{.} "a";"0" <0pt>
\ar @{.} "a";"b_0" <0pt>
\ar @{.} "a";"b_2" <0pt>
\ar @{.} "a";"b_3" <0pt>
\ar @{-} "b_2";"c_1" <0pt>
\ar @{-} "b_2";"c_2" <0pt>
\ar @{-} "b_2";"c_3" <0pt>
\ar @{-} "b_0";"d_1" <0pt>
\ar @{-} "b_0";"d_2" <0pt>
\ar @{-} "b_0";"d_3" <0pt>
\ar @{-} "b_3";"e_1" <0pt>
\ar @{-} "b_3";"e_2" <0pt>
\ar @{-} "b_3";"e_3" <0pt>
\endxy
\Ea.
\Eeqr
which takes into account signs coming from natural orientations of the faces. The next statement is now
obvious.

\subsubsection{\bf Proposition.}\label{2: Propos on face complex of Mor(A_infty)}
 {\em The face complex of the disjoint union $\overline{C}_\bu(\R)\sqcup \widehat{\fC}_{\bu}(\R)\sqcup
\overline{C}_\bu(\R)$ has naturally a structure of a  dg free non-unital 2-coloured operad of transformation type,
$$
\cM or(A_\infty):= \cF ree
\left\langle
\xy
(1,-5)*{\ldots},
(-13,-7)*{_{i_1}},
(-8,-7)*{_{i_2}},
(-3,-7)*{_{i_3}},
(7,-7)*{_{i_{p-1}}},
(13,-7)*{_{i_p}},
 (0,0)*{\circ}="a",
(0,5)*{}="0",
(-12,-5)*{}="b_1",
(-8,-5)*{}="b_2",
(-3,-5)*{}="b_3",
(8,-5)*{}="b_4",
(12,-5)*{}="b_5",
\ar @{-} "a";"0" <0pt>
\ar @{-} "a";"b_2" <0pt>
\ar @{-} "a";"b_3" <0pt>
\ar @{-} "a";"b_1" <0pt>
\ar @{-} "a";"b_4" <0pt>
\ar @{-} "a";"b_5" <0pt>
\endxy,
\xy
(1,-5)*{\ldots},
(-13,-7)*{_{i_1}},
(-8,-7)*{_{i_2}},
(-3,-7)*{_{i_3}},
(7,-7)*{_{i_{n-1}}},
(13,-7)*{_{i_n}},
 (0,0)*{\bullet}="a",
(0,5)*{}="0",
(-12,-5)*{}="b_1",
(-8,-5)*{}="b_2",
(-3,-5)*{}="b_3",
(8,-5)*{}="b_4",
(12,-5)*{}="b_5",
\ar @{.} "a";"0" <0pt>
\ar @{-} "a";"b_2" <0pt>
\ar @{-} "a";"b_3" <0pt>
\ar @{-} "a";"b_1" <0pt>
\ar @{-} "a";"b_4" <0pt>
\ar @{-} "a";"b_5" <0pt>
\endxy
,
\xy
(1,-5)*{\ldots},
(-13,-7)*{_{i_1}},
(-8,-7)*{_{i_2}},
(-3,-7)*{_{i_3}},
(7,-7)*{_{i_{q-1}}},
(13,-7)*{_{i_q}},
 (0,0)*{\circ}="a",
(0,5)*{}="0",
(-12,-5)*{}="b_1",
(-8,-5)*{}="b_2",
(-3,-5)*{}="b_3",
(8,-5)*{}="b_4",
(12,-5)*{}="b_5",
\ar @{.} "a";"0" <0pt>
\ar @{.} "a";"b_2" <0pt>
\ar @{.} "a";"b_3" <0pt>
\ar @{.} "a";"b_1" <0pt>
\ar @{.} "a";"b_4" <0pt>
\ar @{.} "a";"b_5" <0pt>
\endxy
\right\rangle_{p,q\geq 2, n\geq 1}
$$
whose representations in a pair of vector spaces $V^{\cci\ccn}$ and $V^{\cco\ccu\cct}$ are in 1-1 correspondence with  with the triples,
$(\mu^{\cci\ccn}_\bu, \mu_{\bu}^{\cco\ccu\cct}, f_\bu)$, consisting of an $\cA_\infty$-structure
$\mu^{\cci\ccn}_\bu$ on the space $V^{\cci\ccn}$, an $\cA_\infty$-structure
$\mu^{\cco\ccu\cct}_\bu$ on the space $V^{\cco\ccu\cct}$, and of an $\cA_\infty$-morphisms,
$f_\bu:(V^{\cci\ccn},\mu^{\cci\ccn}_\bu)\rar (V^{\cco\ccu\cct}, \mu^{\cco\ccu\cct}_\bu)$, of $\cA_\infty$-algebras.
}

\subsubsection{\bf Examples}\label{2: Examples of fC(R) compactif}  (i)
 $\fC_1(\R)$ is already compact so that $\widehat{\fC}_1(\R)=\fC_1(\R)$.
\sip

(ii) ${\fC}_2(\R)$ is isomorphic to $(0,+\infty)$. Its compactification,
$\overline{\fC}_2(\R)$  is given as the closure
of the embedding,
$$
\Ba{ccccc}
{C}_2(\R) & \lon & \widetilde{C}^{st}_2(\R) & \times & [0,+\infty]\\
\{x_1< x_2\} & \lon & \left(-\frac{1}{\sqrt{2}},\frac{1}{\sqrt{2}}\right) && \frac{1}{\sqrt{2}}|x_1-x_2|,
\Ea
$$
and hence is isomorphic to the closed interval $[0, +\infty]$. In terms of fundamental chains
we get,
$$
\p
\xy
(-3,-6)*{_{1}},
(3,-6)*{_{2}},
 (0,0)*{\bullet}="a",
(0,4)*{}="0",
(-3,-4)*{}="b_1",
(3,-4)*{}="b_2",
\ar @{.} "a";"0" <0pt>
\ar @{-} "a";"b_2" <0pt>
\ar @{-} "a";"b_1" <0pt>
\endxy
= -
\Ba{c}
\begin{xy}
<0mm,0mm>*{\bullet},
<0mm,0mm>*{};<0mm,5mm>*{}**@{.},
<0mm,-4mm>*{\circ};
<0mm,0mm>*{}**@{-},
<-3mm,-8mm>*{}**@{-},
<3mm,-8mm>*{}**@{-},
<0mm,-10mm>*{_{1}\ \ \ \ _{2}},
\end{xy}
\Ea
 +
 \Ba{c}
 \xy
(-3,-10)*{_{1}},
(3,-10)*{_{2}},
 (0,0)*{\circ}="a",
(0,4)*{}="0",
(-3,-4)*{\bu}="b_1",
(3,-4)*{\bu}="b_2",
(-3,-8)*{}="c_1",
(3,-8)*{}="c_2",
\ar @{.} "a";"0" <0pt>
\ar @{.} "a";"b_2" <0pt>
\ar @{.} "a";"b_1" <0pt>
\ar @{-} "b_1";"c_1" <0pt>
\ar @{-} "b_2";"c_2" <0pt>
\endxy
\Ea
$$
where the first term in the r.h.s.\ represents the limit configuration $|x_1-x_2|\rar 0$ and
the second one the limit configuration $|x_1-x_2|\rar +\infty$.

\sip

(iii) the compactification $\widehat{\fC}_3(\R)$  is defined  as the closure
of an embedding\footnote{It should be clear from the context that the subscript $12$ in, say,
$\widetilde{C}^{st}_{12}(\R)$ refers to a subset of $[3]$ rather than to a natural number.},
$$
\Ba{ccccccc}
{\fC}_3(\R) & \lon & \widetilde{C}^{st}_3(\R)\times [0,+\infty] & \times &
\widetilde{C}^{st}_{12}(\R)\times [0,+\infty]&  \times &
\widetilde{C}^{st}_{23}(\R)\times [0,+\infty]\\
p=\{x_1< x_2< x_3\} & \lon & \left(\frac{p-x_c(p)}{||p||}, ||p||\right)
 && ||p_{12}||=\frac{1}{\sqrt{2}}|x_1-x_2| && ||p_{23}||=\frac{1}{\sqrt{2}}|x_2-x_3|.
\Ea
$$
The codimension 1 boundary strata of  $\widehat{\fC}_3(\R)$ decomposes into strata determined by
various  limit values of the parameters
$||p||$, $||p_{12}||$ and $||p_{23}||$ as follows:
\Bi
\item[(a)] the stratum, $\Ba{c}\begin{xy}
<0mm,0mm>*{\bullet},
<0mm,0mm>*{};<0mm,5mm>*{}**@{.},
<0mm,0mm>*{};<0mm,-3.6mm>*{}**@{-},
<0mm,-8mm>*{};<0mm,-4.4mm>*{}**@{-},
<0mm,-4mm>*{\circ};
<-3mm,-8mm>*{}**@{-},
<3mm,-8mm>*{}**@{-},
<0mm,-9.5mm>*{_{1}\ \ _2 \ _{3}},
\end{xy}
\Ea \simeq \fC_1(\R)\times C_3^{st}(\R)$, is given by $||p||=||p_{12}||=||p_{23}||=0$; it represents the limit
configurations in which all three points collapse into a single point in $\fC_1$ in such a way that the ratio
$\frac{p-x_c(p)}{||p||}$ gives in the limit a well-defined point in $C_3^{st}(\R)$;  any point in this
boundary stratum can be obtained as the $\la\rar 0$ limit of a configuration $(\la x_1, \la x_2, \la x_3)
\in \fC_3(\R)$ for some fixed $(x_1,x_2,x_3)\in C_3^{st}(\R)$;

\item[(b)] the stratum,
$ \Ba{c}
 \xy
(-3,-10)*{_{1}},
(0,-10)*{_{2}},
(3,-10)*{_{3}},
 (0,0)*{\circ}="a",
(0,4)*{}="0",
(-3,-4)*{\bu}="b_1",
(0,-4)*{\bu}="b_2",
(3,-4)*{\bu}="b_3",
(-3,-8)*{}="c_1",
(0,-8)*{}="c_2",
(3,-8)*{}="c_3",
\ar @{.} "a";"0" <0pt>
\ar @{.} "a";"b_2" <0pt>
\ar @{.} "a";"b_1" <0pt>
\ar @{.} "a";"b_3" <0pt>
\ar @{-} "b_1";"c_1" <0pt>
\ar @{-} "b_2";"c_2" <0pt>
\ar @{-} "b_3";"c_3" <0pt>
\endxy
\Ea
 \simeq  C_3^{st}(\R)\times \fC_1(\R)\times \fC_1(\R)\times \fC_1(\R)$, is given by $||p||=||p_{12}||=||p_{23}||=+\infty$; it represents
the limit configurations
in which all three points go infinitely far away from each other
 in such a way that the ratio
$\frac{p-x_c(p)}{||p||}$ gives in the limit a well-defined point in $C_3^{st}(\R)$;
any point in this
boundary stratum can be obtained as the $\la\rar +\infty$ limit of a configuration $(\la x_1, \la x_2, \la x_3)
\in \fC_3(\R)$ for some fixed $(x_1,x_2,x_3)\in C_3^{st}(\R)$;

\item[(c)] the stratum,
$ \Ba{c}
 \xy
(-3,-10)*{_{1}},
(0,-10)*{_{2}},
(6,-10)*{_{3}},
 (0,0)*{\circ}="a",
(0,4)*{}="0",
(-3,-4)*{\bu}="b_1",
(3,-4)*{\bu}="b_3",
(-3,-8)*{}="c_1",
(0,-8)*{}="c_2",
(6,-8)*{}="c_3",
\ar @{.} "a";"0" <0pt>
\ar @{.} "a";"b_3" <0pt>
\ar @{.} "a";"b_1" <0pt>
\ar @{-} "b_1";"c_1" <0pt>
\ar @{-} "b_3";"c_2" <0pt>
\ar @{-} "b_3";"c_3" <0pt>
\endxy
\Ea
 \simeq  C_2^{st}(\R)\times \fC_1(\R)\times \fC_2(\R)$, is given by $||p||=||p_{12}||=+\infty$,
  $0<||p_{23}||<+\infty$; it represents
the limit configurations
in which the point $1$ goes infinitely far away from the points $2$ and $3$
 in such a way that the ratio
$\frac{p-x_c(p)}{||p||}$  is well-defined;
using translation freedom we can fix $x_c=0$ so that
$$
\frac{p}{||p||}=\frac{(x_1=:-\la, \frac{1}{2}(\la+ (x_2-x_3)),  \frac{1}{2}(\la+ (x_3-x_2)))}
{\sqrt{ \la^2 + \frac{1}{4}(\la - (x_3-x_2))^2 +  \frac{1}{4}(\la + (x_3-x_2))^2 }}
\stackrel{\la\rar +\infty}{\lon} (-\frac{\sqrt{2}}{\sqrt{3}},  \frac{1}{\sqrt{6}},
\frac{1}{\sqrt{6}}).
$$
Thus in the limit the images of the points $x_2$ and $x_3$ in $\widetilde{C}_3(\R)$
collapse into a single point.  Any point in this
boundary stratum can be obtained as the $\la\rar +\infty $ limit of a configuration of the form
 $(-\la, \la  +  \Delta , \la  - \Delta)
\in \fC_3(\R)$ for some  $\Delta \in \R$;

\item[(d)] the stratum,
$ \Ba{c}
 \xy
(-6,-10)*{_{1}},
(0,-10)*{_{2}},
(3,-10)*{_{3}},
 (0,0)*{\circ}="a",
(0,4)*{}="0",
(-3,-4)*{\bu}="b_1",
(3,-4)*{\bu}="b_3",
(-6,-8)*{}="c_1",
(0,-8)*{}="c_2",
(3,-8)*{}="c_3",
\ar @{.} "a";"0" <0pt>
\ar @{.} "a";"b_3" <0pt>
\ar @{.} "a";"b_1" <0pt>
\ar @{-} "b_1";"c_1" <0pt>
\ar @{-} "b_1";"c_2" <0pt>
\ar @{-} "b_3";"c_3" <0pt>
\endxy
\Ea
 \simeq  C_2^{st}(\R)\times \fC_2(\R)\times \fC_1(\R)$, is given, by analogy to (c), by the
 following values of the parameters:
 $||p||=||p_{23}||=+\infty$,  $||p_{12}||$ is finite;  any point in this
boundary stratum can be obtained as the $\la\rar +\infty $ limit of a configuration of the form
 $(-\la  -  \Delta , -\la  + \Delta, \la )
\in \fC_3(\R)$ for some  $\Delta \in \R$;

\item[(e)] the stratum,
$ \Ba{c}
 \xy
(-3,-6)*{_{1}},
(0,-10)*{_{2}},
(6,-10)*{_{3}},
 (0,0)*{\bu}="a",
(0,4)*{}="0",
(-3,-4)*{}="b_1",
(3,-4)*{\circ}="b_3",
(-3,-8)*{}="c_1",
(0,-8)*{}="c_2",
(6,-8)*{}="c_3",
\ar @{.} "a";"0" <0pt>
\ar @{-} "a";"b_3" <0pt>
\ar @{-} "a";"b_1" <0pt>
\ar @{-} "b_3";"c_2" <0pt>
\ar @{-} "b_3";"c_3" <0pt>
\endxy
\Ea
 \simeq  C_2^{st}(\R)\times \fC_1(\R)\times \fC_2(\R)$, is given by $0<||p||,||p_{12}||<+\infty$, $||p_{23}||=0$; it represents
the limit configurations
in which all the points are at a finite distance from each other
and the points $2$ and $3$ collapse into a single point in $\fC_2(\R)$;
any point in this
boundary stratum can be obtained as the $\la\rar 0 $ limit of a configuration of the form
 $(-x,  x - \la, x+\la )
\in \fC_3(\R)$ for some $x\in \R$;

\item[(f)] the stratum,
$ \Ba{c}
 \xy
(-6,-10)*{_{1}},
(0,-10)*{_{2}},
(3,-6)*{_{3}},
 (0,0)*{\bu}="a",
(0,4)*{}="0",
(-3,-4)*{\circ}="b_1",
(3,-4)*{}="b_3",
(-6,-8)*{}="c_1",
(0,-8)*{}="c_2",
(0,-8)*{}="c_3",
\ar @{.} "a";"0" <0pt>
\ar @{-} "a";"b_3" <0pt>
\ar @{-} "a";"b_1" <0pt>
\ar @{-} "b_1";"c_1" <0pt>
\ar @{-} "b_1";"c_2" <0pt>
\endxy
\Ea
 \simeq  \fC_2(\R)\times C_2^{st}(\R)$, is given by $0<||p||,||p_{23}||<+\infty$, $||p_{12}||=0$;
any point in this
boundary stratum can be obtained as the $\la\rar 0 $ limit of a configuration of the form
 $(-x - \la, -x+\la,x )
\in \fC_3(\R)$ for some $x\in \R$;
\Ei
Taking into account natural orientations we can  finally summarize the above discussion in  a
single formula
$$
\p
\xy
(-3,-6)*{_{1}},
(0,-6)*{_{2}},
(3,-6)*{_{3}},
 (0,0)*{\bullet}="a",
(0,4)*{}="0",
(-3,-4)*{}="b_1",
(0,-4)*{}="b_2",
(3,-4)*{}="b_3",
\ar @{.} "a";"0" <0pt>
\ar @{-} "a";"b_2" <0pt>
\ar @{-} "a";"b_1" <0pt>
\ar @{-} "a";"b_3" <0pt>
\endxy
= - \Ba{c}\begin{xy}
<0mm,0mm>*{\bullet},
<0mm,0mm>*{};<0mm,5mm>*{}**@{.},
<0mm,0mm>*{};<0mm,-3.6mm>*{}**@{-},
<0mm,-8mm>*{};<0mm,-4.4mm>*{}**@{-},
<0mm,-4mm>*{\circ};
<-3mm,-8mm>*{}**@{-},
<3mm,-8mm>*{}**@{-},
<0mm,-9.5mm>*{_{1}\ \ _2 \ _{3}},
\end{xy}
\Ea
+
\Ba{c}
 \xy
(-6,-10)*{_{1}},
(0,-10)*{_{2}},
(3,-6)*{_{3}},
 (0,0)*{\bu}="a",
(0,4)*{}="0",
(-3,-4)*{\circ}="b_1",
(3,-4)*{}="b_3",
(-6,-8)*{}="c_1",
(0,-8)*{}="c_2",
(0,-8)*{}="c_3",
\ar @{.} "a";"0" <0pt>
\ar @{-} "a";"b_3" <0pt>
\ar @{-} "a";"b_1" <0pt>
\ar @{-} "b_1";"c_1" <0pt>
\ar @{-} "b_1";"c_2" <0pt>
\endxy
\Ea
-
\Ba{c}
 \xy
(-3,-6)*{_{1}},
(0,-10)*{_{2}},
(6,-10)*{_{3}},
 (0,0)*{\bu}="a",
(0,4)*{}="0",
(-3,-4)*{}="b_1",
(3,-4)*{\circ}="b_3",
(-3,-8)*{}="c_1",
(0,-8)*{}="c_2",
(6,-8)*{}="c_3",
\ar @{.} "a";"0" <0pt>
\ar @{-} "a";"b_3" <0pt>
\ar @{-} "a";"b_1" <0pt>
\ar @{-} "b_3";"c_2" <0pt>
\ar @{-} "b_3";"c_3" <0pt>
\endxy
\Ea
+
\Ba{c}
 \xy
(-3,-10)*{_{1}},
(0,-10)*{_{2}},
(3,-10)*{_{3}},
 (0,0)*{\circ}="a",
(0,4)*{}="0",
(-3,-4)*{\bu}="b_1",
(0,-4)*{\bu}="b_2",
(3,-4)*{\bu}="b_3",
(-3,-8)*{}="c_1",
(0,-8)*{}="c_2",
(3,-8)*{}="c_3",
\ar @{.} "a";"0" <0pt>
\ar @{.} "a";"b_2" <0pt>
\ar @{.} "a";"b_1" <0pt>
\ar @{.} "a";"b_3" <0pt>
\ar @{-} "b_1";"c_1" <0pt>
\ar @{-} "b_2";"c_2" <0pt>
\ar @{-} "b_3";"c_3" <0pt>
\endxy
\Ea
-
 \Ba{c}
 \xy
(-6,-10)*{_{1}},
(0,-10)*{_{2}},
(3,-10)*{_{3}},
 (0,0)*{\circ}="a",
(0,4)*{}="0",
(-3,-4)*{\bu}="b_1",
(3,-4)*{\bu}="b_3",
(-6,-8)*{}="c_1",
(0,-8)*{}="c_2",
(3,-8)*{}="c_3",
\ar @{.} "a";"0" <0pt>
\ar @{.} "a";"b_3" <0pt>
\ar @{.} "a";"b_1" <0pt>
\ar @{-} "b_1";"c_1" <0pt>
\ar @{-} "b_1";"c_2" <0pt>
\ar @{-} "b_3";"c_3" <0pt>
\endxy
\Ea
+
\Ba{c}
 \xy
(-3,-10)*{_{1}},
(0,-10)*{_{2}},
(6,-10)*{_{3}},
 (0,0)*{\circ}="a",
(0,4)*{}="0",
(-3,-4)*{\bu}="b_1",
(3,-4)*{\bu}="b_3",
(-3,-8)*{}="c_1",
(0,-8)*{}="c_2",
(6,-8)*{}="c_3",
\ar @{.} "a";"0" <0pt>
\ar @{.} "a";"b_3" <0pt>
\ar @{.} "a";"b_1" <0pt>
\ar @{-} "b_1";"c_1" <0pt>
\ar @{-} "b_3";"c_2" <0pt>
\ar @{-} "b_3";"c_3" <0pt>
\endxy
\Ea
$$
which is in agreement with (\ref{2: differential A_infty morphism}).

\sip

A choice of a total ordering on the set $[n]$ (say, the natural one, $1<2<\ldots< n$),
gives an equivariant smooth isomorphism $\overline{\fC}_n(\R)= \cJ_n\times \bS_n$, where $\cJ_n$ is the $n$-th Stasheff's multiplihedron \cite{St},
for example
$$
\cJ_1\ \mbox{is a point}\ \sim
\xy
(0,0)*{\bu}="a",
(0,4)*{}="b",
(0,-4)*{}="c",
\ar @{.} "a";"b" <0pt>
\ar @{-} "a";"c" <0pt>
\endxy \ \ \ \ \ \ \ , \ \ \ \ \ \ \
\cJ_2=
\xy
(0,5)*{
\xy
 (0,0)*{\bullet}="a",
(0,3)*{}="0",
(-2,-3)*{}="b_1",
(2,-3)*{}="b_2",
\ar @{.} "a";"0" <0pt>
\ar @{-} "a";"b_2" <0pt>
\ar @{-} "a";"b_1" <0pt>
\endxy
},
(-13,7)*{
\xy
<0mm,0mm>*{\bullet},
<0mm,0mm>*{};<0mm,3mm>*{}**@{.},
<0mm,0mm>*{};<0mm,-3mm>*{}**@{-},
<0mm,-3.7mm>*{\circ};
<-2mm,-6mm>*{}**@{-},
<2mm,-6mm>*{}**@{-},
\endxy
},
(13,7)*{\xy
 (0,0)*{\circ}="a",
(0,3)*{}="0",
(-2,-3)*{\bu}="b_1",
(2,-3)*{\bu}="b_2",
(-2,-6.5)*{}="c_1",
(2,-6.5)*{}="c_2",
\ar @{.} "a";"0" <0pt>
\ar @{.} "a";"b_2" <0pt>
\ar @{.} "a";"b_1" <0pt>
\ar @{-} "b_1";"c_1" <0pt>
\ar @{-} "b_2";"c_2" <0pt>
\endxy},
(-10,0.6)*{\centerdot},
(10,0.6)*{\centerdot},
(-10,0)*{}="a",
(10,0)*{}="b",
\ar @{-} "a";"b" <0pt>
\endxy
$$

\Beq\label{2: picture for J_3}
\cJ_3=\ \ \ \ \ \Ba{c}
\xy
(0,-35)*{\xy
 (0,0)*{\bullet}="a",
(0,4)*{}="0",
(-3,-4)*{}="b_1",
(0,-4)*{}="b_2",
(3,-4)*{}="b_3",
\ar @{.} "a";"0" <0pt>
\ar @{-} "a";"b_2" <0pt>
\ar @{-} "a";"b_1" <0pt>
\ar @{-} "a";"b_3" <0pt>
\endxy},
 (0,-8)*{ \xy
 (0,0)*{\circ}="a",
(0,4)*{}="0",
(-3,-4)*{\bu}="b_1",
(0,-4)*{\bu}="b_2",
(3,-4)*{\bu}="b_3",
(-3,-8)*{}="c_1",
(0,-8)*{}="c_2",
(3,-8)*{}="c_3",
\ar @{.} "a";"0" <0pt>
\ar @{.} "a";"b_2" <0pt>
\ar @{.} "a";"b_1" <0pt>
\ar @{.} "a";"b_3" <0pt>
\ar @{-} "b_1";"c_1" <0pt>
\ar @{-} "b_2";"c_2" <0pt>
\ar @{-} "b_3";"c_3" <0pt>
\endxy},
(-27,6)*{
 \xy
 (0,0)*{\circ}="a",
(0,3)*{}="0",
(-3,-3)*{\circ}="b_1",
(3,-3)*{\bu}="b_3",
(-6,-6)*{\bu}="c_1",
(0,-6)*{\bu}="c_2",
(3,-6)*{}="c_3",
(-6,-9)*{}="d_1",
(0,-9)*{}="d_2",
\ar @{.} "a";"0" <0pt>
\ar @{.} "a";"b_3" <0pt>
\ar @{.} "a";"b_1" <0pt>
\ar @{.} "b_1";"c_1" <0pt>
\ar @{.} "b_1";"c_2" <0pt>
\ar @{-} "b_3";"c_3" <0pt>
\ar @{-} "c_1";"d_1" <0pt>
\ar @{-} "c_2";"d_2" <0pt>
\endxy},
%
(27,6)*{
 \xy
 (0,0)*{\circ}="a",
(0,3)*{}="0",
(3,-3)*{\circ}="b_1",
(-3,-3)*{\bu}="b_3",
(6,-6)*{\bu}="c_1",
(0,-6)*{\bu}="c_2",
(-3,-6)*{}="c_3",
(6,-9)*{}="d_1",
(0,-9)*{}="d_2",
\ar @{.} "a";"0" <0pt>
\ar @{.} "a";"b_3" <0pt>
\ar @{.} "a";"b_1" <0pt>
\ar @{.} "b_1";"c_1" <0pt>
\ar @{.} "b_1";"c_2" <0pt>
\ar @{-} "b_3";"c_3" <0pt>
\ar @{-} "c_1";"d_1" <0pt>
\ar @{-} "c_2";"d_2" <0pt>
\endxy},
%
(-26,-18)*{
 \xy
 (0,0)*{\circ}="a",
(0,4)*{}="0",
(-3,-4)*{\bu}="b_1",
(3,-4)*{\bu}="b_3",
(-6,-8)*{}="c_1",
(0,-8)*{}="c_2",
(3,-8)*{}="c_3",
\ar @{.} "a";"0" <0pt>
\ar @{.} "a";"b_3" <0pt>
\ar @{.} "a";"b_1" <0pt>
\ar @{-} "b_1";"c_1" <0pt>
\ar @{-} "b_1";"c_2" <0pt>
\ar @{-} "b_3";"c_3" <0pt>
\endxy},
%
(26,-18)*{\xy
 (0,0)*{\circ}="a",
(0,4)*{}="0",
(-3,-4)*{\bu}="b_1",
(3,-4)*{\bu}="b_3",
(-3,-8)*{}="c_1",
(0,-8)*{}="c_2",
(6,-8)*{}="c_3",
\ar @{.} "a";"0" <0pt>
\ar @{.} "a";"b_3" <0pt>
\ar @{.} "a";"b_1" <0pt>
\ar @{-} "b_1";"c_1" <0pt>
\ar @{-} "b_3";"c_2" <0pt>
\ar @{-} "b_3";"c_3" <0pt>
\endxy},
%
%
(47,-35)*{\xy
 (0,0)*{\circ}="a",
(0,3)*{}="0",
(-3,-3)*{\bu}="b_1",
(3,-7)*{\circ}="e",
(3,-3)*{\bu}="b_3",
(-3,-6)*{}="c_1",
(0,-10)*{}="c_2",
(6,-10)*{}="c_3",
\ar @{.} "a";"0" <0pt>
\ar @{.} "a";"b_3" <0pt>
\ar @{.} "a";"b_1" <0pt>
\ar @{-} "b_1";"c_1" <0pt>
\ar @{-} "e";"c_2" <0pt>
\ar @{-} "b_3";"e" <0pt>
\ar @{-} "e";"c_3" <0pt>
\endxy},
%
(22,-50)*{\xy
 (0,0)*{\bu}="a",
(0,4)*{}="0",
(-3,-4)*{}="b_1",
(3,-4)*{\circ}="b_3",
(-3,-8)*{}="c_1",
(0,-8)*{}="c_2",
(6,-8)*{}="c_3",
\ar @{.} "a";"0" <0pt>
\ar @{-} "a";"b_3" <0pt>
\ar @{-} "a";"b_1" <0pt>
\ar @{-} "b_3";"c_2" <0pt>
\ar @{-} "b_3";"c_3" <0pt>
\endxy},
%
%
(27,-74)*{\xy
 (0,7)*{}="1",
 (0,0)*{\circ}="a",
(0,4)*{\bu}="0",
(-3,-3)*{}="b_1",
(3,-3)*{\circ}="b_3",
(-3,-7)*{}="c_1",
(0,-6)*{}="c_2",
(6,-6)*{}="c_3",
\ar @{.} "1";"0" <0pt>
\ar @{-} "a";"0" <0pt>
\ar @{-} "a";"b_3" <0pt>
\ar @{-} "a";"b_1" <0pt>
\ar @{-} "b_3";"c_2" <0pt>
\ar @{-} "b_3";"c_3" <0pt>
\endxy},
%
(-27,-74)*{\xy
 (0,7)*{}="1",
 (0,0)*{\circ}="a",
(0,4)*{\circ}="0",
(3,-3)*{}="b_1",
(-3,-3)*{\bu}="b_3",
(-3,-7)*{}="c_1",
(0,-6)*{}="c_2",
(-6,-6)*{}="c_3",
\ar @{.} "1";"0" <0pt>
\ar @{-} "a";"0" <0pt>
\ar @{-} "a";"b_3" <0pt>
\ar @{-} "a";"b_1" <0pt>
\ar @{-} "b_3";"c_2" <0pt>
\ar @{-} "b_3";"c_3" <0pt>
\endxy},
(0,-62)*{\begin{xy}
<0mm,0mm>*{\bullet},
<0mm,0mm>*{};<0mm,5mm>*{}**@{.},
<0mm,0mm>*{};<0mm,-3.6mm>*{}**@{-},
<0mm,-8mm>*{};<0mm,-4.4mm>*{}**@{-},
<0mm,-4mm>*{\circ};
<-3mm,-8mm>*{}**@{-},
<3mm,-8mm>*{}**@{-},
\end{xy}},
(-22,-50)*{ \xy
 (0,0)*{\bu}="a",
(0,4)*{}="0",
(-3,-4)*{\circ}="b_1",
(3,-4)*{}="b_3",
(-6,-8)*{}="c_1",
(0,-8)*{}="c_2",
(0,-8)*{}="c_3",
\ar @{.} "a";"0" <0pt>
\ar @{-} "a";"b_3" <0pt>
\ar @{-} "a";"b_1" <0pt>
\ar @{-} "b_1";"c_1" <0pt>
\ar @{-} "b_1";"c_2" <0pt>
\endxy},
%
(-47,-35)*{\xy
 (0,0)*{\circ}="a",
(0,3)*{}="0",
(3,-3)*{\bu}="b_1",
(-3,-7)*{\circ}="e",
(-3,-3)*{\bu}="b_3",
(3,-6)*{}="c_1",
(0,-10)*{}="c_2",
(-6,-10)*{}="c_3",
\ar @{.} "a";"0" <0pt>
\ar @{.} "a";"b_3" <0pt>
\ar @{.} "a";"b_1" <0pt>
\ar @{-} "b_1";"c_1" <0pt>
\ar @{-} "e";"c_2" <0pt>
\ar @{-} "b_3";"e" <0pt>
\ar @{-} "e";"c_3" <0pt>
\endxy},
%
%
(-21,0)*{\bu}="a",
(21,0)*{\bu}="b",
(40,-35)*{\bu}="c",
(21,-70)*{\bu}="d",
(-21,-70)*{\bu}="e",
(-40,-35)*{\bu}="f",
\ar @{-} "a";"b" <0pt>
\ar @{-} "b";"c" <0pt>
\ar @{-} "c";"d" <0pt>
\ar @{-} "d";"e" <0pt>
\ar @{-} "e";"f" <0pt>
\ar @{-} "f";"a" <0pt>
\endxy
\Ea
\Eeq

\subsubsection{\bf A smooth atlas on the compactification
$\widehat{\fC}_\bu(\R)$}\label{2: smooth atlas multipli}
Using metric graphs we can make $\widehat{\fC}_\bu(\R)$ into a smooth manifold with corners
 by constructing a coordinate chart, $\cU_T$, near the boundary stratum corresponding
to an arbitrary tree $T\in \cM or(A_\infty)$ containing at least one black vertex\footnote{If  $T$ does
not contain black vertices, then the associated boundary stratum lies in $\overline{C}_\bu(\R)$ and the construction of $\cU_T$ goes as in
Subsection~\ref{2.1: associahedra}.} as follows:

{\bf (i)} Associate to $T$ a metric graph, $T_{\ccm \cce \cct\ccr \cci \ccc}$, by
assigning
\Bi
\item[(a)]
to every internal edge of $T$ of the form
$
\xy (0,3)*{\bullet}="a", (0,-3)*{\circ}="b"
           \ar @{-} "b";"a" <0pt>\endxy$  a {\em small}\, positive real number $\var \ll +\infty$;

\item[(b)] to every (if any) white vertex of a dashed corolla in $T$ a {\em  large}\, positive real number $\tau \gg 0$,
\Beq\label{2: tau decorated corolla 1}
\xy
(3,0)*{^\tau},
(2,-6)*{\ldots},
 (0,0)*{\circ}="a",
(0,5)*{}="0",
(-12,-6)*{}="b_1",
(-5,-6)*{}="b_2",
(12,-6)*{}="b_3",
\ar @{.} "a";"0" <0pt>
\ar @{.} "a";"b_2" <0pt>
\ar @{.} "a";"b_1" <0pt>
\ar @{.} "a";"b_3" <0pt>
\endxy,
\Eeq

\item[(c)] to every (if any) two vertex subgraph of $T_{\ccm \cce \cct\ccr \cci \ccc}$ of the form
$\xy
(3,4)*{^{\tau_1}};
(3,-4)*{^{\tau_2}};
(0,4)*{\circ}="a",
(0,-4)*{\circ}="b",
\ar @{.} "b";"a" <0pt>
\endxy$
an inequality  $\tau_1\gg \tau_2\gg 0$. This can be understood as a relation $\tau_2=\var_{12} \tau_1$ for some {\em small}\,
parameter $\var_{12}$.
\Ei
For example, if
$
T=\Ba{c}
\xy
(-15,-10)*{_1},
(-11,-18)*{_3},
(-2,-18)*{_5},
(5,-18)*{_6},
(8,-10)*{_2},
(14,-10)*{_4},
(18,-18)*{_7},
(25.7,-18)*{_8},
(0,15)*{}="0",
 (0,10)*{\circ}="a",
(-10,0)*{\bullet}="b_1",
(-2,0)*{\circ}="b_2",
(12,0)*{\bullet}="b_3",
(-15,-8)*{}="c_0",
(0.6,-8)*{\bullet}="c_1",
(-7,-8)*{\bullet}="c_2",
(8,-8)*{}="c_3",
(14,-8)*{}="c_4",
(20,-8)*{\circ}="c_5",
(-11,-16)*{}="d_1",
(-3,-16)*{}="d_2",
(4,-16)*{}="d_3",
(18,-16)*{}="d_4",
(25,-16)*{}="d_5",
\ar @{.} "a";"0" <0pt>
\ar @{.} "a";"b_1" <0pt>
\ar @{.} "a";"b_2" <0pt>
\ar @{.} "a";"b_3" <0pt>
\ar @{-} "b_1";"c_0" <0pt>
\ar @{.} "b_2";"c_1" <0pt>
\ar @{.} "b_2";"c_2" <0pt>
\ar @{-} "b_3";"c_3" <0pt>
\ar @{-} "b_3";"c_4" <0pt>
\ar @{-} "b_3";"c_5" <0pt>
\ar @{-} "c_2";"d_1" <0pt>
\ar @{-} "c_2";"d_2" <0pt>
\ar @{-} "c_1";"d_3" <0pt>
\ar @{-} "c_5";"d_4" <0pt>
\ar @{-} "c_5";"d_5" <0pt>
\endxy
\Ea
$\hspace{-4mm},
then
$
T_{\ccm \cce \cct\ccr \cci \ccc}=
\Ba{c}
\xy
(19,-4)*{_{\var}},
(3,10)*{^{\tau_1}},
(1,0)*{_{\tau_2}},
(-15,-10)*{_1},
(-11,-18)*{_3},
(-2,-18)*{_5},
(5,-18)*{_6},
(8,-10)*{_2},
(14,-10)*{_4},
(18,-18)*{_7},
(25.7,-18)*{_8},
(0,15)*{}="0",
 (0,10)*{\circ}="a",
(-10,0)*{\bullet}="b_1",
(-2,0)*{\circ}="b_2",
(12,0)*{\bullet}="b_3",
(-15,-8)*{}="c_0",
(0.6,-8)*{\bullet}="c_1",
(-7,-8)*{\bullet}="c_2",
(8,-8)*{}="c_3",
(14,-8)*{}="c_4",
(20,-8)*{\circ}="c_5",
(-11,-16)*{}="d_1",
(-3,-16)*{}="d_2",
(4,-16)*{}="d_3",
(18,-16)*{}="d_4",
(25,-16)*{}="d_5",
\ar @{.} "a";"0" <0pt>
\ar @{.} "a";"b_1" <0pt>
\ar @{.} "a";"b_2" <0pt>
\ar @{.} "a";"b_3" <0pt>
\ar @{-} "b_1";"c_0" <0pt>
\ar @{.} "b_2";"c_1" <0pt>
\ar @{.} "b_2";"c_2" <0pt>
\ar @{-} "b_3";"c_3" <0pt>
\ar @{-} "b_3";"c_4" <0pt>
\ar @{-} "b_3";"c_5" <0pt>
\ar @{-} "c_2";"d_1" <0pt>
\ar @{-} "c_2";"d_2" <0pt>
\ar @{-} "c_1";"d_3" <0pt>
\ar @{-} "c_5";"d_4" <0pt>
\ar @{-} "c_5";"d_5" <0pt>
\endxy
\Ea
$

\sip

with $\tau_1,\tau_2=\var_{12}\tau_1 \in (l,+\infty)$ and $\var_{12},\var\in (0,s)$ for some real numbers $l\gg 0$ and  $s\ll +\infty$.
\mip

{\bf (ii)} Choose an equivariant section, $s: \fC_n(\R)\rar \Conf_n(\R)$, of the natural projection
$\Conf_n(\R)\rar  \fC_n(\R)$ as well as an arbitrary smooth structure on its image, $\fC_n^{st}(\R):=s(\fC_n(\R))$,
which is  called the space of configurations in the {\em standard position}; for example,
$\fC_n^{st}(\R)$ can be chosen to be the subspace of $\Conf_n(\R)$ satisfying the condition
$\sum_{i=1}^n x_i=0$; in particular, $\fC_1^{st}(\R)=0\in \R$.

\sip

{\bf (iii)} the required coordinate chart $\cU_T\subset \overline{\fC}_n(\R)$ is, by definition, isomorphic to the manifold with corners,
    $$(l,+\infty]^{\# V_\circ^{d}(T)}\times  [0,s)^{\# E_{\circ}^\bu(T)}\times
    \prod_{v\in V_\circ (T)} {C}^{st}_{\# In(v)}(\R)\times  \prod_{v\in V_\bu (T)} {\fC}^{st}_{\# In(v)}(\R)
    $$
where $V_\circ(T)$ is the set of white vertices of $T$, $V^d_\circ(T)\subset V_\circ(T)$ a subset
corresponding to dashed corollas, $V_\bu(T)$ the set of black vertices, and  $E_{\circ}^\bu(T)$ the set of internal edges of the type $
\xy (0,3)*{\bullet}="a", (0,-3)*{\circ}="b"
           \ar @{-} "b";"a" <0pt>\endxy$.
The isomorphism $\al_T$ between $\cU_T$ and the latter product
of manifolds with corners can be read from the metric graph via a simple procedure which we explain
on the particular example. For the tree $T$ shown above the map $\al_T$ is defined by
a formal extension
of the domain of the following continuous  map\footnote{We ordered factors $C_{\# In(v)}(\R)$ and $\fC_{\# In(v)}(\R)$
in the formula below in accordance with
a  natural ``from top to the bottom" and ``from left to the right" ordering of the vertices $v$ of the planar
tree $T$. 
 .}

$$
\Ba{ccccccccccccccccc}
 \hspace{-2mm}(l,+\infty)^2&\hspace{-2mm}\times\hspace{-3mm} & (0,s) & \hspace{-3mm} \times \hspace{-3mm}&
 C_3^{st}(\R) &\hspace{-3mm} \times\hspace{-3mm} &  \fC_1^{st}(\R) & \hspace{-3mm} \times\hspace{-3mm} & C_2^{st}(\R)
 &\hspace{-3mm} \times\hspace{-3mm}&
 \fC_3^{st}(\R)  & \hspace{-3mm} \times\hspace{-3mm} & \fC_2^{st}(\R) & \hspace{-3mm} \times \hspace{-3mm} & \fC_1^{st}(\R)
 & \hspace{-2mm} \times \hspace{-2mm} & C_2^{st}(\R)
\\
 \hspace{-2mm}(\tau_1,\tau_2)&\hspace{-3mm}\times \hspace{-3mm} & \var & \hspace{-3mm} \times \hspace{-3mm} & (x', x'',x''') & \hspace{-3mm}\times
 \hspace{-3mm}& (x_1\hspace{-1mm}=\hspace{-1mm}0) & \hspace{-3mm} \times\hspace{-3mm} &  (t' \hspace{-1mm}= \hspace{-1mm}\frac{-1}{\sqrt{2}},t''\hspace{-1mm}=\hspace{-1mm}\frac{1}{\sqrt{2}})
  & \hspace{-3mm} \times \hspace{-3mm}& (x_2,x_4,u) & \hspace{-3mm}\times
 \hspace{-3mm}& (x_3, x_5)& \hspace{-3mm} \times \hspace{-3mm}& (x_6\hspace{-1mm}=\hspace{-1mm} 0)  &\hspace{-3mm} \times \hspace{-3mm}&
(x_7\hspace{-1mm}=\hspace{-1mm}\frac{-1}{\sqrt{2}},x_8\hspace{-1mm}=\hspace{-1mm}\frac{1}{\sqrt{2}}) \\
\Ea
$$
$$
\hspace{120mm}
\lon
\Ba{c} \fC_8(\R)\\
(y_1,\ldots, y_8)
\Ea
$$

with
$$
\Ba{lllrrrl}
y_1 &=& \tau_1 x' + x_1 & &   y_2 &=&   \tau_1  x''' + x_2  \\
y_3 &=&  \tau_1 x'' + \tau_2 t' + x_3   &&  y_4 &=& \tau_1 x''' +  x_4\\
y_5 &=&  \tau_1 x'' + \tau_2 t' + x_5  &&  y_7 &=&\tau_1 x''' + u + \var x_7\\
y_6 &=&   \tau_1 x'' + \tau_2 t'' + x_6 , &&    y_8 &=&   \tau_1 x'''  +  u + \var x_8
\Ea
$$
The boundary strata in $\cU_T$ are given by allowing formally $\tau_1=+\infty$, $\tau_2=+\infty$ with $\tau_1/\tau_2=0$
and $\var=0$. Therefore, the main novelty comparing to the case of associahedra discussed in Sect.\ {\ref{2.1: associahedra}}
comes from the dashed corollas decorated by a large parameter $\tau$.


\subsection{Another configuration space model for $\cM or(\cA_\infty)$}
\label{2:  A different smooth structure on fC(R)}
For a pair of subspaces $B\subsetneq A\subseteq [n]$ we consider
$$
\Ba{rcccccc}
\pi_{A,B}: & \fC_n(\R) & \lon &  C_B^{st}(\R)&\times & (0,+\infty)\\
& p & \lon &  \frac{p_B-x_c(p_B)}{||p_B||}
&& ||p_{A,B}||:= ||p_A||\cdot ||p_B||
\Ea
$$
and then define a compactification\footnote{We use the same symbol $\widehat{\fC}_\bu(\R)$ to denote
this new compactification as it has the same combinatoric of the boundary strata as the model in
the previous subsection; we define essentially here a different manifold structure on the same set theoretic operad
 $\widehat{\fC}_\bu(\R)$.}
$\widehat{\fC}_\bu(\R)$ as the closure of
the following composition of embeddings,
\Beq\label{2: second compactifn of fC(R)}
\fC_{n}(\R)\stackrel{\Psi_n\prod \pi_{A,B}}{\lon} C_n^{st}(\R)\times (0,+\infty)
\hspace{-3mm}  \prod_{B\subsetneq A\subseteq [n]\atop \# B\geq 2} \hspace{-4mm} C_B^{st}(\R) \times (0,+\infty)
 \hook \widetilde{C}_n^{st}(\R)\times [0,+\infty]\hspace{-3mm}\prod_{B\subsetneq A\subseteq [n]\atop \# B\geq 2}\hspace{-4mm}  \widetilde{C}_B^{st}(\R) \times [0,+\infty].
\Eeq
This new compactification affects only configurations tending to infinity: if, for a pair  $B\subsetneq A\subseteq [n]$,
the parameter $||p_A||$ tends to $+\infty$, then, for the parameter $||p_{A,B}||$ used in the
above compactification
to take a finite value $a\in \R$, one must have $||p_B||=a/||p_A||\rar 0$. Put another way,
if in the previous
compactification (\ref{2: first compactifn of fC(R)}) the boundary stratum at ``infinity" was given by,
say, $k$ groups of points, $\{I_i\}_{i\in [k]}$, going far away from each other in such a way that each group $I_i$
has   a {\em finite}\, size $a_i\in \R$,
now that size should decrease as $a_i/||p_A||$ as points go to infinity, i.e.\ the points in each group $I_i$ gradually collapse to a single point
in the limit.

\sip

As an illustration of this ``renormalized" embedding formula, let us consider in detail the case $n=3$:
$$
\Ba{ccccccc}
{\fC}_3(\R) & \lon & \widetilde{C}^{st}_3(\R)\times [0,+\infty] & \times &
\widetilde{C}^{st}_{12}(\R)\times [0,+\infty]&  \times &
\widetilde{C}^{st}_{23}(\R)\times [0,+\infty]\\
p=\{x_1< x_2< x_3\} & \lon & \left(\frac{p-x_c(p)}{||p||}, ||p||\right)
 && ||p_{[3],12}||:= ||p||\cdot ||p_{12}|| && ||p_{[3],23}||:= ||p||\cdot ||p_{23}||.
\Ea
$$
The codimension 1 boundary strata of  $\widehat{\fC}_3(\R)$ decomposes into strata determined by
various  limit values of the parameters
$||p||$, $||p_{[3],12}||$ and $||p_{[3],12}||$ in a close analogy to the case
(\ref{2: first compactifn of fC(R)}):
\Bi
\item[(a)] the stratum, $\Ba{c}\begin{xy}
<0mm,0mm>*{\bullet},
<0mm,0mm>*{};<0mm,5mm>*{}**@{.},
<0mm,0mm>*{};<0mm,-3.6mm>*{}**@{-},
<0mm,-8mm>*{};<0mm,-4.4mm>*{}**@{-},
<0mm,-4mm>*{\circ};
<-3mm,-8mm>*{}**@{-},
<3mm,-8mm>*{}**@{-},
<0mm,-9.5mm>*{_{1}\ \ _2 \ _{3}},
\end{xy}
\Ea \simeq \fC_1(\R)\times C_3^{st}(\R)$, is given by $||p||=||p_{[3],12}||=||p_{[3],23}||=0$; it represents the limit
configurations in which all three points collapse into a single point in $\fC_1$ in such a way that the ratio
$\frac{p-x_c(p)}{||p||}$ gives in the limit a well-defined point in $C_3^{st}(\R)$;  any point in this
boundary stratum can be obtained as the $\la\rar 0$ limit of a configuration $(\la x_1, \la x_2, \la x_3)
\in \fC_3(\R)$  for some fixed $(x_1,x_2,x_3)\in C_3^{st}(\R)$;

\item[(b)] the stratum,
$ \Ba{c}
 \xy
(-3,-10)*{_{1}},
(0,-10)*{_{2}},
(3,-10)*{_{3}},
 (0,0)*{\circ}="a",
(0,4)*{}="0",
(-3,-4)*{\bu}="b_1",
(0,-4)*{\bu}="b_2",
(3,-4)*{\bu}="b_3",
(-3,-8)*{}="c_1",
(0,-8)*{}="c_2",
(3,-8)*{}="c_3",
\ar @{.} "a";"0" <0pt>
\ar @{.} "a";"b_2" <0pt>
\ar @{.} "a";"b_1" <0pt>
\ar @{.} "a";"b_3" <0pt>
\ar @{-} "b_1";"c_1" <0pt>
\ar @{-} "b_2";"c_2" <0pt>
\ar @{-} "b_3";"c_3" <0pt>
\endxy
\Ea
 \simeq  C_3^{st}(\R)\times \fC_1(\R)\times \fC_1(\R)\times \fC_1(\R)$, is
 given by $||p||=||p_{[3],12}||=||p_{[3],23}||=+\infty$; it represents
the limit configurations
in which all three points go infinitely far away from each other
 in such a way that the ratio
$\frac{p-x_c(p)}{||p||}$ is well-defined giving in the limit a point in $C_3^{st}(\R)$;
any point in this
boundary stratum can be obtained as the $\la\rar +\infty$ limit of a configuration $(\la x_1, \la x_2, \la x_3)
\in \fC_3(\R)$ for some fixed $(x_1,x_2,x_3)\in C_3^{st}(\R)$;

\item[(c)] the stratum,
$ \Ba{c}
 \xy
(-3,-10)*{_{1}},
(0,-10)*{_{2}},
(6,-10)*{_{3}},
 (0,0)*{\circ}="a",
(0,4)*{}="0",
(-3,-4)*{\bu}="b_1",
(3,-4)*{\bu}="b_3",
(-3,-8)*{}="c_1",
(0,-8)*{}="c_2",
(6,-8)*{}="c_3",
\ar @{.} "a";"0" <0pt>
\ar @{.} "a";"b_3" <0pt>
\ar @{.} "a";"b_1" <0pt>
\ar @{-} "b_1";"c_1" <0pt>
\ar @{-} "b_3";"c_2" <0pt>
\ar @{-} "b_3";"c_3" <0pt>
\endxy
\Ea
 \simeq  C_2^{st}(\R)\times \fC_1(\R)\times \fC_2(\R)$, is given by $||p||=||p_{[3],12}||=+\infty$,
  $0<||p_{[3],23}||<+\infty$; it  represents
the limit  configurations
in which the point $1$ goes infinitely far away from the points $2$ and $3$
 in such a way that the ratio
$\frac{p-x_c(p)}{||p||}$ and the product $||p||\cdot ||p_{23}||$ are well-defined
elements of $\widetilde{C}_3^{st}(\R)$ and $(0,+\infty)$ respectively;
any point in this
boundary stratum can be obtained as the $\la\rar +\infty$ limit of a configuration $(-\la, \la - \la^{-1}\Delta,
\la + \la^{-1}\Delta)\in
\fC_3(\R)$ for some $\Delta\in \R$ as
$$
\frac{p-x_c(p)}{||p||}=\frac{(-\frac{4}{3}\la, \frac{2}{3}\la - \la^{-1}\Delta,
\frac{2}{3}\la + \la^{-1}\Delta)}
{\sqrt{ \frac{16}{9}\la^2 + \left(\frac{2}{3}\la - \la^{-1}\Delta\right)^2 + \left(\frac{2}{3}\la + \la^{-1}\Delta\right)^2 }}
\stackrel{\la\rar +\infty}{\lon}  (-\frac{\sqrt{2}}{\sqrt{3}},  \frac{1}{\sqrt{6}},
\frac{1}{\sqrt{6}})
$$
\Beqrn
||p_{[3],23}||=||p||\cdot||p_{[3],23}|| & = &
\sqrt{ \frac{16}{9}\la^2 + (\frac{2}{3}\la - \la^{-1}\Delta)^2 + (\frac{2}{3}\la + \la^{-1}\Delta)^2 }
 \cdot \sqrt{2}{\la^{-1}}\Delta\\
 &\stackrel{\la\rar +\infty}{\lon}&  \frac{4}{\sqrt{3}}\Delta\in (0,+\infty).
\Eeqrn
Thus this stratum consists of limit configurations in which the point $x_1$ goes far
away from the points $x_2$ and $x_3$, and simultaneously, the points $x_2$ and $x_3$
approach each other with the speed given by $||p||^{-1}$ so that the product
$||p||\cdot ||p_{23}||$ is a well-defined {\em finite}\, number.

\item[(d)] the stratum,
$ \Ba{c}
 \xy
(-6,-10)*{_{1}},
(0,-10)*{_{2}},
(3,-10)*{_{3}},
 (0,0)*{\circ}="a",
(0,4)*{}="0",
(-3,-4)*{\bu}="b_1",
(3,-4)*{\bu}="b_3",
(-6,-8)*{}="c_1",
(0,-8)*{}="c_2",
(3,-8)*{}="c_3",
\ar @{.} "a";"0" <0pt>
\ar @{.} "a";"b_3" <0pt>
\ar @{.} "a";"b_1" <0pt>
\ar @{-} "b_1";"c_1" <0pt>
\ar @{-} "b_1";"c_2" <0pt>
\ar @{-} "b_3";"c_3" <0pt>
\endxy
\Ea
 \simeq  C_2^{st}(\R)\times \fC_2(\R)\times \fC_1(\R)$, is given by the following values
 of the parameters: $||p||=||p_{[3],23}||=+\infty$,  $||p_{[3],12}||$ is finite; any point in this
boundary stratum can be obtained as the $\la\rar +\infty$ limit of a configuration $(-\la - \la^{-1}\Delta,
-\la + \la^{-1}\Delta, \la)\in
\fC_3(\R)$ for some $\Delta\in \R$,

 \item[(e)] the stratum,
$ \Ba{c}
 \xy
(-3,-6)*{_{1}},
(0,-10)*{_{2}},
(6,-10)*{_{3}},
 (0,0)*{\bu}="a",
(0,4)*{}="0",
(-3,-4)*{}="b_1",
(3,-4)*{\circ}="b_3",
(-3,-8)*{}="c_1",
(0,-8)*{}="c_2",
(6,-8)*{}="c_3",
\ar @{.} "a";"0" <0pt>
\ar @{-} "a";"b_3" <0pt>
\ar @{-} "a";"b_1" <0pt>
\ar @{-} "b_3";"c_2" <0pt>
\ar @{-} "b_3";"c_3" <0pt>
\endxy
\Ea
 \simeq  C_2^{st}(\R)\times \fC_1(\R)\times \fC_2(\R)$, is given by $0<||p||,||p_{[3],12}||<+\infty$,
  $||p_{[3],23}||=0$; it represents
the limit configurations
in which all the points are at finite distance from each other
and the points $2$ and $3$ collapse into a single point in $\fC_2(\R)$;  any point in this
boundary stratum can be obtained as the $\la\rar 0$ limit of a configuration $(-1, 1 - \la \Delta,
1 + \la\Delta, 1)\in
\fC_3(\R)$ for some $\Delta\in \R$

\item[(f)] the stratum,
$ \Ba{c}
 \xy
(-6,-10)*{_{1}},
(0,-10)*{_{2}},
(3,-6)*{_{3}},
 (0,0)*{\bu}="a",
(0,4)*{}="0",
(-3,-4)*{\circ}="b_1",
(3,-4)*{}="b_3",
(-6,-8)*{}="c_1",
(0,-8)*{}="c_2",
(0,-8)*{}="c_3",
\ar @{.} "a";"0" <0pt>
\ar @{-} "a";"b_3" <0pt>
\ar @{-} "a";"b_1" <0pt>
\ar @{-} "b_1";"c_1" <0pt>
\ar @{-} "b_1";"c_2" <0pt>
\endxy
\Ea
 \simeq  \fC_2(\R)\times C_2^{st}(\R)$, is given by $0<||p||,||p_{23}||<+\infty$, $||p_{12}||=0$;
any point in this
boundary stratum can be obtained as the $\la\rar 0$ limit of a configuration $(-1 - \la \Delta,
-1 + \la\Delta, 1)\in
\fC_3(\R)$ for some $\Delta\in \R$.
\Ei

It is important to notice the following difference between
the compactification formulae (\ref{2: first compactifn of fC(R)}) and
(\ref{2: second compactifn of fC(R)}):
\Bi
\item[-] consider limit configurations in which the
points $x_2$ and $x_3$ go far away from the point $x_1$ while keeping a {\em finite}\,
distance, $||p_{23}||$, between themselves; in the first compactification formula such  limit configurations
fill  in the stratum $ \Ba{c}
 \xy
 (0,0)*{\circ}="a",
(0,4)*{}="0",
(-3,-4)*{\bu}="b_1",
(3,-4)*{\bu}="b_3",
(-3,-8)*{}="c_1",
(0,-8)*{}="c_2",
(6,-8)*{}="c_3",
\ar @{.} "a";"0" <0pt>
\ar @{.} "a";"b_3" <0pt>
\ar @{.} "a";"b_1" <0pt>
\ar @{-} "b_1";"c_1" <0pt>
\ar @{-} "b_3";"c_2" <0pt>
\ar @{-} "b_3";"c_3" <0pt>
\endxy
\Ea
$ as $||p||, ||p_{12}||\rar +\infty$ while $||p_{23}||$ stays finite;
in the second compactification formula all such configurations tend to one and the same
point in the boundary represented by the graph  $\xy
 (0,0)*{\circ}="a",
(0,3)*{}="0",
(3,-3)*{\circ}="b_1",
(-3,-3)*{\bu}="b_3",
(6,-6)*{\bu}="c_1",
(0,-6)*{\bu}="c_2",
(-3,-6)*{}="c_3",
(6,-9)*{}="d_1",
(0,-9)*{}="d_2",
\ar @{.} "a";"0" <0pt>
\ar @{.} "a";"b_3" <0pt>
\ar @{.} "a";"b_1" <0pt>
\ar @{.} "b_1";"c_1" <0pt>
\ar @{.} "b_1";"c_2" <0pt>
\ar @{-} "b_3";"c_3" <0pt>
\ar @{-} "c_1";"d_1" <0pt>
\ar @{-} "c_2";"d_2" <0pt>
\endxy
$  as in this case all three parameters, $||p||$,  $||p||\cdot ||p_{23}||$ and
$||p||\cdot ||p_{12}||$, tend to
$+\infty$ and the limit point $p/||p||$ in $\widetilde{C}_3^{st}$ consists of only two different
points (rather than of three ones as in the case of a generic point in the stratum
$ \Ba{c}
 \xy
 (0,0)*{\circ}="a",
(0,4)*{}="0",
(-3,-4)*{\bu}="b_1",
(0,-4)*{\bu}="b_2",
(3,-4)*{\bu}="b_3",
(-3,-7)*{}="c_1",
(0,-7)*{}="c_2",
(3,-7)*{}="c_3",
\ar @{.} "a";"0" <0pt>
\ar @{.} "a";"b_2" <0pt>
\ar @{.} "a";"b_1" <0pt>
\ar @{.} "a";"b_3" <0pt>
\ar @{-} "b_1";"c_1" <0pt>
\ar @{-} "b_2";"c_2" <0pt>
\ar @{-} "b_3";"c_3" <0pt>
\endxy
\Ea$);

\item[-] consider now  limit configurations in which the point $x_1$ goes far
away from the points $x_2$ and $x_3$, and simultaneously, the points $x_2$ and $x_3$
approach each other with the speed $\sim||p||^{-1}$ so that the product
$||p||\cdot ||p_{23}||$ is a well-defined {\em finite}\, number;
 in the second compactification  such  limit configurations
fill  in the stratum $ \Ba{c}
 \xy
 (0,0)*{\circ}="a",
(0,4)*{}="0",
(-3,-4)*{\bu}="b_1",
(3,-4)*{\bu}="b_3",
(-3,-8)*{}="c_1",
(0,-8)*{}="c_2",
(6,-8)*{}="c_3",
\ar @{.} "a";"0" <0pt>
\ar @{.} "a";"b_3" <0pt>
\ar @{.} "a";"b_1" <0pt>
\ar @{-} "b_1";"c_1" <0pt>
\ar @{-} "b_3";"c_2" <0pt>
\ar @{-} "b_3";"c_3" <0pt>
\endxy
\Ea
$ but in the first compactification all such configurations tend to one and the same
point in the boundary given by the graph  $\xy
 (0,0)*{\circ}="a",
(0,3)*{}="0",
(-3,-3)*{\bu}="b_1",
(3,-7)*{\circ}="e",
(3,-3)*{\bu}="b_3",
(-3,-6)*{}="c_1",
(0,-10)*{}="c_2",
(6,-10)*{}="c_3",
\ar @{.} "a";"0" <0pt>
\ar @{.} "a";"b_3" <0pt>
\ar @{.} "a";"b_1" <0pt>
\ar @{-} "b_1";"c_1" <0pt>
\ar @{-} "e";"c_2" <0pt>
\ar @{-} "b_3";"e" <0pt>
\ar @{-} "e";"c_3" <0pt>
\endxy
$, see picture (\ref{2: picture for J_3}).
\Ei


Using metric graphs we can make the second topological compactification, $\widehat{\fC}_\bu(\R)$,
 into a smooth manifold with corners
 by constructing a coordinate chart, $\cU_T$, near the boundary stratum corresponding
to an arbitrary tree $T\in \cM or(A_\infty)$ as follows:
\sip

{\bf (i)} Associate to $T$ a metric graph, $T_{\ccm \cce \cct\ccr \cci \ccc}$, by
assigning
\Bi
\item[(a)]
to every internal edge of $T$ of the form
$
\xy (0,3)*{\bullet}="a", (0,-3)*{\circ}="b"
           \ar @{-} "b";"a" <0pt>\endxy$  a {\em small}\, positive real number $\var \ll +\infty$;

\item[(b)] to every (if any) white vertex of a dashed corolla in $T$ a {\em  large}\, positive real number $\tau \gg 0$,
and to its every incoming edge a {\em small}\, parameter $\tau^{-1}$,
\Beq\label{2: tau decorated c orolla}
\xy
(3,0)*{^\tau},
(-10,-3)*{^{\frac{1}{\tau}}},
(-0.5,-4)*{^{\frac{1}{\tau}}},
(10,-3)*{^{\frac{1}{\tau}}},
(2,-6)*{\ldots},
 (0,0)*{\circ}="a",
(0,5)*{}="0",
(-12,-6)*{}="b_1",
(-5,-6)*{}="b_2",
(12,-6)*{}="b_3",
\ar @{.} "a";"0" <0pt>
\ar @{.} "a";"b_2" <0pt>
\ar @{.} "a";"b_1" <0pt>
\ar @{.} "a";"b_3" <0pt>
\endxy,
\Eeq

\item[(c)] to every (if any) two vertex subgraph of $T_{\ccm \cce \cct\ccr \cci \ccc}$ of the form
$\xy
(3,4)*{^{\tau_1}};
(3,-4)*{^{\tau_2}};
(3,0)*{^{\tau_1^{-1}}};
(0,4)*{\circ}="a",
(0,-4)*{\circ}="b",
\ar @{.} "b";"a" <0pt>
\endxy$
an inequality  $\tau_1\gg \tau_2\gg 0$. This can be understood as a relation $\tau_2=\var_{12} \tau_1$ for some {\em small}\,
parameter $\var_{12}$.
\Ei
For example, if
$
T=\Ba{c}
\xy
(-15,-10)*{_1},
(-11,-18)*{_3},
(-2,-18)*{_5},
(5,-18)*{_6},
(8,-10)*{_2},
(14,-10)*{_4},
(18,-18)*{_7},
(25.7,-18)*{_8},
(0,15)*{}="0",
 (0,10)*{\circ}="a",
(-10,0)*{\bullet}="b_1",
(-2,0)*{\circ}="b_2",
(12,0)*{\bullet}="b_3",
(-15,-8)*{}="c_0",
(0.6,-8)*{\bullet}="c_1",
(-7,-8)*{\bullet}="c_2",
(8,-8)*{}="c_3",
(14,-8)*{}="c_4",
(20,-8)*{\circ}="c_5",
(-11,-16)*{}="d_1",
(-3,-16)*{}="d_2",
(4,-16)*{}="d_3",
(18,-16)*{}="d_4",
(25,-16)*{}="d_5",
\ar @{.} "a";"0" <0pt>
\ar @{.} "a";"b_1" <0pt>
\ar @{.} "a";"b_2" <0pt>
\ar @{.} "a";"b_3" <0pt>
\ar @{-} "b_1";"c_0" <0pt>
\ar @{.} "b_2";"c_1" <0pt>
\ar @{.} "b_2";"c_2" <0pt>
\ar @{-} "b_3";"c_3" <0pt>
\ar @{-} "b_3";"c_4" <0pt>
\ar @{-} "b_3";"c_5" <0pt>
\ar @{-} "c_2";"d_1" <0pt>
\ar @{-} "c_2";"d_2" <0pt>
\ar @{-} "c_1";"d_3" <0pt>
\ar @{-} "c_5";"d_4" <0pt>
\ar @{-} "c_5";"d_5" <0pt>
\endxy
\Ea
$\hspace{-4mm},
then
$
T_{\ccm \cce \cct\ccr \cci \ccc}=
\Ba{c}
\xy
(1.3,5)*{_{\frac{1}{\tau_{1}}}},
(-7,6)*{_{\frac{1}{\tau_{1}}}},
(8,6)*{_{\frac{1}{\tau_{1}}}},
(19,-4)*{_{\var}},
(-7,-4)*{_{\frac{1}{\tau_{2}}}},
(2,-4)*{_{\frac{1}{\tau_{2}}}},
(3,10)*{^{\tau_1}},
(1,0)*{_{\tau_2}},
(-15,-10)*{_1},
(-11,-18)*{_3},
(-2,-18)*{_5},
(5,-18)*{_6},
(8,-10)*{_2},
(14,-10)*{_4},
(18,-18)*{_7},
(25.7,-18)*{_8},
(0,15)*{}="0",
 (0,10)*{\circ}="a",
(-10,0)*{\bullet}="b_1",
(-2,0)*{\circ}="b_2",
(12,0)*{\bullet}="b_3",
(-15,-8)*{}="c_0",
(0.6,-8)*{\bullet}="c_1",
(-7,-8)*{\bullet}="c_2",
(8,-8)*{}="c_3",
(14,-8)*{}="c_4",
(20,-8)*{\circ}="c_5",
(-11,-16)*{}="d_1",
(-3,-16)*{}="d_2",
(4,-16)*{}="d_3",
(18,-16)*{}="d_4",
(25,-16)*{}="d_5",
\ar @{.} "a";"0" <0pt>
\ar @{.} "a";"b_1" <0pt>
\ar @{.} "a";"b_2" <0pt>
\ar @{.} "a";"b_3" <0pt>
\ar @{-} "b_1";"c_0" <0pt>
\ar @{.} "b_2";"c_1" <0pt>
\ar @{.} "b_2";"c_2" <0pt>
\ar @{-} "b_3";"c_3" <0pt>
\ar @{-} "b_3";"c_4" <0pt>
\ar @{-} "b_3";"c_5" <0pt>
\ar @{-} "c_2";"d_1" <0pt>
\ar @{-} "c_2";"d_2" <0pt>
\ar @{-} "c_1";"d_3" <0pt>
\ar @{-} "c_5";"d_4" <0pt>
\ar @{-} "c_5";"d_5" <0pt>
\endxy
\Ea
$

\sip

with $\tau_1,\tau_2=\var_{12}\tau_1 \in (l,+\infty)$ and $\var_{12},\var\in (0,s)$ for some
large $l\gg 0$ and small $s\ll +\infty$ real numbers.
\mip

{\bf (ii)} Choose an equivariant section, $s: \fC_n(\R)\rar \Conf_n(\R)$, of the natural projection
$\Conf_n(\R)\rar  \fC_n(\R)$ as well as an arbitrary smooth structure on its image, $\fC_n^{st}(\R):=s(\fC_n(\R))$,
which is  called the space of configurations in the {\em standard position}; for example,
$\fC_n^{st}(\R)$ can be chosen to be the subspace of $\Conf_n(\R)$ satisfying the condition
$\sum_{i=1}^n x_i=0$; in particular, $\fC_1^{st}(\R)=0\in \R$.

\sip

{\bf (iii)} the required coordinate chart $\cU_T\subset \overline{\fC}_n(\R)$ is, by definition, isomorphic to the manifold with corners,
    $$(l,+\infty]^{\# V_\circ^{d}(T)}\times  [0,s)^{\# E_{\circ}^\bu(T)}\times
    \prod_{v\in V_\circ (T)} {C}_{\# In(v)}(\R)\times  \prod_{v\in V_\bu (T)} {\fC}_{\# In(v)}(\R)
    $$
where $V_\circ(T)$ is the set of white vertices of $T$, $V^d_\circ(T)\subset V_\circ(T)$ a subset
corresponding to dashed corollas, $V_\bu(T)$ the set of black vertices, and  $E_{\circ}^\bu(T)$ the set of internal edges of the type $
\xy (0,3)*{\bullet}="a", (0,-3)*{\circ}="b"
           \ar @{-} "b";"a" <0pt>\endxy$.
The isomorphism $\al_T$ between $\cU_T$ and the latter product
of manifolds with corners can be read from the metric graph via a simple procedure which we explain
on the particular example. For the tree $T$ shown above the map $\al_T$ is defined by
a formal extension
of the domain of the following continuous  map\footnote{We ordered factors $C_{\# In(v)}(\R)$ and $\fC_{\# In(v)}(\R)$
in the formula below in accordance with
a  natural ``from top to the bottom" and ``from left to the right" ordering of the vertices $v$ of the planar
tree $T$. We have also thrown away factors $\fC_1^{st}(\R)$ as they are just single points identified with $0\in \R$ .}
$$
\Ba{ccccccccccccccccc}
 \hspace{-2mm}(l,+\infty)^2&\hspace{-2mm}\times\hspace{-3mm} & (0,s) & \hspace{-3mm} \times \hspace{-3mm}&
 C_3^{st}(\R) &\hspace{-3mm} \times\hspace{-3mm} &  \fC_1^{st}(\R) & \hspace{-3mm} \times\hspace{-3mm} & C_2^{st}(\R)
 &\hspace{-3mm} \times\hspace{-3mm}&
 \fC_3^{st}(\R)  & \hspace{-3mm} \times\hspace{-3mm} & \fC_2^{st}(\R) & \hspace{-3mm} \times \hspace{-3mm} & \fC_1^{st}(\R)
 & \hspace{-2mm} \times \hspace{-2mm} & C_2^{st}(\R)
\\
 \hspace{-2mm}(\tau_1,\tau_2)&\hspace{-3mm}\times \hspace{-3mm} & \var & \hspace{-3mm} \times \hspace{-3mm} & (x', x'',x''') & \hspace{-3mm}\times
 \hspace{-3mm}& (x_1\hspace{-1mm}=\hspace{-1mm}0) & \hspace{-3mm} \times\hspace{-3mm} &  (t' \hspace{-1mm}= \hspace{-1mm}\frac{-1}{\sqrt{2}},t''\hspace{-1mm}=\hspace{-1mm}\frac{1}{\sqrt{2}})
  & \hspace{-3mm} \times \hspace{-3mm}& (x_2,x_4,u) & \hspace{-3mm}\times
 \hspace{-3mm}& (x_3, x_5)& \hspace{-3mm} \times \hspace{-3mm}& (x_6\hspace{-1mm}=\hspace{-1mm} 0)  &\hspace{-3mm} \times \hspace{-3mm}&
(x_7\hspace{-1mm}=\hspace{-1mm}\frac{-1}{\sqrt{2}},x_8\hspace{-1mm}=\hspace{-1mm}\frac{1}{\sqrt{2}}) \\
\Ea
$$
$$
\hspace{120mm}
\lon
\Ba{c} \fC_8(\R)\\
(y_1,\ldots, y_8)
\Ea
$$

with
$$
\Ba{lllrrrl}
y_1 &=& \tau_1 x' +\frac{1}{\tau_1}x_1 & &   y_2 &=&   \tau_1  x''' + \frac{1}{\tau_1}x_2  \\
y_3 &=&  \tau_1 x'' + \frac{1}{\tau_1}(\tau_2 t' + \frac{1}{\tau_2}x_3)   &&  y_4 &=& \tau_1 x''' +  \frac{1}{\tau_1}x_4\\
y_5 &=&  \tau_1 x'' + \frac{1}{\tau_1} (\tau_2 t' + \frac{1}{\tau_2}x_5)  &&  y_7 &=&\tau_1 x''' + \frac{1}{\tau_1}(u + \var x_7)\\
y_6 &=&   \tau_1 x'' + \frac{1}{\tau_1}(\tau_2 t'' + \frac{1}{\tau_2}x_6) , &&    y_8 &=&   \tau_1 x'''  +  \frac{1}{\tau_1}(u + \var x_8)
\Ea
$$
The boundary strata in $\cU_T$ are given by allowing formally $\tau_1=+\infty$, $\tau_2=+\infty$ with $\tau_1/\tau_2=0$
and/or $\var=0$. Therefore, the only novelty comparing to the case of associahedra discussed in
Sect.\ {\ref{2.1: associahedra}}
comes from the dashed corollas decorated by a large parameter $\tau$; such a corolla tacitly assumes
{\em two}\, rescaling operations: the first one is a
{\em magnification}\, of the standard configuration used to decorate its vertex by the parameter $\tau$,
and the second is a {\em compression} by the factor
$\tau^{-1}$ of  the standard configurations which correspond to all the corollas attached to its legs.

It is this second smooth structure on $\widehat{\fC}_\bu(\R)$ (and
its higher dimensional analogs,  $\widehat{\fC}_\bu(\R^d)$ and $\widehat{\fC}_\bu(\bbH)$, see below)
which we shall be interested in applications. We have no interesting propagators to show in
 the case of smooth structures of the first type at present.

\subsection{One more configuration space model for $\cM or(\cA_\infty)$} Let $\R^+:=\{x\in \R\ | \ x>0\}$ and, for a finite set $A$,
 consider a configuration space,
$$
\Conf_A(\R^+)=\left\{A\hook \R^+\right\}    
$$
of all injections  of $A$ into $\R^+$. The $1$-dimensional Lie group $\R^+$ acts on $\Conf_A(\R)$ by dilations,
$$
\Ba{ccc}
\Conf_A(\R^+) \times \R^+& \lon & \Conf_A(\R^+)\\
(p=\{x_i\}_{i\in A},\la) &\lon & \la p:= \{\la x_i\}_{i\in A}
\Ea
$$
This action is free so that the quotient space (cf.\ (\ref{2': def of fC(R)})),
\Beq\label{2': def of fC(R+)}
{\mathfrak C}_A(\R^+):= \Conf_A(\R^+)/\R^+,
\Eeq
is a naturally oriented $(\# A-1)$-dimensional real manifold equipped with a smooth action of the permutation group $Aut(A)$.
The space $\fC_1(\R^+)$ is a point and hence closed. For any $p\in \Conf_A(\R^+)$ we set $x_{min}(p):=\inf_{i\in A} x_i$.
Define the following section,
$$
C^{st}_{A}(\R^+):=\left\{p\in  \Conf_A(\R^+)\ |\  x_{min}(p)=1\ \mbox{and}\ |p-x_{min}(p)|=1             \right\}
$$
of the natural projection $\Conf_A(\R^+)\rar C_A(\R)$. Then, for any finite set $A$ with $\#A \geq 2,$ we have an equivariant isomorphism,
$$
\Ba{rccccc}
\Psi_A: &\fC_{A}(\R^+) & \lon & C^{st}_{A}(\R^+) \times \R^+ &\simeq & C_{A}(\R) \times (0,+\infty)   \\
& p & \lon & (\frac{p-x_{min}(p)}{|p-x_{min}(p)|}+1, |p-x_{min}(p)|).&&
\Ea
$$
Next one can define a compactification, $\widehat{\fC}_n(\R^+)$, of the space $\fC_{n}(\R^+)$ using either formulae
analogous to (\ref{2: first compactifn of fC(R)}) or to (\ref{2: second compactifn of fC(R)});
the face complex of $\widehat{\fC}_n(\R^+)$ is precisely the operad $\cM or(\cA_\infty)$.

\subsection{Metric graphs and smooth structures on compactified configuration spaces}\label{2: Remark on metric trees}
To save  space-time below in this paper we devote this subsection to a formalization of the Kontsevich type
construction of a smooth atlas on a compactified configuration space  with the help of metric trees which was used in
Sections ~{\ref{2: subsubs smooth atlas on associh}} and {\ref{2:  A different smooth structure on fC(R)}}. This subsection can be skipped as its only purpose is to give a rigorous
(and obvious) definition of the words ``by analogy to \S {\ref{2: subsubs smooth atlas on associh}} or to
{\ref{2:  A different smooth structure on fC(R)}}"
which we use several times below.

\sip

 All  compactified configurations spaces we work with in this paper
 come equipped with a structure of topological (coloured) operad,  $\overline{C}=\{\overline{C}_n\}_{n\geq 1}$ which, as an operad in the category of sets, is   free, $\overline{C}=\cF ree \langle C \rangle$, and  generated by an $\bS$-space,
$$
C=\{C_p:=\Conf_p(\V)/G\}_{p\geq 1},
$$
such that each $C_p$ is the quotient of the configuration space of $p$ pairwise distinct numbered  points in a subset\footnote{Say, $\V$ can be the ``upper half" of  $\R^d$ with respect to the coordinate $x^d$.}
 $\V\subset \R^d$
with respect to an action of a subgroup $G$ of the group $\R^+\ltimes \R^d$ (with $\R^+$ acting on $\R^d$ by dilations) which preserves $\V$ and commutes with the natural action of $\bS_p$ on
$\Conf_p(\R^d)$. As
$
\overline{C}=\cF ree \langle C \rangle$,  each topological space $\overline{C}_n$ is canonically stratified,
$$
\overline{C}_n=\coprod_{T\in {\cT_n}} C_T \ \ \ \ \ (C_T\hook \overline{C}_n\ \mbox{is continous}),
$$
into a disjoint union of cartesian products,
$$
C_T=\prod_{v\in V(T)} {C}_{\# In(v)}.
$$
Here ${\cT_n}$ stands for the family of trees whose input legs are in a fixed bijection with $[n]$,
$V(T)$ stands for the set of vertices of a tree $T\in \cT_n$, and, for a $v\in V(T)$, $In(v)$ stands for its set
of input half-edges.
For example if $T$ is given by (\ref{2: tree T}), then $C_T\simeq C_3^{\times 2}\times C_2^{\times 2}$.

\sip

In all cases of interest
in this paper the structure of a smooth manifold with corners on $\overline{C}_n$
will be defined via an explicit construction of a coordinate chart $\cU_T$ at each stratum $C_T$
with the help of an associated metric tree, $T_{\ccm\cce\cct\ccr\cci\ccc}$, obtained from $T$
\Bi
\item[(i)] by
 assigning
to all vertices, $v$, of some fixed colour a {\em large}\, parameter $\tau\in \R^+$ and simultaneously to all input legs
of $v$ the small parameter\footnote{A smooth structure on the compactification (\ref{2: first compactifn of fC(R)})
should be described with the help of metric corollas (\ref{2: tau decorated corolla 1}) {\em without}\, assigning to its input legs
the small paramter $\tau^{-1}$; however, we never use such a smooth structure in applications and, therefore, exclude it
from now on from the consideration in this paper.}
 $\tau^{-1}$, see (\ref{2: tau decorated c orolla}), and
\item[(ii)] by assigning to all other internal edges
a {\em small}\, parameter $\var$, see (\ref{2: metric tree T}) for an example.
\Ei
As a result {\em every}\, internal edge of  $T_{\ccm\cce\cct\ccr\cci\ccc}$ gets assigned
 a {\em small}\, parameter $\var$ or $\tau^{-1}$. To read the coordinate chart $\cU_T$ from such a metric tree one has to choose a suitable $\bS_n$-equivariant section,
$$
s: C_n \lon \Conf_n(\V)\subset \Conf_n(\R^d),
$$
of the natural projection  $\Conf_n(\V)\rar C_n$. The subspace $s(C_\bu)\subset \Conf_\bu(\R)$ is denoted by $C_\bu^{st}$ and  called the {\em  space of standard configurations}. Then one can use a natural
action of $\R^+\ltimes \V$ on  $\Conf_n(\R^d)$ to define a suitable translation map,
$$
\Ba{rccc}
T:& \V \times C^{st}_n & \lon & \Conf_n(\V)\\
  &(z_0,p)  &\lon & T_{z_0}(p)
\Ea
$$
and a {\em rescaling map},
$$
\Ba{rccc}
\centerdot:& \R^+ \times C^{st}_n & \lon & \Conf_n(\V)\\
  & (\la,p)   &\lon & \la\centerdot p.
\Ea
$$
Having made choices for $C_n^{st}$ and the maps  $T_{z_0}$ and $\la\centerdot$, one
constructs out of a metric tree  an open subset, $\cU_T$, containing the boundary stratum $T$ together with a homeomorphism,
\Beq\label{2: phi_T map}
\phi_T: \cU_T \lon [0, \delta)^m\times \prod_{v\in V(T)} C_{\# In(v)}^{st},
\Eeq
for some sufficiently small positive real number $\delta$ and natural number $m$. This homeomorphism together with a suitable choice
of an $\bS_\bu$-equivariant structure of a smooth manifold with corners on each $C_\bu^{st}$  makes $\cU_T$ itself
into a smooth manifold with corners; the final step in the construction of a smooth atlas $\{\cU_T\}$ on
$\overline{C}_\bu$ is to check smoothness of the transition functions at the non-empty intersections $\cU_T\cap
\cU_{T'}$ of the coordinate charts (which is often straighforward).

\sip

The construction of (\ref{2: phi_T map}) is universal and is best explained
in a particular example: if
$$
T_{\ccm \cce \cct\ccr \cci \ccc}=
\Ba{c}
\xy
(-0.5,5)*{_{\frac{1}{\tau}}},
(11,5)*{_{\frac{1}{\tau}}},
(19,-4)*{_{\var}},
%
(8,10)*{^{\tau}},
(-7,-10)*{_1},
(1,-10)*{_3},
(8,-10)*{_2},
(14,-10)*{_4},
(18,-18)*{_5},
(25.7,-18)*{_6},
(6,15)*{}="0",
 (6,10)*{\circ}="a",
(-2,0)*{\bu}="b_2",
(12,0)*{\bullet}="b_3",
(-15,-8)*{}="c_0",
(0.6,-8)*{}="c_1",
(-7,-8)*{}="c_2",
(8,-8)*{}="c_3",
(14,-8)*{}="c_4",
(20,-8)*{\circ}="c_5",
(-11,-16)*{}="d_1",
(-3,-16)*{}="d_2",
(4,-16)*{}="d_3",
(18,-16)*{}="d_4",
(25,-16)*{}="d_5",
\ar @{.} "a";"0" <0pt>
\ar @{.} "a";"b_2" <0pt>
\ar @{.} "a";"b_3" <0pt>
\ar @{-} "b_2";"c_1" <0pt>
\ar @{-} "b_2";"c_2" <0pt>
\ar @{-} "b_3";"c_3" <0pt>
\ar @{-} "b_3";"c_4" <0pt>
\ar @{-} "b_3";"c_5" <0pt>
\ar @{-} "c_5";"d_4" <0pt>
\ar @{-} "c_5";"d_5" <0pt>
\endxy
\Ea, \ \ \ \ \tau\gg 0, \var\ll +\infty
$$
then $\cU_T\cap C_6$ is given, by definition, by the image under projection $\Conf_6(\V)\rar C_6$  of
a subset consisting of all possible configurations, $\{z_1,\ldots, z_6\}\in \Conf_6(\V)$,
which can be obtained as follows
\Bi
\item[Step 1:] Take an arbitrary standardly positioned configuration, $p^{(1)}=(z', z'')\in C_2^{st}$
and apply $\tau$-rescaling, $p^{(1)}\rar \tau\centerdot p^{(1)}=:( \tau\centerdot z',  \tau\centerdot z'')$;
\item[Step 2:] Take  arbitrary standard configurations,
$p^{(2)}=(z_1^{st}, z^{st}_3)\in C_2^{st}$ and  $p^{(3)}=(z^{st}_2, z^{st}_4, z''')\in C_3^{st}$,
$\tau^{-1}$-rescale them,
\Beqrn
p^{(2)}=(z_1^{st}, z^{st}_3) &\lon & \tau^{-1}\centerdot p^{(2)}=:(z_1, z_3),\\
p^{(3)}=(z_2^{st}, z^{st}_4, z''')& \lon & \tau^{-1}\centerdot p^{(3)}=:( z_2, z_4,
  z''''),
\Eeqrn
and then place the results at the points $\tau\centerdot z'$ and $\tau\centerdot z'$ respectively,
i.e.\ consider a configuration
$$
T_{\tau\centerdot z'}\left(  \tau^{-1}\centerdot p^{(2)}\right)\coprod T_{\tau\centerdot z''}
\left(  \tau^{-1}\centerdot p^{(3)}\right)=:\left(z_1,z_3,z_2,z_4, z'''''\right)\in \Conf_5(\V);
$$
\item[Step 3:] Finally, take an arbitrary standardly positioned configuration,
$p^{(4)}=(z^{st}_5, z^{st}_6)\in C_2^{st}$, apply $\var\tau^{-1}$-rescaling,
$p^{(4)}\rar \var\tau^{-1}\centerdot p^{(4)}$, and place the result into the point  $z'''''$. i.e. consider
    $T_{z'''''}( \var\tau^{-1}\centerdot p^{(4)})=:(z_5,z_6)$.
\Ei
If the constructed continuous map,
$$
\phi_T:
(0,\delta)^2 \times (C_2^{st})^{\times 3}\times C_3^{st} \rar \Conf_6(\V)
$$
is an injection for a sufficiently small $\delta\in \R^+$ (and this will be the case in all cases of interest in this paper), then its image gives us the desired
smooth coordinate chart $\cU_T$; the boundary stratum $C_T$ is given in this chart by setting formally the small
parameters $\var, \tau^{-1}\in (0,\delta)$ to zero.

\sip

The above construction is applicable to all operads, $\overline{C}$, of compactified configuration spaces studied in this paper. In each concrete case we have to specify  only three things,
\Bi
\item[(i)] an association $T\rar T_{metric}$,
\item[(ii)]
a definition of the space, $C_n^{st}\subset \Conf_n(\V)$, of standard positions, and
\item[(iii)] a rescaling map, $\centerdot: \R^+\times C_n^{st}\rar \Conf_n(\V)$, and a
translation map $T:  \V \times C_n^{st}\rar \Conf_n(\V)$.
\Ei
the rest of the construction of a smooth atlas goes along the lines formalized in this
subsection.


\bip

{\large
\section{\bf Kontsevich configuration spaces  and open-closed homotopy algebras}
\label{3: Section}
}
\mip

\subsection{Fulton-MacPherson compactification of points on the complex plane \cite{Ko}}
Let
$$
\Conf_n(\C):=\{z_1, \ldots, z_n\in \C\ |\, z_i\neq z_j\ \mbox{for}\ i\neq j\}
$$
be the configuration space of $n$ pairwise distinct points on the complex plane $\C$.
The space $C_{n}(\C)$ is a smooth $(2n-3)$-dimensional real manifold defined as the orbit space \cite{Ko},
 $$
C_{n}(\C):=\Conf_n(\C)/{G_{(3)}},
$$
with respect to the following action of a real 3-dimensional Lie group,
$$
G_{(3)}= \{z\rar az+b \ |\ a\in \R^+, b\in \C\}.
$$
The space $C_2(\C)$ is homeomorphic to the circle $S^1$ and hence is compact.
The compactification, $\overline{C}_n(\C)$, of ${C}_n(\C)$ for $n\geq 3$
can be defined  \cite{Ko, Ga}) as the closure of an embedding,
$$
\Ba{ccccc}
C_n(\C) & \lon & (\R/2\pi\Z )^{n(n-1)} &\times& (\R\P^2)^{n(n-1)(n-2)}\\
(z_1, \ldots, z_n) & \lon & \prod_{i\neq j} Arg(z_i - z_j) &\times&
 \prod_{i\neq j\neq k\neq i}\left[|z_{i}-z_{j}| :
|z_{j}-x_{k}|: |z_{i}-z_{k}|\right]
\Ea.
$$
The space $\overline{C}_n(\C)$ is a smooth (naturally oriented) manifold with corners.
Its codimension 1 strata is given by
$$
\p \overline{C}_n(\C) = \bigsqcup_{A\subset [n]\atop \# A\geq 2} C_{n - \# A + 1}(\C)\times
 C_{\# A}(\C)
$$
where the summation runs over all possible  proper subsets of $[n]$ with cardinality $\geq 2$.
Geometrically, each such  stratum corresponds to the $A$-labeled elements of the set $\{z_1, \ldots, z_n\}$ moving
very close
to each other. If we represent $\overline{C}_n(\C)$ by the symmetric $n$-corolla of
degree\footnote{We prefer working with {\em co}chain complexes, and hence always adopt gradings accordingly.}
 $3-2n$
\Beq\label{3: Lie_inf corolla}
 \xy
(1,-5)*{\ldots},
(-13,-7)*{_1},
(-8,-7)*{_2},
(-3,-7)*{_3},
(7,-7)*{_{n-1}},
(13,-7)*{_n},
 (0,0)*{\circ}="a",
(0,5)*{}="0",
(-12,-5)*{}="b_1",
(-8,-5)*{}="b_2",
(-3,-5)*{}="b_3",
(8,-5)*{}="b_4",
(12,-5)*{}="b_5",
\ar @{-} "a";"0" <0pt>
\ar @{-} "a";"b_2" <0pt>
\ar @{-} "a";"b_3" <0pt>
\ar @{-} "a";"b_1" <0pt>
\ar @{-} "a";"b_4" <0pt>
\ar @{-} "a";"b_5" <0pt>
\endxy=
\xy
(1,-6)*{\ldots},
(-13,-7)*{_{\sigma(1)}},
(-6.7,-7)*{_{\sigma(2)}},
(13,-7)*{_{\sigma(n)}},
 (0,0)*{\circ}="a",
(0,5)*{}="0",
(-12,-5)*{}="b_1",
(-8,-5)*{}="b_2",
(-3,-5)*{}="b_3",
(8,-5)*{}="b_4",
(12,-5)*{}="b_5",
\ar @{-} "a";"0" <0pt>
\ar @{-} "a";"b_2" <0pt>
\ar @{-} "a";"b_3" <0pt>
\ar @{-} "a";"b_1" <0pt>
\ar @{-} "a";"b_4" <0pt>
\ar @{-} "a";"b_5" <0pt>
\endxy,
\ \ \ \forall \sigma\in \bS_n,\ n\geq2
\Eeq
then the boundary operator in the associated face complex of $\overline{C}_\bu(\C)$ takes a familiar form
\Beq\label{3: Lie_infty differential}
\p\hspace{-3mm}
 \xy
(1,-5)*{\ldots},
(-13,-7)*{_1},
(-8,-7)*{_2},
(-3,-7)*{_3},
(7,-7)*{_{n-1}},
(13,-7)*{_n},
 (0,0)*{\circ}="a",
(0,5)*{}="0",
(-12,-5)*{}="b_1",
(-8,-5)*{}="b_2",
(-3,-5)*{}="b_3",
(8,-5)*{}="b_4",
(12,-5)*{}="b_5",
\ar @{-} "a";"0" <0pt>
\ar @{-} "a";"b_2" <0pt>
\ar @{-} "a";"b_3" <0pt>
\ar @{-} "a";"b_1" <0pt>
\ar @{-} "a";"b_4" <0pt>
\ar @{-} "a";"b_5" <0pt>
\endxy
=
\sum_{A\varsubsetneq [n]\atop
\# A\geq 2}\hspace{-2mm}
\Ba{c}
\begin{xy}
<10mm,0mm>*{\circ},
<10mm,0.8mm>*{};<10mm,5mm>*{}**@{-},
<0mm,-10mm>*{...},
<14mm,-5mm>*{\ldots},
<13mm,-7mm>*{\underbrace{\ \ \ \ \ \ \ \ \ \ \ \ \  }},
<14mm,-10mm>*{_{[n]\setminus A}};
<10.3mm,0.1mm>*{};<20mm,-5mm>*{}**@{-},
<9.7mm,-0.5mm>*{};<6mm,-5mm>*{}**@{-},
<9.9mm,-0.5mm>*{};<10mm,-5mm>*{}**@{-},
<9.6mm,0.1mm>*{};<0mm,-4.4mm>*{}**@{-},
<0mm,-5mm>*{\circ};
<-5mm,-10mm>*{}**@{-},
<-2.7mm,-10mm>*{}**@{-},
<2.7mm,-10mm>*{}**@{-},
<5mm,-10mm>*{}**@{-},
<0mm,-12mm>*{\underbrace{\ \ \ \ \ \ \ \ \ \ }},
<0mm,-15mm>*{_{A}},
\end{xy}
\Ea
\Eeq
implying the following  useful observation.

\subsubsection{\bf Proposition \cite{GJ}}  {\em The face complex
 of the family of
compactified configurations spaces,
$\{\overline{C}_n(\C)\}_{n\geq 2}$, has a structure of a dg free non-unital  pseudo-operad canonically isomorphic
to the operad, $L_\infty\{1\}$, of strong homotopy Lie algebras with degree shifted by one\footnote{Denote the endomorphism operad,
$\cE nd_{\K[m]}$, of the one 1-dimensional graded vector space $\K[m]$ by $\{m\}$. Then for any dg operad $P$ the tensor
product $P\ot \{m\}=: P\{m\}$ is again an operad whose representations in a graded vector space $V$ are in one-to-one
correspondence with representations of the operad $P$ in $V[m]$. Therefore, the association $P\rar P\{m\}$ is a kind of degree
shifting in the world of operads}.}
\mip
\subsubsection{\bf Smooth atlas on $\overline{C}_\bu(\C)$} The coordinate chart $\cU_T$ near the boundary stratum in  $\overline{C}_\bu(\C)$
corresponding to an arbitrary tree $T$ built from corollas ({\ref{3: Lie_inf corolla}}) is constructed as in
Section~{\ref{2: subsubs smooth atlas on associh}} by associating to $T$ a metric tree  $T_{\ccm\cce\cct\ccr\cci\ccc}$
whose every internal edge is assigned a small positive real number and whose every vertex, $v$, is decorated
with an element of $C_{\# In(v)}^{st}(\C)$ which is defined as the subset
of $\Conf_{\# In(v)}(\C)$ consisting of all configurations $(z_1,\ldots, z_{\# In(v)})$ satisfying two conditions,
$\sum_{i=1}^{\#In(v)} z_i=0$ and $\sum^{\#In(v)}_{i=1} |z_i|^2=1$.
The rescaling (resp.\ translation) map on $C^{st}_\bu(\C)$ is defined to be the ordinary dilation (resp.\ translation)
map, see Remark~{\ref{2: Remark on metric trees}}.

\subsubsection{\bf An equivalent construction of the compactification
$\overline{C}_\bu(\C)$ \cite {AT}}\label{3: second FM compact in C}
 Let $\Conf_A(\C)$ stand for the
 space of immersions, $A\hook \C$, of a finite non-empty set $A$ into the complex plane and
$\widetilde{\Conf}_A(\C)$ for the space of all possible maps. We define $C_A(\C)=\Conf_A(\C)/G_{(3)}$
and, for a configuration $p=\{z_i\}_{i\in A}\in \Conf_A(\C)$, we  set,
$$
z_c(p):=\frac{1}{\# A}\sum_{i\in A} z_i, \ \ \ \ |p-z_c(p)|:=\sqrt{\sum_{i\in A}|z_i-z_c(p)|^2}.
$$
Recall that $C_A(\C)$ can be equivariantly   identified with
$$
C_A^{st}(\C) = \{p\in \Conf_n(\C)\ |\ z_c(p)=0,\ |p-z_c(p)|=1\}
$$
Let us also consider a space,
$$
\widetilde{C}_A^{st}(\C)= \{p\in \widetilde{\Conf}_A(\C)\ |\ z_c(p)=0,\ |p-z_c(p)|=1\},
$$
which is a {\em compact}\, $(2\# A-3)$-dimensional manifold with boundary. The
compactification $\overline{C}_\bu(\C)$ can be defined as the closure of an embedding,
$$
C_n(\C) \stackrel{\prod \pi_A}{\lon} \prod_{A\subseteq [n]\atop \# A\geq 2} C_A(\C)
 \stackrel{\simeq }{\lon} \prod_{A\subseteq [n]\atop \# A\geq 2} C_A^{st}(\C)
\hook  \prod_{A\subseteq [n]\atop \# A\geq 2} \widetilde{C}^{st}_A(\C).
$$
where the product runs over all possible subsets $A$ of $[n]$ with $\# A\geq 2$, and
$$
\Ba{rccc}
\pi_A : & C_n(\C) & \lon & C_A(\C)\\
        & p=\{z_i\}_{i\in [n]} & \lon & p_A:=\{z_i\}_{i\in A}
\Ea
$$
is the natural forgetful  map.

\subsubsection{\bf Higher dimensional version}\label{3: subsection on C_nR^d} One sets, for $d\geq 2$,
$$
C_n(\R^{d}):=\frac{\{p_1, \ldots, p_n\in \R^{d}\ |\, p_i\neq p_j\ \mbox{for}\ i\neq j\}}{G_{(k+1)}},
$$
where $G_{(d+1)}:= \{p\rar \la p + \nu \ |\ \la\in \R^+, \nu\in \R^{d}\}$. The map
$$
\Ba{ccc}
C_2(\R^{d}) & \lon & S^{d-1}\\
(p_1,p_2) & \lon & \frac{p_1-p_2}{|p_1-p_2|}
\Ea
$$
is an isomorphism so that $C_2(\R^d)$ is compact. For $n\geq 3$ the compactified configuration
space $\overline{C}_n(\R^{d})$ is defined as the closure of an embedding
$$
\Ba{ccccc}
C_n(\R^{d}) & \lon &  (S^{d-1} )^{n(n-1)} &\times& (\R\P^2)^{n(n-1)(n-2)}\\
(p_1, \ldots, p_n) & \lon &  \prod_{i\neq j} \frac{p_i-p_j}{|p_i-p_j|} &\times&
 \prod_{i\neq j\neq k\neq i}\left[|p_{i}-p_{j}| :
|p_{j}-p_{k}|: |p_{i}-p_{k}|\right]
\Ea
$$

The face complex of $\{\overline{C}_n(\R^{d})\}_{n\geq 2}$ has a structure of a dg free operad canonically
isomorphic
to the operad of $\cA_\infty$-algebras for $d=1$, and, for $d\geq 2$,  to the operad,
$\caL_\infty\{d-1\}$, of strong homotopy Lie algebras with degree shifted by $d-1$ \cite{GJ}.
If $d$ is odd, then the action of an element $\sigma\in \bS_n$ on
$C_n(\R^{d})$ preserves  its natural orientation if the permutation $\sigma$ is even and
reverses the orientation if $\sigma$ is odd. Therefore, the generating corollas
(\ref{3: Lie_inf corolla}) of the operad $\overline{C}_\bu(\R^{d})$
are symmetric for $d$ even, and skewsymmetric for $d$ odd.

\subsection{Kontsevich's compactification of points in the upper half plane \cite{Ko}}
Let
$$
\Conf_{n,m}(\bbH):=\{z_1, \ldots, z_n\in \bbH, x_{\bar{1}},\ldots, x_{\bar{m}}\in \R\subset \p\overline{\bbH}\ |\, z_i\neq z_j,
x_{\bar{i}}\neq x_{\bar{j}}\ \mbox{for}\ i\neq j\}
$$
be the configuration space of $n+m$ pairwise distinct points on the closed upper half plane $\overline\bbH$.
For future reference we also define,
$$
\widetilde{\Conf}_{n,m}(\bbH):=\{z_1, \ldots, z_n\in \overline{\bbH}, x_{\bar{1}},
\ldots, x_{\bar{m}}\in \R\subset \p\overline{\bbH}\},
$$
where the condition on the points being distinct is dropped.

\sip

For any configuration $p= (z_1, \ldots, z_n, x_{\bar{1}},\ldots, x_{\bar{m}})\in \Conf_{n,m}(\bbH)$ we set
$$
x_c(p):=\frac{1}{n+m}\left(\sum_{i=1}^{n}\Re(z_i)+ \sum_{i=1}^m x_i\right), \ \ \ \ \mbox{and}\ \ \
|p- x_c(p)|:=\sqrt{\sum_{i=1}^{p}|z_i- x_c(p)|^2+ \sum_{i=1}^q |x_i- x_c(p)|^2}.
$$
The Lie group $G_{(2)}$ acts freely on $\Conf_{n,m}(\bbH)$ with the space of orbits,
 $$
C_{n,m}(\bbH):=\Conf_{n,m}(\bbH)/{G_{(2)}}, \ \ \ \ 2n+m\geq 2,
$$
being a $(2n+m-2)$-dimensional naturally oriented manifold. 
A compactification, $\overline{C}_{n,m}(\bbH)$, of  ${C}_{n,m}(\bbH)$  has been defined in
\cite{Ko} as the closure of an embedding
$$
\Ba{ccccc}
C_{n,m}(\bbH) & \lon & C_{2n+m}(\C) &\hook  & \overline{C}_{2n+m}(\C)\\
(z_1, \ldots, z_n, x_{\bar{1}},\ldots, x_{\bar{m}}) & \lon & (z_1, \ldots, z_n, \overline{z_1},\ldots,  \overline{z_n}, x_{\bar{1}},\ldots, x_{\bar{m}})
 &&
\Ea.
$$
The face complex of a disjoint union,
\Beq\label{3: Konts 2-coloured operad}
\overline{C}(\bbH):=\overline{C}_\bu(\C)\bigsqcup \overline{C}_{\bu,\bu}(\bbH),
\Eeq
has a natural structure of a dg free 2-colored operad \cite{KS} generated by degree $3-2n$ corollas (\ref{3: Lie_inf corolla})
representing $\overline{C}_n(\C)$, $n\geq 2$, and degree $2-2n-m$ corollas,
\Beq\label{3: OCHA corolla}
\Ba{c}
\begin{xy}
 <0mm,-0.5mm>*{\blacktriangledown};
 <0mm,0mm>*{};<0mm,5mm>*{}**@{.},
 <0mm,0mm>*{};<-16mm,-5mm>*{}**@{-},
 <0mm,0mm>*{};<-11mm,-5mm>*{}**@{-},
 <0mm,0mm>*{};<-3.5mm,-5mm>*{}**@{-},
 <0mm,0mm>*{};<-6mm,-5mm>*{...}**@{},
   <0mm,0mm>*{};<-16mm,-8mm>*{^{1}}**@{},
   <0mm,0mm>*{};<-11mm,-8mm>*{^{2}}**@{},
   <0mm,0mm>*{};<-3mm,-8mm>*{^{n}}**@{},
 <0mm,0mm>*{};<16mm,-5mm>*{}**@{.},
 <0mm,0mm>*{};<8mm,-5mm>*{}**@{.},
 <0mm,0mm>*{};<3.5mm,-5mm>*{}**@{.},
 <0mm,0mm>*{};<11.6mm,-5mm>*{...}**@{},
   <0mm,0mm>*{};<19mm,-8mm>*{^{\bar{m}}}**@{},
<0mm,0mm>*{};<10mm,-8mm>*{^{\bar{2}}}**@{},
   <0mm,0mm>*{};<5mm,-8mm>*{^{\bar{1}}}**@{},
 \end{xy}
\Ea=
\Ba{c}
\begin{xy}
 <0mm,-0.5mm>*{\blacktriangledown};
 <0mm,0mm>*{};<0mm,5mm>*{}**@{.},
 <0mm,0mm>*{};<-16mm,-5mm>*{}**@{-},
 <0mm,0mm>*{};<-11mm,-5mm>*{}**@{-},
 <0mm,0mm>*{};<-3.5mm,-5mm>*{}**@{-},
 <0mm,0mm>*{};<-6mm,-5mm>*{...}**@{},
   <0mm,0mm>*{};<-18mm,-8mm>*{^{\sigma(1)}}**@{},
   <0mm,0mm>*{};<-11mm,-8mm>*{^{\sigma(2)}}**@{},
   <0mm,0mm>*{};<-3mm,-8mm>*{^{\sigma(n)}}**@{},
 <0mm,0mm>*{};<16mm,-5mm>*{}**@{.},
 <0mm,0mm>*{};<8mm,-5mm>*{}**@{.},
 <0mm,0mm>*{};<3.5mm,-5mm>*{}**@{.},
 <0mm,0mm>*{};<11.6mm,-5mm>*{...}**@{},
   <0mm,0mm>*{};<19mm,-8mm>*{^{\bar{m}}}**@{},
<0mm,0mm>*{};<10mm,-8mm>*{^{\bar{2}}}**@{},
   <0mm,0mm>*{};<5mm,-8mm>*{^{\bar{1}}}**@{},
 \end{xy}
\Ea
, \ \ \ \ 2n+m\geq 2, \forall\ \sigma\in \bS_n
\Eeq
representing  $\overline{C}_{n,m}(\bbH)$. The boundary differential in the associated face complex is given on the generators
by (\ref{3: Lie_infty differential}) and the following formula \cite{Ko, KS}
\Beqr
\p
\Ba{c}
\begin{xy}
 <0mm,-0.5mm>*{\blacktriangledown};
 <0mm,0mm>*{};<0mm,5mm>*{}**@{.},
 <0mm,0mm>*{};<-16mm,-5mm>*{}**@{-},
 <0mm,0mm>*{};<-11mm,-5mm>*{}**@{-},
 <0mm,0mm>*{};<-3.5mm,-5mm>*{}**@{-},
 <0mm,0mm>*{};<-6mm,-5mm>*{...}**@{},
   <0mm,0mm>*{};<-16mm,-8mm>*{^{1}}**@{},
   <0mm,0mm>*{};<-11mm,-8mm>*{^{2}}**@{},
   <0mm,0mm>*{};<-3mm,-8mm>*{^{n}}**@{},
 <0mm,0mm>*{};<16mm,-5mm>*{}**@{.},
 <0mm,0mm>*{};<8mm,-5mm>*{}**@{.},
 <0mm,0mm>*{};<3.5mm,-5mm>*{}**@{.},
 <0mm,0mm>*{};<11.6mm,-5mm>*{...}**@{},
   <0mm,0mm>*{};<17mm,-8mm>*{^{\bar{m}}}**@{},
<0mm,0mm>*{};<10mm,-8mm>*{^{\bar{2}}}**@{},
   <0mm,0mm>*{};<5mm,-8mm>*{^{\bar{1}}}**@{},
 \end{xy}
\Ea
&=&
\sum_{A\varsubsetneq [n]\atop
\# A\geq 2}\
\Ba{c}
\begin{xy}
 <0mm,-0.5mm>*{\blacktriangledown};
 <0mm,0mm>*{};<0mm,5mm>*{}**@{.},
 <0mm,0mm>*{};<-16mm,-5mm>*{}**@{-},
 <0mm,0mm>*{};<-11mm,-5mm>*{}**@{-},
 <0mm,0mm>*{};<-3.5mm,-5mm>*{}**@{-},
 <0mm,0mm>*{};<-6mm,-5mm>*{...}**@{},
 <0mm,0mm>*{};<16mm,-5mm>*{}**@{.},
 <0mm,0mm>*{};<8mm,-5mm>*{}**@{.},
 <0mm,0mm>*{};<3.5mm,-5mm>*{}**@{.},
 <0mm,0mm>*{};<11.6mm,-5mm>*{...}**@{},
   <0mm,0mm>*{};<17mm,-8mm>*{^{\bar{m}}}**@{},
<0mm,0mm>*{};<10mm,-8mm>*{^{\bar{2}}}**@{},
   <0mm,0mm>*{};<5mm,-8mm>*{^{\bar{1}}}**@{},
<-17mm,-12mm>*{\underbrace{\ \ \ \ \ \ \ \ \ \   }},
<-17mm,-14.9mm>*{_A};
<-6mm,-7mm>*{\underbrace{\ \ \ \ \ \ \  }},
<-6mm,-10mm>*{_{[n]\setminus A}};
 (-16.5,-5.5)*{\circ}="a",
(-23,-10)*{}="b_1",
(-20,-10)*{}="b_2",
(-16,-10)*{...}="b_3",
(-12,-10)*{}="b_4",
\ar @{-} "a";"b_2" <0pt>
\ar @{-} "a";"b_1" <0pt>
\ar @{-} "a";"b_4" <0pt>
 \end{xy}
\Ea \ \ \nonumber  \\
\label{3: differential on Konts corollas}
&+& \sum_{k, l, [n]=I_1\sqcup I_2\atop
{2\# I_1 + m \geq l+1 \atop
2\#I_2 + l\geq 2}}
(-1)^{k+l(n-k-l)}
\Ba{c}
\begin{xy}
 <0mm,-0.5mm>*{\blacktriangledown};
 <0mm,0mm>*{};<0mm,6mm>*{}**@{.},
 <0mm,0mm>*{};<-16mm,-6mm>*{}**@{-},
 <0mm,0mm>*{};<-11mm,-6mm>*{}**@{-},
 <0mm,0mm>*{};<-3.5mm,-6mm>*{}**@{-},
 <0mm,0mm>*{};<-6mm,-6mm>*{...}**@{},
<0mm,0mm>*{};<2mm,-9mm>*{^{\bar{1}}}**@{},
<0mm,0mm>*{};<6mm,-9mm>*{^{\bar{k}}}**@{},
<0mm,0mm>*{};<19mm,-9mm>*{^{\overline{k+l+1}}}**@{},
<0mm,0mm>*{};<28mm,-9mm>*{^{\overline{m}}}**@{},
<0mm,0mm>*{};<13mm,-16.6mm>*{^{\overline{k+1}}}**@{},
<0mm,0mm>*{};<20mm,-16.6mm>*{^{\overline{k+l}}}**@{},
 <0mm,0mm>*{};<11mm,-6mm>*{}**@{.},
 <0mm,0mm>*{};<6mm,-6mm>*{}**@{.},
 <0mm,0mm>*{};<2mm,-6mm>*{}**@{.},
 <0mm,0mm>*{};<17mm,-6mm>*{}**@{.},
 <0mm,0mm>*{};<25mm,-6mm>*{}**@{.},
 <0mm,0mm>*{};<4mm,-6mm>*{...}**@{},
<0mm,0mm>*{};<20mm,-6mm>*{...}**@{},
<6.5mm,-16mm>*{\underbrace{\ \ \ \ \   }_{I_2}},
<-10mm,-9mm>*{\underbrace{\ \ \ \ \ \ \ \ \ \ \ \   }_{I_1}},
 %
 (11,-7)*{\blacktriangledown}="a",
(4,-13)*{}="b_1",
(9,-13)*{}="b_2",
(16,-13)*{...},
(7,-13)*{...},
(13,-13)*{}="b_3",
(19,-13)*{}="b_4",
\ar @{-} "a";"b_2" <0pt>
\ar @{.} "a";"b_3" <0pt>
\ar @{-} "a";"b_1" <0pt>
\ar @{.} "a";"b_4" <0pt>
 \end{xy}
\Ea
\Eeqr
This operad was denoted in \cite{KS} by\footnote{This notation may be misleading as this operad is neither a minimal
resolution of some operad $\f\cC$ nor a cobar construction of some cooperad which is Koszul dual to a quadratic operad
$\f\cC$.}
 $\f\cC_\infty$ and its representations in a pair of dg vector spaces
$(X_c,X_o)$  were called {\em open-closed homotopy algebras}\, or OCHA for short.
\sip

 It was shown in
\cite{H} that representations of $\f\cC_\infty$ are in one-to-one correspondence with degree one codifferentials
in the tensor product, $\odot^\bu( X_c[2])\bigotimes \ot^\bu (X_o[1])$, of
the free graded cocommutative coalgebra cogenerated by $X_c[2]$ and the free coalgebra cogenerated by $X_o[1]$.
As  $\f\cC_\infty$ is a {\em free}\, operad, its arbitrary representation, $\rho$, is uniquely determined
by the values on the generators,
\Beqrn
\nu_n&:=& \rho  \left(  \xy
(1,-5)*{\ldots},
(-13,-7)*{_1},
(-8,-7)*{_2},
(-3,-7)*{_3},
(7,-7)*{_{n-1}},
(13,-7)*{_n},
 (0,0)*{\circ}="a",
(0,5)*{}="0",
(-12,-5)*{}="b_1",
(-8,-5)*{}="b_2",
(-3,-5)*{}="b_3",
(8,-5)*{}="b_4",
(12,-5)*{}="b_5",
\ar @{-} "a";"0" <0pt>
\ar @{-} "a";"b_2" <0pt>
\ar @{-} "a";"b_3" <0pt>
\ar @{-} "a";"b_1" <0pt>
\ar @{-} "a";"b_4" <0pt>
\ar @{-} "a";"b_5" <0pt>
\endxy\right) \in \Hom(\odot^n X_c, X_c[3-2n]), \ \ \ n\geq 2, \\
\mu_{n,m} & := & \rho  \left(
\Ba{c}
\begin{xy}
 <0mm,-0.5mm>*{\blacktriangledown};
 <0mm,0mm>*{};<0mm,5mm>*{}**@{.},
 <0mm,0mm>*{};<-16mm,-5mm>*{}**@{-},
 <0mm,0mm>*{};<-11mm,-5mm>*{}**@{-},
 <0mm,0mm>*{};<-3.5mm,-5mm>*{}**@{-},
 <0mm,0mm>*{};<-6mm,-5mm>*{...}**@{},
   <0mm,0mm>*{};<-16mm,-8mm>*{^{1}}**@{},
   <0mm,0mm>*{};<-11mm,-8mm>*{^{2}}**@{},
   <0mm,0mm>*{};<-3mm,-8mm>*{^{n}}**@{},
 <0mm,0mm>*{};<16mm,-5mm>*{}**@{.},
 <0mm,0mm>*{};<8mm,-5mm>*{}**@{.},
 <0mm,0mm>*{};<3.5mm,-5mm>*{}**@{.},
 <0mm,0mm>*{};<11.6mm,-5mm>*{...}**@{},
   <0mm,0mm>*{};<17mm,-8mm>*{^{\bar{m}}}**@{},
<0mm,0mm>*{};<10mm,-8mm>*{^{\bar{2}}}**@{},
   <0mm,0mm>*{};<5mm,-8mm>*{^{\bar{1}}}**@{},
 \end{xy}
\Ea
\right) \in \Hom(\odot^n X_c\bigotimes \ot^m X_o, X_o[2-2n-m]), \ \ 2n +m \geq 2,
\Eeqrn
which satisfy quadratic relations given by the formulae for the differential $\p$. One often denotes
in this context the given differential in  the dg space $X_c$ by
$\nu_1$ and the one in $X_o$ by $\mu_{0,1}$.

\subsubsection{\bf An interpretation of OCHA}\label{3: remark on repr of OC_infty}
 Let
$\mbox{Coder}(\ot^\bu (X_o[1]), [\ ,\ ])$ be the Lie algebra of
coderivations
of the free coalgebra, $\ot^\bu (X_o[1])$, cogenerated by $X_o[1]$.
We do {\em not}\,  assume that coderivations
preserve the co-unit so that MC elements, $\ga$, in this Lie algebra describe, in general, non-flat
$\cA_\infty$-structures on $X_o$.
We have an isomorphism of vector spaces,
$$
\mbox{Coder}(\ot^\bu (X_o[1]))=\oplus_{m\geq 0} \Hom(\ot^m X_o, X_o[1-m]).
$$

It is not hard to check that a representation, $\rho$, of the dg operad $\f\cC_\infty$ in
a pair of dg vector spaces
$(X_c,X_o)$ is equivalent to the
following data
\Bi
\item[(i)] a $\caL_\infty\{1\}$-algebra structure, $\nu=\{\nu_n: \odot^n X_c\rar X_c[3-2n]\}_{n\geq 1}$, in $X_c$, i.e.\
 an ordinary $\caL_\infty$-structure in $X_c[1]$;
\item[(ii)] a $\cA_\infty$-algebra structure, $\mu=\{\mu_{0,m}: \ot^m X_o\rar X_c[2-m]\}_{m\geq 1}$, in $X_o$; the associated
MC element, $\mu$,  of the Lie algebra $\mbox{Coder}(\ot^\bu (X_o[1])), [\ ,\ ])$ makes the latter into a {\em dg}\, Lie
algebra with the differential $d_\mu:=[\mu,\ ]$;
\item[(iii)] a  $\caL_\infty$-morphism, $F$, from the $L_\infty$-algebra $(X_c[1], \nu)$ to
the dg Lie algebra $(\mbox{Coder}(\ot^\bu (X_o[1])), [\ ,\ ], d_\mu)$,
$$
F=\left\{F_n: \odot^n X_c   \lon  \mbox{Coder}(\ot^\bu (X_o[1]))[1-2n]            \right\}_{n\geq 1}
$$
such that the composition
$$
 \odot^n X_c   \stackrel{F_n}{\lon}  \mbox{Coder}(\ot^\bu (X_o[1]))[1-2n]  \stackrel{proj}{\lon}
 \Hom(\ot^m X_o, X_o[2-2n-m])
$$
coincides  precisely with $\mu_{n,m}$ for any $n\geq 1$, $m\geq 0$.
\Ei

\sip

If $\rho$ is an arbitrary representation of $\f\cC_\infty$ and $\ga\in X_c$ is an arbitrary Maurer-Cartan
element\footnote{We tacitly assume here that the $L_\infty$-algebra
$(X_c,\nu_\bu)$ is appropriately filtered so that
the MC equation makes sense. In our applications below $\nu_{n\geq 3}=0$
 so that one has no problems with convergence of the infinite sum.},
$$
\sum_{n\geq 0}\frac{1}{n!} \nu_n(\ga^{\ot n})=0, \ \ \ |\ga|=2,
$$
of the associated $\caL_\infty$-algebra $(X_c,\nu)$, then the maps
$$
\mu_m:= \sum_{n\geq 1}\frac{\hbar^n}{n!}
\mu_{n,m}(\ga^{\ot n}\ot \ldots): \ot^m X_o\lon X_o[[\hbar]][2-m], \ \ m\geq 0,
$$
make the topological (with respect to the adic topology) vector space $X_o[[\hbar]]$ into a
continuous and, in general, non-flat $\cA_\infty$-algebra.  Here $\hbar$ is a formal parameter, and
$X_c[[\hbar]:=X_c\ot\K[[\hbar]]$.

\subsubsection{\bf Example} Kontsevich's formality construction \cite{Ko} gives a
non-trivial representation of $\f\cC_\infty$
in the pair $(X_c:=\cT_{poly}(\R^d), X_o:=C^\infty(\R^d))$, consisting of the space of polyvector
fields and the space of smooth functions
on $\R^d$ for any $d$.

\subsubsection{\bf Smooth atlas on $\overline{C}(\C,\bbH)$}\label{3: Smooth atlas on Konts config spaces}
A generic boundary stratum in  $\overline{C}(\bbH)=\overline{C}_\bu(\C)\coprod \overline{C}_{\bu,\bu}(\bbH)$ is given by a tree $T$
constructed from corollas (\ref{3: Lie_inf corolla}) and (\ref{3: OCHA corolla}).  A
smooth coordinate chart, $\cU_T$, containing that boundary stratum is constructed as in
Section~{\ref{2: Remark on metric trees}} by associating to $T$ a metric tree  $T_{\ccm\cce\cct\ccr\cci\ccc}$
whose
every (of any colour) internal   edge is assigned a small positive real number $\var$ (i.e.\ there are no metric
corollas  type (\ref{2: tau decorated c orolla})).
The spaces of {\em standard configurations}\, associated with $\circ$-vertices are set to be $C_{\bu}^{st}(\C)$ and with $\blacktriangledown$-vertices are set to be $C_{\bu,\bu}^{st}(\bbH)$  which are, by definition,  the subsets
of $\Conf_{\bu,\bu}(\bbH)$ consisting of all configurations $p$ satisfying two conditions,
$x_c(p)=0$ and $||p||=1$. The rescaling and translation maps are defined to be the ordinary dilation,
 $z\rar \lambda z$, and translation, $T_{z_o}: z\rar z+ z_o$, maps on both
$C_{\bu}^{st}(\C)$ and $C_{\bu,\bu}^{st}(\bbH)$. The latter two groups of spaces can be equipped with arbitrary
$\bS$-equivariant smooth structures.

\subsubsection{\bf Example: Kontsevich eye}
$$
\overline{C}_{2,0}\ = \
{
\xy
 (0,0)*{
\xycircle(5,5){}};
(-20,0)*{\bullet},
(20,0)*{\bullet},
(-9,0)*+{};(8.5,0)*+{};
%
   \ar@{-}@(ur,lu) (-20.0,0.0)*{};(20.0,0.0)*{},
   \ar@{-}@(dr,ld) (-20.0,0.0)*{};(20.0,0.0)*{},
\endxy}
$$
The codimension 1 boundary splits into the union of three strata: the inner circle $C_2(\C)$ (``pupil")
represents limit configurations when two points $z_1,z_2\in \bbH$ collapse into a single point in $\bbH$,
the upper (resp. lower)
lid $C_{1,1}(\C)$ represents limit configurations of the form $(z_1\in \R ,z_2\in \bbH)$ (resp.\
$(z_1\in \bbH ,z_2\in \R)$).

\subsection{Higher dimensional version of $C_{n,m}(\bbH)$ and  open-closed homotopy Lie algebras}
\label{3.3 higher dim version of Konts spaces}
 Let $\bbH^{d}$ stand for a subspace of $\R^{d}=\{x_1, \ldots, x_{d-1}, x_{d}\}$
 such that $x_{d}> 0$, $d\geq 2$. The space $\bbH^2$ is just another notation for the
 the upper-half-plane $\bbH$. In a full analogy
 to the case $\overline{C}_{\bu,\bu}(\bbH^2)$ one defines a compactification $\overline{C}_{\bu,\bu}(\bbH^d)$
 for any  $d\geq 3$ and observes that the disjoint union,
$$
 \overline{C}(\bbH^{d}):=\overline{C}_\bu(\R^d)\coprod \overline{C}_{\bu,\bu}(\bbH^{d})
 $$
has a natural structure of a 2-coloured operad in the category of semialgebraic sets. Representations
of the associated  dg operad of fundamental chains, $\cF \cC hains(\overline{C}(\bbH^{d})$, are called
{\em open-closed homotopy Lie algebras} or, shortly, OCHLA. One can describe this operad  in
terms of  generators of degree $d(1-n)-(d-1)m$,
$$
\Ba{c}
\begin{xy}
 <0mm,-0.5mm>*{\blacktriangledown};
 <0mm,0mm>*{};<0mm,5mm>*{}**@{.},
 <0mm,0mm>*{};<-16mm,-5mm>*{}**@{-},
 <0mm,0mm>*{};<-11mm,-5mm>*{}**@{-},
 <0mm,0mm>*{};<-3.5mm,-5mm>*{}**@{-},
 <0mm,0mm>*{};<-6mm,-5mm>*{...}**@{},
   <0mm,0mm>*{};<-16mm,-8mm>*{^{1}}**@{},
   <0mm,0mm>*{};<-11mm,-8mm>*{^{2}}**@{},
   <0mm,0mm>*{};<-3mm,-8mm>*{^{n}}**@{},
 <0mm,0mm>*{};<16mm,-5mm>*{}**@{.},
 <0mm,0mm>*{};<8mm,-5mm>*{}**@{.},
 <0mm,0mm>*{};<3.5mm,-5mm>*{}**@{.},
 <0mm,0mm>*{};<11.6mm,-5mm>*{...}**@{},
   <0mm,0mm>*{};<19mm,-8mm>*{^{\bar{m}}}**@{},
<0mm,0mm>*{};<10mm,-8mm>*{^{\bar{2}}}**@{},
   <0mm,0mm>*{};<5mm,-8mm>*{^{\bar{1}}}**@{},
 \end{xy}
\Ea\hspace{-2mm}
=(-1)^{d sgn(\sigma) + (d-1)sgn(\tau)}\hspace{-5mm}
\Ba{c}
\begin{xy}
 <0mm,-0.5mm>*{\blacktriangledown};
 <0mm,0mm>*{};<0mm,5mm>*{}**@{.},
 <0mm,0mm>*{};<-16mm,-5mm>*{}**@{-},
 <0mm,0mm>*{};<-11mm,-5mm>*{}**@{-},
 <0mm,0mm>*{};<-3.5mm,-5mm>*{}**@{-},
 <0mm,0mm>*{};<-6mm,-5mm>*{...}**@{},
   <0mm,0mm>*{};<-18mm,-8mm>*{^{\sigma(1)}}**@{},
   <0mm,0mm>*{};<-11mm,-8mm>*{^{\sigma(2)}}**@{},
   <0mm,0mm>*{};<-3mm,-8mm>*{^{\sigma(n)}}**@{},
 <0mm,0mm>*{};<16mm,-5mm>*{}**@{.},
 <0mm,0mm>*{};<8mm,-5mm>*{}**@{.},
 <0mm,0mm>*{};<3.5mm,-5mm>*{}**@{.},
 <0mm,0mm>*{};<11.6mm,-5mm>*{...}**@{},
   <0mm,0mm>*{};<19mm,-8mm>*{^{\tau(\bar{m})}}**@{},
<0mm,0mm>*{};<10mm,-8mm>*{^{\tau(\bar{2})}}**@{},
   <0mm,0mm>*{};<5mm,-8mm>*{^{\tau(\bar{1})}}**@{},
 \end{xy}
\Ea
,  \ dn+(d-1)m\geq d, \forall\ \sigma\in \bS_n, \tau
\in\bS_m,
$$
  or, equivalently,
in terms of its representations in an arbitrary pair, $(X_o, X_c)$, of dg vector spaces. We choose here
a second more compact option.
\sip

As  $\cF \cC hains(\overline{C}(\bbH^{d})$ is a {\em free}\, operad, its arbitrary representation, $\rho$,
in $(X_o, X_c)$
is uniquely determined by the values on the generators,
\Beqrn
\nu_n&:=& \rho  \left(  \xy
(1,-5)*{\ldots},
(-13,-7)*{_1},
(-8,-7)*{_2},
(-3,-7)*{_3},
(7,-7)*{_{n-1}},
(13,-7)*{_n},
 (0,0)*{\circ}="a",
(0,5)*{}="0",
(-12,-5)*{}="b_1",
(-8,-5)*{}="b_2",
(-3,-5)*{}="b_3",
(8,-5)*{}="b_4",
(12,-5)*{}="b_5",
\ar @{-} "a";"0" <0pt>
\ar @{-} "a";"b_2" <0pt>
\ar @{-} "a";"b_3" <0pt>
\ar @{-} "a";"b_1" <0pt>
\ar @{-} "a";"b_4" <0pt>
\ar @{-} "a";"b_5" <0pt>
\endxy\right) \in \displaystyle\Hom\left(\odot^n (X_c[d]), X_c[d+1]\right), \ \ \ n\geq 2, \\
\mu_{n,m} & := & \rho  \left(
\Ba{c}
\begin{xy}
 <0mm,-0.5mm>*{\blacktriangledown};
 <0mm,0mm>*{};<0mm,5mm>*{}**@{.},
 <0mm,0mm>*{};<-16mm,-5mm>*{}**@{-},
 <0mm,0mm>*{};<-11mm,-5mm>*{}**@{-},
 <0mm,0mm>*{};<-3.5mm,-5mm>*{}**@{-},
 <0mm,0mm>*{};<-6mm,-5mm>*{...}**@{},
   <0mm,0mm>*{};<-16mm,-8mm>*{^{1}}**@{},
   <0mm,0mm>*{};<-11mm,-8mm>*{^{2}}**@{},
   <0mm,0mm>*{};<-3mm,-8mm>*{^{n}}**@{},
 <0mm,0mm>*{};<16mm,-5mm>*{}**@{.},
 <0mm,0mm>*{};<8mm,-5mm>*{}**@{.},
 <0mm,0mm>*{};<3.5mm,-5mm>*{}**@{.},
 <0mm,0mm>*{};<11.6mm,-5mm>*{...}**@{},
   <0mm,0mm>*{};<17mm,-8mm>*{^{\bar{m}}}**@{},
<0mm,0mm>*{};<10mm,-8mm>*{^{\bar{2}}}**@{},
   <0mm,0mm>*{};<5mm,-8mm>*{^{\bar{1}}}**@{},
 \end{xy}
\Ea
\right) \in \Hom\left(\odot^n (X_c[d])\bigotimes \odot^m (X_o[d-1]), X_o[d]\right), \ \ dn +(d-1)m \geq d.
\Eeqrn
which satisfy quadratic relations which we explain in the  definition of OCHLA.
Let us denote  the given differential in  the dg space $X_c$ by
$\nu_1$ and the one in $X_o$ by $\mu_{0,1}$.

\sip

A structure of an {\em open-closed homotopy Lie d-algebra}\, in  a pair of dg vector spaces
$(X_c,X_o)$ is, for $d\geq 3$, the data,
\Bi
\item[(i)] a $\caL_\infty\{d-1\}$-algebra structure,
$\nu=\{\nu_n: \odot^n (X_c[d])\rar X_c[d+1]\}_{n\geq 1}$, on $X_c$, i.e.\ an ordinary
$\caL_\infty$-structure on $X_c[d-1]$;
\item[(ii)] a $\caL_\infty\{d-2\}$-algebra structure,
$\mu=\{\mu_{0,n}: \odot^n (X_o[d-1])\rar X_o[d]\}_{n\geq 1}$, on $X_o$;
 the associated
MC element, $\mu$,  of the Lie algebra $(\mbox{Coder}(\odot^\bu (X_o[d-1])), [\ ,\ ])$ makes
the latter into a {\em dg}\, Lie
algebra with the differential $d_\mu:=[\mu,\ ]$;
\item[(iii)] a  $\caL_\infty$-morphism, $F$, from the $\caL_\infty$-algebra $(X_c[d-1], \nu)$ to
the dg Lie algebra $(\mbox{Coder}(\odot^\bu (X_o[d-1])), [\ ,\ ], d_\mu)$,
$$
F=\left\{F_n: \odot^n (X_c[d])   \lon  \mbox{Coder}\left(\odot^\bu (X_o[d-1])[1]\right)            \right\}_{n\geq 1}
$$
such that the composition
$$
 \odot^n (X_c[d])   \stackrel{F_n}{\lon}  \mbox{Coder}(\odot^\bu (X_o[d-1]))[1]  \stackrel{proj}{\lon}
 \Hom(\odot^m (X_o[d-1]), X_o[d-1])[1]
$$
coincides  precisely with $\mu_{n,m}$ for any $n\geq 1$, $m\geq 0$.
\Ei

If $\rho$ is an arbitrary representation of $\cF \cC hains(\overline{C}(\bbH^{d})$ for $d\geq 3$
and $\ga\in X_c$ is an arbitrary Maurer-Cartan
element,
$$
\sum_{n\geq 0}\frac{1}{n!} \nu_n(\ga^{\ot n})=0, \ \ \ |\ga|=d,
$$
of the associated  $\caL_\infty$-algebra $(X_c,\nu_\bu)$, then the element
$$
F(\ga):= \sum_{n\geq 1}\frac{\hbar^n}{n!}F_n(\ga^{\ot n}) \in  \mbox{Coder}(\odot^\bu (X^\hbar_o[d-1])[1],
$$
make the topological  vector space $X_o^\hbar:=X_o[[\hbar]]$ into a
continuous (in general,  non-flat) $\caL_\infty\{d-2\}$-algebra.  We show an explicit and
 non-trivial example of such an
open-closed homotopy Lie algebra below in Corollary {\ref{9: corollary on extension of Schouten}}.
below.


\bip

{\large
\section{\bf Configuration space models for the 2-coloured operad of $L_\infty$-morphisms}
\label{4': Section on Mor(L_infty)}
}

\mip

\subsection{The complex plane models}\label{4': C models for Mor(Linfty)}
The 2-dimensional Lie group, $G_{(2)}'$,
of complex translations, $z\rar z + \nu$, $\nu\in \C$, acts freely on $\Conf_A(\C)$
(see
\S {\ref{3: second FM compact in C}} for notations used in this subsection),
$$
\Ba{ccc}
\Conf_A(\C) \times \C& \lon & \Conf_A(\C)\\
(p=\{z_i\}_{i\in A},\nu) &\lon & p+\nu:= \{z_i+\nu\}_{i\in A}
\Ea
$$
 so that
the quotient
$$
\fC_A(\C):=\Conf_A(\C)/G_{(2)}'
$$
is a $(2\# A -2)$-dimensional manifold; as usual, we abbreviate $\fC_{[n]}(\C)$ to $\fC_{n}(\C)$.
There is a diffeomorphism,
$$
\Ba{rccccc}
\Psi_A: & \fC_A(\C) & \lon & C_A^{st}(\C) & \times & (0,+\infty)\\
  &           p     & \lon & \frac{p-z_c(p)}{|p-z_c(p)|} && |p-z_c(p)|
\Ea
$$
Note that the configuration  $\frac{p-z_c(p)}{|p-z_c(p)|}$ is invariant under $\R^+\ltimes \C$
and hence gives a well-defined point in $C_A^{st}(\C)\simeq C_A(\C)$. For any  non-empty subset
$A\subseteq [n]$ there is a natural map
$$
\Ba{rccc}
\pi_A : & \fC_n(\C) & \lon & \fC_A(\C)\\
        & p=\{z_i\}_{i\in [n]} & \lon & p_A:=\{z_i\}_{i\in A}
\Ea
$$
which forgets all the points labeled by elements of the complement $[n]\setminus A$.

\sip

A {\em topological compactification}, $\widehat{\fC}_n(\C)$, of $\fC_{n}(\C)$
can be defined as the closure of a composition (cf.\ \cite{Me-Auto}),
\Beq\label{4': first compactifn of fC(C)}
\fC_{n}(\C)\stackrel{\prod \pi_A}{\lon} \prod_{A\subseteq [n]\atop \# A\geq 2} \fC_{A}(\C)
\stackrel{\prod \Psi_A}{\lon} \prod_{A\subseteq [n]\atop  \# A\geq 2}
 C_{A}^{st}(\C)\times (0, +\infty) \hook \prod_{A\subseteq [n]\atop  \# A\geq 2}
 \widetilde{C}^{st}_{A}(\C)\times [0, +\infty].
\Eeq
Thus all the limiting points in this compactification
come  from configurations when a group or groups of points move too {\em close}\,  to each other
 within each group (as in the case of $\overline{C}_n(\R)$) and/or a group or
groups of points are moving too {\em far}\, (with respect to the relative Euclidean distances inside each group) away
from each other (cf.\ \S\ref{2'': section}).

The boundary strata in $\widehat{\fC}_{n}(\C)$ are given by the limit values $0$ or $+\infty$
of the parameters $|p_A-z_c(p_A)|$, $A\subseteq [n]$, and it is an easy (and fully analogous to
\S\ref{2'': section})
exercise to find all the codimension 1 boundary strata,

\Beq\label{4':codimension 1 boundary in widehat{C}_n(C)}
\displaystyle
\p \widehat{\fC}_{n}(\C) = \bigsqcup_{A\subseteq [n]\atop \# A\geq 2} \left(\widehat{\fC}_{n - \# A + 1}(\C)\times
 \overline{C}_{\# A}(\C)\right)\
 \bigsqcup_{[n]=B_1\ \sqcup \ldots \sqcup B_k\atop{
 2\leq k\leq n \atop \#B_1,\ldots, \#B_k\geq 1}}\left( \overline{C}_{k}(\C)\times \widehat{\fC}_{\# B_1,0}(\C)\times \ldots\times
 \widehat{\fC}_{\# B_k,0}(\C)
 \right)
\Eeq
where the first summation runs over all possible  subsets, $A$, of $[n]$
 with cardinality  at least two, and the
second summation runs over all possible decompositions of $[n]$ into (at least two)  disjoint
non-empty subsets $B_1, \ldots, B_k$.
Geometrically,  a stratum in the first group of summands  corresponds to  $A$-labeled elements of the set $\{z_1, \ldots, z_n\}$ moving {\em close}\,
to each other, while a stratum in the second group of summands corresponds to $k$ clusters of points (labeled, respectively,
 by disjoint ordered subsets $B_1, \ldots B_k$ of $[n]$) moving {\em far}\,  from each other while keeping relative
  distances within each group $B_i$ finite.

\sip

Note that the faces of the type $C_\bu(\C)$ appear in the natural stratification
of $\widehat{\fC}_{n}(\C)$ in two ways --- as the strata describing collapsing points and as the strata controlling groups of
points at ``infinity" --- and they {\em never intersect}\, in $\widehat{\fC}_{n}(\C)$ (cf.\
\S \ref{2'': section}). For that reason we have to assign to these two groups
of faces  different colours and represent collapsing $\overline{C}_n(\C)$-stratum  by, say,  white corolla
with straight legs as in (\ref{3: Lie_inf corolla}), the $\overline{C}_n(\R)$-stratum at ``infinity" by, say,
   a version of (\ref{3: Lie_inf corolla}) with ``broken" legs,
$
\hspace{-3mm}
\xy
(1,-5)*{\ldots},
(-13,-7)*{_{i_1}},
(-8,-7)*{_{i_2}},
(-3,-7)*{_{i_3}},
(7,-7)*{_{i_{q-1}}},
(13,-7)*{_{i_q}},
 (0,0)*{\circ}="a",
(0,5)*{}="0",
(-12,-5)*{}="b_1",
(-8,-5)*{}="b_2",
(-3,-5)*{}="b_3",
(8,-5)*{}="b_4",
(12,-5)*{}="b_5",
\ar @{--} "a";"0" <0pt>
\ar @{--} "a";"b_2" <0pt>
\ar @{--} "a";"b_3" <0pt>
\ar @{--} "a";"b_1" <0pt>
\ar @{--} "a";"b_4" <0pt>
\ar @{--} "a";"b_5" <0pt>
\endxy
$,
and the face $\fC_{n}(\C)$ by the black corolla
$\hspace{-3mm}
\xy
(1,-5)*{\ldots},
(-13,-7)*{_{i_1}},
(-8,-7)*{_{i_2}},
(-3,-7)*{_{i_3}},
(7,-7)*{_{i_{n-1}}},
(13,-7)*{_{i_n}},
 (0,0)*{\bullet}="a",
(0,5)*{}="0",
(-12,-5)*{}="b_1",
(-8,-5)*{}="b_2",
(-3,-5)*{}="b_3",
(8,-5)*{}="b_4",
(12,-5)*{}="b_5",
\ar @{--} "a";"0" <0pt>
\ar @{-} "a";"b_2" <0pt>
\ar @{-} "a";"b_3" <0pt>
\ar @{-} "a";"b_1" <0pt>
\ar @{-} "a";"b_4" <0pt>
\ar @{-} "a";"b_5" <0pt>
\endxy
$ of degree $2-2n$.

\mip

\subsubsection{\bf Proposition (cf.\ \cite{Me-Auto})}\label{4': Propos on the face complex of Mor(Lie_infty)}
{\em The face complex of the disjoint union
\Beq\label{4: My Lie_infty config topol operad}
\widehat{\fC}(\C):=\overline{C}_\bu(\C)\sqcup \widehat{\fC}_{\bu}(\C)\sqcup
\overline{C}_\bu(\C)
\Eeq
has naturally a structure of a dg free  non-unital 2-coloured operad of transformation type,
$$
\cM or(L_\infty):= \cF ree
\left\langle
\xy
(1,-5)*{\ldots},
(-13,-7)*{_1},
(-8,-7)*{_2},
(-3,-7)*{_3},
(7,-7)*{_{p-1}},
(13,-7)*{_p},
 (0,0)*{\circ}="a",
(0,5)*{}="0",
(-12,-5)*{}="b_1",
(-8,-5)*{}="b_2",
(-3,-5)*{}="b_3",
(8,-5)*{}="b_4",
(12,-5)*{}="b_5",
\ar @{-} "a";"0" <0pt>
\ar @{-} "a";"b_2" <0pt>
\ar @{-} "a";"b_3" <0pt>
\ar @{-} "a";"b_1" <0pt>
\ar @{-} "a";"b_4" <0pt>
\ar @{-} "a";"b_5" <0pt>
\endxy,
\ \ \
\xy
(1,-5)*{\ldots},
(-13,-7)*{_1},
(-8,-7)*{_2},
(-3,-7)*{_3},
(7,-7)*{_{n-1}},
(13,-7)*{_n},
 (0,0)*{\bullet}="a",
(0,5)*{}="0",
(-12,-5)*{}="b_1",
(-8,-5)*{}="b_2",
(-3,-5)*{}="b_3",
(8,-5)*{}="b_4",
(12,-5)*{}="b_5",
\ar @{--} "a";"0" <0pt>
\ar @{-} "a";"b_2" <0pt>
\ar @{-} "a";"b_3" <0pt>
\ar @{-} "a";"b_1" <0pt>
\ar @{-} "a";"b_4" <0pt>
\ar @{-} "a";"b_5" <0pt>
\endxy
\ \ \ ,
\xy
(1,-5)*{\ldots},
(-13,-7)*{_1},
(-8,-7)*{_2},
(-3,-7)*{_3},
(7,-7)*{_{q-1}},
(13,-7)*{_q},
 (0,0)*{\circ}="a",
(0,5)*{}="0",
(-12,-5)*{}="b_1",
(-8,-5)*{}="b_2",
(-3,-5)*{}="b_3",
(8,-5)*{}="b_4",
(12,-5)*{}="b_5",
\ar @{--} "a";"0" <0pt>
\ar @{--} "a";"b_2" <0pt>
\ar @{--} "a";"b_3" <0pt>
\ar @{--} "a";"b_1" <0pt>
\ar @{--} "a";"b_4" <0pt>
\ar @{--} "a";"b_5" <0pt>
\endxy
\right\rangle_{p,q\geq 2, n\geq 1}
$$
equipped with a differential which is given on white corollas of both colours by
formula (\ref{3: Lie_infty differential}) and on  black corollas by the following formula
  \Beqr\label{Ch3: d on black corollas}
\p
\xy
(1,-5)*{\ldots},
(-13,-7)*{_1},
(-8,-7)*{_2},
(-3,-7)*{_3},
(7,-7)*{_{n-1}},
(13,-7)*{_n},
 (0,0)*{\bullet}="a",
(0,5)*{}="0",
(-12,-5)*{}="b_1",
(-8,-5)*{}="b_2",
(-3,-5)*{}="b_3",
(8,-5)*{}="b_4",
(12,-5)*{}="b_5",
\ar @{--} "a";"0" <0pt>
\ar @{-} "a";"b_2" <0pt>
\ar @{-} "a";"b_3" <0pt>
\ar @{-} "a";"b_1" <0pt>
\ar @{-} "a";"b_4" <0pt>
\ar @{-} "a";"b_5" <0pt>
\endxy
&=&
- \sum_{A\varsubsetneq [n]\atop
\# A\geq 2}
\Ba{c}
\begin{xy}
<10mm,0mm>*{\bu},
<10mm,0.8mm>*{};<10mm,5mm>*{}**@{--},
<0mm,-10mm>*{...},
<12mm,-5mm>*{\ldots},
<13mm,-7mm>*{\underbrace{\ \ \ \ \ \ \ \ \ \ \ \ \  }},
<14mm,-10mm>*{_{[n]\setminus A}};
<10.0mm,0mm>*{};<20mm,-5mm>*{}**@{-},
<10.0mm,-0mm>*{};<5mm,-5mm>*{}**@{-},
<10.0mm,-0mm>*{};<8mm,-5mm>*{}**@{-},
<10.0mm,0mm>*{};<0mm,-4.4mm>*{}**@{-},
<10.0mm,0mm>*{};<16.5mm,-5mm>*{}**@{-},
<0mm,-5mm>*{\circ};
<-5mm,-10mm>*{}**@{-},
<-2.7mm,-10mm>*{}**@{-},
<2.7mm,-10mm>*{}**@{-},
<5mm,-10mm>*{}**@{-},
<0mm,-12mm>*{\underbrace{\ \ \ \ \ \ \ \ \ \ }},
<0mm,-15mm>*{_{A}},
\end{xy}
\Ea
\nonumber\\
&& +\ \, \sum_{k=2}^n \sum_{[n]=B_1\sqcup\ldots\sqcup B_k
\atop \inf B_1<\ldots< \inf B_k}
\Ba{c}
\xy
(-15.5,-7)*{...},
(19,-7)*{...},
(7.5,0)*{\ldots},
(-17.8,-12)*{_{B_1}},
(-3.2,-12)*{_{B_2}},
(17.8,-12)*{_{B_k}},
(-1.8,-7)*{...},
%
(-3.2,-9)*{\underbrace{\ \ \ \ \ \  \ \ \ \   }},
%
(-17.8,-9)*{\underbrace{\ \ \ \ \ \  \ \ \ \   }},
%
(16.8,-9)*{\underbrace{\ \ \ \ \ \  \ \ \ \ \  }},
%
 (0,7)*{\circ}="a",
(-14,0)*{\bullet}="b_0",
(-4.5,0)*{\bullet}="b_2",
(14,0)*{\bullet}="b_3",
(0,13)*{}="0",
(1,-7)*{}="c_1",
(-8,-7)*{}="c_2",
(-5,-7)*{}="c_3",
(-22,-7)*{}="d_1",
(-19,-7)*{}="d_2",
(-13,-7)*{}="d_3",
(12,-7)*{}="e_1",
(15,-7)*{}="e_2",
(22,-7)*{}="e_3",
\ar @{--} "a";"0" <0pt>
\ar @{--} "a";"b_0" <0pt>
\ar @{--} "a";"b_2" <0pt>
\ar @{--} "a";"b_3" <0pt>
\ar @{-} "b_2";"c_1" <0pt>
\ar @{-} "b_2";"c_2" <0pt>
\ar @{-} "b_2";"c_3" <0pt>
\ar @{-} "b_0";"d_1" <0pt>
\ar @{-} "b_0";"d_2" <0pt>
\ar @{-} "b_0";"d_3" <0pt>
\ar @{-} "b_3";"e_1" <0pt>
\ar @{-} "b_3";"e_2" <0pt>
\ar @{-} "b_3";"e_3" <0pt>
\endxy
\Ea.
\Eeqr
Representations of this operad in a pair of dg vector spaces, $V_{in}$ and $V_{out}$, is the same
as a triple, $(\mu_{in}, \mu_{out}, F)$, consisting of  $\caL_\infty\{1\}$ structures, $\mu_{in}$ on $V_{in}$
and $\mu_{out}$ on $V_{out}$, and  of a $\caL_\infty\{1\}$ morphism, $F:(V_{in},\mu_{in})\rar (V_{out},\mu_{out})$,
between them.
}

\subsubsection{\bf Example} As $C_2^{st}(\C)=\widetilde{C}_2(\C)=S^1$, the space
 $\widehat{\fC}_2(\C)$ is the closure
of the embedding
$$
\Ba{ccccccc}
\fC_2(\C) & \lon &  S^1 &\times & (0,+\infty) &\hook &  S^1 \times  [0,+\infty]\\
(z_1,z_2) &\lon &   Arg(z_1-z_2) && |z_1-z_2| &&
\Ea
$$
and hence can be identified with the closed cylinder
\Beq\label{5: fC_2(C) cylinder}
\widehat{\fC}_2(\C)=\
\Ba{c}
\xy
(-8,0)*{}="a",
(8,0)*{}="b",
(-8,-25)*{}="a1",
(8,-25)*{}="b1",
(-8,0)*-{};(8,0)*-{};
**\crv{(0,6)}
**\crv{(0,-6)};
\ar @{-} "a";"a1" <0pt>
\ar @{-} "b";"b1" <0pt>
\endxy\vspace{-3mm}\\
\xy
(-8,0)*{},
(8,0)*{},
(-8,0)*-{};(8,0)*-{};
**\crv{(0,6)}
**\crv{(0,-6)};
\endxy
\Ea.
\Eeq

\subsubsection{\bf Smooth (or semialgebraic) structure} The embedding formula (\ref{4': first compactifn of fC(C)})
makes $\widehat{\fC}(\C)$ into an operad in the category of semialgebraic manifolds. We can make it also into an operad
in the category of smooth manifolds with corners using metric trees in almost exactly the same way as in \S
{\ref{2: subsubs smooth atlas on associh}}.

\subsubsection{\bf A second complex space model for $\cM or(L_\infty)$}  In a full analogy to
 \S {\ref{2:  A different smooth structure on fC(R)}}
 we can introduce on $\widehat{\fC}(\C)$ a different smooth structure using a different compactification formula.
For a pair of subspaces $B\subsetneq A\subseteq [n]$ we consider
$$
\Ba{rcccccc}
\pi_{A,B}: & \fC_n(\C) & \lon &  C_B^{st}(\C)&\times & (0,+\infty)\\
& p & \lon &  \frac{p_B-z_c(p_B)}{|p_B-z_c(p_B)|}
&& |p_A-z_c(p_B)|\cdot |p_B-z_c(p_B)|
\Ea
$$
and then define a topological compactification $\widehat{\fC}_\bu(\C)$ as the closure of
the following composition of embeddings,
\Beq\label{4': second compactifn of fC(C)}
\fC_{n}(\C)\stackrel{\Psi_n\prod \pi_{A,B}}{\lon} C_n^{st}(\C)\times (0,+\infty)
\hspace{-3mm}  \prod_{B\subsetneq A\subseteq [n]\atop \# B\geq 2} \hspace{-4mm} C_B^{st}(\C) \times (0,+\infty)
 \hook \widetilde{C}_n^{st}(\C)\times [0,+\infty]\hspace{-3mm}\prod_{B\subsetneq A\subseteq [n]\atop \# B\geq 2}
 \hspace{-4mm}  \widetilde{C}_B^{st}(\C) \times [0,+\infty].
\Eeq
The boundary strata in $\widehat{\fC}_{n}(\C)$ are given by the limit values $0$ or $+\infty$
of the parameters $|p_{[n]}-z_c(p_{[n]})|$ and  $|p_A-z_c(p_B)|\cdot |p_B-z_c(p_B)|$, and the combinatorics of
its face complex is again described
by Proposition {\ref{4': Propos on the face complex of Mor(Lie_infty)}}. However, this compactification has a different
geometric
meaning from the one given by the embedding  formula (\ref{4': first compactifn of fC(C)}); we refer to \S
{\ref{2:  A different smooth structure on fC(R)}} for detailed discussion of the 1-dimensional version of this phenomenon.
Smooth structure on this compactification can be introduced in a complete analogy to \S
{\ref{2:  A different smooth structure on fC(R)}}.

\subsection{Upper half space models for $\cM or(L_\infty)$ \cite{Me-Auto}}
\label{4': Subsec on my compact of C_n,0}  Let $\Conf_{A,0}(\bbH)$ stand for the space of
 injections, $A\hook \bbH$, of a finite set $A$ into the upper half-plane, and  $\widetilde{\Conf}_{A,0}(\bbH)$
for the space of  maps, $A\rar \bbH$. In this section we remind a compactification, $\widehat{C}_{A,0}(\bbH)$, of
Kontsevich's configuration space,
$$
C_{A,0}(\bbH)=\Conf_{A,0}(\bbH)/G_{(2)},\ \ \ \ \# A\geq 1,
$$
which is different from Kontsevich's one and which
 gives us an upper half space model for the 2-coloured operad of homotopy morphisms of $\caL_\infty$-algebras.
It is worth noting that  the group $G_{(2)}'$ used earlier to construct a complex space model, $\widehat{\fC}_\bu(\C)$,
for  $\cM or(\caL_\infty)$ is obtained from the group $G_{(3)}=\R^+\ltimes \C$ (which was used
to construct a configuration
space model for the operad of $\caL_\infty$-algebras) by taking away {\em dilations}\, $\R^+$, while this
time we use points in the upper half plane together with the group $G_{(2)}$ obtained  from
$G_{(3)}$ by taking away the semigroup of {\em vertical translations}\, $\R^+$.

\sip

Define a section,
$$
\Ba{rccc}
s: &C_{A,0}(\bbH) & \lon & \Conf_{A,0}(\bbH)\\
\displaystyle
&\displaystyle p={\{z_i=x_i+\ii y_i\in \bbH\}_{i\in A}} & \lon &
\displaystyle p^{st}:=  \frac{p - x_c(p)}{\inf_{i\in A}y_i}.
\Ea
$$
where $x_c(p):=\sum_{i=1}^{\# A}\frac{1}{\# A}x_{i}$, and set $C_{A,0}^{st}(\bbH):=\Img s$.  Note that every point in the configuration $p^{st}$ lies
in the subspace $\Im z \geq 1 \subset \bbH$ and at least one point lies on the line $\Im z=1$.
Thus
$$
C_{A,0}^{st}(\bbH)=\left\{p=\{z_i\}_{i\in A} \in  \Conf_{A,0}(\bbH)\ |\ x_c(p)=0, \ \inf_{i\in A}y_i=1
\right\}.
$$

It is an elementary exercise to check that the subspace $C_A^{st}(\bbH)\subset C_{A,0}^{st}(\bbH)$
consisting of elements $p^{st}$ with $$
|p^{st}-\ii|=1
$$
gives
a global section of the surjective forgetful map  $ \Conf_{A,0}(\bbH)\rar C_A(\C)$
and hence is homeomorphic to $C_A(\C)$. Note that
both spaces $C_{n,0}^{st}(\bbH)$ and $C_n^{st}(\bbH)$ have natural structures of smooth manifolds with corners
(and also of semialgebraic sets); for example,
$$
C_2^{st}(\bbH)=
\xy
(-9,0)*-{};(8.5,0)*-{};
**\crv{(0,6)}
**\crv{(0,-6)}
\endxy
$$
rather than an ordinary smooth circle $S^1$.
Thus
$$
C_A^{st}(\bbH)=\left\{p=\{z_i\}_{i\in A} \in  \Conf_{A,0}(\bbH)\ |\ x_c(p)=0, \ \inf_{i\in A}y_i=1,\ |p-\ii|=1
\right\}.
$$
We also define
$$
\widetilde{C}_A^{st}(\bbH)=\left\{p=\{z_i\}_{i\in A} \in  \widetilde{\Conf}_{A,0}(\bbH)\ |\ x_c(p)=0, \ \inf_{i\in A}y_i=1,\ |p-\ii|=1
\right\}.
$$
which is a {\em compact}\, manifold with corners.
 There is a homeomorphism,
\Beq\label{5': X_n iso of C(H)}
\Ba{rccccc}
\Xi_n: & C_{n,0}(\bbH) & \lon &  C_{n}^{st}(\bbH)\simeq C_n(\C) &\times & (0,+\infty) \\
& p & \lon & \frac{p^{st}-\ii }{|p^{st}-\ii|} +\ii &\times & |p^{st}-\ii|.
\Ea
\Eeq
Let, for a subset $A\subset [n]$,
$$
\Ba{rccc}
\pi_A: &  C_{n,0}(\bbH) & \lon & C_{A,0}(\bbH)\\
       &  p=\{z_i\}_{i\in [n]} & \lon &  p_A=\{z_i\}_{i\in A}
\Ea
$$
stand for the natural forgetful map.
For a pair of subspaces $B\subsetneq A\subseteq [n]$ we  consider a map
$$
\Ba{rcccccc}
\Xi_{A,B}: & C_{n,0}(\bbH) & \lon &  C_B^{st}(\bbH)&\times & (0,+\infty)\\
& p & \lon &  \frac{p_B- z_{min}(p_B)}{|p_B-z_{min}(p_B)|} +\ii
&& ||p_{A,B}||:=  \frac{|p_B-z_{min}(p_B)|}{y_{min}(p_A)}.
\Ea
$$

Depending on application needs,
a topological {\em compactification}, $\widehat{C}_{n,0}(\bbH)$, of $C_{n,0}(\bbH)$
can be defined either as the closure of a composition
(cf.\ (\ref{2: first compactifn of fC(R)}) and (\ref{4': first compactifn of fC(C)})),
\Beq\label{4': first compactifn of fC(H)}
C_{n,0}(\bbH)\stackrel{\prod \pi_A}{\lon} \prod_{A\subseteq [n]\atop A\neq \emptyset} C_{A,0}(\bbH)
\stackrel{\prod \Xi_A}{\lon} \prod_{A\subseteq [n]\atop A\neq \emptyset}
 C_{A}^{st}(\bbH)\times (0, +\infty) \lon \prod_{A\subseteq [n]\atop A\neq \emptyset}
 \widetilde{C}^{st}_{A}(\bbH)\times [0, +\infty].
\Eeq
or as the closure on the following embedding,
\Beq\label{4': second compactifn of fC(H)}
C_{n,0}(\bbH)\stackrel{\Psi_n\prod \Xi_{A,B}}{\lon} C_n^{st}(\bbH)\times (0,+\infty)
\hspace{-3mm}  \prod_{B\subsetneq A\subseteq [n]\atop \# B\geq 2} \hspace{-4mm} C_B^{st}(\bbH) \times (0,+\infty)
 \hook \widetilde{C}_n^{st}(\bbH)\times [0,+\infty]\hspace{-3mm}\prod_{B\subsetneq A\subseteq [n]\atop \# B\geq 2}
 \hspace{-4mm}  \widetilde{C}_B^{st}(\bbH) \times [0,+\infty].
\Eeq
The boundary strata in both  cases are given   by the limit values $0$ or $+\infty$
of the parameters $|p^{st}-\ii|$ and  $|p^{st}_{A}-\ii|$ (respectively, $|p^{st}-\ii|$ and $||p_{A,B}||$).
It is not hard to see that the combinatorics
of the
face complex of $\widehat{C}_{\bu,0}(\bbH)$ is the same as in the case of $\widehat{\fC}_\bu(\C)$
so that  $\widehat{C}_{\bu,0}(\bbH)$ gives us  a configuration space model for the 2-coloured operad of
$\caL_\infty$-algebras and their homotopy morphisms \cite{Me-Auto}. However, the geometric meaning and the natural smooth structure
on $\widehat{\fC}_\bu(\bbH)$ are  not equivalent to the ones studied above. Note that contrary to the Kontsevich
compactification, $\overline{C}_{n,0}(\bbH)$, of $C_{n,0}(\bbH)$ limit configurations in  $\widehat{C}_{n,0}(\bbH)$
never approach the real line in $\overline{\bbH}$. From now on we use the symbol $\widehat{C}_{n,0}(\bbH)$
to denote the closure of the embedding (\ref{4': second compactifn of fC(H)}).

\sip

\subsubsection{\bf Smooth atlas on $\widehat{C}_{\bu,0}(\bbH)$}\label{4: Smooth atals on Mor(Lie_infty)}
An atlas on the topological operad
$$
\widehat{C}(\bbH):=
\overline{C}_\bu(\C)\sqcup \widehat{C}_{\bu,0}(\bbH)\sqcup
\overline{C}_\bu(\C)
$$
 can be constructed with the help of exactly the same kind of metric trees as the  ones used in
Section~{\ref{2:  A different smooth structure on fC(R)}} (see also \S{\ref{2: Remark on metric trees}}):
\Bi
\item
 the spaces of standard positions associated with white $n$-corollas of both colours are set to be $C_n^{st}(\bbH)$,
 and the space of standard positions associated with the black $n$-corolla is  $C_{n,0}^{st}(\bbH)$;
\item
the rescaling operation is defined on
$C_n^{st}(\bbH)$ and $C_{n,0}^{st}(\bbH)$ by the map,
$z\rar \la(z-i) +i$;
\item for a point $z_0=x_0+\ii y_0$ the associated translation map $T_{z_0}: \Conf_{n,0}(\bbH) \rar  \Conf_{n,0}(\bbH)$
is defined to be $p\rar p + z_0$.
\Ei
This atlas makes $\widehat{C}(\bbH)$ into an operad
in the category of smooth manifolds with corners. For example, the space $\widehat{C}_{2,0}(\bbH)$ is the closure
of an embedding,
$$
\Ba{rcc}
C_{2,0}(\bbH) & \lon & C_2^{st}(\bbH) \times [0, +\infty]\\
\Ea
$$
 and hence is diffeomorphic to the following manifold with corners
$$
{
\xy
(-9,0)*-{};(8.5,0)*-{};
**\crv{(0,6)}
**\crv{(0,-6)}
   \ar@{-}@(ur,lu) (-20.0,0.0)*{};(20.0,0.0)*{},
   \ar@{-}@(dr,ld) (-20.0,0.0)*{};(20.0,0.0)*{},
\endxy}
$$
whose inner topological circle represents the boundary component, $C_{1,0}(\bbH)\times C_2^{st}(\bbH)$,
describing two point moving very close to each other while the outer topological circle describes the
 boundary component describing two points moving very far ---
in the Euclidean or Poincar\'{e} metric --- from each other.

\subsubsection{\bf Remark} We can define a slightly different smooth structure on  $\widehat{C}_\bu(\bbH)$
by associating $C_n^{st}(C)$  to white corollas with solid legs and
$C_n^{st}(\bbH)$ to white corollas with broken legs. Then the rescaling operation on  $C_n^{st}(\C)$ has to be defined
as an ordinary dilation, $z\rar \lambda z$. In this smooth structure on $\widehat{\fC}_\bu(\bbH)$ the space
 $\widehat{C}_{2,0}(\bbH)$ is precisely the ``Kontsevich eye",
$$
\widehat{C}_{2,0}(\bbH)\ = \
{
\xy
 (0,0)*{
\xycircle(5,5){}};
(-20,0)*{\bullet},
(20,0)*{\bullet},
(-9,0)*+{};(8.5,0)*+{};
%
   \ar@{-}@(ur,lu) (-20.0,0.0)*{};(20.0,0.0)*{},
   \ar@{-}@(dr,ld) (-20.0,0.0)*{};(20.0,0.0)*{},
\endxy}
$$
There is no big difference between these two smooth structures as in both cases the Kontsevich propagator
 $\om_K=d Arg\frac{z_1-z_2}{\overline{z}_1-z_2}$ is a smooth differential 1-form on  $\widehat{C}_{2,0}(\bbH)$.

\subsubsection{\bf Higher dimensional versions} In a full analogy  one can define a compactification,
$\widehat{C}_{n,0}(\bbH^k)$, of the orbit space $C_{n,0}(\bbH^k)$ for any $k\geq 2$
and check that the face complex of the disjoint union
$$
\widehat{C}(\bbH^d):=
\overline{C}_\bu(\R^{d})\sqcup \widehat{C}_{\bu,0}(\bbH^d)\sqcup
\overline{C}_\bu(\R^{d})$$
 is a dg free 2-coloured operad of morphisms of $\caL_{\infty}\{k\}$-algebras.
In fact, we can talk about this family of operads in the range $k\geq 1$:
the case $k=1$ gives us the 2-coloured operad of $\cA_\infty$-morphisms, and all the other cases
give us the (degree shifted) 2-coloured operad of $\caL_\infty$-morphisms; the topological reason for this phenomenon
 is clear.


\bip

{\large
\section{\bf Configuration space model for the 4-coloured operad
of OCHA morphsisms}
\label{6: Section}
}

\mip

\subsection{New compactified configuration spaces $\widehat{\fC}_{n,m}(\bbH)$} Let us define,
for $2n+m\geq 1$,
 $$
\fC_{n,m}(\bbH):=\Conf_{n,m}(\bbH)/{G_{(1)}},
$$
where the Lie group $G_{(1)}=\R$ acts on $\overline{\bbH}$ by translations,
$$
G_{(1)}= \{z\rar  z+ \nu\ |\ \nu\in \R\}.
$$
This is a $(2n+m-1)$-dimensional naturally oriented manifold which is isomorphic to $C_{n,m}(\bbH)\times \R^+$
via the following map,
$$
\Ba{rccc}
\Phi_{n,m}: & \fC_{n,m}(\bbH) & \lon &  C_{n,m}(\bbH)\times \R^+ \\
& p & \lon & \frac{p-x_c(p)}{|p-x_c(p)|} \times |p-x_c(p)|.
\Ea
$$
Note that the fraction  $(p-x_c(p))/|p-x_c(p)|$ is
$G_{(2)}$-invariant and hence gives a well-defined element in  $C_{n,m}(\bbH)$.
Recall that
$$
C_{n,m}^{st}(\bbH)=\left\{p\in \Conf_{n,m}(\bbH)\ |\ x_c(p)=0,\ \ |p|=1       \right\}
$$
gives a section of the natural projection $\Conf_{n,m}(\bbH)\rar C_{n,m}(\bbH)$. We also consider
$$
\widetilde{C}_{n,m}^{st}(\bbH)=\left\{p\in \widetilde{\Conf}_{n,m}(\bbH)\ |\ x_c(p)=0,\ \ |p|=1
   \right\}
$$
which is a compact manifold with boundary.

\sip

For a pair of subsets $A\subset [n]$ and ${B}\subset [m]$, let
$$
\Ba{rccc}
\pi_{A,{B}}: &  \fC_{n,m}(\bbH) & \lon & \fC_{A,{B}}(\bbH)\\
       &  p=\{z_i, x_j\}_{i\in [n], j\in [m]} & \lon &  p_{A, {B}}
       :=\{z_i, x_j\}_{i\in A, j\in {B}}
\Ea
$$
be the forgetful map. We also consider a map
$$
\Ba{rccccc}
\Xi_{A,0}: &  \fC_{A,B}(\bbH) & \stackrel{\pi_{A\sqcup \emptyset}}{\lon}
& \fC_{A,{0}}(\bbH) &\lon &  C_A(\C) \times (0+\infty)  \\
       &  p=\{z_i, x_j\}_{i\in [A], j\in [B]} & \lon &  p_{A}
       =\{z_i\}_{i\in A} &\lon & \displaystyle \left(\frac{p_A-z_c(p_A)}{|p_A-z_c(p_A)|}, |p_A-z_c(p_A)|\right)
\Ea
$$
where $z_c(p):=\frac{1}{\# A} \sum_{i\in A} z_i$. Note that the fraction  $(p_A-z_c(p_A))/|p_A-z_c(p_A)|$ is
$G_{(3)}$-invariant and hence gives a well-defined element in  $C^{st}_{A}(\C)\simeq C_A(\C)$.


\subsubsection{\bf Definition}
A topological {\em compactification}, $\widehat{\fC}_{n,m}(\bbH)$, of $\fC_{n,m}(\bbH)$
can be defined as the closure of a composition
(cf.\ (\ref{2: first compactifn of fC(R)}) and (\ref{4': first compactifn of fC(C)}))
\Beq\label{4: first compctfn fC_n,m(H)}
\Ba{ccccc}
\fC_{n,m}(\bbH)\hspace{-1mm} & \xrightarrow{\prod \pi_{A,B }} & \hspace{-2mm}
\displaystyle\prod_{A\subseteq [n], B\subset [m]\atop \# 2A+\# B\geq 1}
\fC_{A,B}(\bbH) \hspace{-2mm} &
\xrightarrow{\prod \Phi_{A, B}\times \prod \Xi_{A,0}} & \hspace{-2mm}
\displaystyle\prod_{A\subseteq [n], B\subseteq [m]\atop \# 2A+\# B\geq 1}
 \left(C^{st}_{A,B}(\bbH)\times \R^+ \right)\times
 \prod_{A\subseteq [n] \atop\# A\geq 2}\left(  C^{st}_A(\C)\times \R^+\right)\\
&&&&\Big\downarrow\\
&&&&
\displaystyle\prod_{A\subseteq [n], B\subseteq [m]\atop \# 2A+\# B\geq 1}\hspace{-3mm}
 \left(\widetilde{C}^{st}_{A,B}(\bbH)\times \overline{\R^+} \right)\times
 \prod_{A\subseteq [n] \atop\# A\geq 2}\left(  \widetilde{C}^{st}_A(\C)\times \overline{\R^+}\right)
\Ea
\Eeq

The boundary strata in both definitions  are  given   by the limit values, $0$ or $+\infty$,
of the parameters,
$
\left\{|p_{A,B}-x_c(p_{A,B})|, \ \ |p_A-z_c(p_A)|\right\}_{A\subseteq [n], B\subseteq [m]}
$
so that all the limiting points in this compactification
come  from configurations when a group or groups of points move too {\em close}\,  to each other
 within each group and/or a group or
groups of points are moving too {\em far}\, (with respect to the relative Euclidean distances
inside each group) away
from each other.
It is obvious that the family $\widehat{\fC}_{0,\bu}(\bbH)$ is precisely the family of
compactified configuration spaces
defined in \S \ref{2'': section}  and hence describes the 2-coloured operad of $A_\infty$-morphisms.
We claim that $\widehat{\fC}_{\bu,\bu}(\bbH)$ unifies this 2-coloured operad with the 2-coloured
operad $\widehat{C}_{\bu,0}(\bbH)$ describing $\cM or(L_\infty)$ into a 4-coloured operad with an
expected meaning ---
it gives a geometric model for the operad, $\cM or(\f\cC_\infty)$, of homotopy morphisms of OCHA algebras introduced
in the analytic form in \cite{KS}. Let us first give a precise definition of  $\cM or(\f\cC_\infty)$ and then prove the claim.

\subsubsection{\bf Morphisms of OCHA algebras \cite{KS}} The 4-coloured operad, $\cM or(\f\cC_\infty)$, is a dg free
operad generated by two copies,
$$\label{4: first copy of OCHA}
\left\langle
\xy
(1,-5)*{\ldots},
(-12,-7)*{_1},
(-8,-7)*{_2},
(-3,-7)*{_3},
(7,-7)*{_{n-1}},
(13,-7)*{_n},
 (0,0)*{\circ}="a",
(0,5)*{}="0",
(-12,-5)*{}="b_1",
(-8,-5)*{}="b_2",
(-3,-5)*{}="b_3",
(8,-5)*{}="b_4",
(12,-5)*{}="b_5",
\ar @{-} "a";"0" <0pt>
\ar @{-} "a";"b_2" <0pt>
\ar @{-} "a";"b_3" <0pt>
\ar @{-} "a";"b_1" <0pt>
\ar @{-} "a";"b_4" <0pt>
\ar @{-} "a";"b_5" <0pt>
\endxy,
\Ba{c}
\begin{xy}
 <0mm,-0.5mm>*{\blacktriangledown};
 <0mm,0mm>*{};<0mm,5mm>*{}**@{.},
 <0mm,0mm>*{};<-16mm,-5mm>*{}**@{-},
 <0mm,0mm>*{};<-11mm,-5mm>*{}**@{-},
 <0mm,0mm>*{};<-3.5mm,-5mm>*{}**@{-},
 <0mm,0mm>*{};<-6mm,-5mm>*{...}**@{},
   <0mm,0mm>*{};<-16mm,-8mm>*{^{1}}**@{},
   <0mm,0mm>*{};<-11mm,-8mm>*{^{2}}**@{},
   <0mm,0mm>*{};<-3mm,-8mm>*{^{n}}**@{},
 <0mm,0mm>*{};<16mm,-5mm>*{}**@{.},
 <0mm,0mm>*{};<8mm,-5mm>*{}**@{.},
 <0mm,0mm>*{};<3.5mm,-5mm>*{}**@{.},
 <0mm,0mm>*{};<11.6mm,-5mm>*{...}**@{},
   <0mm,0mm>*{};<17mm,-8mm>*{^{\bar{m}}}**@{},
<0mm,0mm>*{};<10mm,-8mm>*{^{\bar{2}}}**@{},
   <0mm,0mm>*{};<5mm,-8mm>*{^{\bar{1}}}**@{},
 \end{xy}
\Ea
\right\rangle
,
\left\langle
\xy
(1,-5)*{\ldots},
(-12,-7)*{_1},
(-8,-7)*{_2},
(-3,-7)*{_3},
(7,-7)*{_{n-1}},
(13,-7)*{_n},
 (0,0)*{\circ}="a",
(0,5)*{}="0",
(-12,-5)*{}="b_1",
(-8,-5)*{}="b_2",
(-3,-5)*{}="b_3",
(8,-5)*{}="b_4",
(12,-5)*{}="b_5",
\ar @{--} "a";"0" <0pt>
\ar @{--} "a";"b_2" <0pt>
\ar @{--} "a";"b_3" <0pt>
\ar @{--} "a";"b_1" <0pt>
\ar @{--} "a";"b_4" <0pt>
\ar @{--} "a";"b_5" <0pt>
\endxy,\hspace{-1mm}
\Ba{c}
\begin{xy}
 <0mm,-0.5mm>*{\blacktriangledown};
 <0mm,0mm>*{};<0mm,5mm>*{}**@{~},
 <0mm,0mm>*{};<-16mm,-5mm>*{}**@{--},
 <0mm,0mm>*{};<-11mm,-5mm>*{}**@{--},
 <0mm,0mm>*{};<-3.5mm,-5mm>*{}**@{--},
 <0mm,0mm>*{};<-6mm,-5mm>*{...}**@{},
   <0mm,0mm>*{};<-16mm,-8mm>*{^{1}}**@{},
   <0mm,0mm>*{};<-11mm,-8mm>*{^{2}}**@{},
   <0mm,0mm>*{};<-3mm,-8mm>*{^{n}}**@{},
 <0mm,0mm>*{};<16mm,-5mm>*{}**@{~},
 <0mm,0mm>*{};<8mm,-5mm>*{}**@{~},
 <0mm,0mm>*{};<3.5mm,-5mm>*{}**@{~},
 <0mm,0mm>*{};<11.6mm,-5mm>*{...}**@{},
   <0mm,0mm>*{};<17mm,-8mm>*{^{\bar{m}}}**@{},
<0mm,0mm>*{};<10mm,-8mm>*{^{\bar{2}}}**@{},
   <0mm,0mm>*{};<5mm,-8mm>*{^{\bar{1}}}**@{},
 \end{xy}
\Ea
\hspace{-1mm}
\right\rangle
$$
of the operad $\f\cC_\infty$, one copy,
$
\left\langle
\xy
(1,-5)*{\ldots},
(-13,-7)*{_1},
(-8,-7)*{_2},
(-3,-7)*{_3},
(7,-7)*{_{n-1}},
(13,-7)*{_n},
 (0,0)*{\bullet}="a",
(0,5)*{}="0",
(-12,-5)*{}="b_1",
(-8,-5)*{}="b_2",
(-3,-5)*{}="b_3",
(8,-5)*{}="b_4",
(12,-5)*{}="b_5",
\ar @{--} "a";"0" <0pt>
\ar @{-} "a";"b_2" <0pt>
\ar @{-} "a";"b_3" <0pt>
\ar @{-} "a";"b_1" <0pt>
\ar @{-} "a";"b_4" <0pt>
\ar @{-} "a";"b_5" <0pt>
\endxy
\right\rangle
$
of the generators of $\cM or(\caL_\infty)$,
 and the following family of $\bS_N$-modules, $N\geq 1$,
$$
\bigoplus_{N=n+m\atop 2n+m\geq 1}\K[\bS_N]\ot_{\bS_n\times \bS_m} \left(\id_n\ot \K[\bS_m]\right)[2n+m-1] =:
\mbox{span}\left\langle
\Ba{c}
\begin{xy}
 <0mm,-0.5mm>*{\blacklozenge};
 <0mm,0mm>*{};<0mm,5mm>*{}**@{~},
 <0mm,0mm>*{};<-16mm,-5mm>*{}**@{-},
 <0mm,0mm>*{};<-11mm,-5mm>*{}**@{-},
 <0mm,0mm>*{};<-3.5mm,-5mm>*{}**@{-},
 <0mm,0mm>*{};<-6mm,-5mm>*{...}**@{},
   <0mm,0mm>*{};<-16mm,-8mm>*{^{i_1}}**@{},
   <0mm,0mm>*{};<-11mm,-8mm>*{^{i_2}}**@{},
   <0mm,0mm>*{};<-3mm,-8mm>*{^{i_n}}**@{},
 <0mm,0mm>*{};<16mm,-5mm>*{}**@{.},
 <0mm,0mm>*{};<8mm,-5mm>*{}**@{.},
 <0mm,0mm>*{};<3.5mm,-5mm>*{}**@{.},
 <0mm,0mm>*{};<11.6mm,-5mm>*{...}**@{},
   <0mm,0mm>*{};<17mm,-8mm>*{^{i_{\bar{m}}}}**@{},
<0mm,0mm>*{};<10mm,-8mm>*{^{i_{\bar{2}}}}**@{},
   <0mm,0mm>*{};<5mm,-8mm>*{^{i_{\bar{1}}}}**@{},
 \end{xy}
\Ea
\right\rangle
$$
where $\id_n$ stands for the trivial representation of $\bS_n$ (implying that the solid input
legs of the  $\blacklozenge$-corolla are ``symmetric" as in (\ref{3: OCHA corolla})). The differential $\p$
is given on $\circ$-, $\blacktriangledown$- and $\bu$-corollas by formulae (\ref{3: Lie_infty differential}),
(\ref{3: differential on Konts corollas}),
and, respectively, (\ref{Ch3: d on black corollas}), and on $\blacklozenge$-corollas
by the following formula,
\Beqr
\p\hspace{-2mm}
\Ba{c}
\begin{xy}
 <0mm,-0.5mm>*{\blacklozenge};
 <0mm,0mm>*{};<0mm,5mm>*{}**@{~},
 <0mm,0mm>*{};<-16mm,-5mm>*{}**@{-},
 <0mm,0mm>*{};<-11mm,-5mm>*{}**@{-},
 <0mm,0mm>*{};<-3.5mm,-5mm>*{}**@{-},
 <0mm,0mm>*{};<-6mm,-5mm>*{...}**@{},
   <0mm,0mm>*{};<-16mm,-8mm>*{^{1}}**@{},
   <0mm,0mm>*{};<-11mm,-8mm>*{^{2}}**@{},
   <0mm,0mm>*{};<-3mm,-8mm>*{^{n}}**@{},
 <0mm,0mm>*{};<16mm,-5mm>*{}**@{.},
 <0mm,0mm>*{};<8mm,-5mm>*{}**@{.},
 <0mm,0mm>*{};<3.5mm,-5mm>*{}**@{.},
 <0mm,0mm>*{};<11.6mm,-5mm>*{...}**@{},
   <0mm,0mm>*{};<17mm,-8mm>*{^{{\bar{m}}}}**@{},
<0mm,0mm>*{};<10mm,-8mm>*{^{{\bar{2}}}}**@{},
   <0mm,0mm>*{};<5mm,-8mm>*{^{{\bar{1}}}}**@{},
 \end{xy}
\Ea\hspace{-3mm}                                &=& -
\sum_{[n]=I_1\sqcup I_2\atop
\# I_1\geq 2, \# I_2\geq 1}
\Ba{c}
\begin{xy}
 <0mm,-0.5mm>*{\blacklozenge};
 <0mm,0mm>*{};<0mm,5mm>*{}**@{~},
 <0mm,0mm>*{};<-16mm,-5mm>*{}**@{-},
 <0mm,0mm>*{};<-11mm,-5mm>*{}**@{-},
 <0mm,0mm>*{};<-3.5mm,-5mm>*{}**@{-},
 <0mm,0mm>*{};<-6mm,-5mm>*{...}**@{},
 <0mm,0mm>*{};<16mm,-5mm>*{}**@{.},
 <0mm,0mm>*{};<8mm,-5mm>*{}**@{.},
 <0mm,0mm>*{};<3.5mm,-5mm>*{}**@{.},
 <0mm,0mm>*{};<11.6mm,-5mm>*{...}**@{},
   <0mm,0mm>*{};<17mm,-8mm>*{^{{\bar{m}}}}**@{},
<0mm,0mm>*{};<10mm,-8mm>*{^{{\bar{2}}}}**@{},
   <0mm,0mm>*{};<5mm,-8mm>*{^{{\bar{1}}}}**@{},
<-16mm,-13mm>*{\underbrace{\ \ \ \ \ \ \ \ }_{I_1}},
<-7mm,-8mm>*{\underbrace{\ \ \ \ \ \ \ \ }_{I_2}},
 (-16,-5)*{\circ}="a",
(-20,-10)*{}="b_1",
(-17,-10)*{}="b_2",
(-15,-9)*{...}="b_3",
(-12,-10)*{}="b_4",
\ar @{-} "a";"b_2" <0pt>
\ar @{-} "a";"b_1" <0pt>
\ar @{-} "a";"b_4" <0pt>
 \end{xy}\Ea                    \nonumber       \\
&-& \sum_{k, l, [n]=I_1\sqcup I_2\atop
{2\# I_1 + m \geq l\atop
2\#I_2 + l\geq 2}}
(-1)^{k+l(n-k-l)}
\Ba{c}
\begin{xy}
 <0mm,-0.5mm>*{\blacklozenge};
 <0mm,0mm>*{};<0mm,5mm>*{}**@{~~},
 <0mm,0mm>*{};<-16mm,-6mm>*{}**@{-},
 <0mm,0mm>*{};<-11mm,-6mm>*{}**@{-},
 <0mm,0mm>*{};<-3.5mm,-6mm>*{}**@{-},
 <0mm,0mm>*{};<-6mm,-6mm>*{...}**@{},
<0mm,0mm>*{};<2mm,-9mm>*{^{\bar{1}}}**@{},
<0mm,0mm>*{};<6mm,-9mm>*{^{\bar{k}}}**@{},
<0mm,0mm>*{};<19mm,-9mm>*{^{\overline{k+l+1}}}**@{},
<0mm,0mm>*{};<28mm,-9mm>*{^{\overline{m}}}**@{},
<0mm,0mm>*{};<13mm,-16.6mm>*{^{\overline{k+1}}}**@{},
<0mm,0mm>*{};<20mm,-16.6mm>*{^{\overline{k+l}}}**@{},
 <0mm,0mm>*{};<11mm,-6mm>*{}**@{.},
 <0mm,0mm>*{};<6mm,-6mm>*{}**@{.},
 <0mm,0mm>*{};<2mm,-6mm>*{}**@{.},
 <0mm,0mm>*{};<17mm,-6mm>*{}**@{.},
 <0mm,0mm>*{};<25mm,-6mm>*{}**@{.},
 <0mm,0mm>*{};<4mm,-6mm>*{...}**@{},
<0mm,0mm>*{};<20mm,-6mm>*{...}**@{},
<6.5mm,-16mm>*{\underbrace{\ \ \ \ \   }_{I_2}},
<-10mm,-9mm>*{\underbrace{\ \ \ \ \ \ \ \ \ \ \ \   }_{I_1}},
 (11,-7)*{\blacktriangledown}="a",
(4,-13)*{}="b_1",
(9,-13)*{}="b_2",
(16,-13)*{...},
(7,-13)*{...},
(13,-13)*{}="b_3",
(19,-13)*{}="b_4",
\ar @{-} "a";"b_2" <0pt>
\ar @{.} "a";"b_3" <0pt>
\ar @{-} "a";"b_1" <0pt>
\ar @{.} "a";"b_4" <0pt>
 \end{xy}
\Ea
                                                   \nonumber \\
&+& \hspace{-3mm} \sum_{k,l\geq 0\atop
2k+l\geq 2} \sum_{[n]=I_{1}\sqcup ...\sqcup J_l
\atop m=m_1+...+m_l}\hspace{-4mm}
(-1)^{\sum_{i=1}^l(l-i)(m_i-1)}\hspace{-4mm}
\Ba{c}
\begin{xy}
 <0mm,-0.5mm>*{\blacktriangledown};
 <0mm,0mm>*{};<0mm,5mm>*{}**@{~},
 <0mm,0mm>*{};<-16mm,-6mm>*{}**@{--},
 <0mm,0mm>*{};<-5mm,-6mm>*{}**@{--},
 <-9.9mm,-6mm>*{\ldots}**@{},
 <0mm,0mm>*{};<22mm,-6mm>*{}**@{~},
 <0mm,0mm>*{};<6mm,-6mm>*{}**@{~},
<12mm,-6mm>*{\ldots}**@{},
<-17mm,-15mm>*{\underbrace{\ \ \ \ \ \ }_{I_1}},
<-6mm,-15mm>*{\underbrace{\ \ \ \ \ \ }_{I_k}},
<-11mm,-16mm>*{\ldots},
<2mm,-15mm>*{\underbrace{\ \   }_{J_1}},
<18mm,-15mm>*{\underbrace{\   }_{J_l}},
<8mm,-14.8mm>*{^{\bar{1}}}**@{},
<12.8mm,-14.8mm>*{^{{\bar{m}_1}}}**@{},
<29mm,-14.8mm>*{^{\bar{m}}}**@{},
(-16,-6)*{\bu}="a_1",
(-5,-6)*{\bu}="a_2",
(-20,-12)*{}="b_1",
(-18,-12)*{}="b_2",
(-15,-12)*{...},
(-13,-12)*{}="b_3",
(-9,-12)*{}="b^1",
(-7,-12)*{}="b^2",
(-4.9,-12)*{...},
(-3,-12)*{}="b^3",
(22,-6)*{\blacklozenge}="a_3",
(6,-6)*{\blacklozenge}="a_4",
(28,-12)*{}="c_1",
(24,-12)*{}="c_2",
(26,-12)*{...},
(18,-12)*{...},
(20,-12)*{}="c_3",
(16,-12)*{}="c_4",
(0,-12)*{}="c^1",
(4,-12)*{}="c^2",
(2,-12)*{...},
(10,-12)*{...},
(8,-12)*{}="c^3",
(12,-12)*{}="c^4",
\ar @{-} "a_1";"b_2" <0pt>
\ar @{-} "a_1";"b_3" <0pt>
\ar @{-} "a_1";"b_1" <0pt>
\ar @{-} "a_2";"b^2" <0pt>
\ar @{-} "a_2";"b^3" <0pt>
\ar @{-} "a_2";"b^1" <0pt>
\ar @{.} "a_3";"c_2" <0pt>
\ar @{-} "a_3";"c_3" <0pt>
\ar @{.} "a_3";"c_1" <0pt>
\ar @{-} "a_3";"c_4" <0pt>
\ar @{-} "a_4";"c^2" <0pt>
\ar @{.} "a_4";"c^3" <0pt>
\ar @{-} "a_4";"c^1" <0pt>
\ar @{.} "a_4";"c^4" <0pt>
 \end{xy}
\Ea
\label{4: d on Mor(OCHA) corollas}
\Eeqr
Representations of this 4-coloured operad in a 4-tuple of dg vector spaces, $V^{in}_c$, $V^{in}_o$, $V_c^{out}$
 and $V_o^{out}$, is the same
as a pair,  $(V^{in}_c, V^{in}_0)$ and $(V^{out}_c, V^{out}_0)$, of homotopy open-closed algebras,
and a homotopy morphism, $F: (V^{in}_c, V^{in}_0)\rar (V^{out}_c, V^{out}_o)$, between them as defined in \cite{KS}.

\subsection{Theorem on the face complex of  $\widehat{\fC}_{\bu,\bu}$}
\label{4: Theorem on Mor(CO_infty) operad}
 {\em The face complex of the disjoint union,
\Beq\label{4: Mor(CO_infty) config spaces topol operad}
\widehat{\fC}(\bbH):=
\underbrace{\overline{C}_\bu(\C)\bigsqcup \overline{C}_{\bu,\bu}(\bbH)}_{in}
\bigsqcup \widehat{\fC}_{\bu,\bu}(\bbH) \bigsqcup \widehat{\fC}_{\bu}(\C)\bigsqcup \underbrace{
\overline{C}_\bu(\C)\bigsqcup \overline{C}_{\bu,\bu}(\bbH)}_{out}
\Eeq
has a natural structure of a dg free 4-coloured operad canonically isomorphic to the operad
$\cM or(\f\cC_\infty)$.
The canonical isomorphism is given by the following identifications,
$$
\underbrace{\left\langle\overline{C}_n(\C),\ \ \overline{C}_{n,m}(\bbH)\right\rangle}_{in} =
\left\langle
\xy
(1,-5)*{\ldots},
(-12,-7)*{_1},
(-8,-7)*{_2},
(-3,-7)*{_3},
(7,-7)*{_{n-1}},
(13,-7)*{_n},
 (0,0)*{\circ}="a",
(0,5)*{}="0",
(-12,-5)*{}="b_1",
(-8,-5)*{}="b_2",
(-3,-5)*{}="b_3",
(8,-5)*{}="b_4",
(12,-5)*{}="b_5",
\ar @{-} "a";"0" <0pt>
\ar @{-} "a";"b_2" <0pt>
\ar @{-} "a";"b_3" <0pt>
\ar @{-} "a";"b_1" <0pt>
\ar @{-} "a";"b_4" <0pt>
\ar @{-} "a";"b_5" <0pt>
\endxy,
\Ba{c}
\begin{xy}
 <0mm,-0.5mm>*{\blacktriangledown};
 <0mm,0mm>*{};<0mm,5mm>*{}**@{.},
 <0mm,0mm>*{};<-16mm,-5mm>*{}**@{-},
 <0mm,0mm>*{};<-11mm,-5mm>*{}**@{-},
 <0mm,0mm>*{};<-3.5mm,-5mm>*{}**@{-},
 <0mm,0mm>*{};<-6mm,-5mm>*{...}**@{},
   <0mm,0mm>*{};<-16mm,-8mm>*{^{1}}**@{},
   <0mm,0mm>*{};<-11mm,-8mm>*{^{2}}**@{},
   <0mm,0mm>*{};<-3mm,-8mm>*{^{n}}**@{},
 <0mm,0mm>*{};<16mm,-5mm>*{}**@{.},
 <0mm,0mm>*{};<8mm,-5mm>*{}**@{.},
 <0mm,0mm>*{};<3.5mm,-5mm>*{}**@{.},
 <0mm,0mm>*{};<11.6mm,-5mm>*{...}**@{},
   <0mm,0mm>*{};<17mm,-8mm>*{^{\bar{m}}}**@{},
<0mm,0mm>*{};<10mm,-8mm>*{^{\bar{2}}}**@{},
   <0mm,0mm>*{};<5mm,-8mm>*{^{\bar{1}}}**@{},
 \end{xy}
\Ea
\right\rangle
$$
$$
\underbrace{\left\langle\overline{C}_n(\C),\ \ \overline{C}_{n,m}(\bbH)\right\rangle}_{out} =
\left\langle
\xy
(1,-5)*{\ldots},
(-12,-7)*{_1},
(-8,-7)*{_2},
(-3,-7)*{_3},
(7,-7)*{_{n-1}},
(13,-7)*{_n},
 (0,0)*{\circ}="a",
(0,5)*{}="0",
(-12,-5)*{}="b_1",
(-8,-5)*{}="b_2",
(-3,-5)*{}="b_3",
(8,-5)*{}="b_4",
(12,-5)*{}="b_5",
\ar @{--} "a";"0" <0pt>
\ar @{--} "a";"b_2" <0pt>
\ar @{--} "a";"b_3" <0pt>
\ar @{--} "a";"b_1" <0pt>
\ar @{--} "a";"b_4" <0pt>
\ar @{--} "a";"b_5" <0pt>
\endxy,\hspace{-1mm}
\Ba{c}
\begin{xy}
 <0mm,-0.5mm>*{\blacktriangledown};
 <0mm,0mm>*{};<0mm,5mm>*{}**@{~},
 <0mm,0mm>*{};<-16mm,-5mm>*{}**@{--},
 <0mm,0mm>*{};<-11mm,-5mm>*{}**@{--},
 <0mm,0mm>*{};<-3.5mm,-5mm>*{}**@{--},
 <0mm,0mm>*{};<-6mm,-5mm>*{...}**@{},
   <0mm,0mm>*{};<-16mm,-8mm>*{^{1}}**@{},
   <0mm,0mm>*{};<-11mm,-8mm>*{^{2}}**@{},
   <0mm,0mm>*{};<-3mm,-8mm>*{^{n}}**@{},
 <0mm,0mm>*{};<16mm,-5mm>*{}**@{~},
 <0mm,0mm>*{};<8mm,-5mm>*{}**@{~},
 <0mm,0mm>*{};<3.5mm,-5mm>*{}**@{~},
 <0mm,0mm>*{};<11.6mm,-5mm>*{...}**@{},
   <0mm,0mm>*{};<17mm,-8mm>*{^{\bar{m}}}**@{},
<0mm,0mm>*{};<10mm,-8mm>*{^{\bar{2}}}**@{},
   <0mm,0mm>*{};<5mm,-8mm>*{^{\bar{1}}}**@{},
 \end{xy}
\Ea
\hspace{-1mm}
\right\rangle
$$

$$
\widehat{C}_{n,0}(\bbH)=
\Ba{c}
\xy
(1,-5)*{\ldots},
(-13,-7)*{_1},
(-8,-7)*{_2},
(-3,-7)*{_3},
(7,-7)*{_{n-1}},
(13,-7)*{_n},
 (0,0)*{\bullet}="a",
(0,5)*{}="0",
(-12,-5)*{}="b_1",
(-8,-5)*{}="b_2",
(-3,-5)*{}="b_3",
(8,-5)*{}="b_4",
(12,-5)*{}="b_5",
\ar @{--} "a";"0" <0pt>
\ar @{-} "a";"b_2" <0pt>
\ar @{-} "a";"b_3" <0pt>
\ar @{-} "a";"b_1" <0pt>
\ar @{-} "a";"b_4" <0pt>
\ar @{-} "a";"b_5" <0pt>
\endxy
\Ea, \ \ \ \ \
\widehat{\fC}_{n,m}=
\Ba{c}
\xy
 <0mm,-0.5mm>*{\blacklozenge};
 <0mm,0mm>*{};<0mm,5mm>*{}**@{~},
 <0mm,0mm>*{};<-16mm,-5mm>*{}**@{-},
 <0mm,0mm>*{};<-11mm,-5mm>*{}**@{-},
 <0mm,0mm>*{};<-3.5mm,-5mm>*{}**@{-},
 <0mm,0mm>*{};<-6mm,-5mm>*{...}**@{},
   <0mm,0mm>*{};<-16mm,-8mm>*{^{1}}**@{},
   <0mm,0mm>*{};<-11mm,-8mm>*{^{2}}**@{},
   <0mm,0mm>*{};<-3mm,-8mm>*{^{n}}**@{},
 <0mm,0mm>*{};<16mm,-5mm>*{}**@{.},
 <0mm,0mm>*{};<8mm,-5mm>*{}**@{.},
 <0mm,0mm>*{};<3.5mm,-5mm>*{}**@{.},
 <0mm,0mm>*{};<11.6mm,-5mm>*{...}**@{},
   <0mm,0mm>*{};<17mm,-8mm>*{^{{\bar{m}}}}**@{},
<0mm,0mm>*{};<10mm,-8mm>*{^{{\bar{2}}}}**@{},
   <0mm,0mm>*{};<5mm,-8mm>*{^{{\bar{1}}}}**@{},
 \endxy
\Ea
$$
}
\begin{proof}  The codimension 1 boundary strata in $\widehat{\fC}_{n,m}(\bbH)$ are given by
the limit values, $0$ and $+\infty$, of the parameters
$$
\left\{||p_{A,B}||:=|p_{A,B}-x_c(p_{A,B})|,\ \ \ \
 ||p_A||_0:=|p_{A}-z_{c}(p_{A})| \right\}_{A\subset [n]
,  B\subseteq [m]}.
$$

(i) The limit configurations, $p\in \widehat{\fC}_{n,m}(\bbH)$, filling in the boundary stratum
$\Ba{c}
\begin{xy}
 <0mm,-0.5mm>*{\blacklozenge};
 <0mm,0mm>*{};<0mm,5mm>*{}**@{~},
 <0mm,0mm>*{};<-16mm,-5mm>*{}**@{-},
 <0mm,0mm>*{};<-11mm,-5mm>*{}**@{-},
 <0mm,0mm>*{};<-3.5mm,-5mm>*{}**@{-},
 <0mm,0mm>*{};<-6mm,-5mm>*{...}**@{},
 <0mm,0mm>*{};<16mm,-5mm>*{}**@{.},
 <0mm,0mm>*{};<8mm,-5mm>*{}**@{.},
 <0mm,0mm>*{};<3.5mm,-5mm>*{}**@{.},
 <0mm,0mm>*{};<11.6mm,-5mm>*{...}**@{},
   <0mm,0mm>*{};<17mm,-8mm>*{^{{\bar{m}}}}**@{},
<0mm,0mm>*{};<10mm,-8mm>*{^{{\bar{2}}}}**@{},
   <0mm,0mm>*{};<5mm,-8mm>*{^{{\bar{1}}}}**@{},
<-16mm,-13mm>*{\underbrace{\ \ \ \ \ \ \ \ }_{I_1}},
<-7mm,-8mm>*{\underbrace{\ \ \ \ \ \ \ \ }_{I_2}},
 (-16,-5)*{\circ}="a",
(-20,-10)*{}="b_1",
(-17,-10)*{}="b_2",
(-15,-9)*{...}="b_3",
(-12,-10)*{}="b_4",
\ar @{-} "a";"b_2" <0pt>
\ar @{-} "a";"b_1" <0pt>
\ar @{-} "a";"b_4" <0pt>
 \end{xy}\Ea$ are given by $||p_{I_1}||_0=0$, $||p||$ is a finite number.
 \sip

 (ii) The limit configurations, $p\in \widehat{\fC}_{n,m}(\bbH)$, filling in the boundary stratum
$\Ba{c}
\begin{xy}
 <0mm,-0.5mm>*{\blacklozenge};
 <0mm,0mm>*{};<0mm,5mm>*{}**@{~~},
 <0mm,0mm>*{};<-16mm,-6mm>*{}**@{-},
 <0mm,0mm>*{};<-11mm,-6mm>*{}**@{-},
 <0mm,0mm>*{};<-3.5mm,-6mm>*{}**@{-},
 <0mm,0mm>*{};<-6mm,-6mm>*{...}**@{},
<0mm,0mm>*{};<2mm,-9mm>*{^{\bar{1}}}**@{},
<0mm,0mm>*{};<6mm,-9mm>*{^{\bar{k}}}**@{},
<0mm,0mm>*{};<19mm,-9mm>*{^{\overline{k+l+1}}}**@{},
<0mm,0mm>*{};<28mm,-9mm>*{^{\overline{m}}}**@{},
<0mm,0mm>*{};<13mm,-16.6mm>*{^{\overline{k+1}}}**@{},
<0mm,0mm>*{};<20mm,-16.6mm>*{^{\overline{k+l}}}**@{},
 <0mm,0mm>*{};<11mm,-6mm>*{}**@{.},
 <0mm,0mm>*{};<6mm,-6mm>*{}**@{.},
 <0mm,0mm>*{};<2mm,-6mm>*{}**@{.},
 <0mm,0mm>*{};<17mm,-6mm>*{}**@{.},
 <0mm,0mm>*{};<25mm,-6mm>*{}**@{.},
 <0mm,0mm>*{};<4mm,-6mm>*{...}**@{},
<0mm,0mm>*{};<20mm,-6mm>*{...}**@{},
<6.5mm,-16mm>*{\underbrace{\ \ \ \ \   }_{I_2}},
<-10mm,-9mm>*{\underbrace{\ \ \ \ \ \ \ \ \ \ \ \   }_{I_1}},
 (11,-7)*{\blacktriangledown}="a",
(4,-13)*{}="b_1",
(9,-13)*{}="b_2",
(16,-13)*{...},
(7,-13)*{...},
(13,-13)*{}="b_3",
(19,-13)*{}="b_4",
\ar @{-} "a";"b_2" <0pt>
\ar @{.} "a";"b_3" <0pt>
\ar @{-} "a";"b_1" <0pt>
\ar @{.} "a";"b_4" <0pt>
 \end{xy}
\Ea$
are given by the equation  $||p_{I_2,\{\overline{k+1}\ldots, \overline{k+l}\}}||=0$, $||p||$ is a finite number.

\sip

(iii) The limit configurations, $p\in \widehat{\fC}_{n,m}(\bbH)$, filling in the boundary stratum
$\Ba{c}
\begin{xy}
 <0mm,-0.5mm>*{\blacktriangledown};
 <0mm,0mm>*{};<0mm,5mm>*{}**@{~},
 <0mm,0mm>*{};<-16mm,-6mm>*{}**@{--},
 <0mm,0mm>*{};<-5mm,-6mm>*{}**@{--},
 <-9.9mm,-6mm>*{\ldots}**@{},
 <0mm,0mm>*{};<22mm,-6mm>*{}**@{~},
 <0mm,0mm>*{};<6mm,-6mm>*{}**@{~},
<12mm,-6mm>*{\ldots}**@{},
<-17mm,-15mm>*{\underbrace{\ \ \ \ \ \ }_{I_1}},
<-6mm,-15mm>*{\underbrace{\ \ \ \ \ \ }_{I_k}},
<-11mm,-16mm>*{\ldots},
<2mm,-15mm>*{\underbrace{\ \   }_{J_1}},
<18mm,-15mm>*{\underbrace{\   }_{J_l}},
<10mm,-15mm>*{\underbrace{\ \   }_{m_1}},
<26mm,-15mm>*{\underbrace{\   }_{m_l}},
(-16,-6)*{\bu}="a_1",
(-5,-6)*{\bu}="a_2",
(-20,-12)*{}="b_1",
(-18,-12)*{}="b_2",
(-15,-12)*{...},
(-13,-12)*{}="b_3",
(-9,-12)*{}="b^1",
(-7,-12)*{}="b^2",
(-4.9,-12)*{...},
(-3,-12)*{}="b^3",
(22,-6)*{\blacklozenge}="a_3",
(6,-6)*{\blacklozenge}="a_4",
(28,-12)*{}="c_1",
(24,-12)*{}="c_2",
(26,-12)*{...},
(18,-12)*{...},
(20,-12)*{}="c_3",
(16,-12)*{}="c_4",
(0,-12)*{}="c^1",
(4,-12)*{}="c^2",
(2,-12)*{...},
(10,-12)*{...},
(8,-12)*{}="c^3",
(12,-12)*{}="c^4",
\ar @{-} "a_1";"b_2" <0pt>
\ar @{-} "a_1";"b_3" <0pt>
\ar @{-} "a_1";"b_1" <0pt>
\ar @{-} "a_2";"b^2" <0pt>
\ar @{-} "a_2";"b^3" <0pt>
\ar @{-} "a_2";"b^1" <0pt>
\ar @{.} "a_3";"c_2" <0pt>
\ar @{-} "a_3";"c_3" <0pt>
\ar @{.} "a_3";"c_1" <0pt>
\ar @{-} "a_3";"c_4" <0pt>
\ar @{-} "a_4";"c^2" <0pt>
\ar @{.} "a_4";"c^3" <0pt>
\ar @{-} "a_4";"c^1" <0pt>
\ar @{.} "a_4";"c^4" <0pt>
 \end{xy}
\Ea$
are characterized by the following data: $||p_{[n],[m]}||=+\infty$,  $||p_{I_i}||_0$
is finite for all $i\in [k]$,
 $||p_{J_i, m_j}||$ is finite for all $j\in [l]$, and the image of $p$ under the
 projection $\widehat{\fC}_{n,m}(\bbH)\rar \widetilde{C}^{st}_{n,m}(\bbH)$ consists of $k$ different points
 in the upper-half-plane and $l$ different points on the real line.
  This is the case when $k$ groups
 of points in $\bbH$  parameterized by sets $I_1, \ldots, I_k$, and $l$-groups of points
 in $\overline{\bbH}$
parameterized by sets $J_1\sqcup m_1, \ldots, J_l\sqcup m_l$
are moving far away from each other in such a way that their sizes (measured by the parameters
$||p_{I_i}||_0$ and $||p_{J_i, m_j}||$)  stay finite.
\sip

Finally, it is an elementary calculation to check that all the  boundary strata defined above have codimension $1$, and
these are
the only boundary strata satisfying this condition.
\end{proof}

Let us illustrate the above theorem with several explicit examples.

\sip

\subsubsection{\bf The case $\widehat{\fC}_{0,\bu}(\bbH)$} The configuration spaces
$\{\widehat{\fC}_{0,m}(\bbH)\}_{m\geq 1}$ are precisely Stasheff's multiplihedra
(see \S \ref{2'': section}), and the formula (\ref{4: d on Mor(OCHA) corollas}) indeed reduces in this case
to (\ref{2: differential A_infty morphism}).

\subsubsection{\bf The case $\widehat{\fC}_{1,0}(\bbH)$}  In the case $n=1$, $m=0$ formula
(\ref{4: d on Mor(OCHA) corollas})
gives,
$$
\p
\Ba{c}
\begin{xy}
 <0mm,-0.5mm>*{\blacklozenge};
 <0mm,0mm>*{};<0mm,5mm>*{}**@{~},
 <0mm,0mm>*{};<-1mm,-5mm>*{}**@{-},
 \end{xy}
\Ea
=-\Ba{c}
\begin{xy}
 <0mm,-0.5mm>*{\blacklozenge};
 <0mm,0mm>*{};<0mm,5mm>*{}**@{~},
 <0mm,0mm>*{};<1mm,-5mm>*{}**@{.},
<1mm,-5.5mm>*{\blacktriangledown};
 <1mm,-5.5mm>*{};<0mm,-10mm>*{}**@{-},
 \end{xy}
\Ea
+
\Ba{c}
\begin{xy}
 <0mm,-0.5mm>*{\blacktriangledown};
 <0mm,0mm>*{};<0mm,5mm>*{}**@{~},
 <0mm,0mm>*{};<-1mm,-4.5mm>*{}**@{--},
<-1mm,-5.5mm>*{\bu};
 <-1mm,-5.5mm>*{};<-1mm,-10mm>*{}**@{-},
 \end{xy}
\Ea
$$
On the other hand, $\fC_{1,0}(\bbH)$ is isomorphic to $(0,+\infty)$, the $y$-axis in $\bbH$, and
$\widehat{\fC}_{1,0}(\bbH)$ is  the closure of the embedding
$(0, +\infty)\hook
[0,+\infty]$. Hence $\widehat{\fC}_{1,0}(\bbH)$ is the closed interval $[0,+\infty]$ with the boundary operator
$\p$ coinciding
precisely with the above formula if we use the  identifications of configuration spaces with graphs
given in Theorem~\ref{4: Mor(CO_infty) config spaces topol operad}.

\subsubsection{\bf The case $\widehat{\fC}_{1,1}(\bbH)$}  In the case $n=1$, $m=1$ formula
(\ref{4: d on Mor(OCHA) corollas})
gives,
\Beq\label{4: p for fC_1,1}
\p
\Ba{c}
\begin{xy}
 <0mm,-0.5mm>*{\blacklozenge};
 <0mm,0mm>*{};<0mm,5mm>*{}**@{~},
 <0mm,0mm>*{};<-2mm,-5.5mm>*{}**@{-},
<0mm,0mm>*{};<2mm,-5.5mm>*{}**@{.},
 \end{xy}
\Ea
=-
\underbrace{\Ba{c}
\begin{xy}
 <0mm,-0.5mm>*{\blacklozenge};
 <0mm,0mm>*{};<0mm,5mm>*{}**@{~},
 <0mm,0mm>*{};<1mm,-5mm>*{}**@{.},
<1mm,-5.5mm>*{\blacktriangledown};
 <1mm,-5.5mm>*{};<-1mm,-10.5mm>*{}**@{-},
<1mm,-5.5mm>*{};<3mm,-10.5mm>*{}**@{.},
 \end{xy}
\Ea}_a
+
\underbrace{
\Ba{c}
\begin{xy}
 <0mm,-0.5mm>*{\blacktriangledown};
 <0mm,0mm>*{};<0mm,5mm>*{}**@{~},
 <0mm,0mm>*{};<-2mm,-4.5mm>*{}**@{--},
<0mm,0mm>*{};<3mm,-5.5mm>*{}**@{~},
<-2mm,-5.5mm>*{\bu};
 <-2mm,-5.5mm>*{};<-2mm,-10.5mm>*{}**@{-},
<3mm,-5.5mm>*{\blacklozenge};
 <3mm,-5.5mm>*{};<3mm,-10.5mm>*{}**@{.},
 \end{xy}
\Ea}_b
+
\underbrace{
\Ba{c}
\begin{xy}
 <0mm,-0.5mm>*{\blacktriangledown};
 <0mm,0mm>*{};<0mm,5mm>*{}**@{~},
 <0mm,0mm>*{};<2mm,-5.5mm>*{}**@{~},
<0mm,0mm>*{};<6mm,-5.5mm>*{}**@{~},
<2mm,-5.5mm>*{\blacklozenge};
 <2mm,-5.5mm>*{};<1mm,-10.5mm>*{}**@{-},
<6mm,-5.5mm>*{\blacklozenge};
 <6mm,-5.5mm>*{};<7mm,-10.5mm>*{}**@{.},
 \end{xy}
\Ea}_c
-
\underbrace{
\Ba{c}
\begin{xy}
 <0mm,-0.5mm>*{\blacktriangledown};
 <0mm,0mm>*{};<0mm,5mm>*{}**@{~},
 <0mm,0mm>*{};<2mm,-5.5mm>*{}**@{~},
<0mm,0mm>*{};<6mm,-5.5mm>*{}**@{~},
<2mm,-5.5mm>*{\blacklozenge};
 <2mm,-5.5mm>*{};<3mm,-10.5mm>*{}**@{.},
<6mm,-5.5mm>*{\blacklozenge};
 <6mm,-5.5mm>*{};<5mm,-10.5mm>*{}**@{-},
 \end{xy}
\Ea}_d
\Eeq
On the other hand, the compactifying embedding takes the following explicit form
$$
\Ba{rcccc}
\fC_{1,1}(\bbH) & \lon & \left(\widetilde{C}_{1,1} \times [0,+\infty]\right)\hspace{-3mm} &
\times &\hspace{-3mm} \left(\widetilde{C}_{1,0} \times [0,+\infty]\right)\\
p=(\underbrace{x_1+\ii y_1}_{p_1} , x_2)\hspace{-3mm}& \lon &\hspace{-3mm}
\displaystyle\left( \frac{p-x_c(p)}{||p||}=
 \frac{\frac{1}{2}(x_1-x_2)+\ii y_1, -\frac{1}{2}(x_1-x_2)}{\sqrt{y_1^2 +\frac{1}{2}(x_1-x_2)^2}}, ||p||\right)
\hspace{-3mm}&&\hspace{-3mm} \left(i, y_1\right)
\Ea
$$
In this approach
\Bi
\item the boundary stratum $a=\overline{C}_{1,1}(\bbH)$ is given by the limit configurations
with $||p||\rar 0$;
\item the boundary stratum $b=\overline{C}_{1,1}(\bbH)$ is given by the limit configurations
with $||y_1||\rar +\infty $,
\item the boundary stratum $c$ and $d$ are given by the limit configurations with
$||p||\rar +\infty $ and
$y_1$ finite, i.e.\ as the limit configurations
$(\frac{\la}{2}(x_1-x_2)+ \ii y_1, -\frac{\la}{2}(x_1-x_2))$
when $|\la|\rar +\infty$; the case $\la\rar +\infty$ corresponds to $c$ and the case
$\la\rar -\infty$  to $d$.
\Ei

Each term on the r.h.s.\  of (\ref{4: p for fC_1,1}) stands therefore for a closed interval.
We finally get the following picture,
$$
\overline{\fC}_{1,1}=\Ba{c}
\xy
 (0,0)*\ellipse(6,6),=:a(-180){-};
 (0,0)*\ellipse(12,12),=:a(-180){-};
(0,13.5)*{^{b}},
(0,3.5)*{^{a}},
 (-6,0)*{}="a",
(-12,0)*{}="b",
(-9,-3)*{^{c}},
(9,-3)*{^{d}},
(6,0)*{}="c",
(12,0)*{}="d",
\ar @{-} "a";"b" <0pt>
\ar @{-} "c";"d" <0pt>
\endxy\Ea \simeq \Ba{c}\xy
(0,3)*{^{b}},
(0,-11)*{^{a}},
(-9,-5)*{^{c}},
(9,-5)*{^{d}},
(-7,0)*-{};(7.0,-0)*-{}
**\crv{(0,3.3)};
(-7,-10)*-{};(7.0,-10)*-{}
**\crv{(0,-6.7)};
<7mm,0mm>*{};<7mm,-10mm>*{}**@{-},
<-7mm,0mm>*{};<-7mm,-10mm>*{}**@{-},
\endxy\Ea
$$

\subsubsection{\bf The case $\widehat{\fC}_{2,0}(\bbH)$}  In the case $n=2$, $m=0$ formula
(\ref{4: d on Mor(OCHA) corollas})
gives (see picture (\ref{5: picture for fC(H)_2,0}) below which visualizes   each
summand's contribution into the boundary of $\widehat{\fC}_{2,0}(\bbH)$) ,
\Beqrn
\p
\Ba{c}
\begin{xy}
 <0mm,-0.5mm>*{\blacklozenge};
 <0mm,0mm>*{};<0mm,5mm>*{}**@{~},
 <0mm,0mm>*{};<-5mm,-5.5mm>*{}**@{-},
<0mm,0mm>*{};<-2mm,-5.5mm>*{}**@{-},
<-5mm,-7.5mm>*{_1};
<-2mm,-7.5mm>*{_2};
 \end{xy}
\Ea
&=&-
\underbrace{\Ba{c}
\begin{xy}
 <0mm,-0.5mm>*{\blacklozenge};
 <0mm,0mm>*{};<0mm,5mm>*{}**@{~},
 <0mm,0mm>*{};<1mm,-5mm>*{}**@{.},
<1mm,-5.5mm>*{\blacktriangledown};
 <1mm,-5.5mm>*{};<-1mm,-10.5mm>*{}**@{-},
<1mm,-5.5mm>*{};<-4mm,-10.5mm>*{}**@{-},
<-1mm,-12.5mm>*{_2};
<-4mm,-12.5mm>*{_1};
 \end{xy}
\Ea}_{\Ba{c}\xy
(0,0)*\ellipse(2.9,1){.};
(-8,0)*-{};(8.0,-0)*-{}
**\crv{~*=<4pt>{.}(0,3.3)}
**\crv{(0,-3.3)};
\endxy\Ea}
-
\underbrace{\Ba{c}
\begin{xy}
 <0mm,-0.5mm>*{\blacklozenge};
 <0mm,0mm>*{};<0mm,5mm>*{}**@{~},
 <0mm,0mm>*{};<-1mm,-5mm>*{}**@{-},
<-1mm,-5.3mm>*{\circ};
 <-1mm,-5.5mm>*{};<-3mm,-10.5mm>*{}**@{-},
<-1mm,-5.5mm>*{};<1mm,-10.5mm>*{}**@{-},
<-3mm,-12.5mm>*{_1};
<1mm,-12.5mm>*{_2};
 \end{xy}
\Ea}_{\Ba{c}\xy (0,0)*\ellipse(3,1){.};(0,-5)*\ellipse(3,1){.};
<-3mm,0mm>*{};<-3mm,-10.2mm>*{}**@{.},
<3mm,0mm>*{};<3mm,-10.2mm>*{}**@{.},
\endxy\Ea}
-
\underbrace{
\Ba{c}
\begin{xy}
 <0mm,-0.5mm>*{\blacklozenge};
 <0mm,0mm>*{};<0mm,5mm>*{}**@{~},
 <0mm,0mm>*{};<-3mm,-5.5mm>*{}**@{-},
<0mm,0mm>*{};<4mm,-5.5mm>*{}**@{.},
<4mm,-5.5mm>*{\blacktriangledown};
 <4mm,-5mm>*{};<1mm,-10mm>*{}**@{-},
<-3.5mm,-6.7mm>*{_1};
<0.5mm,-11.7mm>*{_2};
 \end{xy}
\Ea}_{\Ba{c}\xy
(-7,0)*-{};(7.0,-0)*-{}
**\crv{(0,-3.3)};
(-4,-10)*-{};(10,-10)*-{}
**\crv{(0,-13.3)};
<7mm,0mm>*{};<10mm,-10mm>*{}**@{-},
<-7mm,0mm>*{};<-4mm,-10mm>*{}**@{-},
\endxy\Ea}
-
\underbrace{
\Ba{c}
\begin{xy}
 <0mm,-0.5mm>*{\blacklozenge};
 <0mm,0mm>*{};<0mm,5mm>*{}**@{~},
 <0mm,0mm>*{};<-3mm,-5.5mm>*{}**@{-},
<0mm,0mm>*{};<4mm,-5.5mm>*{}**@{.},
<4mm,-5.5mm>*{\blacktriangledown};
 <4mm,-5mm>*{};<1mm,-10mm>*{}**@{-},
<-3.5mm,-6.7mm>*{_2};
<0.5mm,-11.7mm>*{_1};
 \end{xy}
\Ea}_{\Ba{c}\xy
(-7,0)*+{};(7.0,-0)*-{}
**\crv{~*=<3pt>{.}(0,3.3)};
(-10,-10)*-{};(4.0,-10)*-{}
**\crv{~*=<4pt>{.}(0,-6.7)};
<7mm,0mm>*{};<4mm,-10mm>*{}**@{-},
<-7mm,0mm>*{};<-10mm,-10mm>*{}**@{--},
\endxy\Ea}
\\
&+&
\underbrace{
\Ba{c}
\begin{xy}
 <0mm,-0.5mm>*{\blacktriangledown};
 <0mm,0mm>*{};<0mm,5mm>*{}**@{~},
 <0mm,0mm>*{};<-2mm,-4.5mm>*{}**@{--},
 %
<-3mm,-5.5mm>*{\bu};
 <-3mm,-5.5mm>*{};<-5mm,-10.5mm>*{}**@{-},
 <-3mm,-5.5mm>*{};<-1mm,-10.5mm>*{}**@{-},
<-5mm,-12.5mm>*{_1};
<-0.5mm,-12.5mm>*{_2};
 \end{xy}
\Ea}_{\Ba{c}\xy (0,0)*\ellipse(3,1){-};(0,-5)*\ellipse(3,1){.};
<-3mm,0mm>*{};<-3mm,-10mm>*{}**@{--},
<3mm,0mm>*{};<3mm,-10mm>*{}**@{--},
\endxy\Ea}
+
\underbrace{
\Ba{c}
\begin{xy}
 <0mm,-0.5mm>*{\blacktriangledown};
 <0mm,0mm>*{};<0mm,5mm>*{}**@{~},
 <0mm,0mm>*{};<-2mm,-4.5mm>*{}**@{--},
<0mm,0mm>*{};<-6mm,-5.5mm>*{}**@{--},
<-2mm,-5.5mm>*{\bu};
 <-2mm,-5.5mm>*{};<-2mm,-10.5mm>*{}**@{-},
<-6mm,-5.5mm>*{\bu};
 <-6mm,-5.5mm>*{};<-6mm,-10.5mm>*{}**@{-},
<-6mm,-12.5mm>*{_1};
<-2mm,-12.5mm>*{_2};
 \end{xy}
\Ea}_{\Ba{c}\xy
(0,0)*\ellipse(2.9,1){-};
(-8,0)*-{};(8.0,-0)*-{}
**\crv{(0,3.1)}
**\crv{(0,-3.1)};
\endxy\Ea}
+
\underbrace{
\Ba{c}
\begin{xy}
 <0mm,-0.5mm>*{\blacktriangledown};
 <0mm,0mm>*{};<0mm,5mm>*{}**@{~},
 <0mm,0mm>*{};<-2mm,-4.5mm>*{}**@{--},
<0mm,0mm>*{};<3mm,-5.5mm>*{}**@{~},
<-2mm,-5.5mm>*{\bu};
 <-2mm,-5.5mm>*{};<-2mm,-10.5mm>*{}**@{-},
<3mm,-5.5mm>*{\blacklozenge};
 <3mm,-5.5mm>*{};<1mm,-10.5mm>*{}**@{-},
<-2mm,-12.5mm>*{_1};
<1mm,-12.5mm>*{_2};
 \end{xy}
\Ea}_{\Ba{c}\xy
(-7,0)*-{};(7.0,-0)*-{}
**\crv{(0,-3.3)};
(-10,-10)*-{};(4,-10)*-{}
**\crv{(0,-13.3)};
<7mm,0mm>*{};<4mm,-10mm>*{}**@{-},
<-7mm,0mm>*{};<-10mm,-10mm>*{}**@{-},
\endxy\Ea}
+
\underbrace{
\Ba{c}
\begin{xy}
 <0mm,-0.5mm>*{\blacktriangledown};
 <0mm,0mm>*{};<0mm,5mm>*{}**@{~},
 <0mm,0mm>*{};<-2mm,-4.5mm>*{}**@{--},
<0mm,0mm>*{};<3mm,-5.5mm>*{}**@{~},
<-2mm,-5.5mm>*{\bu};
 <-2mm,-5.5mm>*{};<-2mm,-10.5mm>*{}**@{-},
<3mm,-5.5mm>*{\blacklozenge};
 <3mm,-5.5mm>*{};<1mm,-10.5mm>*{}**@{-},
<-2mm,-12.5mm>*{_2};
<1mm,-12.5mm>*{_1};
 \end{xy}
\Ea}_{\Ba{c}\xy
(-7,0)*-{};(7.0,-0)*-{}
**\crv{(0,3.3)};
(10,-10)*-{};(-4.0,-10)*-{}
**\crv{~*=<4pt>{.}(0,-6.7)};
<7mm,0mm>*{};<10mm,-10mm>*{}**@{-},
<-7mm,0mm>*{};<-4mm,-10mm>*{}**@{--},
\endxy\Ea}
+
\underbrace{
\Ba{c}
\begin{xy}
 <0mm,-0.5mm>*{\blacktriangledown};
 <0mm,0mm>*{};<0mm,5mm>*{}**@{~},
 <0mm,0mm>*{};<2mm,-5.5mm>*{}**@{~},
<0mm,0mm>*{};<6mm,-5.5mm>*{}**@{~},
<2mm,-5.5mm>*{\blacklozenge};
 <2mm,-5.5mm>*{};<1mm,-10.5mm>*{}**@{-},
<6mm,-5.5mm>*{\blacklozenge};
 <6mm,-5.5mm>*{};<5mm,-10.5mm>*{}**@{-},
<1mm,-12.5mm>*{_1};
<5mm,-12.5mm>*{_2};
 \end{xy}
\Ea}_{\Ba{c}\xy
<0mm,0mm>*{};<-2.5mm,-6mm>*{}**@{-},
<-2.5mm,-6mm>*{};<0mm,-10mm>*{}**@{-},
<0mm,0mm>*{};<2.5mm,-4mm>*{}**@{--},
<2.5mm,-4mm>*{};<0mm,-10mm>*{}**@{--},
\endxy\Ea}
+
\underbrace{
\Ba{c}
\begin{xy}
 <0mm,-0.5mm>*{\blacktriangledown};
 <0mm,0mm>*{};<0mm,5mm>*{}**@{~},
 <0mm,0mm>*{};<2mm,-5.5mm>*{}**@{~},
<0mm,0mm>*{};<6mm,-5.5mm>*{}**@{~},
<2mm,-5.5mm>*{\blacklozenge};
 <2mm,-5.5mm>*{};<1mm,-10.5mm>*{}**@{-},
<6mm,-5.5mm>*{\blacklozenge};
 <6mm,-5.5mm>*{};<5mm,-10.5mm>*{}**@{-},
<1mm,-12.5mm>*{_2};
<5mm,-12.5mm>*{_1};
 \end{xy}
\Ea}_{\Ba{c}\xy
<0mm,0mm>*{};<-2.5mm,-6mm>*{}**@{-},
<-2.5mm,-6mm>*{};<0mm,-10mm>*{}**@{-},
<0mm,0mm>*{};<2.5mm,-4mm>*{}**@{-},
<2.5mm,-4mm>*{};<0mm,-10mm>*{}**@{-},
\endxy\Ea}
\Eeqrn

On the other hand the compactification formula (\ref{4: first compctfn fC_n,m(H)}) takes
in this case the form,
$$
\Ba{ccccccccc}
\fC_{2,0}(\bbH)& \rar & \hspace{-1mm} \widetilde{C}^{st}_{2,0}(\bbH)\times [0,+\infty]
\hspace{-3mm} &\times &
 \hspace{-3mm}  \widetilde{C}^{st}_{1,0}(\bbH)\times[0,+\infty] \hspace{-3mm}  &\times & \hspace{-3mm}
  \widetilde{C}^{st}_{1,0}(\bbH)\times[0,+\infty]
&\times & \widetilde{C}^{st}_{2}(\C)\times[0,+\infty]
\\
p=(z_1,z_2) \hspace{-3mm}   &      & \hspace{-3mm} (\frac{p-x_c(p)}{||p||}, ||p||)\hspace{-3mm} &&
\hspace{-3mm}
(\ii ,  y_1)\hspace{-3mm} &&\hspace{-3mm} (\ii, y_2)\hspace{-3mm}
&& \hspace{-3mm} (\frac{p-z_c(p)}{|p-z_c(p)|},||p||_0= |p-z_c(p)| )
\Ea
$$
so that each codimension 1 boundary stratum can be described  explicitly as follows:

(i) The boundary stratum $\Ba{c}
\begin{xy}
 <0mm,-0.5mm>*{\blacklozenge};
 <0mm,0mm>*{};<0mm,5mm>*{}**@{~},
 <0mm,0mm>*{};<1mm,-5mm>*{}**@{.},
<1mm,-5.5mm>*{\blacktriangledown};
 <1mm,-5.5mm>*{};<-1mm,-10.5mm>*{}**@{-},
<1mm,-5.5mm>*{};<-4mm,-10.5mm>*{}**@{-},
<-1mm,-12.5mm>*{_2};
<-4mm,-12.5mm>*{_1};
 \end{xy}
\Ea$ is given by the limit configurations, $p$, with $||p||=0$;
 a generic point in this boundary stratum can be obtained as the $\la\rar 0$ limit of a configuration
 $(x_0+\la z_1,x_0+\la z_2)$, $x_0\in \R$, $z_1,z_2\in \bbH$.

(ii) The boundary stratum
$\Ba{c}
\begin{xy}
 <0mm,-0.5mm>*{\blacklozenge};
 <0mm,0mm>*{};<0mm,5mm>*{}**@{~},
 <0mm,0mm>*{};<-1mm,-5mm>*{}**@{-},
<-1mm,-5.3mm>*{\circ};
 <-1mm,-5.5mm>*{};<-3mm,-10.5mm>*{}**@{-},
<-1mm,-5.5mm>*{};<1mm,-10.5mm>*{}**@{-},
<-3mm,-12.5mm>*{_1};
<1mm,-12.5mm>*{_2};
 \end{xy}
\Ea$
is given by the limit configurations, $p$, with $||p||_0=0$, $||p||$, $ y_1$,  $ y_2$
finite and non-zero; their  image under the projection
$\widehat{\fC}_{2,0}(\bbH)\rar \widetilde{C}^{st}_{2,0}(\bbH)$ consists  of a single point in $\bbH$;
 a generic point in this boundary stratum can be obtained as the $\la\rar 0$ limit of a configuration
 $(z_0+\la z_1,z_0+\la z_2)$ with $z_0, z_1,z_2\in \bbH$.

(iii) The boundary stratum
$\Ba{c}
\begin{xy}
 <0mm,-0.5mm>*{\blacklozenge};
 <0mm,0mm>*{};<0mm,5mm>*{}**@{~},
 <0mm,0mm>*{};<-3mm,-5.5mm>*{}**@{-},
<0mm,0mm>*{};<4mm,-5.5mm>*{}**@{.},
<4mm,-5.5mm>*{\blacktriangledown};
 <4mm,-5mm>*{};<1mm,-10mm>*{}**@{-},
<-3.5mm,-6.7mm>*{_1};
<0.5mm,-11.7mm>*{_2};
 \end{xy}
\Ea$
is given by the limit configurations, $p$, with $||p||$, $ y_1$ and $||p||_0$
finite and non-zero while  $ y_2=0$;
 a generic point in this boundary stratum can be obtained as the $\la\rar 0$ limit of a configuration
 $(z_1,x_2+ \ii \la y_2)$ with $z_1,z_2=x_2+\ii y_2\in \bbH$.

(iv) The boundary stratum $
\Ba{c}
\begin{xy}
 <0mm,-0.5mm>*{\blacktriangledown};
 <0mm,0mm>*{};<0mm,5mm>*{}**@{~},
 <0mm,0mm>*{};<-2mm,-4.5mm>*{}**@{--},
 %
<-3mm,-5.5mm>*{\bu};
 <-3mm,-5.5mm>*{};<-5mm,-10.5mm>*{}**@{-},
 <-3mm,-5.5mm>*{};<-1mm,-10.5mm>*{}**@{-},
<-5mm,-12.5mm>*{_1};
<-0.5mm,-12.5mm>*{_2};
 \end{xy}
\Ea$
is given by the limit configurations, $p$, with $||p||=+\infty$ and
$||p||_0$ a finite number; their  image under the projection
$\widehat{\fC}_{2,0}\rar \widetilde{C}^{st}_{2,0}(\bbH)$ consists  of a single point in $\bbH$;
 a generic point in this boundary stratum can be obtained as the $\la\rar +\infty$ limit of a configuration
 $(\la z_0 +  z_1, \la z_0 +  z_2)$, $z_0,z_1,z_2\in \bbH$.

(v) The boundary stratum $\Ba{c}
\begin{xy}
 <0mm,-0.5mm>*{\blacktriangledown};
 <0mm,0mm>*{};<0mm,5mm>*{}**@{~},
 <0mm,0mm>*{};<-2mm,-4.5mm>*{}**@{--},
<0mm,0mm>*{};<-6mm,-5.5mm>*{}**@{--},
<-2mm,-5.5mm>*{\bu};
 <-2mm,-5.5mm>*{};<-2mm,-10.5mm>*{}**@{-},
<-6mm,-5.5mm>*{\bu};
 <-6mm,-5.5mm>*{};<-6mm,-10.5mm>*{}**@{-},
<-6mm,-12.5mm>*{_1};
<-2mm,-12.5mm>*{_2};
 \end{xy}
\Ea$ is given by the limit configurations, $p$, with $||p||_0=+\infty$ and such that
their  image under the projection
$\widehat{\fC}_{2,0}\rar \widetilde{C}^{st}_{2,0}(\bbH)$ consists  of two different points in $\bbH$;
 a generic point in this boundary stratum can be obtained as the $\la\rar +\infty$ limit of a configuration
 $(\la z_1 , \la z_2)$,  $z_1,z_2\in \bbH$.

(vi)  The boundary stratum $\Ba{c}
\begin{xy}
 <0mm,-0.5mm>*{\blacktriangledown};
 <0mm,0mm>*{};<0mm,5mm>*{}**@{~},
 <0mm,0mm>*{};<-2mm,-4.5mm>*{}**@{--},
<0mm,0mm>*{};<3mm,-5.5mm>*{}**@{~},
<-2mm,-5.5mm>*{\bu};
 <-2mm,-5.5mm>*{};<-2mm,-10.5mm>*{}**@{-},
<3mm,-5.5mm>*{\blacklozenge};
 <3mm,-5.5mm>*{};<1mm,-10.5mm>*{}**@{-},
<-2mm,-12.5mm>*{_1};
<1mm,-12.5mm>*{_2};
 \end{xy}
\Ea$ is given by the limit configurations, $p$, with
 $y_1=+\infty$, $y_2$ finite; their image under the projection
$\widehat{\fC}_{2,0}\rar \widetilde{C}^{st}_{2,0}(\bbH)$ consists  of two different points in
$\overline{\bbH}$ the first of which lies in $\bbH$ and the second in $\R$;
 a generic point in this boundary stratum can be obtained as the $\la\rar +\infty$ limit of a configuration
 $(\la z_1,  z_2)$, $z_1, z_2\in \bbH$.

(vii)  The boundary stratum $\Ba{c}
\begin{xy}
 <0mm,-0.5mm>*{\blacktriangledown};
 <0mm,0mm>*{};<0mm,5mm>*{}**@{~},
 <0mm,0mm>*{};<2mm,-5.5mm>*{}**@{~},
<0mm,0mm>*{};<6mm,-5.5mm>*{}**@{~},
<2mm,-5.5mm>*{\blacklozenge};
 <2mm,-5.5mm>*{};<1mm,-10.5mm>*{}**@{-},
<6mm,-5.5mm>*{\blacklozenge};
 <6mm,-5.5mm>*{};<5mm,-10.5mm>*{}**@{-},
<1mm,-12.5mm>*{_1};
<5mm,-12.5mm>*{_2};
 \end{xy}
\Ea$
is given by the limit configurations, $p$, with
 $||p||=+\infty$,  $y_1$ and
$ y_2$ finite numbers; their image  under the projection
$\widehat{\fC}_{2,0}\rar \widetilde{C}^{st}_{2,0}(\bbH)$ consists  of two different points on the real line;
 a generic point in this boundary stratum can be obtained as the $\la\rar +\infty$ limit of a configuration
 $(-\la x+ \ii  y_1, \la x + y_2)$, $x\in \R$, $y_1,y_2\in \R^+$.

\sip

A straightforward but tedious inspection of higher codimension strata in
$\widehat{\fC}_{2,0}(\bbH)$
tells us that the above classified  codimension 1 strata (i)-(vii) are glued together into the following $3$-dimensional
compact manifold
with corners,
\Beq\label{5: picture for fC(H)_2,0}
\widehat{\fC}_{2,0}(\bbH)=\Ba{c}
\xy
 <5mm,15mm>*{};<5mm,0mm>*{}**@{.},
<-5mm,15mm>*{};<-5mm,0mm>*{}**@{.},
<5mm,30mm>*{};<5mm,15mm>*{}**@{--},
<-5mm,30mm>*{};<-5mm,15mm>*{}**@{--},
%
<-17mm,0mm>*{};<-12mm,18mm>*{}**@{--},
<-17mm,0mm>*{};<-22mm,12mm>*{}**@{-},
<-17mm,30mm>*{};<-22mm,12mm>*{}**@{-},
<-17mm,30mm>*{};<-12mm,18mm>*{}**@{--},
<17mm,0mm>*{};<12mm,12mm>*{}**@{-},
<17mm,0mm>*{};<22mm,18mm>*{}**@{-},
<17mm,30mm>*{};<22mm,18mm>*{}**@{-},
<17mm,30mm>*{};<12mm,12mm>*{}**@{-},
(0,15)*\ellipse(5,1){-};
(0,7.5)*\ellipse(5,1){.};
(0,0)*\ellipse(5,1){.};
(-17,0)*-{};(17.0,-0)*-{}
**\crv{~*=<4pt>{.}(0,6)}
**\crv{(0,-6)};
(-17,30)*-{};(17.0,30)*-{}
**\crv{(0,36)}
**\crv{(0,24)};
(-22,12)*-{};(12.0,12)*-{}
**\crv{(0,6)};
(-12,18)*-{};(22.0,18)*+{}
**\crv{~*=<4pt>{.}(0,22)}
\endxy
\Ea
\Eeq

\subsection{Semialgebraic structure on $\widehat{\fC}_{n,m}$} The right hand side of the embedding
(\ref{4: first compctfn fC_n,m(H)}) is a product of compact semialgebraic sets. Therefore $\widehat{\fC}_{n,m}$
comes naturally equipped with the structure of a compact semialgebraic set so that we can employ, if necessary,
the de Rham algebra
of PA differential forms on $\widehat{\fC}_{n,m}$. In fact, $\widehat{\fC}_{n,m}$ is a compact semialgebraic manifold:
its smooth semialgebraic atlas can be given by metric trees in a full analogy to  \S{\ref{3: Smooth atlas on Konts config spaces}}
and \S{\ref{4: Smooth atals on Mor(Lie_infty)}}.
\subsection{Higher dimensional version}
The operad of compactified configuration space $\widehat{\fC}\left(\bbH\right)$ has an obvious higher dimensional version,
$\widehat{\fC}\left(\bbH^d\right)$, $d\geq 3$, which describes the 4-coloured operad of morphisms of
open-closed homotopy Lie algebras.

\bip

{\large
\section{\bf Operads of Feynman graphs and their representations}\label{7: section}
}
\mip

\subsection{An operad of Feynman graphs $\fG$}\label{7: An operad of Feynman graphs fG}

 For a finite set $I$ we denote by ${G}_{I}$ a set of graphs\footnote{ A {\em graph}\, $\Ga$ is, by definition,  a 1-dimensional $CW$-complex whose $0$-cells are called {\em vertices} and $1$-dimensional cells are called {\em edges}. Its automorphism group as a $CW$-complex is denoted by $Aut(\Ga)$.}, $\{\Ga\}$, with $\# I$ vertices such that
\Bi
\item the edges of $\Ga$ are directed, beginning and ending at {\em different}\, vertices;
\item the vertices of $\Ga$ are labelled by elements of $I$, i.e.\ a bijection  $V(\Ga)\rar I$ is fixed;
\item the set of edges, $E(\Ga)$, is totally ordered.
\Ei
We identify two total orderings on the set $E(\Ga)$ (that is, isomorphisms $E(\Ga)\simeq [\# E(\Ga)]$),
if they differ by an even permutation of $[\# E(\Ga)]$. Thus there are precisely
two possible orderings\footnote{It is useful sometimes  to identify an orientation of $\Ga\in  {\cG}_{I}$
with a vector $\displaystyle e_\Ga:=\wedge_{e\in E(\Ga)}e$ in the real one dimensional
vector space $\wedge^{l}\R[E(\Ga)]$,
 where $\R[E(\Ga)]$ is the $l$-dimensional vector space spanned over
 $\R$ by the set $E(\Ga)$.\label{5: footnote on sign}}
on the set $E(\Ga)$
 and the group $\Z_2$ acts freely  on ${G}_{I}$ by ordering changes;  its orbit
is denoted by $\{\Ga, \Ga_{opp}\}$. If $I=[n]$, we write $G_n$ for
$G_I$. The subset of $G_n$ consisting of graphs with precisely $l$ edges is denoted
by $G_{n,l}$.
\sip

For any  fixed integer $d\in \Z$ we first set $\K\langle G_{n,l}\rangle$ to be the vector space spanned
by isomorphism classes, $[\Ga]$ of
graphs $\Ga\in G_{n,l}$ modulo the relation $[\Ga_{opp}]=(-1)^{d-1} [\Ga]$, and then define
a $\Z$-graded $\bS_n$-module,
$$
\fG(n):=\bigoplus_{l=0}^\infty \K\langle G_{n,l}\rangle[(1-d)l].
$$
Note that if $d$ is even, then any graph $\Ga\in G_{n,l} $  which has a pair of vertices  connected by two
or more edges with the same orientation  vanishes in $\fG(n)$; for example, $\xy (0,0)*{\bu}="a"; (10,0)*{\bu}="b";
{\ar@/^/"a";"b"}; {\ar@/_/"a";"b"}  \endxy =0$ in $\fG(2)$ for  $d$ even. If $\Ga$ does not vanish, then
its degree in  $\fG(n)$ is equal to $(d-1)\# E(\Ga)$, i.e.\ every edge contributes $d-1$  to the total degree.

\sip

The resulting $\bS$-module,
$$
\fG=\{ \fG(n)\}_{n\geq 1},
$$
has a natural operad structure defined as follows: let $[n]\rar [p]$ be an arbitrary surjection, and let $[n]=I_1\sqcup I_2\sqcup \ldots\sqcup I_p$ be the associated partition of $[n]$, then the map
\Beq\label{5: operadic composition in fG}
\Ba{rccc}
\circ: & \fG(p)\ot  \fG(I_1)\ot\ldots\ot
 \fG(I_p) & \lon & \fG(n)\\
        & (\Ga_0, \Ga_1, \ldots, \Ga_p)                                             & \lon &  \Ga_0\circ (\Ga_1\ot \ldots \ot \Ga_p)
\Ea
\Eeq
is given, by definition, by
\Beq\label{5: composition in Feynman operad}
\Ga_0\circ (\Ga_1\ot \ldots \ot \Ga_p) =
\sum_{\Ga\in  G_{I_1,\ldots,I_p}} (-1)^{\sigma_\Ga}\Ga,
\Eeq
where the summation runs over  a subset
$G_{I_1,\ldots,I_p}\subset   G_n$ consisting of all
graphs $\Ga$
satisfying the following condition: the subgraphs\footnote{ For any subset $A\subset [n]$
and any graph $\Ga$ in $G_{n}$ there is an associated  subgraph $\Ga_A$ of
$\Ga$ whose vertices
are, by definition, those vertices of $\Ga$ which are labelled by elements of $A$, and whose edges
are {\em all}\, the edges of $\Ga$ which connect these $A$-labelled vertices. If we compress all the
$A$-labelled vertices of $\Ga$ (together with all the edges connecting these $A$-labelled vertices) into a single vertex,
then we obtain from $\Ga$ a new graph which we denote by $\Ga/\Ga_A$. Analogously one defines
the quotient graph $\Ga/\{\Ga_{I_1},\ldots,\Ga_{I_p}\}$ for any partition $V(\Ga)=I_1\sqcup\ldots\sqcup I_p$.}
 $\Ga_{I_1}, \Ga_{I_2}, \ldots, \Ga_{I_p}$, of $\Ga$ spanned by the vertices labelled, respectively,  by the subsets
  ${I_1}, {I_2}, \ldots, {I_p}\subset [n]$
 are isomorphic, respectively, to $\Ga_1, \Ga_2,\ldots, \Ga_p$, and the quotient graph,
  $\Ga/\{\Ga_{I_1}, \Ga_{I_2}, \ldots, \Ga_{I_p}\}$ is isomorphic to $\Ga_0$.
The sign is determined by the equality
$e_{\Ga/\{\Ga_{I_1}, \Ga_{I_2}, \ldots, \Ga_{I_p}\}}e_{\Ga_{I_1}}e_{\Ga_{I_2}}\cdots e_{\Ga_{I_p}} =(-1)^{\sigma_\Ga}e_\Ga$
(see footnote \ref{5: footnote on sign}),
i.e.\ by comparing the orderings on the sets of edges. The unique element in $G_{1,0}$
serves as a unit in this operad. For example,
$$
\xy
   {\ar@/^0.6pc/(-5,0)*{\bu};(5,0)*{\bu}};
 {\ar@/^0.6pc/(5,0)*{\bu};(-5,0)*{\bu}};
\endxy \circ\left(
\xy
 {\ar@{->}(0,7)*{\bu};(0,0)*{\bu}};
\endxy
\ot
\bu
\right)=
\xy
 {\ar@{->}(-5,7)*{\bu};(-5,0)*{\bu}};
   {\ar@/^0.6pc/(-5,0)*{\bu};(5,0)*{\bu}};
 {\ar@/^0.6pc/(5,0)*{\bu};(-5,0)*{\bu}};
\endxy
+
\xy
 {\ar@{->}(-5,0)*{\bu};(-5,-7)*{\bu}};
   {\ar@/^0.6pc/(-5,0)*{\bu};(5,0)*{\bu}};
 {\ar@/^0.6pc/(5,0)*{\bu};(-5,0)*{\bu}};
\endxy
+
\xy
 {\ar@{->}(-5,4)*{\bu};(-5,-4)*{\bu}};
   {\ar@{->}(-5,4)*{\bu};(5,0)*{\bu}};
 {\ar@{->}(5,0)*{\bu};(-5,-4)*{\bu}};
\endxy
+
\xy
 {\ar@{->}(-5,4)*{\bu};(-5,-4)*{\bu}};
   {\ar@{<-}(-5,4)*{\bu};(5,0)*{\bu}};
 {\ar@{<-}(5,0)*{\bu};(-5,-4)*{\bu}};
\endxy
$$
where orientations of graphs on the r.h.s.\ are  chosen implicitly in a such a way that they contribute
with coefficient $+1$. Thus the operadic composition, $\Ga_0\circ (\Ga_1\ot \ldots \ot \Ga_p)$  is given simply by substitutions of the graphs $\Ga_1, \ldots, \Ga_p$ into the vertices of the graph $\Ga_0$ and redistributing the edges in all possible ways.

\sip

The number $d-1$ is called the {\em propagator degree}\, of the operad $\fG$. Perhaps  we should have
imprinted $d$ somehow into the notation, say use the symbol $\fG[d]$ to indicate its dependence on $d$, but we do not
bother doing it as in applications the propagator degree will be always clear from the context.


\subsubsection{\bf Coloured variants of  $\fG$} Let a $\Z$-graded $\bS_n$-module
$\bigoplus_{n=n_1+n_2\atop 2n_1 + n_2\geq 2} \sG^{\uparrow\downarrow}(n_1, n_2)$  be defined as $\fG(n)$ above except
that the vertices of graphs $\Ga$ from $\sG^{\uparrow\downarrow}(n_1, n_2)$  are coloured in one of two
possible colours, say white and black, i.e. $V(\Ga)$ comes equipped with a splitting into a
disjoint union $V(\Ga)=V_w(\Ga)\sqcup V_b(\Ga)$, $\# V_w(\Ga)=n_1$ and  $\# V_b(\Ga)=n_2$, satisfying the condition $2n_1+n_2 \geq 2$. It is useful to visualize such a graph as drawn on the closed
upper  half plane $\overline{\bbH}$ with white vertices placed in $\bbH$ and black vertices on the boundary $\R$
in the order which is consistent with their numbering, e.g.
$$
\Ba{c}
\xy
(-15,0)*{}="a",
(15,0)*{}="b",
(0,14)*{\circ}="c",
(-8,8)*{\circ}="c1",
(8,8)*{\circ}="c2",
(-4,0)*{\bu}="d1",
(4,0)*{\bu}="d2",
\ar @{-} "a";"b" <0pt>
\ar @{->} "c";"c1" <0pt>
\ar @{->} "c";"c2" <0pt>
\ar @{<-} "c1";"d1" <0pt>
\ar @{<-} "c1";"d2" <0pt>
\ar @{->} "c2";"d1" <0pt>
\ar @{<-} "c2";"d2" <0pt>
\endxy
\Ea
\in \cG^{\uparrow\downarrow}(3,2)
$$

The $\bS$-bimodule
$$
\fG^{\uparrow\downarrow}=\{\fG(n)\oplus \bigoplus_{n=n_1+n_2} \cG^{\uparrow\downarrow} (n_1,n_2)\}_{n\geq 2, 2n_1+ n_2\geq 2}
$$
is naturally a 2-coloured operad with respect to substitutions of
\Bi
\item[(i)]
graphs from $\fG(n)$ into the vertices of
$\fG(m)$ and into the white vertices of  $\cG^{\uparrow\downarrow} (m_1,m_2)$,
 \item[(ii)]
graphs from $\cG^{\uparrow\downarrow} (n_1,n_2)$ into the black vertices of  $\cG^{\uparrow\downarrow} (m_1,m_2)$.
\Ei

\sip
One can  consider versions,  $\cG^{\downarrow}$ and $\cG^{\uparrow}$, of the 2-coloured operad
$\cG^{\uparrow\downarrow}$ spanned by graphs $\Ga$ such that all edges incident to any black vertex
are oriented towards (respectively, outwards) that black vertex. For example
$$
\Ba{c}
\xy
(-15,0)*{}="a",
(15,0)*{}="b",
(0,14)*{\circ}="c",
(-8,8)*{\circ}="c1",
(8,8)*{\circ}="c2",
(-4,0)*{\bu}="d1",
(4,0)*{\bu}="d2",
\ar @{-} "a";"b" <0pt>
\ar @{->} "c";"c1" <0pt>
\ar @{->} "c";"c2" <0pt>
\ar @{->} "c1";"d1" <0pt>
\ar @{->} "c1";"d2" <0pt>
\ar @{->} "c2";"d1" <0pt>
\ar @{->} "c2";"d2" <0pt>
\endxy
\Ea
\in \cG^{\downarrow}(3,2), \ \ \ \
\Ba{c}
\xy
(-15,0)*{}="a",
(15,0)*{}="b",
(0,14)*{\circ}="c",
(-8,8)*{\circ}="c1",
(8,8)*{\circ}="c2",
(-4,0)*{\bu}="d1",
(4,0)*{\bu}="d2",
\ar @{-} "a";"b" <0pt>
\ar @{->} "c";"c1" <0pt>
\ar @{->} "c";"c2" <0pt>
\ar @{<-} "c1";"d1" <0pt>
\ar @{<-} "c1";"d2" <0pt>
\ar @{<-} "c2";"d1" <0pt>
\ar @{<-} "c2";"d2" <0pt>
\endxy
\Ea
\in \cG^{\uparrow}(3,2)
$$

 \sip

More generally,
with any compactified configuration space, $\overline{C}=\{\overline{C}_n\}$, considered in \S 2-4 of this paper
one can associate  a coloured operad of Feynman graphs,
$\fG_{\overline{C}}=\{\fG_{\overline{C}}(n)\}$,
whose elements are, by definition, linear combinations of graphs from $\fG{(n)}$ (or from an appropriate subset of $\fG(n)$)
whose vertices are identified
with configuration of points in $\overline{C}_n$ and hence are coloured correspondingly.
For example, the above operad $\fG$ with propagator degree $d-1$  can be re-denoted as
$\fG_{\overline{C}(\R^d)}$ while any of the operads
$\fG^{\uparrow\downarrow}$, $\fG^\downarrow$ and $\fG^\uparrow$ can be associated with the configuration spaces
$\overline{C}(\bbH^d)$ and hence can be re-denoted by
$\fG^{\uparrow\downarrow}_{\overline{C}(\bbH^d)}$ , $\fG^\downarrow_{\overline{C}(\bbH^d)}$ and
 $\fG^\uparrow_{\overline{C}(\bbH^d)}$ respectively. It is the operad $\fG^\downarrow_{\overline{C}(\bbH^d)}$
 which Kontsevich used in his paper \cite{Ko} on the formality theorem; we shall consider below other two operads
 as well.

\subsubsection{\bf Dual cooperads of Feynman graphs} As the vector space  $\K\langle G_{n,l}\rangle$
is finite-dimensional for each $n$ and $l$, the dual $\bS$-modules,
$$
{\fG}^*:=\left\{\Hom({\fG}_k(n),\K)\right\}_{n\geq 1}, \ \
({\fG}^{\uparrow\downarrow})^*:=\left\{\Hom({\fG}^{{\uparrow\downarrow}}(n),\K)\right\}_{n\geq 1}, \ \ etc.
$$
have induced structures of  (coloured) cooperads in the category of graded vector spaces. these spaces come equipped
canonically  with distinguished bases which we can identify with graphs.
We denote by $\check{\fG}$, $\check{\fG}^{\uparrow\downarrow}$, etc.\
 their sub-cooperads spanned by {\em finite}\, linear combinations of such graphs.

For example, the dualization of the operadic composition (\ref{5: operadic composition in fG}) in $\fG$ gives the following formula for the  co-composition
in the cooperad $\check{\fG}$,
\Beq\label{5: co-composition in G_d}
\Ba{rccc}
\Delta: &  \check{\fG}(n) &\lon &
\displaystyle \bigoplus_{[n]=I_1\sqcup\ldots\sqcup I_p\atop 1\leq p \leq  n}
\check{\fG}(p) \ot \check{\fG}(I_1)\ot \check{\fG}(I_2) \ot\ldots
\ot \check{\fG}(I_p)
\mip\\
&   \Ga & \lon & \Delta(\Ga):= \displaystyle\sum_{p=0}^{n-1}\sum_{[n]=I_1\sqcup\ldots\sqcup I_p}
(-1)^{\var}\Ga/\{\Ga_{I_1},\ldots,\Ga_{I_p}\}\ot
\Ga_{I_1}\ot \Ga_{I_2}\ot\ldots\ot \Ga_{I_p}.
\Ea
\Eeq
where the summation runs over all possible partitions of $[n]$. As $\fG$ is unital,
we can equivalently describe cooperad structure in $\check{\fG}_d$ in terms
of the reduced co-composition, $\forall i\in [n]$,
\Beq\label{5: reduced co-composition in G_d}
\Ba{rccc}
\Delta_i^{red}: &  \check{\fG}(n) &\lon &
\displaystyle \bigoplus_{i\in A \subset[n]}
\check{\fG}(n-\# A +1) \ot \check{\fG}(A)
\mip\\
&   \Ga & \lon & \Delta_i^{red}(\Ga):= \displaystyle \sum_{i\in A \subset[n]}
(-1)^{\var}\Ga/\Ga_{A}\ot
\Ga_{A}.
\Ea
\Eeq
Setting  $\check{\fG}(1)=0$, we make the data $(\{ \check{\fG}_d(n)\}_{n\geq 2}$ into a non-unital pseudo-cooperad which we denote
by the same symbol $\check{\fG}$ but call the  {\em non-unital pseudo-cooperad
of Feynman graphs}. Analogously one defines non-unital pseudo-cooperads
 $\check{\fG}^{\uparrow\downarrow}$ etc.

\subsection{A class of representations of $\fG$} Note that operads of Feynman graphs
 depend on the choice
 of a  parameter $d$ which we do not show in the notations. Any representation
of $\fG$ in a graded vector space $X$,
$$
\rho: \fG \lon \cE nd_X
$$
is uniquely determined by its values on the generators, $\Ga\in G_{n,l}$, i.e. by
a collection of operators,
$$
\left\{\Phi_\Ga:=\rho(\Ga)\in \Hom_{(d-1)\# E(\Ga)}(X^{\ot\# V(\Ga)}, X)\right\}
$$
which, in accordance with (\ref{5: composition in Feynman operad}),  satisfy the equations
\Beq\label{5: Phi_Ga equations}
(-1)^K\Phi_{\Ga_0}\left(\Phi_{\Ga_1}(x_1,\ldots, x_{n_1}), \Phi_{\Ga_2}(x_{n_1+1}, \ldots, x_{n_1+n_2}),\ldots, \Phi_{\Ga_p}(x_{n_1+...+n_{p-1}+1},\ldots,x_{n_1+...+n_{p}})\right)=\hspace{10mm}
\Eeq
$$
\hspace{40mm}
\sum_{\Ga\in  \fG_d(n_1,\ldots,n_p)}
 (-1)^{\sigma(\Ga)}\Phi_\Ga(x_1,x_2, \ldots, x_{n_1+...+n_p})
$$
for any $\Ga_i\in G_{n_i,l_i}$, $i=0,1,\ldots,p$, and any $x_1,x_2,\ldots, x_{n_1+\ldots+n_p}\in A$.  Here $(-1)^K$ stands for the usual Koszul sign
arising under a composition of homogeneous maps and $\fG(n_1,\ldots,n_p)\subset
\fG(n_1+\ldots+n_p)$ is a subset consisting of all graphs $\Ga$
whose subgraphs,
 $\Ga_{I_1}, \Ga_{I_2}, \ldots, \Ga_{I_p}$, spanned by vertices labelled, respectively,  by
 $$
 I_1:=\{1, \ldots, n_1\}, \ I_2:=\{n_1+1, \ldots, n_1+n_2\},\ \ldots,\ I_p:=\{n_1+\ldots+n_{p-1}+1, \ldots, n_1+\ldots+n_p\}
 $$
are isomorphic to $\Ga_1, \Ga_2,\ldots,\Ga_p$, and the quotient graph, $\Ga/\{\Ga_1, \Ga_2,\ldots,\Ga_p\}$, is isomorphic to $\Ga_0$.

\sip

 Let $W$ be a $\Z$-graded vector
space and $\tau\in W^*\ot W^*$ a non zero element of degree $d-1$. Any element $\om\in W^*$ defines a map
$W\rar \K$ which can be uniquely extended to a derivation of the symmetric tensor algebra $\bigodot^\bu W$. Therefore,
the element $\tau$ gives naturally rise to a biderivation on $\bigodot^\bu W \otimes \bigodot^\bu W$
which we denote by $\Delta^\tau$. Moreover, for any $n\geq 2$ and any ordered
pair of different integers $i,j\in [n]$, $\tau$ gives rise to an automorphism,
$$
\Delta_{ij}^\tau: (\odot^\bu W)^{\ot n} \lon  (\odot^\bu W)^{\otimes n}
$$
which acts as $\Delta^\tau$ on $i$-th and $j$-th tensor factors and as the identity
on all other tensor factors. Next, for any graph $\Ga\in G_{n,l}$, we define
$$
\Phi_\Ga:  (\odot^\bu W)^{\ot n} \lon  \odot^\bu W
$$
as the composition,
\Beq\label{5: Def of Phi_Ga}
\Phi_\Ga:=\mu^{(n)}\circ \prod_{e\in E(V)} \Delta_{In(e),Out(e)}^\tau,
\Eeq
where, for an edge $e$ connecting a vertex labelled by $i\in [n]$ to a vertex labelled by $j$, we set
 $\Delta^\tau_{In(e),Out(e)}:= \Delta_{i,j}^\tau$, and $\mu^{(n)}$ stands for the natural multiplication,
 $$
 \Ba{rccc}
\mu^{(n)}: &   (\odot^\bu W)^{\ot n} & \lon &  \odot^\bu W \\
     &   (f_1,f_2,\ldots, f_n) & \lon & f_1f_2\cdots f_n.
\Ea
 $$
\subsubsection{\bf Proposition}\label{5: Proposition on tau and repr of Feynman operad}
{\em For any graded vector space $W$ and any element $\tau\in W^*\ot W^*$ of degree $k$
the operators (\ref{5: Def of Phi_Ga}) define a representation
of the operad of Feynman graphs in the vector space $X:=\odot^\bu W$.}

\begin{proof} One has only to check equations (\ref{5: Phi_Ga equations})
but this is straightforward due to the Leibniz rule.
\end{proof}

\subsubsection{\bf Poisson-Schouten algebras}\label{5: subsect  Poisson-Schouten algebras}
 Let $V$ be an arbitrary  $\Z$-graded vector space $V$ and let
$$
W_d:=V^*[2-d] \oplus V[-1].
$$
Then $W_d^*\ot W_d^*$ contains a distinguished element $\tau:=s^{1-d}\Id_V$ such that
$$
\Delta^{\tau}=\sum_\al \frac{\p}{\p \psi_\al} \ot \frac{\p}{\p x^\al}
$$
where, for a basis $\{e_\al\}$ in $V$ and the associated dual basis, $\{e^\al\}$, in $V^*$, we set
$$
x^\al:= s^{2-d}e^\al, \ \ \ \ \psi_\al:= s^{-1}e_\al.
$$
This operator makes the completed graded commutative algebra $\widehat{\odot^\bu} W_d\simeq \K[[x^\al,\psi_\al]]$
into a $d$-{\em algebra}\, (in the terminology of \cite{Ko2}) with the degree $1-d$ Lie brackets
given by,
\Beq\label{5: Poisson-Schouten bracket}
\{f\bu g\}_{1-d}=\sum_\al 
\frac{ f\overleftarrow{\p}}{\p \psi_\al} \frac{\overrightarrow{\p} g}{\p x^\al}\ +  (-1)^{|f||g|+ (d-1)(f+g)+d}
\frac{g \overleftarrow{\p} }{\p \psi_\al} \frac{\overrightarrow{\p} f}{\p x^\al}.
\Eeq
We call this particular class of $d$-algebras,
$$
\fg_d(V):=(\widehat{\odot^\bu} W_d, \{\ ,\ \}_{1-d}),
$$
the {\em Poisson-Schouten algebras}. Note that
$$
\fg_2(V)=\cT_{poly}(V)
$$
can be identified with the Schouten algebra of polyvector fields on $V$ viewed as an affine or
 formal manifold.

\sip

By Proposition~{\ref{5: Proposition on tau and repr of Feynman operad}}, every such an algebra comes equipped
with a representation of the operad $\fG$ (which, as we shall see below, can be quantized).
Let us have a brief look at the set of Maurer-Cartan elements of $\fg_d(V)$,
$$
\cM\cC(V):= \left\{\ga \in  \fg_d(V)\ | \{\ga\bu \ga\}_{1-d}=0 \ \mbox{and}\ |\ga|=d \right\},
$$
for the case $V=\R^d$ and $d\geq 2$:
\Bi
\item[$d=2:$] the set  $\cM\cC_2(V)$  consists of formal Poisson structures on $V$, that is,
 $\cM\cC_2(V)$ is the set of formal bi-vector fields
$\ga \in \wedge^2\cT_V$ satisfying the equation  $\{\ga\bu \ga\}_{-1}=0$; the 2-algebra $\fg_2(\R^d)$
is precisely the Schouten algebra of formal polyvector fields on $\R^d$;
\item[$d=3:$]  a generic element of degree $3$ has the form,
$$
\ga= \sum_{\al,\be,\ga} \underbrace{C_{\al\be\ga}}_{\wedge^3 V\rar K}  x^\al x^\be x^\ga +
\sum_{\al,\be,\ga} \underbrace{C_{\al\be}^{\ga}}_{\wedge^2 V\rar V}  x^\al x^\be \psi_\ga +
\sum_{\al,\be,\ga} \underbrace{C^{\al\be}_{\ga}}_{V\rar \wedge^2 V}  \psi_\al \psi_\be x^\ga +
\sum_{\al,\be,\ga} \underbrace{C^{\al\be\ga}}_{\K\rar \wedge^3 V}  \psi_\al \psi_\be \psi_\ga.
$$
It is easy to check that such an element satisfies the equation $\{\ga\bu \ga\}_{-2}=0$
if and only if the associated set of degree $0$ maps,
$$
{\K \rar \wedge^3 V,\ \  V \rar \wedge^2 V,\ \ \wedge^2 V\rar V, \ \ \wedge^3 V\rar \K},
$$
make $V$ into a Lie quasi-bialgebra. This notion was introduced by Drinfeld in order ro describe
infinitesimal breaking of the (co)associativity of basic  bialgebra operations due to associators.

\sip

\item[$d=4$:] the set $\cM\cC_4(V)$ describes triples, $([\ ,\ ], g, \Phi)$, consisting of a Lie algebra
structure on $V^*$, and  Lie invariant elements  $g\in \odot^2V^*$ and $\Phi\in \wedge^4 V$,
$$
\ga= \sum_{\al,\be} \underbrace{C_{\al\be}}_{g:\odot^2 V\rar \K}  x^\al x^\be  +
\sum_{\al,\be,\ga} \underbrace{C^{\al\be}_{\ga}}_{V\rar \wedge^2 V}  \psi_\al \psi_\be x^\ga +
\sum_{\al,\be,\ga,\var} \underbrace{C^{\al\be\ga\var}}_{\K\rar  \wedge^4 V}  \psi_\al \psi_\be \psi_\ga
\psi_\var.
$$

\sip

\item[$d\geq 5$:]  the set $\cM\cC_d(V)$ describes pairs, $([\ ,\ ], \Phi)$, consisting of a Lie algebra
structure on $V^*$ and a Lie invariant element $\Phi\in \wedge^{d} V$.
\Ei

\subsection{ A class of representations of the 2-coloured operads  $\fG^{\uparrow\downarrow}$, $\fG^\uparrow$ and
$\fG^\downarrow$}\label{7: representations of fG-arrows} Formulae (\ref{5: Def of Phi_Ga}) for  $\tau:=s^{1-d}\Id_V$ give a representation
of the 2-coloured operad
$$
\rho: \fG^{\uparrow\downarrow} \lon \cE nd_{\{X_1,X_2\}}
$$
in $X_1=X_2=\fg_d(V)$. We shall quantize this representation in \S
{\ref{9:  quantization of associative algebras of polyvector fields}} using the standard homogeneous volume form
on the sphere $S^{d-1}$ as a propagator.

\sip

The Poisson-Schouten algebra $\fg_{d}(V)$ contains two graded commutative subalgebras,
$\odot^\bu (V^*[2-d])$ and $\odot^\bu (V[-1])$, which are also Abelian Lie subalgebras. There are natural projections,
$$
\pi_{\downarrow}: \fg_{d}(V)\rar \odot^\bu (V^*[2-d]), \ \ \mbox{and}\ \
\pi_{\uparrow}: \fg_{d}(V)\rar \odot^\bu (V[-1]).
$$
If $\Ga\in  \cG^{\downarrow}$ has $n$ white vertices and $m$ black vertices, then  we set,
$$
\Ba{rccc}
\Phi_\Ga: & \left(\odot^\bu (V^*[2-d] \oplus V[-1])^{\otimes n}\right)\bigotimes
(\odot^\bu(V^*[2-d])^{\otimes m} &\lon &  \odot^\bu (V^*[2-d])\\
& \ga_1\ot\ldots\ot \ga_n\ot f_1\ot\ldots\ot f_m & \lon & \Phi_\Ga(\ga_1,\ldots,\ga_n, f_1,\ldots,f_m)
\Ea
$$
where
\Beq\label{5: Phi_Ga 2 colour}
 \Phi_\Ga(\ga_1,\ldots,\ga_n, f_1,\ldots,f_m):= \pi_{\downarrow}\circ \mu^{(n+m)}\circ \prod_{e\in E(V)}
 \Delta_{In(e),Out(e)}^\tau(\ga_1\ot \cdots\ot \ga_n\ot f_1 \ot \cdots \ot f_m).
\Eeq

It is easy to check that
for any graded vector space $V$
the operators (\ref{5: Def of Phi_Ga}) and (\ref{5: Phi_Ga 2 colour}) define a representation
of the 2-coloured operad of Feynman graphs $\fG^{\downarrow}$ in the vector spaces
$X_c:=\odot^\bu (V^*[2-d]\oplus V[-1])$ and $X_o= \odot^\bu (V^*[2-d])$. Quantization of this representation
in the case $d=2$ with the help of the Kontsevich propagator gives us his formality map (see below).

\sip

Replacing in the above construction $\odot^\bu(V^*[2-d])$ by  $\odot^\bu(V[-1])$ and $\pi_{\downarrow}$ by
$\pi_{\uparrow}$ one obtains a canonical representation of  $\fG^{\uparrow}$ in the vector spaces
$X_c:=\odot^\bu (V^*[2-d]\oplus V[-1])$ and $X_o= \odot^\bu (V[-1])$.

\bip

{\large
\section{\bf
De Rham field theories on configuration spaces and quantization}\label{8 Section: De Rham}
}

\mip

\subsection{Completed tensor product of de Rham algebras} Let $\cM an$ be the category of smooth manifolds
with corners (or semialgebraic manifolds) and let $\Omega_{\cM an}$ be the associated category\footnote{One might prefer using the language of functors and natural transformations in this subsection, but, to save the space and the time,  we choose to consider $\Omega_{\cM an}$ not as a functor but as a subcategory of the category of dg algebras.} of de Rham
algebras of  (PA) differential forms. The category  $\cM an$ is symmetric monoidal with respect to the ordinary
Cartesian product, $\times$, of manifolds. We shall define
a {\em completed}\, tensor product of de Rham algebras as follows,
$$
\Omega_{X_1}\widehat{\ot}\, \Omega_{X_2}:= \Omega_{X_1\times X_2},
$$
where $X_1$ and $X_2$ are arbitrary objects in $\cM an$. This completed tensor product makes
$\Omega_{\cM an}$ into a symmetric monoidal category so that we can define operads and cooperads in $\Omega_{\cM an}$.
Moreover, in this case one can associate with an arbitrary operad $\cP=\{\cP(n)\}$
in the category $\cM an$  a co-operad, $\Omega_\cP:=\{\Omega_{\cP(n)}\}$, of de Rham algebras in the category
$\Omega_{\cM an}$.

\subsection{De Rham field theory on $\overline{C}(\R^d)$}
 For any proper subset $A\subset [n]$ of cardinality
at least two there is a uniquely associated operadic
composition\footnote{See Section {\ref{Appendix: Third definition}} in Appendix
 for explanation of the notation $\circ_{\bu}^{([n]-A)\sqcup \bu, A}$.},
\Beq\label{6: circ_A composition}
\circ_A:= \circ_{\bu}^{([n]-A)\sqcup \bu, A}: \overline{C}_{([n]- A)\sqcup\bu}(\R^{d})\times
\overline{C}_{ A}(\R^{d})
 \lon \overline{C}_n(\R^{k+1}),
\Eeq
which sends the Cartesian product into a  corresponding  boundary component in $\p\overline{C}_n(\R^{d})$. Moreover,
 $$
\p \overline{C}_n(\R^d)= \bigcup_{A\varsubsetneq [n], \# A\geq 2} \circ_A
\left(\overline{C}_{([n]-A)\sqcup \bu}(\R^d)\times  \overline{C}_A(\R^d)\right)
$$

Let $\Omega_{\overline{C}_n(\R^d)}=\oplus_{p\geq 0} \Omega^p_{\overline{C}_n(\R^d)}$ stand for the de
Rham algebra of ($PA$) smooth complex valued differential forms on the $\overline{C}_n(\R^d)$.
The associated dg $\bS$-module,
$$
\Omega_{\overline{C}(\R^d)}:=\left\{\Omega_{\overline{C}_n(\R^d)}, d_{DR}\right\}_{n\geq 2}
$$
is naturally  a non-unital dg cooperad in the category $\Omega_{\cM an}$.

\subsubsection{\bf Definition} A {\em de Rham field theory}\,  on the  operad
 $\{\overline{C}_n(\R^d)\}_{n\geq 1}$ is a morphism\footnote{Strictly speaking, the objects on both sides of the arrow
 belong to different symmetric monoidal categories as the tensor product on the r.h.s.\ is completed;
 however, it is straightforward to adopt axioms behind the ordinary notion of a
  morphism of operads in the category of vector spaces to this particular case.},
$$
\Omega: (\check{\fG}, 0) \lon (\Omega_{\overline{C}(\R^d)}, d_{DR}),
$$
of dg cooperads.

\sip

Let us translate this definition into a system of explicit formulae.

\subsubsection{\bf Proposition}{\em A de Rham field theory on
 $\{\overline{C}_n(\R^d)\}_{n\geq 1}$ is equivalent to a family of maps
$$
\left\{
\Ba{rccc}
\Omega: & {G}_{n, l}& \lon & \Omega^{l(d-1)}_{\overline{C}_n(\R^d)}\\
& \Ga & \lon & \Omega_\Ga
\Ea
\right\}_{n\geq 1, l\geq 0},
$$
such that $d_{DR}\Omega_\Ga=0$,  $\Omega_{\Ga_{opp}}=-(-1)^{d-1}\Omega_\Ga$,
and, for any
boundary stratum in $\overline{C}_n(\R^d)$ given by the image of an operadic composition
(\ref{6: circ_A composition}),  one has
\Beq\label{6: de Rham on C_n R^d}
\circ_A^*(\Omega_\Ga) = (-1)^{\var}\Omega_{\Ga/\Ga_A}\wedge \Omega_{\Ga_A}
\Eeq
where the sign is determined by the equality $e_{\Ga/\Ga_A} e_{\Ga_{A}} =(-1)^{\var}e_\Ga$.}
\begin{proof} As differential in $\check{\fG}$ is trivial, the image of $\Omega$
belongs to the subspace of closed differential forms.
As the co-composition in $\fG$ is given by (\ref{5: co-composition in G_d})
while the co-composition
in the cooperad $\Omega_{\overline{C}(\R^d)}$,
$$
\Ba{rccc}
\Delta: &  \Omega_{\overline{C}(\R^d)} &\lon & \displaystyle \bigoplus_{A\subsetneq [n],\ \# A\geq 2}
\Omega_{\overline{C}_{([n]-A)\sqcup\bu}(\R^d)} \hat{\ot}   \Omega_{\overline{C}_{A}(\R^d)} \mip\\
&   \om & \lon & \displaystyle \Delta(\om):= \bigoplus_{A\subsetneq [n],\ \# A\geq 2}
 \circ_A^*(\om)|_{\overline{C}_{([n]-A)\sqcup\bu}(\R^d)\times  \overline{C}_{A}(\R^d)}
\Ea
$$
is just a direct sum of  restrictions of the differential form $\om$  to all the boundary components
$Im(\circ_A)$, we conclude that the map $\Omega$ is a morphism of cooperads if and only if
equations (\ref{6: de Rham on C_n R^d}) hold for any $n$ and any $A\varsubsetneq [n]$ with $\# A\geq 2$.
\end{proof}

\subsubsection{\bf Theorem}\label{6: Theorem on Lie_infty(k) algebras} {\em Let
$$
\Ba{rccc}
\rho: & \fG & \lon  & \cE nd_X\\
      &    \Ga                  &\lon &   \Phi_\Ga
\Ea
$$
be a representation of the operad of
Feynman graphs in a graded vector space $X$.
Then any de Rham field theory on   $\overline{C}(\R^d)$  makes  $X$ into a representation,
$$
\Ba{rccc}
\rho_Q: & (\cF \cC hains(\overline{C}(\R^d)),\p) & \lon  & \cE nd_X\\
      &    \overline{C}_n(\R^d)                  &\lon &  \mu_n\in \Hom(X^{\ot n},X)
\Ea
$$
of the operad of fundamental chains of $\overline{C}(\R^d)$, that is, makes $X$ into
a $\caL_\infty\{d-1\}$-algebra with operations, $\{\mu_n: \ot^n X\rar X\}_{n\geq 2}$ given  by
\Beq\label{Ch4: induced_Leib_infty}
\mu_n:=
\left\{
\Ba{cl}
 \sum_{\Ga\in G_{n,\frac{nd-2}{d-1}-1}} c_{\Ga}\Phi_\Ga
 & \mbox{if}\ d-1|nd-2  \\
 0 & \mbox{otherwise}.\\
 \Ea
 \right.
\Eeq
where\footnote{$c_\Ga$ and $\Phi_\Ga$ depend on the choice of the
linear ordering of the set $E(\Ga)$; however their product
$c_\Ga\Phi_\Ga$ is independent of that choice.}
\Beq\label{Ch4: c_gamma_on_C_n}
c_\Ga:= \int_{\overline{C}_n(\R^d)} \Omega_\Ga.
\Eeq
}
\begin{proof}
First we explain the condition $d-1|nd-2$. If $\Ga\in G_{n,l}$, then $c_\Ga\neq 0$ if and only if
$\Omega_\Ga$ is a top degree differential form on $\overline{C}_n(\R^d)$, i.e.\ if and only
if $\deg \Omega_\Ga= (d-1)l$ is equal to $\dim \overline{C}_n(\R^d)= dn-d-1=dn-2-(d-1)$.

\sip

Assume 
now that $n$ is such that $(d-1)|nd-3$. Then, for any $\Ga\in G_{n, \frac{nd-3}{d-1}-1}$, one has,
by the Stokes theorem,
$$
0=\int_{\overline{C}_n(\R^d)}d\Omega_\Ga =\int_{\p\overline{C}_n(\R^d)}\Omega_\Ga=
\sum_{A\varsubsetneq [n]\atop \# A\geq 2}(-1)^{\sigma_A}
\int_{\overline{C}_{(n- A)\sqcup \bu}(\R^d)}\Omega_{\Ga/\Ga_A}
\int_{\overline{C}_{ A}(\R^d)}\Omega_{\Ga_A}  =
\sum_{A\subset V(\Ga)\atop
A\, \mathit i \mathit s\, \mathit a \mathit d \mathit m \mathit i \mathit s \mathit s \mathit i \mathit b \mathit l \mathit e}(-1)^{\sigma(A)}c_{\Ga_A}c_{\Ga/\Ga_A}.
$$
where a subset $A\subset V(\Ga)$ is called {\em admissible}\, if   $d-1|d\# A -2$ and $\Ga_A\in G_{\# A, \frac{d\# A -2}{d-1}-1}$.
For admissible subsets $A\subset [n]$ both differential forms $\Omega_{\Ga_A}$ and $\Omega_{\Ga/\Ga_A}$ are
top degree forms
on  $\overline{C}_{\# A}(\R^d)$ and, respectively, $\overline{C}_{n-\# A +1}(\R^d)$.

For a pair of natural number $m$ and $d$
such that $d-1|md-2$ let us denote
$$
\Ga_{[m;d]}:= G_{m, \frac{md-2}{d-1}-1}.
$$
 Then, using (\ref{5: Phi_Ga equations}),
we obtain,
\Beqrn
\sum_{A\subsetneqq [n]\atop
\# A\geq 2}(-1)^{\sigma(A)}\hspace{-5mm} \underbrace{\mu_{([n]-A)\sqcup\bu}\circ_\bu \mu_{A}}_{operadic\ composition\ in\ \cE nd_{X}}&=&
\sum_{A\subsetneqq [n]\ \# A\geq 2\atop \stackrel{d-1|d\# A -2}{d-1| d(n-\#A+1)-2}}   \sum_{\Ga_1\in G_{[n-\# A+1,d]}}
\sum_{\Ga_2\in G_{[\#A,d]}}  (-1)^{\sigma(A)} c_{\Ga_1}c_{\Ga_2}                \Phi_{([n]-A)\sqcup\bu}\circ_\bu \Phi_{A}\\
&=&
\sum_{\Ga\in G_{n, \frac{nd-3}{d-1}-1}}
\left(
\sum_{A\subset Vert(\Ga)\atop
A\, \mathit i \mathit s\, \mathit a \mathit d \mathit m \mathit i \mathit s \mathit s \mathit i \mathit b \mathit l \mathit e}(-1)^{\sigma_A}c_{\Ga_A}c_{\Ga/\Ga_A}\right)\Phi_{\Ga}\\
  &=& 0
\Eeqrn
which proves the claim.
\end{proof}

\subsubsection{\bf Remark}\label{6: Remark on weights} The above proof is elementary but notationally looks unduly complicated ---
we have to take care about divisibility conditions of the type $d-1|nd-2$ to ensure  that
the integrals $\int_{\overline{C}_n(\R^d)} \Omega_\Ga$ make sense. If, however, we set formally
$\int_{\overline{C}_n(\R^d)} \Omega_\Ga=0$ when $\deg \Omega_\Ga\neq \dim \overline{C}_n(\R^d)$, then
the presentation gets  simplified. We assume this convention from now on.

\subsubsection{\bf Example} For any $i,j\in [1,\ldots,n]$
there is a natural map
$$
\Ba{rccc}
\pi_{ij}: & C_n(\R^{d}) & \lon & C_{2}(\R^{d})\simeq S^{d-1}\\
& (x_1,\ldots,x_n) & \lon & \frac{x_i-x_j}{|x_i-x_j|}
\Ea
$$
which forgets all points in the configuration except those labelled by
$i$ and $j$. This map extends to a map of the compactifications,
$
\bar{\pi}_{ij}: \overline{ C}_n(\R^{d})\lon \overline{ C}_n(\R^{d}).$

\begin{subtheorem}\label{6: prop on propagator on C(R^d)}  { Let $\om$ be a volume form on $S^{d-1}$ normalized so that $\int_{S^{d-1}}\om=1$.
Then the maps, $n\geq 2$,
\Beq\label{6: DeRFT on C(R^d)}
\Ba{rccc}
\Omega: & \fG(n) & \lon & \Omega_{\overline{C}_n(\R^d)}\\
        &   \Ga   & \lon &   \displaystyle   \Omega_\Ga:=\bigwedge_{e\in E(\Ga)}\om_e
\Ea
\Eeq
define a de Rham field theory on $\overline{C}(\R^d)$. Here, for an edge $e$
beginning at a vertex labelled by $i\in [n]$ and ending at a vertex labelled by $j\in [n]$,
we set
$
\om_e:= \bar{\pi}_{ij}^*\left(\om\right).$
}
\end{subtheorem}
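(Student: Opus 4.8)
The plan is to verify the three conditions from the previous Proposition characterizing de Rham field theories on $\overline{C}(\R^d)$: closedness of each $\Omega_\Ga$, the antisymmetry $\Omega_{\Ga_{opp}}=-(-1)^{d-1}\Omega_\Ga$ under reversal of the edge ordering, and the multiplicativity (\ref{6: de Rham on C_n R^d}) under restriction to boundary strata. The first is immediate: each $\om_e=\bar\pi_{ij}^*(\om)$ is closed since $\om$ is a top-degree form on $S^{d-1}$ (hence $d\om=0$ for dimension reasons), and a wedge of closed forms is closed, so $d_{DR}\Omega_\Ga=0$. For the antisymmetry, note that each $\om_e$ has degree $d-1$, so transposing two adjacent factors in $\bigwedge_{e\in E(\Ga)}\om_e$ introduces a sign $(-1)^{(d-1)^2}=(-1)^{d-1}$; since $\Ga_{opp}$ differs from $\Ga$ by reversing the total order on $E(\Ga)$, which is an odd permutation of the $\#E(\Ga)$ factors precisely when the reordering sign is $-1$, one checks the bookkeeping matches the required relation $\Omega_{\Ga_{opp}}=-(-1)^{d-1}\Omega_\Ga$ — I would phrase this as: the convention $[\Ga_{opp}]=(-1)^{d-1}[\Ga]$ on the source side is exactly compensated by the Koszul sign on the form side, so the map is well-defined and satisfies the stated antisymmetry.

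The substantive step is the factorization property (\ref{6: de Rham on C_n R^d}) on a codimension-one boundary stratum $\mathrm{Im}(\circ_A)\cong \overline{C}_{([n]-A)\sqcup\bu}(\R^d)\times\overline{C}_A(\R^d)$, corresponding to the $A$-labelled points colliding. Here I would partition the edge set $E(\Ga)$ into three groups: edges $e$ with both endpoints in $A$ (these are the edges of $\Ga_A$), edges with both endpoints outside $A$, and edges with exactly one endpoint in $A$. For the first group, $\circ_A^*(\om_e)$ pulls back to $\om_e$ on the $\overline{C}_A(\R^d)$ factor via the rescaled (blown-up) configuration. For the other two groups, the key geometric fact is that in the collision limit all points of $A$ have the same image in $\overline{C}_{([n]-A)\sqcup\bu}(\R^d)$ (namely the vertex $\bu$), so for an edge $e$ touching $A$ the direction $\frac{x_i-x_j}{|x_i-x_j|}$ degenerates to the direction involving $\bu$ — i.e. $\circ_A^*(\om_e)$ becomes the pullback of $\om$ along the map recording the $\bu$-to-other-vertex direction on the quotient factor. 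Thus $\circ_A^*(\Omega_\Ga)$ splits as a wedge of a form pulled back from $\overline{C}_{([n]-A)\sqcup\bu}(\R^d)$ (assembled from edges of $\Ga/\Ga_A$, with collapsed parallel edges matching the $d$ even vanishing convention) and a form pulled back from $\overline{C}_A(\R^d)$ (the form $\Omega_{\Ga_A}$), and this is exactly $\Omega_{\Ga/\Ga_A}\wedge\Omega_{\Ga_A}$ up to the Koszul sign $(-1)^{\var}$ coming from reordering $E(\Ga)$ into $E(\Ga/\Ga_A)\sqcup E(\Ga_A)$, which matches $e_{\Ga/\Ga_A}e_{\Ga_A}=(-1)^{\var}e_\Ga$.

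The main obstacle I anticipate is making the ``degeneration of $\om_e$'' claim precise and rigorous at the level of the compactified spaces: one must check, using the explicit smooth atlas on $\overline{C}(\R^d)$ built from metric trees (as formalized in \S\ref{2: Remark on metric trees} and \S\ref{8 Section: De Rham}), that the map $\bar\pi_{ij}$ extends continuously (smoothly) to the blow-down and that on the boundary face $\var=0$ its restriction is literally the corresponding projection from $\overline{C}_{([n]-A)\sqcup\bu}(\R^d)$ when at least one of $i,j$ lies in $A$ — and that when both lie in $A$ the relevant limiting direction is instead read off from the rescaled configuration parametrizing $\overline{C}_A(\R^d)$. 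Once this compatibility of the forgetful projections $\pi_{ij}$ with the operadic boundary maps $\circ_A$ is established (which is a routine but careful coordinate computation in the metric-tree charts), the factorization formula (\ref{6: de Rham on C_n R^d}) and hence the theorem follow.
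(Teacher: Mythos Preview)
Your proposal is correct and follows essentially the same approach as the paper: work in the metric-tree coordinate chart $\cU_A=[0,\delta)\times C_A^{st}(\R^d)\times C_{[n]-A+\bu}^{st}(\R^d)$, represent the collapsing configuration as $\{x_0+sx_i\}_{i\in A}\sqcup\{x_j\}_{j\notin A}$, and use the translation--dilation invariance of $\pi_{ij}$ to see that $\om_e$ for $e\in E(\Ga_A)$ is pulled back from the $\overline{C}_A$ factor while $\om_e$ for $e\notin E(\Ga_A)$ degenerates at $s=0$ to a pullback from the quotient factor (the point in $A$ being replaced by $\bu$). Your three-way edge partition and your remark on collapsed parallel edges are a bit more explicit than the paper's two-way split $E(\Ga)=E(\Ga/\Ga_A)\sqcup E(\Ga_A)$, but the argument is the same.
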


\begin{proof} Consider the boundary stratum in $\overline{C}(\R^d)$ corresponding to a graph,
$$
\Ba{c}
\begin{xy}
<10mm,0mm>*{\circ},
<10mm,0.8mm>*{};<10mm,5mm>*{}**@{-},
<0mm,-10mm>*{...},
<14mm,-5mm>*{\ldots},
<13mm,-7mm>*{\underbrace{\ \ \ \ \ \ \ \ \ \ \ \ \  }},
<14mm,-10mm>*{_{[n]\setminus A}};
<10.3mm,0.1mm>*{};<20mm,-5mm>*{}**@{-},
<9.7mm,-0.5mm>*{};<6mm,-5mm>*{}**@{-},
<9.9mm,-0.5mm>*{};<10mm,-5mm>*{}**@{-},
<9.6mm,0.1mm>*{};<0mm,-4.4mm>*{}**@{-},
<0mm,-5mm>*{\circ};
<-5mm,-10mm>*{}**@{-},
<-2.7mm,-10mm>*{}**@{-},
<2.7mm,-10mm>*{}**@{-},
<5mm,-10mm>*{}**@{-},
<0mm,-12mm>*{\underbrace{\ \ \ \ \ \ \ \ \ \ }},
<0mm,-15mm>*{_{A}},
\end{xy}
\Ea
$$
It is equal to the image of the map (\ref{6: circ_A composition}). Introduce in the neighbourhood of $\Img(\circ_A)$ a
coordinate chart,
$$
\cU_A=[0,\delta)\times  C_A^{st}(\R^d)\times C^{st}_{[n]-A+\bu}(\R^d),
$$
 corresponding to the metric version of the above graph
(see \S {\ref{3.3 higher dim version of Konts spaces}}). The boundary stratum $\Img (\circ_A)$ is given in this chart by the
equation $s=0$, where $s$ is the standard coordinate on the semi-open interval $[0,\delta)$ for some small $\delta\in \R^+$.
Thus, to prove the Proposition, we have to check that
$$
 \Omega_\Ga|_{s=0}= (-1)^{\var}p_1^*\left(\bigwedge_{e'\in E(\Ga/\Ga_A)}\om_{e'}\right)\wedge p_2^*\left(
 \bigwedge_{e''\in E(\Ga_A)}\om_{e''}\right)
$$
where $p_1: C_A^{st}(\R^d)\times C^{st}_{[n]-A+\bu}(\R^d)\rar C^{st}_{[n]-A+\bu}(\R^d)$ and
$p_2:  C_A^{st}(\R^d)\times C^{st}_{[n]-A+\bu}(\R^d)\rar C^{st}_{A}(\R^d)$ are natural projections, and  the sign
$ (-1)^{\var}$ is given just by regrouping of the factors $\om_e$ in  $\Omega_\Ga$, i.e.\ by the equality
$$
 \Omega_\Ga=(-1)^{\var}\left(\bigwedge_{e'\in E(\Ga)\setminus E(\Ga_A)}\om_{e'}\right)\wedge \left(
 \bigwedge_{e''\in E(\Ga_A)}\om_{e''}\right)
$$
All points parameterized by $A$ collapse in the limit $s\rar 0$  into a single point $x_0\in \R^d$;
we can represent this limit configuration as $s\rar 0$ limit of a configuration
 $\{x_0 + s x_i\}_{i\in A}\sqcup \{x_j\}_{j\in [n]\setminus A}$. Using now invariance of the forgetful map $\pi_{ij}$
 under the transformation $(x_i\rar x_0+ sx_i, x_j\rar x_0+ s x_j)$,
   we immediately conclude  that
$$
\bigwedge_{e'\in E(\Ga)\setminus E(\Ga_A)}\om_{e'}|_{s=0}=p_1^*\left(\bigwedge_{e'\in E(\Ga/\Ga_A)} \om_{e'}\right)\ \ \
\mbox{and}\ \ \
 \bigwedge_{e''\in E(\Ga_A)}\om_{e'}|_{s=0}=p_2^*\left(\bigwedge_{e''\in E(\Ga_A)} \om_{e''}\right)
$$
as required.
\end{proof}

For any vector space $V$ the associated vector space $\fg_d(V)$ is a representation of the operad $\fG$. Hence
we can apply Theorem {\ref{6: Theorem on Lie_infty(k) algebras}} and obtain the following

\begin{subcorollary}\label{6: Corollary on de Rham of Schouten-Bracket}
The $\caL_\infty$-structure, $\{\mu_n=\sum_{\Ga\in \fG_d(n)}c_\Ga\Phi_\Ga\}_{n\geq 2}$,
 induced on $\fg_d(V)$ by the de Rham field theory
(\ref{6: DeRFT on C(R^d)}) is homotopy non-trivial.
For $\om=Vol(S^{d-1})$, the standard homogeneous volume form on the
sphere,
the induced $\caL_\infty$-structure is precisely
the Poisson-Schouten structure (\ref{5: Poisson-Schouten bracket}).
\end{subcorollary}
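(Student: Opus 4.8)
The statement has two parts: first, that the induced $\caL_\infty\{d-1\}$-structure $\{\mu_n\}_{n\geq 2}$ on $\fg_d(V)$ coming from the de Rham field theory (\ref{6: DeRFT on C(R^d)}) is homotopy non-trivial; and second, that for the standard homogeneous volume form $\om = Vol(S^{d-1})$ one recovers exactly the Poisson-Schouten bracket (\ref{5: Poisson-Schouten bracket}). The plan is to treat the second (identification) part first, since it is the concrete computation, and then deduce non-triviality from it. By Theorem~\ref{6: Theorem on Lie_infty(k) algebras} we already know that the formulae $\mu_n = \sum_{\Ga\in G_{n,l}} c_\Ga \Phi_\Ga$ (with $l$ the unique value making $\Omega_\Ga$ top-degree and $c_\Ga = \int_{\overline{C}_n(\R^d)}\Omega_\Ga$) do assemble into an $\caL_\infty\{d-1\}$-algebra; so the work is purely in evaluating these weights and the operators $\Phi_\Ga$.

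\textbf{Step 1: Dimension count and reduction to $n=2$.} For a graph $\Ga \in G_{n,l}$ one has $\deg \Omega_\Ga = (d-1)l$ and $\dim \overline{C}_n(\R^d) = dn - d - 1$, so $c_\Ga \neq 0$ forces $(d-1)l = dn-d-1$. For $n=2$ this gives $(d-1)l = d-1$, i.e.\ $l=1$: the bracket $\mu_2$ is a sum over graphs with two vertices and a single directed edge. There are exactly two such graphs (up to the orientation identification), the edge pointing one way or the other, and each contributes $c_\Ga = \int_{\overline{C}_2(\R^d)} \om_e = \int_{S^{d-1}} Vol(S^{d-1}) = 1$ by the normalization. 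I would then compute $\Phi_\Ga$ for each from (\ref{5: Def of Phi_Ga})--(\ref{5: Poisson-Schouten bracket}): with $\tau = s^{1-d}\Id_V$ the biderivation $\Delta^\tau = \sum_\al \frac{\p}{\p\psi_\al}\ot\frac{\p}{\p x^\al}$, and a single-edge graph applies $\Delta^\tau$ to the pair of inputs and then multiplies. The two orientations give the two terms $\frac{f\overleftarrow\p}{\p\psi_\al}\frac{\overrightarrow\p g}{\p x^\al}$ and (with the appropriate Koszul/orientation sign, which is precisely the $\Z_2$-grading sign $(-1)^{d-1}$ built into $\fG(n)$ combined with the symmetry of $\overline{C}_2(\R^d)$ under $\bS_2$) the term $(-1)^{|f||g|+(d-1)(f+g)+d}\frac{g\overleftarrow\p}{\p\psi_\al}\frac{\overrightarrow\p f}{\p x^\al}$. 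Matching these signs against (\ref{5: Poisson-Schouten bracket}) is the main bookkeeping; the key input is the fact (recorded in \S\ref{3: subsection on C_nR^d}) that the generating corolla of $\overline{C}_\bu(\R^d)$ is symmetric for $d$ even and skewsymmetric for $d$ odd, which is exactly what makes $\mu_2$ graded (anti)symmetric with the correct degree shift.

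\textbf{Step 2: Vanishing of the higher $\mu_n$, $n\geq 3$.} I would argue that for $n\geq 3$ and $\Ga\in G_{n,l}$ with $(d-1)l = dn-d-1$, the top-degree form $\Omega_\Ga = \bigwedge_e \om_e$ integrates to zero over $\overline{C}_n(\R^d)$. The standard argument (as in Kontsevich's \cite{Ko} vanishing lemmas, or via the $SO(d)$/scaling symmetry of the homogeneous volume form) is: $\Omega_\Ga$ is pulled back from a product of maps $\overline{\pi}_{ij}$ to copies of $S^{d-1}$, and a parity/symmetry argument --- reflection on one of the $S^{d-1}$ factors, or the involution reversing all edges combined with the orientation sign --- shows the integral equals its own negative, hence is zero, whenever $n\geq 3$. (Concretely, the number of edges $l = \frac{dn-d-1}{d-1}$ and the dimension bookkeeping leave a residual rotational degree of freedom that is not absorbed.) Thus $\fg_d(V)$ with the induced structure is an \emph{honest} degree-shifted Lie algebra, $\mu_{n\geq 3} = 0$, and by Step 1 its bracket is (\ref{5: Poisson-Schouten bracket}).

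\textbf{Step 3: Homotopy non-triviality.} Finally, for general normalized $\om$ (not necessarily homogeneous), I would observe that $\mu_2$ is always a \emph{nonzero} degree-$(1-d)$ bracket: its value on the generators $x^\al$, $\psi_\al$ is forced by $c_\Ga = 1$ (the single-edge weight is the total mass of $\om$, which is $1$ by normalization, independently of the shape of $\om$), so $\mu_2 = \{\ ,\ \}_{1-d}$ on these generators regardless. An $\caL_\infty\{d-1\}$-structure on $\fg_d(V)$ with nonzero binary bracket descends to a nonzero bracket on cohomology; but $\fg_d(V)$ here carries zero differential ($\mu_1 = 0$), so its cohomology is itself, with the nonzero Schouten-type bracket. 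Since a homotopy-trivial $\caL_\infty$-structure would induce the \emph{zero} bracket on cohomology, the structure cannot be homotopy trivial. The hard part of the whole argument is Step 2 --- the vanishing of the higher integrals --- which is where one genuinely needs the geometry of $\overline{C}_n(\R^d)$ and a symmetry argument rather than pure algebra; Steps 1 and 3 are essentially sign-chasing and a degree count.
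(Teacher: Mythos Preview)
Your proof is correct and follows essentially the same route as the paper for the identification part (Steps~1--2): the paper likewise reduces $\mu_2$ to the two single-edge graphs with weight $1$ and invokes Kontsevich's vanishing lemmas (\cite{Ko} for $d=2$, \cite{Ko-fd} for $d\geq 3$) to kill $\mu_{n\geq 3}$ when $\om=Vol(S^{d-1})$. One caveat on your Step~2 sketch: the Kontsevich vanishing is \emph{not} a simple reflection/edge-reversal parity argument --- such an argument would not distinguish $n=2$ from $n\geq 3$. The actual proofs are more subtle (a complex-analytic trick for $d=2$, a different degree-theoretic argument for $d\geq 3$), so you should treat this as a black-box citation, which is exactly what the paper does.

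For homotopy non-triviality (Step~3) your argument is actually more direct than the paper's. The paper writes a general propagator as $\om = Vol(S^{d-1}) + df$, cites \cite{Me-Auto} for the statement that the exact perturbation $df$ produces only a homotopy-trivial $\caL_\infty$-deformation of the Schouten bracket, and concludes that the resulting structure is homotopy equivalent to the nonzero Schouten structure. You instead observe that the single-edge weight $\int_{S^{d-1}}\om = 1$ is independent of the shape of $\om$, so $\mu_2$ is \emph{always} the Schouten bracket; since $\mu_1=0$, any $\caL_\infty$-quasi-isomorphism to the zero structure would have its linear part intertwine $\mu_2$ with $0$, a contradiction. Your route avoids the external reference and proves exactly the stated claim; the paper's route proves a bit more (namely that all choices of $\om$ yield $\caL_\infty$-isomorphic structures), which is useful elsewhere in the paper but not needed for this Corollary.
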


\begin{proof} Let us first consider the case $\om=Vol(S^{d-1})$. Proof of the last sentence in the Corollary
is based on two Kontsevich's vanishing lemmas. It was shown  in
\cite{Ko} (for the case $d=2$) and
in \cite{Ko-fd} (for
$d\geq 3$)
that, for any integer $n\geq 3$ and for any graph $\Ga\in \cG_d(n)$,
one has
$$
\int_{\overline{C}_n(\R^{d})}\Omega_\Ga=0
$$
Hence, for $n\geq 3$,
$$
\mu_n=\sum_{\Ga\in \fG_d(n)} c_\Ga\Phi_\Ga=0
$$
and
\Beqrn
\mu_2 &=& \sum_{\Ga\in \fG_d((2))} c_\Ga\Phi_\Ga\\
      &=&c_{\Ga_1} \Phi_{\Ga_1} + c_{\Ga_2}\Phi_{\Ga_2}\\
      &=& \{\ \bu\ \}_{-1},
\Eeqrn
where we denoted
$
\Ga_1= \xy
(0,2)*{^{1}},
(8,2)*{^{2}},
(0,0)*{\bullet}="a",
(8,0)*{\bullet}="b",
\ar @{->} "a";"b" <0pt>
\endxy
$ and $
 \Ga_2= \xy
(0,2)*{^{2}},
(8,2)*{^{1}},
(0,0)*{\bullet}="a",
(8,0)*{\bullet}="b",
\ar @{->} "a";"b" <0pt>
\endxy
$ and used the fact that, by the normalization assumption on $\om$,  $c_{\Ga_1}=c_{\Ga_2}=1$.

\sip

A generic cohomologically non-trivial and normalized $(d-1)$-form $\om$ on $S^{d-1}$ is given by
$$
\om= Vol(S^{d-1}) + df
$$
for some semialgebraic function $f$ on $S^{d-1}$. In general, Kontsevich's vanishing lemmas
are not true for non-homogeneous propagators $\om$, and one obtains a non-trivial (but homotopy trivial)
$\caL_\infty$-deformation of the Poisson-Schouten bracket (see \cite{Me-Auto} for an explicit example).
This fact implies that the resulting $L_\infty$-structure is never homotopy equivalent to the trivial
(i.e.\ vanishing) $\caL_\infty$-structure on $\fg_d(V)$.
\end{proof}

\subsection{De Rham field theories on a a class of free topological operads} The above results for
$\overline{C}(\R^d)$  have obvious analogues for all operads of configuration spaces
considered in this paper. In fact, they hold true for any (coloured)
operad $\overline{C}=\{\overline{C}_n\}_{n\geq 1}$ in the category of smooth manifolds with
corners (or semialgebraic manifolds)
which, as an operad in the category of sets, is free, $\overline{C}=\cF ree \langle C\rangle$, and satisfies the following conditions:
\Bi
\item[(i)] each  $\overline{C}_n$ is a compact oriented (semialgebraic) manifold;
\item[(ii)] the $\bS$-space, $C=\{C_p\}_{p\geq 1}$, of generators is given by
$$
C_p:=\overline{C}_p\setminus \p\overline{C}_p,
$$
where  $\p\overline{C}_p$ is the boundary of $\overline{C}_n$;
\item[(iii)] as
$
\overline{C}=\cF ree \langle C \rangle$,  each manifold $\overline{C}_n$ is canonically stratified,
$
\overline{C}_n=\coprod_{T\in {\cT_n}} C_T$,
into a disjoint union of cartesian products,
$
C_T=\prod_{v\in V(T)} {C}_{\# In(v)}$. It is assumed that the set theoretic inclusion,
$C_T\hook \overline{C}_n$, is smooth (or semialgebraic) for any $T$.
\Ei
It follows from these assumptions that the operadic composition in $\overline{C}$ corresponding to a tree $T\in \cT_n$,
\Beq\label{6: operadic composition in overline C}
\mu_T: T\langle \overline{C} \rangle \lon \overline{C}_n,
\Eeq
is a smooth map which sends the Cartesian product $T\langle \overline{C}\rangle\simeq\prod_{v\in V(T)}
\overline{C}_{\# In(v)}$ into a boundary stratum in $\overline{C}_n$. We define
$(-1)^{\mu_T}$ to be $+1$ if this maps preserves orientations and $(-1)$ otherwise.
We also have,
$$
\p \overline{C}_n=\coprod_{T\in \cT_n^{boundary}} \mu_T\left( T\langle \overline{C} \rangle\right)
$$
for a suitable subfamily $\cT_n^{boundary}\subset \cT_n$. For concreteness and simplicity, we assume from now on
that  $\overline{C}=\{\overline{C}_n\}_{n\geq 1}$ is one of the operads of compactified configuration
spaces considered in \S2-4 so that
each connected component of the generator  $C_n$ of $\oC$ is a quotient of a configuration space, $\Conf_n(\V)$,
of pairwise distinct $[n]$-labeled points in a (subspace of a) vector space $\V$ modulo an action of an appropriate
 subgroup of $Aff(\V)$.

\sip

Let $\overline{\cC}_n$ be the vector space spanned by all possible  pairs,
$(\overline{C}_n^o, or)$,
consisting of a connected component, $\overline{C}_n^o$,  of $\overline{C}_n$ and a choice of
 orientation, $or$, on $\overline{C}_n^o$ modulo an equivalence relation, $(\overline{C}_n^o, -or)=
 -(\overline{C}_n^o, or)$. The standard orientation on $\overline{C}_n^o$ is denoted by $or^0$.
 The collection $\langle\overline{\cC}\rangle:=\{\langle\overline{\cC}_n\rangle\}_{n\geq 1}$ is naturally an
$\bS$-module. The {\em operad of fundamental chains}\, or {\em the face complex}\, of $\overline{C}$
is\footnote{The most natural way to define this notion is to use the theory of semialgebraic chains
developed in \cite{KoSo, HLTV}.}, by definition, a dg free operad, $\cF\cC hains(\overline{C}):=(\cF ree \langle  \overline{\cC}\rangle, \p)$
whose differential is read off from the above formula
for $\p\overline{C}_n$,
$$
\p (\overline{C}_n^o,or^o) = \sum_{T\in {\cT_n}^{boundary}} (-1)^{\mu_T} T\langle
(\overline{C}^o_\bu, or^o)\rangle.
$$
(see explicit formulae given in \S 2-4 for particular  examples).

\subsubsection{\bf Definition-Theorem}\label{6: Main Theorem-defintion}
{\em  Let $\fG^\circlearrowright$ be an arbitrary (coloured) operad of Feynman graphs. A {\em de Rham field theory}\,
(of type $\fG^\circlearrowright$)  on a semialgebraic (coloured) operad
 $\overline{C}=\{\overline{C}_n\}_{n\geq 1}$ is a morphism,
$
\Omega: (\check{\fG}^\circlearrowright, 0) \lon (\Omega_{\overline{C}}, d_{DR})$,
of dg cooperads.
Let
$$
\Ba{rccc}
\rho: & \fG^\circlearrowright & \lon  & \cE nd_X\\
      &    \Ga                  &\lon &   \Phi_\Ga
\Ea
$$
be  a representation of the operad of Feynman graphs in a graded vector space $X$
(or in a family of vector spaces
parameterized by the set of colours). Then a de Rham field theory, $\Omega$,
 on  $\overline{C}=\{\overline{C}_n\}$  makes the graded vector  space $X$ into a representation,
$$
\Ba{rccc}
\rho_Q: & \cF\cC hains(\overline{C}) & \lon  & \cE nd_X\\
      &    (\overline{C}_n^o,or^o)                &\lon &  \mu_n\in \Hom(X^{\ot n},X)
\Ea
$$
of the operad  of fundamental chains on $\overline{C}$ given explicitly by
\Beq\label{6: mu_n for generic top operad}
\mu_n:=\sum_{\Ga\in \fG(n)} \left(\int_{\overline{C}_n^o} \Omega(\Ga)\right)\Phi_\Ga.
\Eeq
The representation $\rho_Q$ is called a {\em quantization}\, of the representation $\rho$.}

\begin{proof} The natural pairing,
$$
\Ba{ccccc}
\cC hains(\overline{C}) & \times &   \Omega(\overline{C}) & \lon & \C\\
     \sigma             &        &    \omega  &\lon&     \int_\sigma \om
     \Ea
$$
gives rise to a morphisms of operads,
$$
\Ba{rccc}
\Omega^*:& \cC hains(\overline{C}) & \lon&   \fG^\circlearrowright \\
     & \sigma             &  \lon      &   \sum_{\Ga} \left(\int_\sigma \Omega_\Ga\right) \Ga
     \Ea
$$
The required representation  $\rho_Q$ is given finally by the composition,
$$
 \cF\cC hains(\overline{C}) \hook \cC hains(\overline{C}) \stackrel{\Omega^*}{\lon}  \fG^\circlearrowright
 \stackrel{\rho}{\lon}    \cE nd_X
$$
\end{proof}

\subsubsection{\bf Important remark}\label{6: Remark on de Rham}
In down to earth terms a de Rham field theory on $\oC$ is a family of maps
$$
\left\{
\Ba{rccc}
\Omega: & {G}_n& \lon & \Omega^{\bu}_{closed}(\overline{C}_n)\\
& \Ga & \lon & \Omega_\Ga
\Ea
\right\}_{n\geq 1, l\geq 0},
$$
such that, for any
boundary stratum in $\overline{C}_n(\R^d)$ given by the image of a reduced operadic composition
or, respectively, a ``full" composition one has
\Beq\label{6: 1 reduced de Rham  factorizatio on C_n}
\circ_A^*(\Omega_\Ga) = (-1)^{\var}\Omega_{\Ga/\Ga_A}\wedge \Omega_{\Ga_A}
\Eeq
or, respectively,
\Beq\label{6: 2 reduced de Rham  factorizatio on C_n}
\circ_{I_1\sqcup\ldots\sqcup I_p}^*(\Omega_\Ga) = (-1)^{\var}\Omega_{\Ga/\{\Ga_{I_1},\ldots,\Ga_{I_p}\}}\wedge
\Omega_{\Ga_{I_1}}\wedge \Omega_{\Ga_{I_2}}\wedge\ldots\wedge\Omega_{\Ga_{I_p}}.
\Eeq
where the signs are determined, as usually, by comparison of orientations on the sets of edges.
Once the above conditions hold true, formulae (\ref{6: mu_n for generic top operad}) define a quantization
of an arbitrary representation of an operad of Feynman diagrams. Here $\Omega^{\bu}_{closed}(\overline{C}_n)\subset
\Omega^{\bu}(\overline{C}_n)$
stands for the subspace of {\em closed}\, differential forms.

\sip

We are interested in this paper only in deformation quantization of representations of the  suboperad of
$\cC hains(\overline{C})$
spanned by the fundamental chains. As it stands,  Theorem~{\ref{6: Main Theorem-defintion}}
holds true for representations of the full chain operad as well; for  this Theorem to work only
 at the level of fundamental chains only one can weaken the notion of a de Rham field theory
 by requiring that factorization properties
 (\ref{6: 1 reduced de Rham  factorizatio on C_n})-({\ref{6: 2 reduced de Rham  factorizatio on C_n}}) hold only
  for {\em top degree}\, differential forms on $\overline{C}_\bu$. From now on we
 understand by a de Rham field theory on $\overline{C}$ such a weakened version as well.

\bip

\bip

{\large
\section
{\bf Examples of  quantized  representations of operads of Feynman diagrams}\label{9: section}
}
\mip

\subsection{Propagators on  $\overline{C}({\bbH}^{d})$}\label{9.1: subsection} Let us consider the operad
(see \S {\ref{3.3 higher dim version of Konts spaces}})
 $$
 \overline{C}(\bbH^{d})=\overline{C}_\bu(\R^d)\coprod \overline{C}_{\bu,\bu}(\bbH^{d})
 $$
 for $d\geq 2$. The space $\overline{C}_{2,0}(\bbH^{d})$ is a compact  $d$-dimensional
 semialgebraic space of the form
$$
\Ba{c}
\xy
(0,0)*\ellipse(5,5){.};
(0,0)*\ellipse(5,1){.};
(-17,0)*-{};(17.0,-0)*-{}
**\crv{~*=<4pt>{.}(0,6)}
**\crv{(0,-6)};
(-17,0)*-{};(17.0,0)*-{}
**\crv{(0,-14)}
**\crv{(0,14)};
\endxy
\Ea
$$
Its boundary is a union $S_{in}^{d-1}\cup S_+^{d-1}\cup S_{-}^{d-1}$,
where $S_{in}^{d-1}$ is the sphere corresponding to two points in the upper half
space $\bbH^d$ collapsing to a point in  $\bbH^d$,  $S_+^{d-1}$ (respectively,  $S_+^{d-1}$)
 is the half-sphere corresponding to the limit configurations where the point labelled $1$ (respectively, $2$)
 approaches the boundary, $\p\overline{\bbH^d}$, of the closed upper-half space.

 \sip

The space $\overline{C}_{2,0}(\bbH^d)$ is homotopy equivalent to $S^{d-1}$. Any cohomologically
non-trivial  $PA$-form $\om$ on  $\overline{C}_{2,0}(\bbH^d)$ normalized so that
$$
\int_{S^{d-1}_{in}}\om^{in}=1, \ \ \ \mbox{where}\ \ \om^{in}:=\om|_{S_{in}^{d-1}},
$$
defines a de Rham field theory of type $\fG^{\uparrow\downarrow}$ on the
2-coloured operad $\overline{C}(\bbH^d)$ by the following two sets of maps
(see Remark {\ref{6: Remark on de Rham}})
\Beq\label{9: in/theory on C(H)}
\left\{
\Ba{rccc}
\Omega_{in}: & {\fG}(n)& \lon & \Omega^{(d-1)\# E(\Ga)}(\overline{C}_n(\R^d))\\
& \Ga & \lon & \Omega_{in}(\Ga):=\bigwedge_{e\in E(\Ga)} p_e^*(\om^{in})
\Ea
\right\},
\Eeq
and
\Beq\label{9: full theory on C(H)}
\left\{
\Ba{rccc}
\Omega: & {\sG}^{\uparrow\downarrow}(n+m)& \lon & \Omega^{(d-1)\# E(\Ga)}(\overline{C}_{n,m}(\bbH^d))\\
& \Ga & \lon & \Omega(\Ga):=\bigwedge_{e\in E(\Ga)} \pi_e^*(\om)
\Ea
\right\}
\Eeq
 Here
$$
p_e: \overline{C}_n(\R^d) \lon \overline{C}_2(\R^d), \ \ \ \mbox{and}\ \ \ \pi_e: C_{n,m}(\bbH^d) \lon \overline{C}_{2,0}(\R^d)
$$
are the natural forgetful maps. To prove
this claim we have to check factorization property (\ref{6: 1 reduced de Rham  factorizatio on C_n})
for the boundary stratum in $\overline{C}_{n,m}$ corresponding to graphs of the form,
$$
\Ba{c}
\begin{xy}
 <0mm,-0.5mm>*{\blacklozenge};
 <0mm,0mm>*{};<0mm,5mm>*{}**@{~},
 <0mm,0mm>*{};<-16mm,-5mm>*{}**@{-},
 <0mm,0mm>*{};<-11mm,-5mm>*{}**@{-},
 <0mm,0mm>*{};<-3.5mm,-5mm>*{}**@{-},
 <0mm,0mm>*{};<-6mm,-5mm>*{...}**@{},
 <0mm,0mm>*{};<16mm,-5mm>*{}**@{.},
 <0mm,0mm>*{};<8mm,-5mm>*{}**@{.},
 <0mm,0mm>*{};<3.5mm,-5mm>*{}**@{.},
 <0mm,0mm>*{};<11.6mm,-5mm>*{...}**@{},
   <0mm,0mm>*{};<17mm,-8mm>*{^{{\bar{m}}}}**@{},
<0mm,0mm>*{};<10mm,-8mm>*{^{{\bar{2}}}}**@{},
   <0mm,0mm>*{};<5mm,-8mm>*{^{{\bar{1}}}}**@{},
<-16mm,-13mm>*{\underbrace{\ \ \ \ \ \ \ \ }_{I_1}},
<-7mm,-8mm>*{\underbrace{\ \ \ \ \ \ \ \ }_{I_2}},
 (-16,-5)*{\circ}="a",
(-20,-10)*{}="b_1",
(-17,-10)*{}="b_2",
(-15,-9)*{...}="b_3",
(-12,-10)*{}="b_4",
\ar @{-} "a";"b_2" <0pt>
\ar @{-} "a";"b_1" <0pt>
\ar @{-} "a";"b_4" <0pt>
 \end{xy}\Ea       \ \ \ \ \mbox{and}\ \ \ \
\Ba{c}
\begin{xy}
 <0mm,-0.5mm>*{\blacklozenge};
 <0mm,0mm>*{};<0mm,5mm>*{}**@{~~},
 <0mm,0mm>*{};<-16mm,-6mm>*{}**@{-},
 <0mm,0mm>*{};<-11mm,-6mm>*{}**@{-},
 <0mm,0mm>*{};<-3.5mm,-6mm>*{}**@{-},
 <0mm,0mm>*{};<-6mm,-6mm>*{...}**@{},
<0mm,0mm>*{};<2mm,-9mm>*{^{\bar{1}}}**@{},
<0mm,0mm>*{};<6mm,-9mm>*{^{\bar{k}}}**@{},
<0mm,0mm>*{};<19mm,-9mm>*{^{\overline{k+l+1}}}**@{},
<0mm,0mm>*{};<28mm,-9mm>*{^{\overline{m}}}**@{},
<0mm,0mm>*{};<13mm,-16.6mm>*{^{\overline{k+1}}}**@{},
<0mm,0mm>*{};<20mm,-16.6mm>*{^{\overline{k+l}}}**@{},
 <0mm,0mm>*{};<11mm,-6mm>*{}**@{.},
 <0mm,0mm>*{};<6mm,-6mm>*{}**@{.},
 <0mm,0mm>*{};<2mm,-6mm>*{}**@{.},
 <0mm,0mm>*{};<17mm,-6mm>*{}**@{.},
 <0mm,0mm>*{};<25mm,-6mm>*{}**@{.},
 <0mm,0mm>*{};<4mm,-6mm>*{...}**@{},
<0mm,0mm>*{};<20mm,-6mm>*{...}**@{},
<6.5mm,-16mm>*{\underbrace{\ \ \ \ \   }_{I_2}},
<-10mm,-9mm>*{\underbrace{\ \ \ \ \ \ \ \ \ \ \ \   }_{I_1}},
 (11,-7)*{\blacktriangledown}="a",
(4,-13)*{}="b_1",
(9,-13)*{}="b_2",
(16,-13)*{...},
(7,-13)*{...},
(13,-13)*{}="b_3",
(19,-13)*{}="b_4",
\ar @{-} "a";"b_2" <0pt>
\ar @{.} "a";"b_3" <0pt>
\ar @{-} "a";"b_1" <0pt>
\ar @{.} "a";"b_4" <0pt>
 \end{xy}
\Ea
$$
These strata are images of the following operadic compositions,
$$
\circ_{I_1}: \overline{C}_{\# I_2+1, m}(\bbH^d)\times  \overline{C}_{I_1}(\R^d) \rar
\overline{C}_{n, m}(\bbH^d) \ \ \ \mbox{and}\ \ \
\circ_{\overline{k+1}}: \overline{C}_{\# I_1, m-l+1}(\bbH^d)\times  \overline{C}_{\# I_1,l}(\bbH^d) \rar
\overline{C}_{n, m}(\bbH^d),
$$
so that the required factorization conditions are given, respectively, by
$$
\circ_{I_1}^*(\Omega(\Ga))=(-1)^\epsilon \Omega(\Ga/\Ga_{I_1}) \wedge \Omega_{in}(\Ga_{I_1}) \ \ \ \mbox{and}\ \ \
\circ_{\bu}^*(\Omega(\Ga))=(-1)^\var
 \Omega(\Ga/\Ga_{A}) \wedge \Omega(\Ga_{A})
$$
where we set $A:=I_2\sqcup\{\overline{k+1},... ,\overline{k+l}\}$. Both these conditions can be checked
in the associated coordinate charts,
$$
\cU_1=[0,\delta)\times C_{\# I_2+1,m}^{st}(\bbH^d) \times C_{\# I_1}(\R^d) \ \ \mbox{and}\ \ \
\cU_1=[0,\delta)\times C_{\# I_1+1,m-l}^{st}(\bbH^d) \times C_{\# I_1,l}(\bbH^d)
$$
 using the same
arguments as in
 in  the proof of Theorem {\ref{6: prop on propagator on C(R^d)}} but with two small subtleties:
\Bi
\item[(1)]
in the limit $s\rar 0$, $s\in [0,\delta)$, the differential form $\wedge_{e''\in E(\Ga_{I_1})}\om_e$ does
{\em not}\, stay  invariant but tends to  $\wedge_{e''\in E(\Ga_{I_1})}\om^{in}_e$ explaining thereby appearance of
$\Omega_{in}$ in the r.h.s.\ of the first factorization condition;
\item[(2)] contrary to the above case (1)  the
differential form $\wedge_{e''\in E(\Ga_{A})}\om_e$ in the limit $s\rar 0$  does stay invariant; the reason is that the
point into which $A$-labelled points collapse is located on the boundary of the upper-half space and,
with no loss of generality, can be placed at $0\in \overline{\bbH^d}$; then, by definition of the coordinate chart
$\cU_2$, the parameter
$s$ acts on the $A$-labelled  configuration as an ordinary dilation while each $\om_{e''}$
is both translation- (along $\R^{d-1}\subset \overline{\bbH^d}$) and dilation-invariant.
\Ei

Therefore, we have the following

\subsubsection{\bf Theorem}\label{6: Theorem on quant in C(H^d)} {\em Given a representation,
$\rho: \fG^{\uparrow\downarrow}\rar \cE nd_{V_c,V_o}$,
of the 2-coloured operad of Feynman diagrams in a pair of vector spaces $(V_c,V_o)$. Then, for any
 any homologically
non-trivial smooth (or  $PA$) differential $(d-1)$-form $\om$ on  $\overline{C}_{2,0}(\bbH^d)$, the formulae
$$
\nu_n:= \sum_{\Ga\in G_n} \left(\int_{\overline{C}_n(\R^d)} \Omega_{in}(\Ga)\right) \rho(\Ga)
$$
define a $L_\infty\{d-1\}$-structure on the vector space $V_c$, and, for any MC-element $\ga$
in $(V_c, \nu_\bu)$, the formulae,
\Beq\label{6: general formula for A_infty}
\mu^\ga_m(v_1,\ldots, v_m):=\sum_{n=0}^\infty \frac{\hbar^n}{n!}\sum_{\Ga\in G_{n+m}}
\left(\int_{\overline{C}_{n,m}(\bbH^d)} \Omega(\Ga)\right) \rho(\Ga)(\ga^{\ot n}\ot v_1\ot\ldots\ot v_m),\ \ v_i\in V_0,
\Eeq
define,
\Bi
\item[(i)]  in general, a non-flat $A_\infty$-algebra structure in $V_o[[\hbar]]$ for $d=2$,
\item[(ii)] in general, a non-flat $L_\infty\{d-1\}$-algebra structure in $V_o[[\hbar]]$ for $d\geq 3$.
\Ei
}

\subsection{ Examples of propagators}\label{6: examples of propagators}
\subsubsection{\bf Homogeneous volume form on a sphere}
 A differential form on $C_{2,0}(\bbH^d)$,
$$
\om_0:= p^*\left(Vol(S^{d-1})\right)
$$
where $p: C_{2,0}(\bbH^d)\rar C_2(\R^d)$ is the natural projection, extends to the compactification
 $\overline{C}_{2,0}(\bbH^d)$ and hence defines a non-trivial de Rham field theory on the operad
 $\overline{C}(\bbH^d)$.

\subsubsection{\bf Kontsevich (anti)propagators}  Let $[z_1,z_2]$ be an arbitrary configuration in
$C_{2,0}(\bbH^d)$ and $(z_1,z_2)$ be its arbitrary
representative in $\Conf_{2,0}(\bbH^d)$. Let
$$
ds^2:= \frac{dx_1^2+\ldots+ dx_d^2}{x_d^2}
$$
be the standard hyperbolic metric on $\bbH^d:=\{(x_1,\ldots, x_d)\in \R^d\, |\, x_d>0\}$.
Let $S_H^{d-1}(z_1)$ and $\om^0_K$ be the unit hyperbolic sphere centered at $z_1$ and, respectively,
 its induced normalized volume form  (with respect to the above metric).
Using  the unique hyperbolic geodesic  $g(z_1,z_2)$ from $z_1$ to $z_2$,
$$
\Ba{c}
{\xy
(-4,6.5)*+{};(7.2,18)*-{}
**\crv{~*=<2pt>{.}(0,18)};
(-2,5)*\ellipse(5,5){};
(-2,5)*\ellipse(5,1){};
(-4,6.5)*{\bullet},
(-1.7,12)*{\bullet},
(-4,3)*{_{z_1}},
(7.2,18)*{\bullet}="f",
(7.2,20)*{_{z_2}},
(-15,0)*{}="a",
(15,0)*{}="b",
(-25,-10)*{}="c",
(-15,25)*{}="d",
\ar @{->} "a";"b" <0pt>
\ar @{->} "a";"d" <0pt>
\ar @{->} "a";"c" <0pt>
\endxy}
\Ea
$$
we define a smooth map
$$
\Ba{rccc}
p_H: & \Conf_{2,0}(\bbH^d) & \lon & S_H^{d-1}(z_1)\\
     & (z_1,z_2) &\lon & g(z_1,z_2)\cap S_H^{d-1}(z_1)
\Ea
$$
and set
$$
\om_K(z_1,z_2):=p_H^*(\om^0_K)
$$

This form is $\R^*\ltimes \R^d$-invariant and hence defines a closed homologically non-trivial
$(d-1)$-form on
$C_{2,0}(\bbH^d)$. Moreover, it extends to the compactification $\overline{C}_{2,0}(\bbH^d)$.
For $d=2$ this form is precisely the Kontsevich propagator used in the construction of his formality
map \cite{Ko}; hence the notation.
We set $\om_{\overline{K}}(z_1,z_2):=\om_K(z_2,z_1)$ and call it  {\em Kontsevich antipropagator}.

\subsection{Deformation quantization of associative algebras of polyvector fields}
\label{9:  quantization of associative algebras of polyvector fields}
For any graded vector space $V$,
the associated pair $X_c=X_o=\fg_d(V)$ carries a natural representation
of the Feynman operad $\fG^{\uparrow\downarrow}$. For propagators considered in  \S {\ref{6: examples of propagators}}
the induced $L_\infty\{d-1\}$ algebra structure on $X_c$ is, by Corollary \S
{\ref{6: Corollary on de Rham of Schouten-Bracket}},
precisely the Poisson-Schouten bracket (\ref{5: Poisson-Schouten bracket}). Hence any propagator from
\S {\ref{6: examples of propagators}} together with an MC element $\ga$
in the degree $1-d$ Lie  algebra $(\fg_d(V), \{\ \bu\ \})$ makes, by Theorem {\ref{6: Theorem on quant in C(H^d)}},
 $\fg_d(V)$ into
a (non-flat) $A_\infty$-algebra for $d=2$ or into a $L_\infty\{d-1\}$-algebra for $d\geq 3$.
The case $d\geq 3$ will be considered in more detail below while in this subsection we assume from now on
that $d=2$ so that  $(\fg_2(V), \{\ \bu\ \})$ is the Schouten algebra, $\cT_{poly}(V)$,
 of formal polyvector fields on $V$. Then an arbitrary Poisson structure, $\ga\in \cT_{poly}(V)$, and any
propagator from the set $\{\om_0, \om_K, \om_{\overline{K}}\}$ defines by formulae
(\ref{6: general formula for A_infty}) a non-flat $A_\infty$ structure
on $\cT_{poly}(V)$. For $\ga=0$ the only graph
contributing to (\ref{6: general formula for A_infty}) is the one consisting of two vertices on the real line
 with no edges,
\Beq\label{9: graph with 2 vertices no edges}
\xy
 (-10,0)*{}="a",
(10,0)*{}="b",
(+3,0)*{\bu},
(-3,0)*{\bu},
\ar @{->} "a";"b" <0pt>
\endxy
\Eeq
so that $\mu^{\ga=0}_m=0$ for $m\neq 2$ and $\mu_2^{\ga=0}$ is the ordinary product of polyvector
fields. Thus, for $\ga\neq 0$, formulae (\ref{6: general formula for A_infty}) describe a deformation
quantization of that ordinary product in $\cT_{poly}(V)$.
Using de Rham field theories discussed in \S {\ref{10: section}} below
using a de Rham field theory on $\widehat{\fC}(\bbH)$
one can show that the three non-flat
$A_\infty$-algebras structures on $\cT_{poly}(V)$ corresponding to propagators $\{\om_0, \om_K, \om_{\overline{K}}\}$
 are all homotopy equivalent to each other. The simplest of them is given by the propagator $\om_0$ as
in this case
$$
\int_{\overline{C}_{n,0}(\bbH} \om_0(\Ga)=0
$$
for any graph $\Ga\in \fG_{n,2n-2}$ with $n\geq 2$  (the reason is that the differential form
  $\om_0(\Ga)$ is invariant under vertical
translations and hence vanishes for degree reasons). Hence the only graph contributing to
$\mu_0^\ga$ is the following one
$$\xy
 (-10,0)*{}="a",
(10,0)*{}="b",
(0,8)*{\bu},
\ar @{->} "a";"b" <0pt>
\endxy
$$
so that $\mu_0^\ga=\ga$. The next term of the $A_\infty$-structure on $\cT_{poly}(V)$ is given by,
$$
\mu_1= \sum_{n\geq 1}\sum_{\Ga\in \fG_{n+1, 2n-1}} \int_{\overline{C}_{n,1}}\Omega(\Ga) \Phi_\Ga.
$$
The initial $n=1$  term in this sum is given by the graph
$$
\xy
 (-10,0)*{}="a",
(10,0)*{}="b",
(0,8)*{\bu}="c",
(0,0)*{\bu}="d",
\ar @{-} "a";"b" <0pt>
\ar @{-} "c";"d" <0pt>
\endxy
$$
of weight $1$; the associated operator $\Phi_\Ga$ is $\{\ga
\bu\ldots \}$. Using the reflection $z \rar -\overline{z}$ one can easily check that
$\int_{\overline{C}_{n,1}}\Omega(\Ga)=0$ for $n$ even. For a graph $\Ga\in  \fG_{n+1, 2n-1}$
let us denote by $\Ga_i$ its drawing in $C_{n,1}$ with the vertex labelled by $i$ put at the point
in $\R$; for any $n\geq 2$ we have, by Kontsevich vanishing Lemma 6.4 in \cite{Ko},
$$
\sum_{i=1}^n \int_{\overline{C}_{n,1}(\bbH)}\Omega(\Ga_i)=\int_{\overline{C}_{n+1}(\C)}\Omega(\Ga)=0
$$
so that
 $$
 \mu_1^\ga(\mu_0^\ga)=\mu_1^\ga(\ga)=\{\ga\bu \ga\}=0
 $$
 which is in a full agreement with the claim that
formulae (\ref{6: general formula for A_infty}) for $\om=\om_0$ define a non-flat $A_\infty$-structure.
The graphs
$$
\xy
 (2,8)*{\ldots},
 (-10,0)*{}="a",
(10,0)*{}="b",
(-7,8)*{\bu}="c1",
(-3,8)*{\bu}="c2",
(7,8)*{\bu}="c3",
(-3,0)*{\bu}="d1",
(3,0)*{\bu}="d2",
\ar @{-} "a";"b" <0pt>
\ar @{-} "c1";"d1" <0pt>
\ar @{-} "c1";"d2" <0pt>
\ar @{-} "c2";"d1" <0pt>
\ar @{-} "c2";"d2" <0pt>
\ar @{-} "c3";"d1" <0pt>
\ar @{-} "c3";"d2" <0pt>
\endxy
$$
give, for example, non-zero contributions to $\mu_2^\ga$ while
$$
\xy
(2,2)*{\ldots},
 (-10,0)*{}="a",
(10,0)*{}="b",
(0,8)*{\bu}="c",
(-7,0)*{\bu}="d1",
(-3,0)*{\bu}="d2",
(7,0)*{\bu}="d3",
\ar @{-} "a";"b" <0pt>
\ar @{-} "c";"d1" <0pt>
\ar @{-} "c";"d2" <0pt>
\ar @{-} "c";"d3" <0pt>
\endxy
$$
and their products contribute to higher homotopies, $\mu^\ga_{m\geq 3}$. These graphs are not, of course,
the only non-trivial contributions;
for example, the weight of the following graph
$$
\xy
 (-15,0)*{}="a",
(15,0)*{}="b",
(10,8)*{\bu}="c",
(-10,8)*{\bu}="c1",
(-3,8)*{\bu}="c2",
(-7,0)*{\bu}="d1",
(0,0)*{\bu}="d2",
(7,0)*{\bu}="d3",
\ar @{-} "a";"b" <0pt>
\ar @{-} "c";"d1" <0pt>
\ar @{-} "c";"d2" <0pt>
\ar @{-} "c";"d3" <0pt>
\ar @{-} "c1";"d1" <0pt>
\ar @{-} "c1";"c2" <0pt>
\ar @{-} "c2";"d2" <0pt>
\ar @{-} "c2";"d3" <0pt>
\endxy
$$
is non-zero so that it contributes, in general,  to $\mu_3^\ga$. As we shall see below, this
non-flat $A_\infty$-structure on $\cT_{poly}(V)\simeq \odot^\bu V \ot \wedge^\bu V^*$
interpolates, in a sense,
two Koszul dual deformation quantizations on $ \odot^\bu V$ and  $\wedge^\bu V^*$ associated
with propagators $\om_K$ and, respectively, $\om_{\overline{K}}$. Note in this connection that
$\om_0=\frac{1}{2}(\om_K+\om_{\overline{K}})$.

\subsection{Kontsevich's formality maps} As we saw in the previous section,  any cohomologically non-trivial
 differential 1-form, $\om$, on $\overline{C}_2(\bbH)$
 gives us a non-trivial $A_\infty$
structure on $\cT_{poly}(V)$. The latter algebra contains two subalgebras,
$$
\odot^\bu(V^*)=:\f_V\ \ \ \mbox{and}\ \ \ \ \odot^\bu(V[-1])=:\f_{V^*[1]}
$$
which can be viewed as the rings of smooth formal function on affine manifolds $V$ and, respectively, $V^*[1]$.
Note that the pairs,
$$
\left(X_c=\cT_{poly}(V), X'_o=\f_V\right)\ \ \ \mbox{and}\ \ \ \left(X_c=\cT_{poly}(V), X''_o=\f_{V^*[1]}\right)
$$
carry naturally representations of Feynman operads $\fG^\downarrow$ and, respectively,  $\fG^\uparrow$ (see
\S {\ref{7: representations of fG-arrows}}). It is an elementary exercise to check that
{\em any cohomologically nontrivial normalized differential 1-form $\om$ on  $C_{2,0}(\bbH)$
satisfying the condition $\om|_{S_{+}^{1}=0}$ (respectively,  $\om|_{S_{-}^{1}}=0$)
defines by formulae (\ref{9: in/theory on C(H)}) and (\ref{9: full theory on C(H)}) a De Rham field theory
on $\overline{C}(\bbH)$ of type $\fG^\downarrow$ (respectively, $\fG^\uparrow$).
}

\sip

The Kontsevich propagator $\om_K$ satisfies the condition $\om_K|_{S_{+}^{1}=0}$ and hence defines
by formulae (\ref{6: general formula for A_infty}) a morphism of operads,
$$
\cF\cC hains(\overline{C}(\bbH^d) \lon \cE nd_{\{X_c=\cT_{poly}(V), X'_o=\f_V\}}
$$
which is the same as his famous formality map. Any MC element $\ga$ in the Schouten algebra
$\cT_{poly}(V)$ makes $\f_V[[\hbar]]$ into a (non-flat, in general) $\cA_\infty$-algebra. The same propagator $\om_K$
and the same MC element $\ga$ make also $\cT_{poly}(V)[[\hbar]]$ into a (non-flat) $\cA_\infty$-algebra.
It is clear that the natural inclusion
$$
\f_V[[\hbar]] \lon \cT_{poly}(V)[[\hbar]]
$$
is a morphism of these $\cA_\infty$-algebras which we denote by $F_K: (\f_V[[\hbar]], \om_K) \rar
(\cT_{poly}(V)[[\hbar]], \om_K)$, the symbol $\om_K$ indicates the origin of the induced $\cA_\infty$-structures.

\sip

Similarly, the Kontsevich antipropagator $\om_{\overline{K}}$
satisfies the condition $\om_K|_{S_{-}^{1}=0}$ and hence defines
 a morphism of operads,
$$
\cF\cC hains(\overline{C}(\bbH^d) \lon \cE nd_{\{X_c=\cT_{poly}(V), X''_o=\f_{V^*[1]}\}}
$$
which coincides again with Kontsevich's  formality map. Thus any MC element $\ga$ in
the Schouten algebra
$\cT_{poly}(V)$ makes $\f_{V^*[1]}[[\hbar]]$ into a (non-flat) $\cA_\infty$-algebra.
 The same antipropagator $\om_{\overline{K}}$
and the same MC element $\ga$ make also $\cT_{poly}(V)[[\hbar]]$ into a (non-flat) $\cA_\infty$-algebra.
The natural inclusion
$$
\f_{V^*[1]}[[\hbar]] \lon \cT_{poly}(V)[[\hbar]]
$$
is a morphism of  $\cA_\infty$-algebras which we denote by $F_{\overline{K}}:
(\f_{V^*[1]}[[\hbar]], \om_{\overline{K}}) \rar
(\cT_{poly}(V)[[\hbar]], \om_{\overline{K}})$.

\sip

Using the 4-coloured operad $\widehat{\fC}(\bbH)$
 one can construct  explicitly $\cA_\infty$-quasi-isomorphisms
(in any direction),
$$
(\cT_{poly}(V)[[\hbar]], \om_K) \rightleftarrows (\cT_{poly}(V)[[\hbar]], \om_0) \leftrightarrows
(\cT_{poly}(V)[[\hbar]], \om_{\overline{K}})
$$
and hence a diagram of canonical $\cA_\infty$-morphisms,
$$
(\f_V[[\hbar]], \om_K) \hook (\cT_{poly}(V)[[\hbar]], \om_K)
\rightleftarrows (\cT_{poly}(V)[[\hbar]], \om_0) \leftrightarrows
(\cT_{poly}(V)[[\hbar]], \om_{\overline{K}}) \hookleftarrow (\f_{V^*[1]}[[\hbar]], \om_{\overline{K}}).
$$
It would be interesting to see if this diagram can be used to define Koszul duality
of  generic {\em non-flat}\, $\cA_\infty$-structures on $\f_V$ and $\f_{V^*[1]}$. For a subclass of MC elements $\ga$ which make
$(\f_V[[\hbar]], \om_K)$ and $(\f_{V^*[1]}[[\hbar]], \om_{\overline{K}})$ into {\em flat}\,
$\cA_\infty$ algebras
the Koszul duality was already established in \cite{Sh3, CFFR} (in fact, not all $\cA_\infty$ algebras appearing in  \cite{CFFR} must be flat).

\subsection{Deformation quantization of the Schouten bracket} In this section the propagator degree, $d-1$, of all
operads under consideration is set by default to $2$, i.e.\ $d=3$.

\sip

For any vector space $V$, consider the following pair of Schouten-Poisson algebras,
$$
\fg_2(V)=\left(\odot^\bu(V^*\oplus V[-1]),\ \ \{\ \bu\ \}_{-1}\right)\simeq \cT_{poly}(V)
$$
and
$$
\fg_3(V^*[1]\oplus V)=\left( \odot^\bu(V^*\oplus V[-1] \oplus V[-2]\oplus V^*[-1]),
\{\ \ , \ \}_{-2}\right).
$$
If $\{x^\al\}$ is a basis in $V^*$, and $\{\psi_\al\}$, $\{\eta^\al\}$ and $\{y_\al\}$ the associated
bases in $V[-1]$,
$V^*[-1]$ and, respectively, $V[-2]$, then
$$
\fg_2(V)\simeq \R[[x^\al,\psi_\al]]\ \ \mbox{with}\ \
\{f\bullet g\}_{-1}:= \sum_\al
\frac{ f\overleftarrow{\p}}{\p \psi_\al} \frac{\overrightarrow{\p} g}{\p x^\al}\ +
(-1)^{|f||g|+ f+g}
\frac{g \overleftarrow{\p} }{\p \psi_\al} \frac{\overrightarrow{\p} f}{\p x^\al}
$$
and
$$
\fg_3(V[1]\oplus V^*)\simeq \R[[x^\al,\psi_\al, \eta^\al,y_\al]]
$$
with
$$
\{f, g\}_{-2}:=\sum_\al \left(
\frac{ f\overleftarrow{\p}}{\p y_\al} \frac{\overrightarrow{\p} g}{\p x^\al}
+ \frac{ f\overleftarrow{\p}}{\p \eta^\al} \frac{\overrightarrow{\p} g}{\p \psi_\al}
- (-1)^{|f||g|}\frac{ g\overleftarrow{\p}}{\p y_\al} \frac{\overrightarrow{\p} f}{\p x^\al}
- (-1)^{|f||g|}\frac{ g\overleftarrow{\p}}{\p \eta^\al} \frac{\overrightarrow{\p} f}{\p \psi_\al}
\right)
$$
where we use the fact that $|\psi_a|=-|x^a|+1$, $|\eta^a|=|x^a|+1$, $|y_a|=-|x^a|+2$.
Note that $\delta:= \sum_a \eta^a y_a$ is an MC element in $(\fg_3(V[1]\oplus V^*), \{\ ,\ \}_{-2})$
making the latter into a {\em dg}\, Lie algebra with the de Rham type differential $d:=\{\delta,\ \}_{-2}$;
its cohomology is equal to $\R$.

\sip

There is a natural projection
$$
\pi_{\downarrow}: \fg_3(V^*[1]\oplus V) \rar \fg_2(V)
$$
 so that
formulae completely analogous to (\ref{5: Def of Phi_Ga}) and (\ref{5: Phi_Ga 2 colour}) make the pair
\Beq\label{9: the pair}
X_1= \fg_3(V[1]\oplus V^*)\ \ \  \mbox{and}\ \ \  X_2=\cT_{poly}(V)
\Eeq
into a representation of the 2-coloured operad $\fG^{\downarrow}$, i.e.\ there exists a canonical morphism
of 2-coloured operads,
$$
\rho: \fG^\downarrow\lon \cE nd_{X_1,X_2}.
$$
On the other hand, the propagator $\om_K$ on $\overline{C}_{2,0}(\bbH^3)$ defines, by
Theorem~{\ref{6: Main Theorem-defintion}}, a morphism of 2-coloured operads,
$$
\Ba{ccc}
\f\cC \caL_\infty:=\cF\cC hains(\overline{C}(\bbH^3)) & \lon&   \fG^\downarrow \\
     \sigma             &  \lon      &   \sum_{\Ga} \left(\int_\sigma \Omega_\Ga\right) \Ga
     \Ea
$$
so that the composition of the above two morphisms makes the pair (\ref{9: the pair})
into an open-closed homotopy Lie 3-algebra, i.e.\ defines (see \S {\ref{3.3 higher dim version of Konts spaces}})
\Bi
\item[(i)] a $L_\infty\{2\}$-structure,
$\nu=\{\nu_n: \odot^n (X_1[3])\rar X_2[4]\}_{n\geq 1}$, on $X_1$, i.e.\ an ordinary
$L_\infty$-structure on $X_1[2]$; as restriction of the propagator  $\om_K$ to the inner 2-sphere in
$\overline{C}_{2,0}(\bbH^3)$ is the standard homogeneous volume form on $S^2$, we conclude by
Corollary~{\ref{6: Corollary on de Rham of Schouten-Bracket}} that this $L_\infty\{2\}$-structure
is precisely the Poisson-Schouten bracket $\{\ ,\ \}_{-2}$ in $X_1$;

\item[(ii)] a $L_\infty\{1\}$-structure,
$\mu=\{\mu_{0,n}: \odot^n (X_2[2])\rar X_1[3]\}_{n\geq 1}$, on $X_2$;
this structure is given by graphs $\Ga$ whose vertices lie in $\C=\p \overline{\bbH^3}$; as weights of all such
graphs w.r.t.\ $\om_K$ are equal to zero, we conclude that this $L_\infty\{1\}$-structure is trivial, i.e.\
all operations $\mu_{0,n}=0$ for $n\geq 2$;

\item[(iii)] a  $L_\infty$-morphism, $F$, of Lie algebras,
$$
F:  (X_1, \{\ ,\ \}_{-2}) \lon  (\mbox{Coder}(\odot^\bu (X_2[2])), [\ ,\ ])
$$
given by sums over graphs $\Ga\in \fG^\downarrow$. Note that as $\deg \om_K=2$,  only those
graphs contribute to $F$ which satisfy the condition $3n+2m-3=2l$, where $n$ is the number of vertices
of $\Ga$ lying in $\bbH^3$, $m$ is the number of vertices of $\Ga$ lying in the boundary plane
$\C=\p \overline{\bbH^3}$ and $l$ is the number of edges of $\Ga$; put another way, $l=3k+m$
and $n=2k+1$ for some $k\in \N$.
\Ei

At the first glance it might seem  that,  contrary to the case of $\overline{C}(\bbH)$ where the graph (\ref{9: graph with 2 vertices no edges})
encodes a natural graded commutative structure in $X_2=\f_V$, there is no graph with non-zero weight
in the case of
$\overline{C}(\bbH^3)$-theory which would encode the canonical Lie$\{1\}$ structure on $X_2=\cT_{poly}(V)$.
However, this is not quite so: if $\ga$ is an arbitrary MC element of the Poisson-Schouten 2-algebra
$(X_1=\fg_3(V^*[1]\oplus V^), \{\ ,\ \}_{-2})$, then the aforementioned $L_\infty$-morphism $F$ sends $\ga$ into an
MC element, $F(\ga)$, of the Lie algebra  $(\mbox{Coder}(\odot^\bu (X_2[2])), [\ ,\ ])$ which is the same as
a $L_\infty\{1\}$-structure on $X_2=\cT_{poly}(V)$. If we take $\ga=\delta$, then the only graph contributing
to $F(\delta)$ is the following one
$$
\Ba{c}
{\xy
(-3,9)*{\bullet}="0",
(-6,-3)*{\bullet}="1",
(0,-6)*{\bullet}="2",
(-3,11)*{^{\delta}},
(-18,11)*{\bbH^3},
(-15,0)*{}="a",
(15,0)*{}="b",
(-25,-10)*{}="c",
(-15,20)*{}="d",
\ar @{->} "a";"b" <0pt>
\ar @{->} "a";"d" <0pt>
\ar @{->} "a";"c" <0pt>
\ar @{->} "0";"1" <0pt>
\ar @{->} "0";"2" <0pt>
\endxy}
\Ea
$$
and has weight $1$. Therefore, $F(\delta)$ is nothing but the ordinary Schouten structure, $\{\ \bu\ \}_{-1}$,
 in $\cT_{poly}(V)$!

 \sip

The conclusion is that any MC element, $\ga$, in the {\em dg}\, Lie algebra
$$
\left(\fg_3(V^*[1]\oplus V), \{\ ,\ \}_{-2},
d=\{\delta,\ \}_{-2}\right)
$$
 gives a deformation, $F(\ga)$, of the Schouten bracket in $\cT_{poly}(V)$. As the former dg
Lie algebra is cohomologically trivial, any such a deformation of $\{\ \bu\ \}_{-1}$ must be homotopy trivial.
Still homotopy trivial does not mean trivial, and we
 can get
 some funny deformations of  $(\cT_{poly}(V), \{\ \bu\ \}_{-1})$ in this way.

\subsubsection{\bf Proposition}{\em To any Lie coalgebra structure,
$$
\Delta: V \lon V\wedge V
$$
in a vector space $V=\R^d$ there corresponds an MC element in the dg  Lie algebra
\Beq\label{9: dg -2 algebra}
(\fg_3(V^*[1]\oplus V)\simeq  \R[[x^\al,\psi_\al, \eta^\al,y_\al]], \{\ ,\ \}_{-2},
d=\{\delta,\ \}_{-2}),
\Eeq
given explicitly by the following formal power series
$$
\ga^\Delta:=\frac{1}{2}\sum_{\al,\be,delta} C_{\al\be}^\delta \eta^\al \eta^\be y_\delta + \sum_{\al,\be}\left(\sum_{n=1}^\infty
\frac{B_n}{n!} \sum_{\xi_i,\zeta_i}C_{\al \xi_1}^{\zeta_1}C_{\zeta_1\xi_2}^{\zeta_2}\cdots C_{\zeta_{n-1}\xi_n}^\be
x^{\xi_1}x^{\xi_2}\cdots x^{\xi_n}
\right)\eta^\al y_\be
$$
where $C_{\al\be}^\delta\in \R$ are the structure constants of $\Delta$ in a basis $x^\al$,
$$
\Delta(x^\delta)=\sum_{\al\be} C_{\al\be}^\delta x^\al x^\be
$$
and $B_n$ are the Bernoulli numbers, $B_1=-\frac{1}{2}$, $B_2=\frac{1}{6}$, $B_3=0$, $B_4=-\frac{1}{30}$, etc.
}

\begin{proof} Set
$$
\gamma:= - \frac{1}{2}\sum_{\al,\be,\delta} C_{\al\be}^\delta \eta^\al \eta^\be \psi_\delta +  \sum_{\al,\be}C_\al^\be(x) \eta^\al y_\be
$$
for some $C_\al^\be(x)\in \R[[x^\al]]$. This degree 3 element satisfies the MC equation in (\ref{9: dg -2 algebra}),
$$
d\ga + \frac{1}{2}\{\ga, \ga\}_{-2}=0,
$$
if and only if
$$
C_{\al \xi}^{\delta}C^\xi_{\be\ga} + C_{\ga \xi}^{\delta}C^\xi_{\al\be} + C_{\be \xi}^{\delta}C^\xi_{\ga\al}=0
$$
and the series $\hat{C}_\al^\be(x):= \delta_\al^\be + C_\be^\al(x)$ satisfies the equations,
\Beq\label{9: Second eqs for hatC_b^a}
C_{\al\be}^\ga \hat{C}_\ga^\delta(x) = \hat{C}_\al^\ga(x)\p_\ga \hat{C}_\be^\delta(x) - \hat{C}_\be^\ga(x)\p_\ga \hat{C}_\al^\delta(x).
\Eeq
The first equations are the co-Jacobi identities for $\Delta$. The second equations have a
nice geometric meaning. Let $\fg:=V^*$ be the Lie algebra dual to the coalgebra $V$; in a basis $\{e_\al\}$ of $\fg$
dual to $\{x^\al\}$ the Lie algebra structure is given by,
$$
[e_\al,e_\be]=\sum_\ga C_{\al\be}^\ga e_\ga.
$$
Equations (\ref{9: Second eqs for hatC_b^a}) are equivalent to saying that the map,
$$
\Ba{rccc}
f: & \fg & \lon & \cT_\fg\\
&  e_\al & \lon & \sum_\be\hat{C}_\al^\be(x)\frac{\p}{\p x^\be}
\Ea
$$
is a map of Lie algebras. Here  $\cT_\fg$ is the Lie algebra of formal vector fields on $\fg$, i.e.\ derivations
of the completed symmetric tensor algebra $\widehat{\odot^\bu} V$. It was proven in
\cite{Me-Bern} (see Corollary 4.1.2 there) that for any
Lie algebra $\fg$ such a canonical map exists and is given by the formulae
$$
\hat{C}_\al^\be(x)=\delta_\al^\be + \sum_{n=1}^\infty
\frac{B_n}{n!} \sum_{\xi_i,\zeta_i}C_{\al \xi_1}^{\zeta_1}C_{\zeta_1\xi_2}^{\zeta_2}\cdots C_{\zeta_{n-1}\xi_n}^\be
x^{\xi_1}x^{\xi_2}\cdots x^{\xi_n}
$$
This fact completes the proof.
\end{proof}
\sip

The Schouten brackets and the wedge product of polyvector fields make
$\cT_{poly}(\R^d)\simeq \R[[x^\al,\psi_\al]]$ into a Gerstenhaber algebra so that the Schouten bracket,
$\{\ \bu\ \}_{-1}$, is
uniquely determined by its values on the generators $x^\al$ and $\psi_\al$,
\Beqrn
\{x^\al\bu x^\be\}_{-1} &=& 0,\\
\{\psi_\al\bu x^\be\}_{-1} &=& \delta_\be^\al,\\
\{\psi_\al \bu \psi_\be\}_{-1} &=& 0.
\Eeqrn

\begin{corollary}\label{9: corollary on extension of Schouten}
For any Lie coalgebra structure on $V=\R^d$ the following formulae,
\Beqrn
\{x^\al\bu x^\be\}_{-1} &=& 0,\\
\{\psi_\al\bu x^\be\}_{-1} &=& \delta_\be^\al + \sum_{n=1}^\infty
\frac{B_n \hbar^n}{n!}\sum_{\xi_i,\zeta_i}C_{\al \xi_1}^{\zeta_1}C_{\zeta_1\xi_2}^{\zeta_2}\cdots C_{\zeta_{n-1}\xi_n}^\be
x^{\xi_1}x^{\xi_2}\cdots x^{\xi_n}
,\\
\{\psi_\al \bu \psi_\be\}_{-1} &=& \hbar\sum_\ga C_{\al\be}^\ga\psi_\ga
\Eeqrn
give a 1-parameter deformation of the standard Gerstenhaber algebra structure on $\cT_{poly}(V)$.

\end{corollary}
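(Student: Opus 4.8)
The plan is to derive Corollary~\ref{9: corollary on extension of Schouten} directly from the Proposition that precedes it, by tracking through the open-closed homotopy Lie $3$-algebra machinery set up in \S\ref{3.3 higher dim version of Konts spaces} and \S\ref{9: section}. First I would recall the setup: the propagator $\om_K$ on $\overline{C}_{2,0}(\bbH^3)$ together with the canonical representation $\rho:\fG^\downarrow\rar \cE nd_{X_1,X_2}$ on the pair $X_1=\fg_3(V^*[1]\oplus V)$, $X_2=\cT_{poly}(V)$ produces, via Theorem~\ref{6: Main Theorem-defintion}, an $\caL_\infty$-morphism $F$ from the dg Lie algebra $(\fg_3(V^*[1]\oplus V),\{\ ,\ \}_{-2},d=\{\delta,\ \}_{-2})$ to $(\mbox{Coder}(\odot^\bu(X_2[2])),[\ ,\ ])$. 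By the discussion in \S\ref{9:  quantization of associative algebras of polyvector fields} (the ``case $d=3$'' part), sending a Maurer-Cartan element $\ga$ of the source dg Lie algebra through $F$ yields a Maurer-Cartan element $F(\ga)$ of the target, i.e.\ an $\caL_\infty\{1\}$-structure on $\cT_{poly}(V)$; and the text has already checked that for $\ga=\delta$ one recovers exactly the standard Schouten bracket, because the only contributing graph is the trivial tripod of weight $1$.

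Next I would apply this to the Maurer-Cartan element $\ga^\Delta$ furnished by the Proposition above: since $\ga^\Delta$ solves the MC equation in the dg Lie algebra \eqref{9: dg -2 algebra}, $F(\ga^\Delta)$ (more precisely, $F(\delta+\hbar\ga^\Delta)$ after rescaling by the formal parameter $\hbar$ which tracks the number of white vertices) is again an MC element of $(\mbox{Coder}(\odot^\bu(\cT_{poly}(V)[2])),[\ ,\ ])$, hence an $\caL_\infty\{1\}$-structure on $\cT_{poly}(V)[[\hbar]]$; modulo $\hbar$ it is the standard Schouten bracket. The key computation is then to identify the \emph{binary} part of $F(\delta+\hbar\ga^\Delta)$ on the generators $x^\al,\psi_\al$ of $\cT_{poly}(V)$. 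The point is that $\ga^\Delta$ has the special shape $-\tfrac12\sum C_{\al\be}^\delta\eta^\al\eta^\be\psi_\delta+\sum C_\al^\be(x)\eta^\al y_\be$ (up to the sign convention in the Proposition's proof), so in any graph only edges decorated by the $y,\eta$ and $\psi,x$ contractions survive; a degree count shows that on a pair of arguments drawn from $\{x^\al,\psi_\be\}$ the only surviving graphs are again the tripod (contributing $\{\psi_\al\bu\psi_\be\}_{-1}\mapsto \hbar\sum C_{\al\be}^\ga\psi_\ga$, from the $\eta\eta\psi$ term) and the ``broom'' graphs with one white vertex carrying $\ga^\Delta$ and edges landing on one $x$-leg and one $\psi$-leg (contributing the $\hat C_\al^\be(x)$ correction to $\{\psi_\al\bu x^\be\}_{-1}$, with the Bernoulli coefficients inherited verbatim from $\ga^\Delta$ because each such graph has weight $1$). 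The bracket $\{x^\al\bu x^\be\}$ stays zero for the obvious reason that no $x$-leg can be contracted against another $x$-leg. Finally, since $F(\delta)$ already makes $\cT_{poly}(V)$ into the \emph{Gerstenhaber} algebra (the bracket is a biderivation of the wedge product, which is encoded by the edgeless two-vertex graph and is undeformed), and since $F(\delta+\hbar\ga^\Delta)\equiv F(\delta)\ \mathrm{mod}\ \hbar$, the resulting $\hbar$-family is a $1$-parameter deformation of that Gerstenhaber structure, as claimed.

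I expect the main obstacle to be the careful graph-by-graph bookkeeping that shows \emph{no other} graphs contribute to the binary operations on generators — in particular verifying that all higher brooms (with $\geq 2$ white vertices or extra internal edges) either violate the degree constraint $3n+2m-3=2l$ forced by $\deg\om_K=2$, or have vanishing Kontsevich weight $\int_{\overline{C}_{n,m}(\bbH^3)}\Omega_\Ga$, and that the surviving brooms reproduce precisely the power series $\hat C_\al^\be(x)$ with the correct Bernoulli coefficients rather than some rescaled version. A clean way to avoid re-deriving the Bernoulli series is to invoke \cite{Me-Bern} exactly as the Proposition does: once one knows $F(\delta+\hbar\ga^\Delta)$ is an $\caL_\infty\{1\}$-structure lifting a biderivation, the only freedom in its values on generators is a Lie-algebra map $\fg=V^*\rar \cT_\fg$ together with the cobracket term, and \cite{Me-Bern} pins that map down uniquely. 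Checking the Jacobi/co-Jacobi and derivation identities is then automatic from $F$ being an $\caL_\infty$-morphism and $\ga^\Delta$ being MC, so no independent verification of the algebra axioms is needed.
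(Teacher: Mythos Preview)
Your route through the $\caL_\infty$-morphism $F$ is both heavier than what the paper intends and has a real gap. The Corollary is a direct statement that the displayed formulae, extended from the generators $x^\al,\psi_\al$ as biderivations of the wedge product, satisfy the Gerstenhaber axioms. The only non-trivial check is the Jacobi identity on triples of generators, and this reduces precisely to the two conditions already established in the proof of the preceding Proposition: the co-Jacobi identity for $C_{\al\be}^\ga$ (from the $\psi\psi\psi$ Jacobiator) and equation~\eqref{9: Second eqs for hatC_b^a} (from the $\psi\psi x$ Jacobiator, using that $\{\psi_\al\bu f(x)\}=\sum_\delta\hat C_\al^\delta\,\p_\delta f$). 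The $\psi xx$ and $xxx$ Jacobiators vanish trivially since $\{x^\al\bu x^\be\}=0$ and each $\hat C_\al^\be$ depends on $x$ only. No reference to $F$, to graphs, or to configuration-space integrals is needed.

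The gap in your argument is that $F(\delta+\hbar\ga^\Delta)$ is merely an MC element of $\mbox{Coder}(\odot^\bu(X_2[2]))$, i.e.\ an $\caL_\infty\{1\}$-structure on $\cT_{poly}(V)[[\hbar]]$, and the OCHLA framework for $d=3$ does \emph{not} encode the wedge product on $X_2=\cT_{poly}(V)$: the paper explicitly records that $\mu_{0,n}=0$ for all $n\geq 2$, so your appeal to ``the edgeless two-vertex graph'' is a $d=2$ statement that has no content here. Consequently there is no a priori reason for the binary part of $F(\ga)$ to be a biderivation of the wedge product, nor for the higher $\caL_\infty$ operations $\nu_{n\geq 3}$ to vanish (graphs with $n\geq 3$ white vertices and $m=2$ black vertices satisfying the degree constraint $3n+2m-3=2l$ certainly exist), so you cannot conclude from $F$ alone that the binary bracket defines a strict Gerstenhaber algebra. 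Your claim that the surviving broom graphs ``have weight $1$'' and reproduce $\hat C_\al^\be$ verbatim is also unverified. The honest fix is to abandon the $F$-computation for this Corollary and check Jacobi directly on generators as above; the Bernoulli series then appears not through any weight calculation but because it is already baked into $\ga^\Delta$ by the Proposition.
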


\bip

{\large
\section{\bf Towards the theory of open-closed morphisms of deformation quantizations}\label{10: section}
}

\mip

\subsection{De Rham field theory on the first model of $\widehat{\fC}_\bu(\C)$}\label{Subsect 10.1}
In \S \ref{4': Section on Mor(L_infty)} we introduced
two different configuration space models for the operad $\cM or(L_\infty)$, $\widehat{\fC}(\C)$ and $\widehat{C}(\bbH)$, the first being
a compactification of the quotient space $\Conf_n(\C)/\{z\rar z + \C\}$ and the second being that of the space
$\Conf_n(\bbH)/\{z\rar \R^+ z + \R\}$;  moreover, each model was equipped with two non-isomorphic semialgebraic structures.
In \cite{Me-Auto} we studied de Rham field theories on $\widehat{C}(\bbH)$ and obtained some exotic $L_\infty$-automorphisms of the
Lie algebra $\cT_{poly}(\R^d)$ for any $d$. In this section we outline an analogous theory for the compactification
$\widehat{\fC}(\C)$.

\sip

Consider
the 2-coloured operad of semialgebraic manifolds
$$
\widehat{\fC}(\C):=\overline{C}_\bu(\C)\sqcup \widehat{\fC}_{\bu}(\C)\sqcup
\overline{C}_\bu(\C)
$$
equipped with a semialgebraic structure given by the embedding (\ref{4': first compactifn of fC(C)}),
and let
$$
\fG_{\widehat{\fC}(\C)}=\fG_{\overline{C}_\bu(\C)}\bigoplus
\fG_{\widehat{\fC}_{\bu}(\C)}\bigoplus \fG_{\overline{C}_\bu(\C)}
$$
be an associated 2-coloured operad operad of Feynman diagrams. Each summand in the latter direct sum is
spanned by (equivalence classes of) directed graphs satisfying all the conditions given in
\S {\ref{7: An operad of Feynman graphs fG}}; the only important difference from the definition of $\fG$
in \S {\ref{7: An operad of Feynman graphs fG}} is that graphs in each
summand should be understood as drawn on the corresponding summand of the operad $\widehat{\fC}(\C)$
and equipped thereby with the corresponding colour. We leave to the reader as an exercise to write
down explicitly composition rules in $\fG_{\widehat{\fC}(\C)}$ with the help of the basic 1-coloured formula
(\ref{5: composition in Feynman operad}).

\sip

A de Rham field theory on $\widehat{\fC}(\C)$, that is a morphism of dg cooperads,
$$
\Omega: \left(\check{\fG}_{\widehat{\fC}(\bbH)},0\right) \lon \left(\Omega_{\widehat{\fC}(\C)}, d_{DR}\right),
$$
is the same as
\Bi
\item[(i)] a pair of de Rham field theories on $\overline{C}_\bu(\C)$, i.e.\ a pair of maps
$$
 \left\{
\Ba{rccc}
\Omega^{in}: & {G}_{n, l}& \lon & \Omega^{l}_{closed}(\overline{C}_{n}(\C)
)\\
& \Ga & \lon & \Omega^{in}_\Ga
\Ea
\right\}_{n\geq 2} \ \ \ \mbox{and}\ \ \ \left\{
\Ba{rccc}
\Omega^{out}: & {G}_{n, l}& \lon & \Omega^{l}_{closed}(\overline{C}_{n}(\C)
)\\
& \Ga & \lon & \Omega^{out}_\Ga
\Ea
\right\}_{n\geq 2}
$$
satisfying on the boundary strata of $\overline{C}_{n}(\C)$ the factorization property (\ref{6: de Rham on C_n R^d}), and
\item[(ii)] a map
$$
 \left\{
\Ba{rccc}
\Xi: & {G}_{n, l}& \lon & \Omega^{l}_{closed}(\widehat{\fC}_{n}(\C))\\
& \Ga & \lon & \Xi_\Ga
\Ea
\right\}_{n\geq 2}
$$
such that  $\Xi_{\Ga_{opp}}=-\Xi_\Ga$ and, for any $\Ga\in  \cG_{n, 2n-3}$,
and any boundary embeddings
$$
j_A: \widehat{\fC}_{n-\# A +1}(\C)\times  \overline{C}_{\# A}(\C) \hook \widehat{\fC}_{n}(\C),
\ \ \
j_{A_1,...,A_k}: \overline{C}_{k}(\C)\times \widehat{\fC}_{n-\# A_1 +1}(\C)\times\ldots\times
\widehat{\fC}_{n-\# A_k +1}(\C) \hook  \widehat{\fC}_{n}(\C)
$$
 one has
\Beq\label{10: boundary in}
j_A^*(\Xi_\Ga) \simeq (-1)^{\sigma_A}\Xi_{\Ga/\Ga_A}  \wedge \Omega^{in}_{\Ga_A},
\Eeq
\Beq\label{10: boundary out}
j_{A_1,...,A_k}^*(\Xi_\Ga)\simeq (-1)^{\sigma_{A,,...,A_k}}
\Omega^{out}_{\Ga/\{\Ga_{A_1},...,\Ga_{A_k}\}}\wedge \Xi_{\Ga_{A_1}}\wedge\ldots
\wedge \Xi_{\Ga_{A_k}},
\Eeq
where the sign $(-1)^{\sigma_{A_1...A_k}}$ is defined by the equality
$$
\f_\Ga=(-1)^{\sigma_{A_1...A_k}} \f_{\Ga/\{\Ga_{A_1},...,\Ga_{A_k}\}}\wedge \f_{\Ga_{A_1}}
\wedge\ldots\wedge  \f_{\Ga_{A_k}},
$$
i.e.\ it is given just by a rearrangement of the wedge product of edges of $\Ga$.
\Ei

\sip

The space $\widehat{\fC}_{2}(C)$ is the cylinder $S^1\times [0,+\infty]$, see (\ref{5: fC_2(C) cylinder}), whose boundaries
are circles $S^1_{in}$ and, respectively, $S^1_{out}$, corresponding  to two points moving too close to each other
and, respectively, to two points moving too far away from each.
For any pair of integers $i,j\in [n]$, $i\neq j$,
there is an associated forgetting map,
\Beq\label{Ch3: p-forgetting map}
\Ba{rccc}
p_{ij}: & \fC_{n}(\C)  &\lon & \fC_{2}(\C) \\
        & (z_1, \ldots, z_n) & \lon & (z_i, z_j),
\Ea
\Eeq
which extends to a smooth map of their compactifications,
$$
{p}_{ij}:\widehat{\fC}_{n}(\C)\rar
\widehat{\fC}_{2}(\C).
$$
 Hence, for any closed differential 1-form  $\om$ on $\widehat{C}_{2}$ the pull-back ${p}_{ij}^*(\om)$ is a well-defined
 one-form on  $\widehat{\fC}_{n}(\C)$. 
 In particular, for any graph $\Ga\in G_{n,l}$  and any edge $e\in E(\Ga)$ there is an associated
 differential form $p_e^*(\om)\in \Omega^1( \widehat{\fC}_n(\C))$, where $p_e:=p_{ij}$ if the edge $e$ begins at the vertex labelled by
 $i$ and ends at the vertex labelled by $j$.
Similar forgetful maps $\pi_{ij}$ and $\pi_e$ can be defined for the configuration spaces $\overline{C}_\bu(\C)$.

\mip

Let $\om$ be an arbitrary closed differential 1-form on $\widehat{\fC}_{2}(\C)$ such that the restrictions
\Beq\label{10: restrictions of propagator to both boundary S^1}
\om_{in}:= \om|_{S^1_{in}} \ \ \ \mbox{and}\ \ \  \om_{out}:= \om|_{S^1_{out}}
\Eeq
define cohomologically non-trivial 1-forms on the circle normalized so that
$$
\int_{S^1} \om_{in} =\int_{S^1} \om_{out}=2\pi.
$$
We call such a differential form a {\em propagator}\, on $\widehat{\fC}_2(\C)$.
Define a series of maps,
\Beq\label{Ch4: Omega-in maps}
\Ba{rccc}
\Omega^{in}: & G_{n,l}\hspace{-2mm} & \rar &\hspace{-2mm} \Omega^l(\widehat{\fC}_n(\C))\vspace{2mm} \\
& \Ga \hspace{-2mm}& \rar & \hspace{-2mm}\Omega^{in}_\Ga:=\hspace{-3mm}\displaystyle
\bigwedge_{e\in E(\Ga)}\hspace{-2mm}
\frac{{\pi}^*_e\left(\om_{in}\right)}{2\pi}
\Ea
\ \ \ \ \ \ \ \ \ \ \
\Ba{rccc}
\Omega^{out}: & G_{n,l}\hspace{-2mm} & \rar &\hspace{-2mm} \Omega^l(\widehat{\fC}_n(\C))\vspace{2mm}  \\
& \Ga \hspace{-2mm}& \rar & \hspace{-2mm}\Omega^{in}_\Ga:=\hspace{-3mm}\displaystyle
\bigwedge_{e\in E(\Ga)}\hspace{-2mm}
\frac{{\pi}^*_e\left(\om_{out}\right)}{2\pi}
\Ea
\Eeq
and
\Beq\label{Ch4: Xi maps}
\Ba{rccc}
\Xi: &  G_{n,l} & \lon & \Omega^l(\widehat{C}_{n,0})\vspace{2mm} \\
& \Ga & \lon & \Xi_\Ga:=\displaystyle\hspace{-3mm} \bigwedge_{e\in E(\Ga)}\hspace{-1mm}
\frac{{p}^*_e\left(\om\right)}{2\pi}.
\Ea
\Eeq

\mip
\subsubsection{\bf Theorem}\label{10: Theorem on de Rham field th on first model fC} {\em For any propagator on $\widehat{\fC}_{2}(\C)$ the
associated  data (\ref{Ch4: Omega-in maps})-(\ref{Ch4: Xi maps})
define a de Rham field theory on the semialgebraic operad $\widehat{\fC}(\C)$.
}

\begin{proof}
Equation (\ref{10: boundary in}) is equivalent to the following one,
$$
\int_{ \widehat{\fC}_{(n-\# A)\sqcup  \bu}(\C)\times  \overline{C}_{\# A}(\C)}
j_A^*(\Xi_\Ga)= (-1)^{\sigma_A}\int_{{\widehat{\fC}_{(n-\# A)\sqcup \bu}(\C)}} \Xi_{\Ga/\Ga_A} \int_{\overline{C}_{\# A}(\C)}  \Omega^{in}_{\Ga_A}.
$$
Studying the embedding $\widehat{\fC}_{(n-\# A)\sqcup \bu}(\C)\times  \overline{C}_{\# A}(\C) \hook \widehat{\fC}_{n}(\C)$
in local coordinates defined by metric graphs,
 one easily sees that both sides of
the above equation are zero unless $\Ga_A$ has $2\# A-3$ edges
(so that  $\Omega^{in}_{\Ga_A}$ is a top degree form on $ \overline{C}_{\# A}(\C)$) in which case the
equality is almost obvious (cf.\ the proof of Proposition {\ref{6: prop on propagator on C(R^d)}}).

\sip

Consider next, for a partition $[n]=A_1\sqcup\ldots \sqcup A_k$,  the associated boundary stratum of the form,
$$
j_{A_1,...,A_k}: {C}_{k}(\C)\times {\fC}_{A_1}(\C)\times\ldots\times
{\fC}_{A_k}(\C) \hook  \widehat{\fC}_{n}(\C).
$$
By definition, this is a subset of $\widehat{\fC}_{n}(\C)$ obtained in the limit
 $\tau\rar\infty$ from a class of configurations in $Conf_{n}(\C)$ determined by the data:
\Bi
\item[(a)]  a configuration $\tau\cdot p=(\tau z_1, \ldots, \tau z_k)\in \Conf_n(\C)$ obtained from a
standard configuration
$p=(z_1, \ldots, z_k)\in C^{st}_n(\C)\simeq C_k(\C)$ by $\tau$-dilation,
\item[(b)]  a collection of configurations, $p_1\in  {\fC}^{st}_{\# A_1}(\C)$,
$\ldots$, $p_k\in  {\fC}^{st}_{\# A_k}(\C)$,
$1\leq i\leq k$, placed, respectively, at the positions $\tau z_1, \ldots, \tau z_k$ in $\bbH$, that is,
a configuration $(\tau z_1 + p_1,\ldots,  \tau z_k + p_k)\in \Conf_n(\C)$.
\Ei
We have, therefore,
$$
 j_{A_1,...,A_k}^*(\Xi_\Ga)=(-1)^{\sigma_{A,,...,A_k}}
\bigwedge_{e\in E(\Ga/\{\Ga_{A_1}\ldots \Ga_{A_k}\} )}\hspace{-1mm}
\frac{\underset{\tau\rar +\infty}{\lim}{p}^*_e\left(\om\right)}{2\pi}\ \
\prod_{i=1}^k
\bigwedge_{e\in E(\Ga_{A_i})}\hspace{-1mm}
\frac{\underset{\tau\rar +\infty}{\lim}{p}^*_e\left(\om\right)}{2\pi}.
$$
As $p_e^*(\om)$ is invariant under translations we have for each $i\in [k]$
$$
\underset{\tau\rar +\infty}{\lim} \bigwedge_{e\in E(\Ga_{A_i})}\hspace{-1mm}
\frac{{p}^*_e\left(\om\right)}{2\pi}=\bigwedge_{e\in E(\Ga_{A_i})}\hspace{-1mm}
\frac{{p}^*_e\left(\om\right)}{2\pi}.
$$
On the other hand, for any $e\in E(\Ga/\{\Ga_{A_1}\ldots \Ga_{A_k}\} )$,
$$
\underset{\tau\rar +\infty}{\lim}{p}^*_e\left(\om\right)=\pi_e^*(\om_{out})
$$
so that
$$
\bigwedge_{e\in E(\Ga/\{\Ga_{A_1}\ldots \Ga_{A_k}\} )}\hspace{-1mm}
\frac{\underset{\tau\rar +\infty}{\lim}{\fp}^*_e\left(\om\right)}{2\pi}=
\Omega^{out}_{\Ga/\{\Ga_{A_1},...,\Ga_{A_k}\}}.
$$
These two facts prove
equality (\ref{10: boundary out}) and hence complete the proof of the Theorem.
\end{proof}

\subsubsection{\bf Corollary}\label{10: Corollary on Lie_infty auto}
 {\em For any propagator  $\om$ on $\widehat{\fC}_{2}(\C)$ and any graded vector space $V$ there are
associated
\Bi
\item[(i)] two $\caL_\infty$-structures on  $\cT_{poly}(V)$,
$$
\mu^{in}=\{\mu^{in}_n: \odot^n\cT_{poly}(V) \rar \cT_{poly}(V)[3-2n]\}_{n\geq 2} \ \ \mbox{and}\ \ \
\mu^{out}=\{\mu^{out}_n: \odot^n\cT_{poly}(\R^d) \rar  \cT_{poly}(\R^d)[3-2n]\}_{n\geq 2},
$$
given by formulae (\ref{Ch4: induced_Leib_infty})-(\ref{Ch4: c_gamma_on_C_n})
for $\Omega=\Omega^{in}$ and, respectively, $\Omega=\Omega^{out}$; $\mu^{in}_2$ and $\mu^{out}_2$ coincide with the Schouten bracket;
\item[(ii)] a $\caL_\infty$ morphism,
$$
F=
\left\{
\Ba{rccc}
F^{Leib}_n: & \odot^n\cT_{poly}(V) &\lon &  \cT_{V}(\R^d)[2-2n]\\
& \ga_1\ot \ldots\ot \ga_n     & \lon & F_n(\ga_1, \ldots, \ga_n)
\Ea
\right\}_{n\geq 1},
$$
from $\mu^{in}$-structure to  $\mu^{out}$-structure
given by the formulae,
\Beq\label{Ch4: induced_Aut_Leib_infty}
F_n(\ga_1, \ldots, \ga_n):=
\left\{
\Ba{cl}
\Id & \mbox{for}\ n=1,\\
 \sum_{\Ga\in \fG_{n,2n-2}} C_{\Ga}\Phi_\Ga(\ga_1, \ldots, \ga_n)
 & \mbox{for}\ n\geq 2
 \Ea
 \right.
\Eeq
with
\Beq\label{Ch4: weight C_Ga}
C_\Ga:= \int_{\widehat{\fC}_{n}(\C)} \displaystyle \bigwedge_{e\in E(\Ga)}\hspace{-1mm}
\frac{{p}^*_e\left(\om\right)}{2\pi}.
\Eeq
\Ei
}

\subsubsection{\bf Remark} If $\om$ is a propagator on $\widehat{\fC}_2(\C)$ such that its restrictions
(\ref{10: restrictions of propagator to both boundary S^1}) to both boundary circles
coincide with the standard homogeneous volume form $d Arg(z_1-z_2)$ on $S^1$, then
formulae (\ref{Ch4: induced_Aut_Leib_infty})
and (\ref{Ch4: weight C_Ga}) define a universal $\caL_\infty$ automorphism of the Schouten algebra of
 polyvector fields. The propagator
$$
\om(z_1,z_2)=d Arg(z_1-z_2)
$$
is well-defined on $\widehat{\fC}_2(\C)$ and satisfies the aforementioned boundary conditions. However, all the weights
(\ref{Ch4: weight C_Ga}) with $n\geq 2$ vanish in this case so that the associated automorphism $F$ is
just the identity map.
Any other smooth propagator on $\widehat{\fC}_2(\C)$ is of the form
\Beq\label{10: om=dArg + df}
\om(z_1,z_2)=d Arg(z_1-z_2) + df(z_1,z_2)
\Eeq
for some smooth (or semialgebraic) function $f$ on  $\widehat{\fC}_2(\C)$. It was proven in \cite{Me-Auto} that any such
 propagator defines a $\caL_\infty$-automorphism of $\cT_{poly}(V)$ which is homotopy equivalent to the trivial one.
Thus the class of smooth propagators on  $\widehat{\fC}_2(\C)$ can not give us an exotic (i.e.\ homotopy non-trivial)
universal automorphism of the Schouten algebra. Hence one should try using {\em singular}\, propagators for that purpose (cf.\ \cite{Ko2}) which have at most simple polar singularity at the collapsing stratum $S^1\subset \widehat{\fC}_2(\C)$. We conjecture that
$$
\om(z_1, z_2):= \frac{1}{\ii}d\log\frac{z_1-z_2}{1+ |z_1-z_2|}
$$
is an example of such suitable propagator on  $\widehat{\fC}_2(\C)$ which gives us an exotic universal $L_\infty$-automorphism  of the Schouten algebra via Corollary {\ref{10: Corollary on Lie_infty auto}}. We hope to discuss elsewhere our motivation for that conjecture, and its relation to the Deligne-Drinfeld conjecture on the structure of the Grothendieck-Teichm\"uller algebra $grt$.

De Rham field theories on the upper half space model of $\cM or(L_\infty)$
have been studied in \cite{Me-Auto}.

\subsection{\bf On (auto)morphisms  of deformation quantizations} Any de Rham field theory, $\Omega$,
on the  4-coloured operad,
$$
\widehat{\fC}(\bbH):=
\underbrace{\overline{C}_\bu(\C)\bigsqcup \overline{C}_{\bu,\bu}(\bbH)}_{in}
\bigsqcup \widehat{\fC}_{\bu,\bu}(\bbH) \bigsqcup \widehat{\fC}_{\bu}(\C)\bigsqcup \underbrace{
\overline{C}_\bu(\C)\bigsqcup \overline{C}_{\bu,\bu}(\bbH)}_{out},
$$
gives us an open-closed homotopy morphism between the two deformation quantizations corresponding to the
 ``in"  and``out" colours, respectively.

\sip
It follows from Tamarkin's proof of Kontsevich formality theorem that the Grothendieck-Teichm\"uller group,
$GRT$,
acts (up to homotopy)  on deformation quantizations.
Following the general philosophy one can expect that this action can be explicitly presented as an
open-closed homotopy morphism determined by a propagator
 $\om\in \Omega^1_{\widehat{\fC}_{2,0}(\bbH)}$ which vanishes on all
boundary strata of $\widehat{\fC}_{2,0}(\bbH)$ except the inner cylinders, both Kontsevich eyes and the spaces
 $B_1$ and $B_2$ shown in the following picture
$$
\widehat{\fC}_{2,0}(\bbH)=\Ba{c}
\xy
 <-17mm,-10mm>*{B_1};<-11mm,5mm>*{}**@{~},
<-24mm,32mm>*{B_2};<-9mm,21mm>*{}**@{~},
 <5mm,15mm>*{};<5mm,0mm>*{}**@{.},
<-5mm,15mm>*{};<-5mm,0mm>*{}**@{.},
<5mm,30mm>*{};<5mm,15mm>*{}**@{--},
<-5mm,30mm>*{};<-5mm,15mm>*{}**@{--},
%
<-17mm,0mm>*{};<-12mm,18mm>*{}**@{--},
<-17mm,0mm>*{};<-22mm,12mm>*{}**@{-},
<-17mm,30mm>*{};<-22mm,12mm>*{}**@{-},
<-17mm,30mm>*{};<-12mm,18mm>*{}**@{--},
<17mm,0mm>*{};<12mm,12mm>*{}**@{-},
<17mm,0mm>*{};<22mm,18mm>*{}**@{-},
<17mm,30mm>*{};<22mm,18mm>*{}**@{-},
<17mm,30mm>*{};<12mm,12mm>*{}**@{-},
(0,15)*\ellipse(5,1){-};
(0,7.5)*\ellipse(5,1){.};
(0,0)*\ellipse(5,1){.};
(-17,0)*-{};(17.0,-0)*-{}
**\crv{~*=<4pt>{.}(0,6)}
**\crv{(0,-6)};
(-17,30)*-{};(17.0,30)*-{}
**\crv{(0,36)}
**\crv{(0,24)};
(-22,12)*+{};(12.0,12)*-{}
**\crv{(0,6)};
(-12,18)*-{};(22.0,18)*+{}
**\crv{~*=<4pt>{.}(0,22)}
\endxy
\Ea
$$
It is not hard to construct such a de Rham field theory on $\widehat{\fC}(\bbH)$ out of a smooth propagator
on $\widehat{\fC}_{2,0}(\bbH)$ along the lines explained in the previous section; however, as $\widehat{\fC}_{2,0}(\bbH)$
is contractible to the topological circle, any such a theory gives us a {\em homotopy trivial}\ open-closed
transformation. Therefore again only {\em singular}\, propagators can, in principle, give us explicit formulae for the action of GRT
on deformation quantizations. One such  singular propagator on $\overline{C}_{\bu,\bu}(\bbH)$
was introduced by Kontsevich in \cite{Ko2}
but a rigorous proof of his claim that this propagator works indeed is not yet available in the literature
(to the best knowledge of the author).

\bip
\appendix
\renewcommand{\thesubsection}{{\bf A.\arabic{subsection}}}
\renewcommand{\thesubsubsection}{{\bf A.\arabic{subsection}.\arabic{subsubsection}}}
\section{ Operads and coloured operads \cite{BM, GJ,GK, LV} }
\subsection{ Trees}
Let $\cT$ be the set of all possible  connected genus $0$ graphs
constructed
 from the following 1-vertex  directed graphs called
$n$-{\em corollas},
\Beq\label{corolla}
\begin{xy}
 <0mm,0mm>*{\bu};
 <0mm,0mm>*{};<0mm,4.5mm>*{}**@{-},
<-0.4mm,-0.2mm>*{};<-8mm,-3mm>*{}**@{-},
 <-0.5mm,-0.3mm>*{};<-4.5mm,-3mm>*{}**@{-},
 <0mm,0mm>*{};<0mm,-2.6mm>*{\ldots}**@{},
 <0.5mm,-0.3mm>*{};<4.5mm,-3mm>*{}**@{-},
 <0.4mm,-0.2mm>*{};<8mm,-3mm>*{}**@{-};
<0mm,-5mm>*{\underbrace{\ \ \ \ \ \ \ \ \ \ \ \ \ \ }};
<0mm,7mm>*{^{the\ output\ leg}};
<0mm,-7mm>*{_{n\ \ input\ legs}};
 \end{xy}, \ \ \ n\geq 0,
\Eeq
by taking their disjoint unions then and gluing some output legs  to the same number of input legs. The resulting graph is called a {\em tree}.
The glued legs are called the {\em edges} of the tree, and all the rest legs are called the  {\em legs} of the tree. Each tree $T$ has, by construction, a unique output leg. The set of edges of $T$ is denoted by $E(T)$, the set of vertices by $V(T)$, and the set
of input legs by $L(T)$. If $\# L(T)=n$, then $T$ is called an $n$-tree. The subset of $\cT$
consisting of $n$-trees is denoted by $\cT_n$.
Note that every  edge as well as every leg of a tree is naturally directed;
we tacitly assume in all our pictures that the direction flow runs from the bottom to the top.
For a vertex $v$, we denote by $In_v$ the set of its input legs.
\sip

Let $I$ be a finite set. An $I$-tree is an $\# I$-tree equipped with a bijection
$I\rar L(T)$. The set of $I$-trees is denoted by $\cT_I$.

\subsection{$\cS$-modules} Let $\cC$ be a symmetric monoidal category
with the tensor product denoted by $\ot$ and the unit object denoted by $\id$. We assume  that the category $\cC$ has small limits and colimits, and that for any object $O$ the functor $O\ot$
preserves colimits.

\sip

Let $\cS$ be the groupoid of finite sets. A functor $\f:\cS\rar \cC$ is called an $\cS$-{\em module}.
The subcategory, $\bS$, of $\cS$ whose objects are the sets
$[n]$, $n\in \N$, and morphisms are the permutation groups $\bS_n$ is the full sceleton of $\cS$. The restriction of $\f$ to $\bS$ is called an $\bS$-module. One can reconstruct $\f$ from its restriction to $\bS$
by setting $\f(I)$ to be the colimit
$$
\f(I):=\left( \bigoplus_{[\# I]\rar I} \f([\# I]) \right)_{\bS_{\# I}}.
$$

Given an $\cS$-module $\f$ and a tree $T$, one constructs a {\em decorated tree} $T\langle\f\rangle$ as the colimit,
$$
T\langle\f\rangle:= \left( \bigoplus_{\sigma: [\# V(T)]\rar V(T)} \f(In_{\sigma(1)})\ot
 \f(In_{\sigma(2)})\ot\ldots\ot  \f(In_{\sigma(\# V(T))})\right)_{\bS_{\# V(T)}},
$$
and then defines an $\cS$-module, $\cT\langle\f\rangle: \cS \rar \cC$, which is given on a finite set $I$
as the following colimit,
$$
\cT\langle\f\rangle(I):= \bigoplus_{T\in \cT_I} T\langle\f\rangle.
$$
The association $\cT: \f \rightsquigarrow \cT\langle\f \rangle$ is an endofunctor in the category
of $\cS$-modules which comes canonically equipped with a natural transformation
$t:\cT\circ \cT\rar \cT$ as, for any finite set $I$, there is a natural in $\f$ ``tautological" map
$$
\cT\langle\cT\langle\f\rangle\rangle(I) \lon   \bigoplus_{T\in \cT_I} T\langle\f\rangle=\cT\langle\f\rangle
$$
which sends a tree $T'\in\cT\langle\cT\langle\f\rangle\rangle$ whose vertices, $v$, are decorated by some $\f$-decorated trees, $T''_v\in \cT\langle\f\rangle$, into the $\f$-decorated tree $T$ obtained from $T'$ by replacing each $v$ with $T''_v$.

\subsection{Definitions of an operad}
\subsubsection{\bf First definition} A {\em non-unital  operad}\, is an $\cS$-module, $\f$, together with a natural transformation,
$$
\mu:\cT\langle\f\rangle \lon  \f
$$
such that the  diagrams,
$$
\xymatrix{
\cT\langle\cT\langle\f\rangle\rangle \ar[r]^{\cT(\mu)} \ar[d]_{t} &   \cT\langle\f\rangle\ar[d]_\mu\\
 \cT\langle\f\rangle \ar[r]_{\mu} & \f
}\ \ \ \mbox{and} \ \ \ \
\xymatrix{
\f \ar[r]^{\nu} \ar[dr]_{Id} &   \cT\langle\f\rangle\ar[d]_\mu\\
  & \f
}
$$
commute. Here $\nu: \f\rar \langle\cT\rangle$ stands for the trasnformation which identifies
$\f(I)$ with the decorated $I$-corolla.

\sip

We omit the definition of a {\em morphism}\ of (non-unital) operads as it is obvious.

\subsubsection{\bf Example} For any $\cS$-module $\cE$ the $\cS$ module $\cT\langle\cE \rangle$
has a natural structure of a non-unital operad called the {\em free operad generated by $\cE$}.

\subsubsection{\bf Example} For any vector space $V$ the $\bS$-module
 $\cE nd_V:=\{\Hom(V^{\ot n},V)\}_{n\geq 0}$
has a natural structure of an operad called the {\em endomorphism operad}\, of $V$.

\sip

Let $\uparrow$ denote the exceptional tree without vertices, and set
$$
\cT^+:=\cT \sqcup \uparrow
$$
be the enlarged family of trees. For any $\cS$-module $\f$ set the decorated graph
$\uparrow\hspace{-1mm}\langle\f\rangle$ to be $\id$, the unit in the category $\cC$.
\sip

 An {\em  operad with unit}\, is defined by replacing in the as bove definition
 of a non-unital operad the symbol $\cT$ by the symbol $\cT^+$.

\subsubsection{\bf Second definition}  A {\em non-unital  operad}\, is an $\cS$-module, $\f$, together with a family of natural transformations,
$$
\{\mu_T: T\langle\f\rangle \lon  \f\}_{T\in \cT}
$$
parameterized by all possible tress from the family $\cT$ such that
\Beq\label{graph-associativity}
\mu_T=\mu_{T/T'}\circ \tilde{\mu}_{T'}
\Eeq
 for any subtree $T'\subset T$. Here $T/T'$ stands for the tree obtained from $T$ by shrinking
 the whole subtree $T'$ into a single $L(T')$-corolla, and
  $\tilde{\mu}_{T'}: T\langle \f \rangle \rar (T/T')\langle \f\rangle$ stands for the natural
  transformation
which equals $\mu_{T'}$ on the decorated vertices lying in $T$ and which is identity on all other vertices of $T$.
Enlarging the family of trees from $\cT$ to $\cT^+$ as above, one obtains similarly
a definition of an operad with unit.

\subsubsection{\bf Third definition}\label{Appendix: Third definition}
 A {\em non-unital  operad}\, is an $\cS$-module, $\f$, together with a family
of natural transformations,
$$
\{\mu_T: T\langle\f\rangle \lon  \f\}_{T\in \cT^{red}}
$$
parameterized by the subfamily $\cT^{red}\subset \cT$ of 2-vertex trees such that
for any three vertex tree $\bar{T}$ the diagram
$$
\xymatrix{
\bar{T}\langle\f\rangle \ar[r]^{\tilde{\mu}_{T'}} \ar[d]_{\tilde{\mu}_{T''}} &   \bar{T}/T'\langle\f\rangle\ar[d]^{\mu_{\bar{T}/T'}}\\
 \bar{T}/T''\langle\f\rangle \ar[r]_{\mu_{\bar{T}/T''}} & \f}
$$
commutes. Here $T'$ and $T''$ stand for the two only possible 2-vertex subtrees of $\bar{T}$.
\sip

For arbitrary finite sets $I$ and $J$ let $T_I$ be the $I$-corolla, $T_J$ be the $J$-corolla,
and, for any $i\in I$, let $T_{(I-i)\sqcup J}$ be the 2-vertex tree obtained by gluing the output vertex
of $T_J$ into the $i$-labeled input leg of $T_I$. The associated
composition
$$
\mu_{T_{(I-i)\sqcup J}}: \f(I)\ot \f(J)\lon \f\left((I-i)\sqcup J\right)
$$
is often denoted in the literature by $\circ_i^{I,J}$.

An {\em  operad with unit}\, is, by definition, a non-unital operad equipped with a morphism
$\id_\bu: \id\rar \f(\bu)$ for any one point set $\bu$ such that the compositions
$$
\f(I)\rar \f(I)\ot \id \stackrel{\Id\ot\id_\bu}{\lon} \f(I)\ot \f(\bu)
\stackrel{\circ_i^{I,\bu}}{\lon}\f(I)
$$
and
$$
\f(I)\rar  \id\ot \f(I)\stackrel{\id_\bu\ot\Id}{\lon} \f(\bu)\ot \f(I)
\stackrel{\circ_\bu^{\bu,I}}{\lon}\f(I)
$$
are the identities for any finite set $I$ and any $i\in I$.

\subsubsection{\bf Fourth definition}
 A {\em non-unital  operad}\, is an $\cS$-module, $\f$,
together with a family of natural transformations,
$$
\left\{\circ_{f}: \f(I)\ot \bigotimes_{i\in I}\f(f^{-1}(i)) \lon \f(J)\right\}_{f:J\rar I}
$$
parameterized by a family, $\{f:J\twoheadrightarrow I\}$,  of surjections of finite sets, such that,
for any triple $K \stackrel{g}{\twoheadrightarrow} J\stackrel {f}{\twoheadrightarrow} I$ the diagram
$$
\xymatrix{
 \left[\f(I)\ot \bigotimes_{i\in I}\f(f^{-1}(i))\right]\bigotimes_{j\in J}\f(g^{-1}(j))
  \ar[r]^{\hspace{13mm}\circ_f\ot \Id} \ar[d]_{\simeq} &   \f(J)\ot \bigotimes_{j\in J}\f(g^{-1}(j))\ar[dd]_\mu\\
\f(I)\ot \left[\bigotimes_{i\in I}\f(f^{-1}(i))\bigotimes_{j_i\in (f)^{-1}(i)}\f(g^{-1}(j_i))\right] \ar[d]_{\ot\circ_{g_i}} &\\
 \f(I)\ot \bigotimes_{i\in I}\f((fg)^{-1}(i))\ar[r]_{\circ_{fg}} & \f(K)
}
$$
commutes.
An {\em  operad with unit}\, is, by definition, a non-unital operad equipped with a morphism
$\id_\bu: \id\rar \f(\bu)$ for any one point set $\bu$ such that the compositions
$$
\f(I)\rar \f(I)\ot \id^{\ot \# I} \stackrel{\Id\ot\id_\bu^{\ot \# I}}{\lon} \f(I)\ot \f(\bu)^{\ot \# I}
\stackrel{\circ_{Id}}{\lon}\f(I)
$$
and
$$
\f(I)\rar  \id\ot \f(I)\stackrel{\circ_{I\leftarrowtail \bu}}{\lon} \f(I)
$$
are identities.

\subsubsection{\bf (Non)equivalences of definitions} All the four definitions
of operads with unit are equivalent to each other. The first three definitions of non-unital
operads are equivalent to each other, but not to the fourth definition. Every non-unital operad
in the sense of the first three definitions is a non-unital operad in the sense of the fourth definition,
 but, obviously, not vice versa. A free non-unital operad in the sense of the fourth
  definition uses {\em leveled}\, trees rather than the ordinary ones.

\sip

In this paper we always understand a non-unital operad in the sense of any of the first three definitions.
However, when we work with non-unital {\em coloured}\, operads we can in principle have a mixture of both approaches, one approach for one set of colours and another inequivalent approach for another set of colours. Such a mixture of two non-equivalent approaches does indeed happen in the geometric
models for various operads of homotopy morphisms between homotopy algebras. We give a rigorous definition of that mixture below under the name of {\em a non-unital coloured operad of transformation type}.


\subsection{Coloured operads} Let $\Phi$ be a set which we refer to as the set of {\em colours}.
An $n$-{\em corolla},
$$
\begin{xy}
 <0mm,0mm>*{\bu};
 <0mm,0mm>*{};<0mm,4.5mm>*{}**@{-},
<-0.4mm,-0.2mm>*{};<-8mm,-3mm>*{}**@{-},
 <-0.5mm,-0.3mm>*{};<-4.5mm,-3mm>*{}**@{-},
 <0mm,0mm>*{};<0mm,-2.6mm>*{\ldots}**@{},
 <0.5mm,-0.3mm>*{};<4.5mm,-3mm>*{}**@{-},
 <0.4mm,-0.2mm>*{};<8mm,-3mm>*{}**@{-};
<0mm,6mm>*{^{\phi_0}};
<-9mm,-5mm>*{_{\phi_1}};
<-5mm,-5mm>*{_{\phi_2}};
<10mm,-5mm>*{_{\phi_n}};
 \end{xy}, \ \ \ n\geq 0,
$$
whose all legs are decorated with some (not-necessarily distinct) elements $\phi_0,\phi_1,\ldots,\phi_n\in \Phi$ is a called an $\Phi$-coloured $n$-corolla.
If the set $\Phi$ consists just of a few elements, then we often make legs dashed or wiggy to indicate their colours,
for example
$$
\Ba{c}
\xy
 <0mm,-0.5mm>*{\bu};
 <0mm,0mm>*{};<0mm,5mm>*{}**@{~},
 <0mm,0mm>*{};<-16mm,-5mm>*{}**@{-},
 <0mm,0mm>*{};<-11mm,-5mm>*{}**@{-},
 <0mm,0mm>*{};<-3.5mm,-5mm>*{}**@{-},
 <0mm,0mm>*{};<-6mm,-5mm>*{...}**@{},
 <0mm,0mm>*{};<16mm,-5mm>*{}**@{.},
 <0mm,0mm>*{};<8mm,-5mm>*{}**@{.},
 <0mm,0mm>*{};<3.5mm,-5mm>*{}**@{.},
 <0mm,0mm>*{};<11.6mm,-5mm>*{...}**@{},
 \endxy
\Ea
$$

Let $\cT^\Phi$ be the set of all possible  connected genus $0$ graphs
constructed
 from $\Phi$-coloured corollas
by taking their disjoint unions  and then gluing some output legs  with  input legs of the same colour.
The resulting graph is called a  {\em $\Phi$-coloured tree}.

Now repeating all the  first three definitions above with the symbol $\cT$ replaced by $\cT^\Phi$
we obtain three equivalent definitions of a (non-unital) $\Phi$-{\em coloured operad}\, in a
 symmetric monoidal category $\cC$.

\subsection{Coloured operads of transformation type}
Many important examples of coloured operads come from ordinary operads and their {\em modules}.

\sip

Let $\f_{in}$  and $\f_{out}$ be ordinary non-unital operads. An $\cS$-module $\cM$ is said to be a {\em bimodule of transformation type}\, over operads $\f_{in}$ and $\f_{out}$ if
\Bi
\item[(i)] $\cM$ is a right module over $\f_{out}$ in the sense of the first definitions of a non-unital operad, i.e.\ for any finite sets $I$ and $J$ and any $i\in I$ there is a morphism
$$
\circ_i^{I,J}: \cM(I)\ot \f_{in}(J)\lon \cM((I-i)\sqcup J)
$$
which is natural in $i$, $I$ and $J$ and satisfies obvious associativity conditions;
\item[(ii)] $\cM$ is a right pseudo-module over $\f_{in}$ in the sense of the fourth definition of a non-unital operad, i.e.\ for any surjection $f: J\twoheadrightarrow I$ there is a morphism,
$$
\circ_{f}: \f(I)\ot \bigotimes_{i\in I}\cM(f^{-1}(i)) \lon \cM(J)
$$
such that, for any triple $K \stackrel{g}{\twoheadrightarrow} J\stackrel {f}{\twoheadrightarrow} I$ the diagram
$$
\xymatrix{
 \left[\f(I)\ot \bigotimes_{i\in I}\f(f^{-1}(i))\right]\bigotimes_{j\in J}\cM(g^{-1}(j))
  \ar[r]^{\hspace{13mm}\circ_f\ot \Id} \ar[d]_{\simeq} &   \f(J)\ot \bigotimes_{j\in J}\cM(g^{-1}(j))\ar[dd]_\mu\\
\f(I)\ot \left[\bigotimes_{i\in I}\f(f^{-1}(i))\bigotimes_{j_i\in (f)^{-1}(i)}\cM(g^{-1}(j_i))\right] \ar[d]_{\ot\circ_{g_i}} &\\
 \f(I)\ot \bigotimes_{i\in I}\cM((fg)^{-1}(i))\ar[r]_{\circ_{fg}} &  \cM(K)
}
$$
commutes.
\Ei

The colimit $\f_{in}\oplus \cM\oplus \f_{out}$ has then a natural structure of a non-unital two-coloured operad  of mixed type which we call a {\em non-unital coloured operad of transformation type}.
Such operads often occur when, for example, one is interested in universal morphisms from $\f_{in}$-algebras
to $\f_{out}$-algebras.
Propositions {\ref{2: Propos on face complex of Mor(A_infty)}} and
{\ref{4': Propos on the face complex of Mor(Lie_infty)}} describe typical examples of such 2-coloured operads.

\sip

The above notion can be straightforwardly generalized to the case when  $\f_{in}$  and $\f_{out}$ are themselves non-unital coloured operads. The associated colimits $\f_{in}\oplus \cM\oplus \f_{out}$
are also called {\em non-unital coloured operad of transformation type}. Theorem
{\ref{4: Theorem on Mor(CO_infty) operad}} describes an example.

\bip

\bip


{\em Acknowledgement}. {\small It is a pleasure to thank Johan Alm,
Johan Gran\aa ker  and Carlo Rossi for valuable discussions. I am very grateful to the referee for a spotting  a mistake in the original definition of the operad $\cG^{\uparrow\downarrow}$ in \S 7.1.1 and for numerous
very useful suggestions and comments.

\newpage
\def\cprime{$'$}

\end{document}